\newcommand{\nc}{\newcommand}
\nc{\les}{\lesssim}
\nc{\nit}{\noindent}
\nc{\nn}{\nonumber}
\nc{\D}{\partial}
\nc{\diff}[2]{\frac{d #1}{d #2}}
\nc{\diffn}[3]{\frac{d^{#3} #1}{d {#2}^{#3}}}
\nc{\pdiff}[2]{\frac{\partial #1}{\partial #2}}
\nc{\pdiffn}[3]{\frac{\partial^{#3} #1}{\partial{#2}^{#3}}}
\nc{\abs}[1] {\lvert #1 \rvert}
\nc{\cAc}{{\cal A}_c}
\nc{\cE}{{\cal E}}
\nc{\cF}{{\cal F}}
\nc{\cP}{{\cal P}}
\nc{\cV}{{\cal V}}
\nc{\cQ}{{\cal Q}}
\nc{\cGin}{{\cal G}_{\rm in}}
\nc{\cGout}{{\cal G}_{\rm out}}
\nc{\cO}{{\cal O}}
\nc{\Lav}{{\cal L}_{\rm av}}
\nc{\cL}{{\cal L}}
\nc{\cB}{{\cal B}}
\nc{\cZ}{{\cal Z}}
\nc{\cR}{{\cal R}}
\nc{\cT}{{\cal T}}
\nc{\cY}{{\cal Y}}
\nc{\cX}{{\cal X}}
\nc{\cXT}{{{\cal X}(T)}}
\nc{\cBT}{{{\cal B}(T)}}
\nc{\vD}{{\vec \mathcal{D}}}
\nc{\efield}{\mathcal{E}}
\nc{\vE}{{\vec \efield}}
\nc{\vB}{{\vec \mathcal{B}}}
\nc{\vH}{{\vec \mathcal{H}}}
\nc{\F}{  \mathcal{F} }
\nc{\ty}{{\tilde y}}
\nc{\tu}{{\tilde u}}
\nc{\tV}{{\tilde V}}
\nc{\Pc}{{\bf P_c}}
\nc{\bx}{{\bf x}}
\nc{\bX}{{\bf X}}
\nc{\bXYZ}{{\bf XYZ}}
\nc{\bY}{{\bf Y}}
\nc{\bF}{{\bf F}}
\nc{\bS}{{\bf S}}
\nc{\dV}{{\delta V}}
\nc{\dE}{{\delta E}}
\nc{\TT}{{\Theta}}
\nc{\dPsi}{{\delta\Psi}}
\nc{\order}{{\cal O}}
\nc{\Rout}{R_{\rm out}}
\nc{\eplus}{e_+}
\nc{\eminus}{e_-}
\nc{\epm}{e_\pm}
\nc{\eps}{\varepsilon}
\nc{\vnabla}{{\vec\nabla}}
\nc{\G}{\Gamma}
\nc{\w}{\omega}
\nc{\mh}{h}
\nc{\mg}{g}
\nc{\vphi}{\varphi}
\nc{\tlambda}{\tilde\lambda}
\nc{\be}{\begin{equation}}
\nc{\ee}{\end{equation}}
\nc{\ba}{\begin{eqnarray}}
\nc{\ea}{\end{eqnarray}}
\nc{\g}{\gamma}
\nc{\ol}{\overline}
\newtheorem{theorem}{Theorem}[section]
\newtheorem{lemma}[theorem]{Lemma}
\newtheorem{prop}[theorem]{Proposition}
\newtheorem{corollary}[theorem]{Corollary}
\newtheorem{defin}[theorem]{Definition}
\newtheorem{rmk}[theorem]{Remark}
\nc{\pT}{\partial_T}
\nc{\pz}{\partial_z}
\nc{\pt}{\partial_t}
\nc{\la}{\langle}
\nc{\ra}{\rangle}
\nc{\infint}{\int_{-\infty}^{\infty}}
\nc{\halfwidth}{6.5cm}
\nc{\figwidth}{10cm}
\newcommand{\f}{\frac}
\nc{\nlayers}{L} \nc{\nsectors}{M}
\nc{\indicator}{\mathbf{1}}
\nc{\Rhole}{R_{\rm hole}}
\nc{\Rring}{R_{\rm ring}}
\nc{\neff}{n_{\rm eff}}
\nc{\Frem}{F_{\rm rem}}
\nc{\R}{\mathbb R}
\nc{\C}{\mathbb C}
\nc{\Z}{\mathbb Z}
\nc{\DD}{\Delta}
\nc{\cD}{\mathcal D}
\nc{\lnorm}{\left\|}
\nc{\rnorm}{\right\|}
\nc{\rnormp}{\right\|_{\ell^{p,\eps}}}
\nc{\rar}{\rightarrow}
\nc{\mR}{\mathcal R}
\nc{\oo}{\"o}   
\nc{\os}{\overset{o}}
\begin{document}

\begin{abstract}

We study the fourth order Schr\"odinger operator $H=(-\Delta)^2+V$ for a decaying potential $V$ in four dimensions. In particular, we show that 
the $t^{-1}$ decay rate holds in the $L^1\to L^\infty$ setting if zero energy is regular.  Furthermore, if the threshold energies are regular then a faster decay rate of $t^{-1}(\log t)^{-2}$ is attained for large $t$, at the cost of logarithmic spatial weights. Zero is not regular for the free equation, hence the free evolution does not satisfy this bound due to the presence of a resonance at the zero energy.  We provide a full classification of the different types of zero energy resonances and study the effect of each type on the time decay in the dispersive bounds. 

\end{abstract}

\title[Dispersive estimates for Fourth order Schr\"odinger]{On the Fourth order Schr\"odinger equation in four dimensions: dispersive estimates and zero energy resonances}

\author[Green, Toprak]{ William~R. Green and Ebru Toprak}
\thanks{The first author is supported by  Simons  Foundation  Grant  511825. \\
 This material is based upon work supported by the National Science Foundation under Grant No. DMS-1440140 while the second author was in residence at the Mathematical Sciences Research Institute in Berkeley, California, during the Fall 2018 Semester. }
\address{Department of Mathematics\\
Rose-Hulman Institute of Technology \\
Terre Haute, IN 47803, U.S.A.}
\email{green@rose-hulman.edu}
\address{Department of Mathematics \\
University of Illinois \\
Urbana, IL 61801, U.S.A.}
\email{toprak2@illinois.edu}

\maketitle

\section{Introduction}

We consider the linear fourth order Schr\"odinger equation 
\begin{align*} 
 i \psi_t = H \psi, \,\,\, \psi(0,x)= f(x), \,\,\, H:=(-\Delta)^2+ V.
\end{align*}
This equation was introduced by Karpman \cite{K} 
and Karpman and Shagalov \cite{KS} to account for   small fourth-order
dispersion in the propagation of  laser beams in a bulk medium with Kerr
nonlinearity.

In the free case, i.e. when $V=0$, the solution operator $e^{-it \Delta^2 }$ preserves the $L^2$ norm and satisfies the following $L^1 \rightarrow L^{\infty}$ dispersive estimate, see \cite{BKS},
\begin{align*}
\|e^{-it \Delta^2 } f \|_{L^{\infty}} \les |t|^{-\f{d} 4} \|f\|_{L^1}. 
\end{align*}
Let $P_{ac}(H)$ be the projection onto the absolutely continuous spectrum of $H$ and $V(x)$ be a real valued decaying potential. Our main purpose in this paper is to extend the above dispersive estimate in dimension four when there are obstructions at zero, i.e  distributional solutions to $H\psi =0$ with $\psi \in L^p(\mathbb R^4)$ for some $p\geq 2$.  In particular, we provide a full classification of the zero energy obstructions as eigenfunctions or resonances, in terms of distributional solutions to $H\psi=0$ with the type of obstruction depending on the decay of $\psi$ at infinity.  We then prove dispersive bounds of the form 
\begin{align} \label{dispersivefree}
\|e^{-it H }P_{ac}(H) f \|_{L^{\infty}} \les \gamma(t) \|f\|_{L^1},
\end{align}
or a variant with spatial weights, for each type of zero energy obstruction where $\gamma(t) \rightarrow 0$ as $t\rightarrow \infty$. Such estimates can be important tools in the study of asymptotic stability of solitons for non-linear  equations.

The dispersive estimates in the form of \eqref{dispersivefree} has been widely studied for Schr\"odinger equation. It was observed that the natural $|t|^{-\f{d}2}$ decay rate for the Schr\"odinger evolution is affected by zero energy obstructions.  
In particular, the time decay for large $|t|$ is slower if there are obstructions at zero, see for example \cite{JSS, Yaj, Sc2, Gol2, eg2, EGG, GG1,GG2} which consider the Schr\"odinger equation.  The notation of a resonance is defined for a rather general class of operators of the form $f(-\Delta)+V$ in \cite{BN}, and the large time decay as an operator is studied between weighted $L^2(\R^n)$ spaces under the assumption that zero energy is regular.    There are no existing works on the effect of zero energy obstructions on the time decay for the fourth order equation to the best of the authors' knowledge.

Similar dynamics are expected also for the fourth order Schr\"odinger equation, in the sense that zero energy obstructions should make the time decay slower. In fact, it was shown in \cite{fsy} that in this case if zero is regular  then the natural time decay $\gamma(t)=|t|^{-\f{d}4}$ is valid in dimensions $d>4$, and is $|t|^{-\f12}$ for large $t$ in $d=3$. In particular, we note that the case of $d=4$, and the case when zero energy is not regular in all dimensions were open until now.

 In the case of the Schr\"odinger equation, the full structure of the obstructions are known and are obtained by a careful expansion of the Schr\"odinger resolvent, $R _0(z):= (-\Delta -z)^{-1}$, around $z=0$, see \cite{JN, ES, eg2, EGG}.  In particular, in dimensions $d=3,4$ the zero energy obstructions are composed of a one dimensional space of resonances and a finite dimensional  eigenspace whereas the structure is more complicated in $d=2$, \cite{JN,eg2}. We note that a similar difficulty and structure appears for the fourth order Schr\"odinger equation in dimension four.  Specifically, we have the following representation, which follows from the second resolvent identity (see also \cite{fsy})
\begin{align}  \label{RH_0 rep}
R (H_0; z):=( (-\Delta)^2 - z)^{-1} = \frac{1}{2z^{\f12}} \Big( R _0(z^{\f12}) - R_0 (-z^{\f12}) \Big), \quad z\in \mathbb C\setminus[0,\infty).
\end{align}
Moreover, we obtain that the set of zero energy obstructions consists of a space of three distinct types of resonances in addition to the zero energy eigenspace.  This resonance space is at most  $15$ dimensional,  and there is a finite-dimensional space of eigenfunctions, see Section~\ref{sec:classification}.

Before we give the main results we define the following spaces, 
\begin{align*}
L^{p,\sigma}:= \{ f: \la \cdot \ra^{\sigma} f \in L^{p} \}, \quad
L^p_{ \pm \omega} :=  \{ f: (\log (2+ |\cdot|)^{\pm 2}  f \in L^{p} \}.
\end{align*}
Here $\la \cdot \ra = (1+ |\cdot|^2)^{\f12}$.  Throughout the paper we write $a-$ to mean $a-\epsilon$ for a small, but fixed $\epsilon>0$.  Similarly, $a+$ denotes $a+\epsilon$.

Our main theorem is the following, see  Definiton~\ref{def:restype} for the precise definition of the different types of resonances mentioned below.  Heuristically, a resonance of the first kind may be classified in terms of the existence of solutions to $H\psi=0$ with $\psi \in L^\infty \setminus L^p$ for any $p<\infty$.  There is a resonance of the second kind if $\psi \in L^p$ for all $p>4$ but $\psi\notin L^4$.  There is a resonance of the third kind if $\psi \in L^p$ for all $p>2$ but $\psi\notin L^2$, and there is a resonance of the fourth kind if $\psi \in L^2$.

\begin{theorem}\label{thm:main}
	
	Suppose that $|V(x)|\les \la x\ra^{-\beta}$,
	\begin{enumerate}[i)]
	\item If zero is regular,
	then if $\beta>4$, 
	$$
		\|e^{-itH} P_{ac}(H)\|_{L^1 \rightarrow L^{\infty}} \les | t | ^{-1}
	$$
	\item If there is a resonance of the first kind,
	then if $\beta>4$, 
	$$
		\|e^{-itH}P_{ac}(H)\|_{L^1 \rightarrow L^{\infty}} \les | t | ^{-1}
	$$
	\item If there is a resonance of the second kind at zero,
	then if $\beta>12 $, 
	$$
		\|e^{-itH} P_{ac}(H)\|_{L^1 \rightarrow L^{\infty}} \les \left\{ \begin{array}{ll}
		|t|^{-\f12} & |t|\geq 2\\
		|t|^{-1} & |t|<2
		\end{array} \right.
	$$
	Furthermore, there exists a finite rank operator $F_t$ satisfying $\|F_t\|_{L^1\to L^\infty} \les \la t \ra^{-\f12}$ so  that
	$$
		\| e^{-itH} P_{ac}(H) -F_t\|_{L^{1,2+}\to L^{\infty,-2-}} \les | t |^{-1}
	$$

	\item If there is a resonance of the third or fourth kind at zero,
	then if $\beta>12$, 
	$$ \|e^{-itH} P_{ac}(H)\|_{L^1 \rightarrow L^{\infty}} \les 
	\left\{ \begin{array}{ll}
		\frac{1}{\log |t|} & |t|\geq 2\\
		|t|^{-1} & |t|<2
	\end{array} \right.
	$$
\end{enumerate}
\end{theorem}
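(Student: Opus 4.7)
The plan is to base everything on Stone's formula
\begin{align*}
e^{-itH}P_{ac}(H)f = \frac{1}{2\pi i}\int_0^\infty e^{-it\lambda}\bigl[R_V^+(\lambda)-R_V^-(\lambda)\bigr]f\,d\lambda,
\end{align*}
and split via a smooth cutoff $\chi(\lambda/\lambda_0)$ into a low-energy piece (which is where the zero energy obstructions appear) and a high-energy piece. For the high-energy piece I would expand $R_V^\pm$ via a Born series iterated against $R(H_0;\lambda)$, use the representation \eqref{RH_0 rep} to reduce to pointwise bounds on the free Schr\"odinger resolvent $R_0^\pm(\lambda^{1/4})$ in $\mathbb R^4$, and then integrate by parts twice in $\lambda$ exploiting the rapid decay of $R_0(-z^{1/2})$. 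This portion is essentially insensitive to the type of resonance and yields the $|t|^{-1}$ contribution in all four cases.

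The heart of the argument is the low-energy analysis, carried out through the symmetric resolvent identity. Writing $V=Uv^2$ with $U=\mathrm{sgn}(V)$ and $v=|V|^{1/2}$ and setting $M^\pm(\lambda)=U+vR(H_0;\lambda\pm i0)v$, one has
\begin{align*}
R_V^\pm(\lambda) = R(H_0;\lambda\pm i0) - R(H_0;\lambda\pm i0)\,v\,(M^\pm(\lambda))^{-1}\,v\,R(H_0;\lambda\pm i0).
\end{align*}
Combined with \eqref{RH_0 rep} and the small-argument expansion of the second order free resolvent in $d=4$, this yields an expansion of $M^\pm(\lambda)$ near $\lambda=0$ of the form $T_0+a(\lambda)T_1+b(\lambda)T_2+\cdots$ in which the coefficients $a,b,\ldots$ carry the characteristic $\lambda^{1/2}\log\lambda$ and $\lambda^{1/2}$ factors inherited from the four-dimensional Schr\"odinger resolvent. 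Inverting $M^\pm(\lambda)$ is then done by iterated Feshbach--Grushin (Jensen--Nenciu) reductions that peel off, one by one, the projections onto the resonance subspaces classified in Section~\ref{sec:classification}; the cases $(i)$--$(iv)$ correspond precisely to how far this peeling procedure must be taken before the residual block becomes invertible with bounded inverse.

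Substituting the resulting expansion back into Stone's formula reduces the task to estimating oscillatory integrals of the form $\int_0^\infty e^{-it\lambda}\chi(\lambda/\lambda_0)\,\Gamma(\lambda)\,d\lambda$ for each contribution $\Gamma$ in the expansion of $v(M^\pm)^{-1}v$. Smooth or mildly singular $\Gamma$ admit two integration-by-parts and give $|t|^{-1}$, covering cases $(i)$ and $(ii)$. In case $(iii)$ the principal singular part of $(M^\pm(\lambda))^{-1}$ is of size $\lambda^{-1/2}$, and a van der Corput / stationary-phase bound delivers only $|t|^{-1/2}$ in the unweighted setting; by isolating this principal symbol one defines a finite rank operator $F_t$ whose subtraction removes exactly the $\lambda^{-1/2}$ contribution, leaving a remainder that, after absorbing one power of $\lambda^{1/2}$ into the logarithmic spatial weights in $L^{1,2+}\to L^{\infty,-2-}$, integrates by parts to the full $|t|^{-1}$ rate. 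In case $(iv)$ the most singular piece behaves like $1/\log\lambda$; neither integration by parts nor stationary phase produces polynomial decay, and a direct computation of $\int_0^\infty e^{-it\lambda}\chi(\lambda/\lambda_0)/\log\lambda\,d\lambda$ yields exactly the $(\log|t|)^{-1}$ rate.

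The main obstacle I expect is the bookkeeping of the low-energy expansion of $(M^\pm(\lambda))^{-1}$. Specifically, each Feshbach reduction mixes $\lambda^{1/2}$ and $\lambda^{1/2}\log\lambda$ terms in a way that requires careful tracking so that the principal singular symbols can be identified with the appropriate orthogonal projections onto the resonance subspaces and so that cancellations with $P_{ac}(H)$ remove the contributions associated to genuine $L^2$ eigenfunctions. In addition, verifying that the subtracted finite rank operator $F_t$ in case $(iii)$ satisfies the stated $\langle t\rangle^{-1/2}$ mapping bound requires that its kernel be constructed explicitly from the second-kind resonance functions and paired against the $\lambda^{-1/2}$ residue in the expansion; the weighted polynomial decay rates $\beta>12$ required in $(iii)$ and $(iv)$ trace back to how many Taylor terms of $R_0^\pm(\lambda^{1/4})$ must be expanded with remainders controlled by $v\langle x\rangle^k v$.
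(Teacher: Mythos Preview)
Your overall architecture matches the paper's: Stone's formula, low/high energy split, the symmetric resolvent identity for low energy, and iterated Jensen--Nenciu reductions to invert $M^\pm(\lambda)$. However, there are two genuine gaps.

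First, your description of the low-energy expansion of $M^\pm$ is off. Working in the spectral variable $z$, the expansion of $R(H_0;z)$ begins with a pure $\log z$ term (from $\tilde g_1^\pm$ in Lemma~\ref{lem:R0low}), not with $z^{1/2}\log z$ and $z^{1/2}$. Consequently the leading block of $M^\pm$ is $\tilde g_1^\pm P + T$ with $\tilde g_1^\pm \sim \log z$, and its Feshbach inverse already carries a $1/\log z$ singularity even in the regular case. Handling this requires the $+/-$ cancellation ($\tilde g_1^+-\tilde g_1^-$ is constant) together with the orthogonality $Qv=0$, not just smoothness; see Lemmas~\ref{prop:QD0Q bound} and \ref{prop:S bound}. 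Your claim that in cases (i)--(ii) ``smooth or mildly singular $\Gamma$ admit two integrations by parts'' glosses over exactly this point.

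Second, and more seriously, the mechanism you propose for case (iv) is incorrect. The principal singular part of $(M^\pm)^{-1}$ in the third and fourth kinds is not $1/\log z$ but of order $z^{-1}(\log z)^{-1}$ (third kind, from $D_3/\tilde g_3^\pm(\lambda)$) or $z^{-1}$ (fourth kind, from $D_4/\lambda^4$); see \eqref{eqn:Binv 3rdkind} and \eqref{eqn:Binv 4rdkind}. Moreover, your test integral $\int_0^\infty e^{-itz}\chi(z)(\log z)^{-1}\,dz$ is actually $O(t^{-1})$: after one integration by parts the resulting integrand $-1/(z(\log z)^2)$ is absolutely integrable near $0$. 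The $(\log t)^{-1}$ rate in the paper arises differently. One first exploits the $+/-$ difference and the orthogonality $S_j v=0$ to replace the outer free resolvents by the bounded auxiliary functions $H^\pm$ and $G$; only then does the full integrand become $\widetilde O_1\bigl(\lambda^{-4}(\log\lambda)^{-2}\bigr)$, and Lemma~\ref{cor:log osc} (essentially $\int_0^{t^{-1/4}} \lambda^{-1}(\log\lambda)^{-2}\,d\lambda \sim (\log t)^{-1}$) delivers the stated decay. Without correctly identifying this integrand you cannot obtain $(\log|t|)^{-1}$.

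Finally, for large energy the paper does not iterate a Born series, which would require smallness of $V$, but uses the finite resolvent identity $R_V = R_0 - R_0 V R_0 + R_0 V R_V V R_0$ together with the limiting absorption principle for $R_V$ (Theorem~\ref{th:LAP}) to control the tail; you will need that ingredient.
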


The case of a resonance of the second kind has no analogue in the Schr\"odinger equation, such a resonance is new in the case of the fourth order equation.   Furthermore, we give an explicit formulation of the operator $F_t$, see \eqref{eqn:Ft explicit}.

We also show that if zero is regular we can obtain integrable time decay  rate for the cost of spatial logarithmic weights.  This provides a result analogous to what is known for the two dimensional Schr\"odinger equation, see \cite{Mur,eg3}.   The free evolution satisfies the estimate $\eqref{dispersivefree}$ with $\gamma(t)=t^{-1}$ and cannot decay faster due to the resonance $\psi=1$ of $(-\Delta)^2$. Therefore, we expect a faster time decay for the perturbed evolution if zero energy is regular. Our second result is the following.

\begin{theorem} \label{thm:main1}Suppose that $|V(x)|\les \la x\ra^{-4-}$ and $t>2$. If zero is regular then 
$$
\| e^{-itH} P_{ac}(H)\|_{L_{\omega}^1 \rightarrow L_{-\omega}^{\infty}} \les \frac{1}{t \log ^2 t}.
$$
\end{theorem}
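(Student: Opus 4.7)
The plan is to apply the Stone formula representation
\begin{equation*}
e^{-itH} P_{ac}(H) f = \frac{2}{\pi i}\int_0^\infty e^{-it\mu^4}\, \mu^3 \bigl[R_V^+(\mu^4) - R_V^-(\mu^4)\bigr] f\, d\mu,
\end{equation*}
obtained from the spectral theorem after the substitution $\lambda=\mu^4$. A smooth cutoff $\chi(\mu)$ localized near $\mu=0$ splits the integral into low- and high-energy pieces. The high-energy piece is handled by repeated integration by parts in $\mu$, exploiting the oscillation of $e^{-it\mu^4}$, and yields polynomial decay strictly faster than $t^{-1}(\log t)^{-2}$ under $\beta>4$; this mirrors the high-energy argument used to prove Theorem~\ref{thm:main}(i). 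All the genuine work is in the low-energy regime.

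For the low-energy piece I would apply the symmetric resolvent identity
\begin{equation*}
R_V(\mu^4) = R(H_0;\mu^4) - R(H_0;\mu^4)\, v\, M(\mu)^{-1}\, v\, R(H_0;\mu^4),
\end{equation*}
with $v=|V|^{\f12}$, $U=\operatorname{sgn}(V)$, and $M(\mu) = U + v R(H_0;\mu^4) v$. Combining \eqref{RH_0 rep} with the standard four-dimensional low-energy expansion of the Schr\"odinger resolvent $R_0(k^2)$ (whose $\mu^0$ coefficient carries a $\log\mu$ contribution attached to the free resonance $\psi\equiv 1$ of $(-\Delta)^2$) produces a low-energy expansion of $R(H_0;\mu^4)$ whose leading behavior at $\mu=0$ is logarithmically singular. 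The hypothesis that zero is regular for $H$ means precisely that this singularity is not inherited by $M(\mu)$: a Feshbach--Schur reduction then yields an expansion of the form
\begin{equation*}
M(\mu)^{-1} = \frac{1}{a+b\log\mu}\, Q + E(\mu),
\end{equation*}
where $Q$ is a finite-rank projection onto the range of the (absent) free resonance and $E(\mu)$ is uniformly bounded with sufficient smoothness in $\mu$ for oscillatory-integral bounds.

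Substituting this back into the Stone formula produces the key scalar integral
\begin{equation*}
\int_0^\infty e^{-it\mu^4}\, \mu^3\, \chi(\mu)\, \frac{K(x,y;\mu)}{a+b\log\mu}\, d\mu.
\end{equation*}
A single integration by parts using $\partial_\mu e^{-it\mu^4} = -4it\mu^3 e^{-it\mu^4}$ supplies the $t^{-1}$ factor, after which the surviving integrand is effectively supported at $\mu\sim t^{-1/4}$, where $|\log\mu|\sim \f14\log t$. One factor of $(\log t)^{-1}$ comes from the explicit denominator; the second emerges from a further $(\log\mu)^{-1}$ that is present in $K(x,y;\mu)$ after performing the subtractions implied by regularity at zero, or equivalently from differentiating the logarithmic term once more during the integration-by-parts step. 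The logarithmic spatial weights $L^1_\omega\to L^\infty_{-\omega}$ appear because the surviving kernel pieces involve the unperturbed Green's function $G_0$, which grows like $\log(1+|x|)+\log(1+|y|)$ at infinity, and these growths must be absorbed against the weights of $L^1_\omega$ and $L^\infty_{-\omega}$.

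The principal obstacle is establishing the precise $(\log\mu)^{-1}$ structure of $M(\mu)^{-1}$ when zero is regular. This requires adapting the Jensen--Nenciu inversion lemma to the fourth-order setting: the leading singularity at zero is logarithmic rather than a bounded constant, and the regularity assumption must rule out all four resonance types listed in Definition~\ref{def:restype} simultaneously before the Feshbach--Schur reduction closes. Once this inversion is in hand, the oscillatory-integral step and the bookkeeping of logarithmic spatial weights are technically tedious but conceptually routine.
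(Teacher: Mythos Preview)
Your outline has the right scaffolding (Stone formula, low/high split, symmetric resolvent identity, and an expansion $M^{-1}(\lambda)=h^{-1}(\lambda)S+QD_0Q+E(\lambda)$), but the mechanism you propose for the extra $(\log t)^{-2}$ is not the one that actually works, and as stated your argument would not get past $t^{-1}$.

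The core point you are missing is an \emph{exact cancellation} of the leading $t^{-1}$ contributions. The free resolvent term in \eqref{resid} contributes, after one integration by parts, a boundary term $\Im(z_1)/(4t)$ plus remainders of order $t^{-1-}\la x\ra^{\f12}\la y\ra^{\f12}$ (Lemma~\ref{lem:free est}). Independently, the contribution of $h_\pm^{-1}(\lambda)R^\pm vSvR^\pm$ produces \emph{the same} constant $\Im(z_1)/(4t)$ with opposite sign, plus $O\big(w(x)w(y)/(t\log^2 t)\big)$ and $O\big(\la x\ra^{0+}\la y\ra^{0+}/t^{1+}\big)$; see Proposition~\ref{prop:S cancel}. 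The cancellation relies on the identity $vSv=\|V\|_1$ (since $Qv=0$) together with the algebra
\[
\frac{(\widetilde g_1^+)^2}{h^+}-\frac{(\widetilde g_1^-)^2}{h^-}=\frac{i\Im z_1}{\|V\|_1}+\widetilde O_2\big((\log\lambda)^{-2}\big),
\]
so the $(\log\lambda)^{-2}$ appears only in the $\pm$ \emph{difference}, not from ``a further $(\log\mu)^{-1}$ in $K$'' or from differentiating once more. Your account of one factor from the denominator and one from $K$ does not match what happens and would not close.

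Two further gaps: (i) the $QD_0Q$ and error contributions do not carry any $(\log\lambda)^{-1}$; they are handled separately via Lipschitz bounds in $\lambda$ on the resolvent and on $E^\pm$ (Corollary~\ref{cor:Error Lips}, Lemma~\ref{regular}) fed into the oscillatory-integral estimate Lemma~\ref{lem:ibpweight}, yielding $t^{-1-}\la x\ra^{0+}\la y\ra^{0+}$; (ii) the polynomial weights $\la x\ra^{0+}\la y\ra^{0+}$ with time decay $t^{-1-}$ must then be traded for logarithmic weights $w(x)w(y)$ with decay $t^{-1}(\log t)^{-2}$ by interpolating against the unweighted $t^{-1}$ bound of Theorem~\ref{thm:main}(i) via \eqref{eqn:interp trick}. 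You do not mention this interpolation, and without it the statement of Theorem~\ref{thm:main1} (with genuinely logarithmic weights) is not reached. Finally, a small correction: the kernel responsible for the logarithmic spatial growth is $G_1(x,y)=c\log|x-y|$, not $G_0$.
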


As in \cite{eg3} and \cite{ebru}, to obtain Theorem~\ref{thm:main1} we use the following interpolation 
\be\label{eqn:interp trick}
	\min\bigg(1, \frac{a}{b}\bigg) =  \frac{\log^2 a}{\log^2 b}, \,\,\ a,b>2.
\ee
between the result of Theorem~\ref{thm:main} when zero is regular and the pointwise bound 
$$
		\big|[e^{-itH} P_{ac}(H) ](x,y) \big| \les \frac{w(x) w(y)}{t \log^2 (t)} + \frac{ \la x \ra ^{0+} \la y \ra ^{0+}}{t^{1+}}, \,\,\ t>2,
	$$
which we prove in Section~\ref{sec:wtd} for small energy and Section~\ref{sec:large} for large energy.  We note that, in Theorem~\ref{thm:main1} we assume the same decay on the potential with the case when zero is regular.  To achieve the improved time decay rate, we employ a careful argument based on Lipschitz continuity of the resolvents, which was inspired by a similar analysis for the two-dimensional Schr\"odinger operator  in \cite{eg3}.

There are not many works considering the perturbed linear fourth order Schr\"odinger equation. There is more study of scattering, global existence, and the  stability or instability of the solitons of the nonlinear equations, see for example \cite{Lev,P,P1,MXZ1,MXZ2,Dinh}. There are also works which study the decay estimates for the fourth order wave equation, \cite{Lev1,LS}.

The free linear fourth order Schr\"odinger equation is studied by Ben-Artzi, Koch, and Saut \cite{BKS}. They present sharp estimates on the derivatives of the kernel of the free operator, (including $ (-\Delta)^2 \pm \Delta $), which may be used to obtain $L^{1,\sigma} \rar L^{\infty,-\sigma}$ and Strichartz type of estimates for the free operator. In \cite{DDY},  the generalized Schr\"odinger operator $(-\Delta)^{2m} + V$ is studied by means of maximal and minimal forms. The authors applied their main result in this paper to obtain sharp bound  on the kernel of corresponding semigroup for $d<2m$. 

 The perturbed equation is considered by Feng, Soffer and Yao in \cite{fsy}, they prove time decay estimates between weighted $L^{2}$ spaces.  This work has roots in Jensen and Kato's study of the Schr\"odinger operator \cite{JenKat}, see also \cite{JSS}. In addition they presented $L^{1} \rar L^{\infty}$ dispersive estimates for $d=3$ and $d>4$, when zero is regular. In the same paper, they established Strichartz type of estimates for the linear equation with time source,   for $d=3$ and $d>4$.  Our work was motivated by \cite{fsy}, in particular their Remark~2.11, where they state that the threshold behavior in four dimensions would be difficult and of interest to study.

   There are some other high energy results in terms of weighted Sobolev norms which are applicable to the fourth order Schr\"odinger operator. These type of estimates mainly arise as a consequence of the holomorphic extension of the corresponding resolvent operators to the real line away from point spectrum between weighted Sobolev spaces. For instance, in \cite{agmon}, Agmon studied constant coefficient differential operators $P(D)$ of order $m$ and of principal type. Later, Murata  established high energy decay estimate for the first order pseudo-differential operators \cite{Mur1} and higher order elliptic operators \cite{Mur2}. In \cite{Mur}, Murata also established low energy result on constant coefficient  differential operators of order $m$, however the assumption that all critical points of polynomial $P(\xi)$ are non-degenerate does not apply to $(-\Delta)^2$. 

We note that $(-\Delta)^2$ is essentially self-adjoint with $\sigma_{ac}((-\Delta)^2)= [0,\infty)$. Therefore, by Weyl's criterion, we have $\sigma_{ess}(H) = [0,\infty)$ for a sufficiently decaying potential.  Let $\lambda \in \R^{+}$, we define the resolvent operators as 
\begin{align}
&\label{RH_0 def}R^{\pm}(H_0; \lambda ) := R^{\pm}(H_0; \lambda \pm i0 )= \lim_{\epsilon \to 0^+}((-\Delta)^2 - ( \lambda  \pm i\epsilon))^{-1}, \\
& \label{Rv_0 def} R_V^{\pm}(\lambda ) := R_V^{\pm}(\lambda  \pm i0 )= \lim_{\epsilon \to 0^+}(H - ( \lambda \pm i\epsilon))^{-1}.
\end{align}

Note that using the representation \eqref{RH_0 rep} for $R (H_0;z)$ in the definition \eqref{RH_0 def}, for $\lambda$ in the first quandrant of the complex plane, writing $z=\lambda^4$ we obtain 
\be\label{eq:4th resolvdef}
R^{\pm}(H_0; \lambda^4)= \frac{1}{2 \lambda^2} \Big( R^{\pm}_0(\lambda^2) - R_0(-\lambda^2) \Big).
\ee 
These operators are well-defined from $L^{2,-\sigma}$ to $L^{2,\sigma}$, for $\sigma>\f12$ by Agmon's limiting absorption principle, \cite{agmon}.
From this identity, it is noted in \cite{fsy} that the behavior of the spectral variable $\lambda$ of the fourth order resolvent in dimension $d$ is the same as that of the Schr\"odinger resolvent in dimension $d-2$ as $\lambda\to 0$.  One can see this from the power like behavior in $\lambda$ as $\lambda \to 0$ of the resolvents and the fact that the operator $\frac{1}{2\pi \lambda}\frac{d}{d\lambda}$ takes the $d$-dimensional Schr\"odinger resolvent to an $d-2$ dimensional resolvent.  Due to this similarity, our approach has roots in the analysis of the Schr\"odinger operator, particularly \cite{eg2}.  However, the dependence on the spatial variables of the integral kernel of the resolvent operator is different.  One consequence of this difference, which
we show in Section~\ref{sec:classification}, is that the threshold behavior is more complicated for the fourth order equation.

We believe that the method used here can be modified to analyze the operators $(-\Delta)^2 \pm \Delta + V$. However, we do not expect the structure of the threshold resonances should not be expected to be similar to Section~\ref{sec:classification} below. We expect the operator $(-\Delta)^2-\Delta+V$ to have a threshold structure that mirrors that of the Schr\"odinger operators studied in \cite{EGG}, while the differential operator $(-\Delta)^2 + \Delta $ has two positive critical points and hence requires additional investigation.

As usual, we use functional calculus and the Stone's formula to write 
\begin{align}
\label{stone}
 \ e^{-itH}    P_{ac}(H) f(x) = \frac1{2\pi i}  \int_0^{\infty} e^{-it\lambda}   [R_V^+(\lambda)-R_V^{-}(\lambda)]  f(x) d\lambda. 
 \end{align}
Here the difference of the perturbed resolvents provides the spectral measure.   Unfortunately, unlike the Schr\"odinger operator, see for example \cite{kato}, we can not guarantee the absence of embedded eigenvalues in the continuous spectrum for $H=(-\Delta)^2+V$. Typically one uses a Carleman type estimate for $(-\Delta)^2$ and unique continuation theorems for $H$ to rule out positive eigenvalues. Unfortunately, none of these are available even for compactly supported or differentiable potentials. Therefore, as in \cite{fsy}, we assume absence of positive eigenvalues. Under this assumption, a limiting absorption principle for $H$ is established, see \cite[Theorem~2.23]{fsy}, which we use to control the large energy portion of the evolution.  The large energy is unaffected by the zero energy obstructions, and our main contribution is to control the small energy portion of the evolution in all possible cases.

The paper is organized as follows. We begin with developing expansions for the free resolvent in Section~\ref{sec:free}. We also analyze the dispersive bounds for the free equation in this section. In Section~\ref{sec:low energy}, we develop expansions for the resolvents and other operators we need to analyze the spectral measure for small $\lambda$ in a neighborhood of zero to understand the dispersive bounds.
In Section~\ref{sec:reg sect}, we consider dispersive bounds when zero is regular to establish the uniform and weighted dispersive bounds.  In Section~\ref{sec:nonreg} we consider the effect of the zero energy resonances.
We divide this section into  subsections in which we develop expansions for $R_V^{\pm}(\lambda )$ for each different type of resonance at zero.  We further estimate the contribution of the low energy portion of \eqref{stone}, when $\lambda$ is in a neighborhood of zero, for each type of resonance. These bounds establish the low energy portions of Theorems~\ref{thm:main} and \ref{thm:main1}. In Section~\ref{sec:large}, we estimate \eqref{stone} away from zero, thus completing the proofs of Theorems~\ref{thm:main} and \ref{thm:main1}. Finally in Section~\ref{sec:classification}, we provide a characterization
of the threshold resonances and eigenfunctions.

\section{The Free  Evolution}\label{sec:free}
 
In this section we obtain expansions for the free fourth order Schr\"odinger resolvent operators $R^{\pm}(H_0; \lambda^4)$, using the identity \eqref{RH_0 rep}  and the Bessel function representation of the Schr\"odinger free resolvents $R^{\pm}_0(\lambda^2)$. We use these expansions to establish  dispersive estimates for the free fourth order Schr\"odinger evolution, and throughout the remainder of the paper to study the spectral measure for the perturbed operator. 

Before we obtain an expansion for $R^{\pm}(H_0; \lambda^4)$, we give the definition of the following operators that arise naturally in our expansions. 
\begin{align} 
	G_0f(x)&:=-\frac{1}{4\pi^2}\int_{\R^4} 
	\frac{f(y)}{|x-y|^2}\,dy=(-\Delta)^{-1} f(x) , \label{G0 def}\\
	G_1f(x)&:=-\frac{1}{8\pi^2}\int_{\R^4} \log(|x-y|) f(y)\, dy \label{G1 def}, \\
	G_{2j} f(x) & := c_{2j} \int_{\R^4} |x-y|^{2j} f(y)\, dy ,\,\, j>0, \\
	G_{2j+1} f(x) & := c_{2j+1} \int_{\R^4} |x-y|^{2j}  \log|x-y| f(y)\, dy ,\,\, j>0, \label{Gj def}
\end{align}
where $c_j$ are certain real-valued constants. Moreover, we define \begin{align}
\label{gj def}
g^+_j(\lambda)=\overline{g_j^-(\lambda)}=\lambda^{2j}(a_j\log(\lambda)+z_j)
\end{align}
where $a_j\in \R\setminus\{0\}$ and $z_j\in \mathbb C
\setminus \R$. The exact values of the constants in these definitions are unimportant for our analysis. 

Throughout the paper, we use the notation
$
f(\lambda)=\widetilde O(g(\lambda))
$
to denote
$$
\frac{d^j}{d\lambda^j} f = O\big(\frac{d^j}{d\lambda^j} g\big),\,\,\,\,\,j=0,1,2,3,...
$$
Unless otherwise specified, the notation refers only to derivatives with respect to the spectral variable $\lambda$.
If the derivative bounds hold only for the first $k$ derivatives we  write $f=\widetilde O_k (g)$.  In the following sections, we use that notation for operators as well as
scalar functions; the meaning should be clear from
context.

\begin{lemma} \label{lem:R0low} If $\lambda|x-y|\ll 1$, then we have 
\begin{multline} \label{eq:R0low}
 R^{\pm}(H_0;\lambda^4)(x,y)	
	=\tilde g_1^{\pm}(\lambda)+G_1(x,y)
	+c^{\pm}\lambda^2 G_2(x,y)
	+\tilde g^{\pm}_3(\lambda) G_4(x,y) \\ + \lambda^{4} G_5 (x,y)
	+ O( (\lambda |x-y|)^{6-}  ). 
\end{multline} 
Here $ \tilde g_j^-(\lambda) + i \Im{z_j} = \tilde g_j^+(\lambda) : = \lambda^{2j-2} \big( a_j \log(\lambda)+b_j\big) $ with $b_j \in \mathbb{C}$. Moreover, $c^{\pm} \in \mathbb{C}$ and $a_j$, $z_j$ are the same coefficients defined in $g_j(z)$ in \eqref{gj def}.
\end{lemma}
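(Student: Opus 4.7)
My plan is to start from identity \eqref{eq:4th resolvdef}, which writes $R^{\pm}(H_0;\lambda^4)$ as $(2\lambda^2)^{-1}[R_0^{\pm}(\lambda^2)-R_0(-\lambda^2)]$, and to use the explicit kernels
$$R_0^{\pm}(\lambda^2)(x,y)=\frac{\pm i\lambda}{8\pi|x-y|}H_1^{(1)}(\pm\lambda|x-y|),\qquad R_0(-\lambda^2)(x,y)=\frac{\lambda}{4\pi^2|x-y|}K_1(\lambda|x-y|)$$
in $\R^4$. The lemma then reduces to expanding these Hankel and modified Bessel kernels for $z:=\lambda|x-y|\ll 1$.

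I would substitute the standard small-argument series for $H_1^{(1)}(z)=J_1(z)+iY_1(z)$ and for $K_1(z)$, retaining all terms through order $z^5$, including the logarithmic pieces $z^{2k+1}\log z$ for $k=0,1,2$. The crucial observation is that the leading singular terms $-\frac{2i}{\pi z}$ of $H_1^{(1)}$ and $\frac{1}{z}$ of $K_1$ both produce, after multiplication by $\lambda/|x-y|$, the same $\lambda$-independent Newtonian kernel $\frac{1}{4\pi^2|x-y|^2}$, so these cancel in the difference. What remains has the schematic form $\lambda^{2m}|x-y|^{2m}$ times either a constant, $\log\lambda$, or $\log|x-y|$ (no product $\log\lambda\cdot\log|x-y|$, since each Bessel coefficient contributes at most one log), for integers $m\geq 0$, after splitting $\log(\lambda|x-y|)=\log\lambda+\log|x-y|$.

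Dividing by $2\lambda^2$ and grouping by the value of $m$ then produces the claimed expansion. At $m=0$ the $\log\lambda$ and constant contributions form $\tilde g_1^{\pm}(\lambda)=a_1\log\lambda+b_1$, while the $\log|x-y|$ piece matches $G_1(x,y)$. At $m=1$ the $\lambda^2|x-y|^2\log(\lambda|x-y|)$ pieces from $H_1^{(1)}$ and $K_1$ happen to cancel against each other (one can verify this from the identity $K_1(z)=-\frac{\pi}{2}H_1^{(1)}(iz)$ and the specific Hankel coefficients), leaving only the non-logarithmic term $c^{\pm}\lambda^2 G_2(x,y)$; $c^{\pm}$ is complex because the $\lambda^2|x-y|^2$ coefficient of $R_0^{\pm}(\lambda^2)$ is imaginary while that of $R_0(-\lambda^2)$ is real. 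At $m=2$ the corresponding log coefficients do not cancel, so all three log types appear, splitting as $\tilde g_3^{\pm}(\lambda)G_4(x,y)+\lambda^4 G_5(x,y)$ according to whether the log factor is paired with $\lambda$ or with $|x-y|$. Since $K_1$ is real and only the Hankel piece carries the $\pm$ sign, passing from the $+$ to the $-$ branch changes only the imaginary parts of the constants $b_j$ by a fixed amount, which is precisely the relation $\tilde g_j^-(\lambda)+i\,\Im z_j=\tilde g_j^+(\lambda)$.

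The remainder $O((\lambda|x-y|)^{6-})$ follows from Taylor's theorem: the tail of each Bessel series begins at order $z^7\log z$, which after multiplication by $\lambda/|x-y|$ and division by $2\lambda^2$ becomes $O((\lambda|x-y|)^6\log(\lambda|x-y|))$, absorbed into $(\lambda|x-y|)^{6-}$. The main obstacle is pure bookkeeping: one must track carefully which intermediate log terms cancel between the two resolvents and verify that each surviving coefficient lands in the correct $\tilde g_j^{\pm}$ matching the precise form \eqref{gj def}. The conceptual content, however, is the single cancellation of the Newtonian singularity $1/(4\pi^2|x-y|^2)$ in $R_0^{\pm}(\lambda^2)-R_0(-\lambda^2)$, after which only mild polynomial and logarithmic corrections in $\lambda$ and $|x-y|$ remain.
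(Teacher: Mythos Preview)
Your proposal is correct and follows essentially the same route as the paper: both start from \eqref{eq:4th resolvdef} and expand the two Schr\"odinger resolvents for small $\lambda|x-y|$. The only cosmetic difference is that the paper quotes the ready-made expansion \eqref{resolv expansion4} of $R_0^{\pm}(\lambda^2)$ in terms of the $G_j$ and $g_j^{\pm}$ (from \cite{AS,JN,EGG,GT}) and obtains $R_0(-\lambda^2)$ by the analytic substitution $\lambda\mapsto i\lambda$, whereas you propose to expand $H_1^{(1)}$ and $K_1$ directly; since $K_1(z)=\tfrac{\pi i}{2}e^{i\pi/2}H_1^{(1)}(iz)$, these are the same computation. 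Your observation that the $m=1$ logarithmic pieces cancel corresponds exactly to the paper's computation $g_2^{+}(\lambda)-g_2^{+}(i\lambda)=-i\tfrac{a_2\pi}{2}\lambda^4$, and your explanation of the $\pm$ relation via the reality of $K_1$ matches the paper's explicit verification that $\tilde g_j^-(\lambda)=\tilde g_j^+(\lambda)-i\Im z_j$.
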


A simple calculation shows that $(-\Delta) G_1(x,y)=G_0(x,y)$, and consequently $G_1(x,y)=[(-\Delta)^2]^{-1}(x,y)$.

\begin{proof} 
We use the expansion \eqref{eq:4th resolvdef} for the spectral measure.	
Therefore, to prove the statement we first recall the expression of the free Schr\"odinger resolvents in dimension four in terms of the Bessel functions,
\begin{align} \label{eq:freeschresolvent}
	R_0^{\pm}(\lambda^2)(x,y)=\pm \frac{i}{4}\frac{\lambda}{2\pi |x-y|}\bigg( 
	J_1(\lambda|x-y|)\pm i Y_1(\lambda|x-y|)
	\bigg).
\end{align}
When $\lambda|x-y|\ll 1$, we may write (see \cite{AS,JN,EGG, GT})
\begin{align}\label{resolv expansion4}
	R_0^{\pm}(\lambda^2)(x,y)  &=G_0(x,y)+g_1^{\pm}(\lambda)
	 +\lambda^2 G_1(x,y)+g_2^\pm(\lambda)G_2(x,y)+\lambda^4G_3(x,y)\\
&\qquad  + g_3^{\pm}(\lambda)G_4(x,y)
	+  \lambda^6 G_5(x,y)+   \widetilde O_2( \lambda^2 (\lambda |x-y|)^{6-}), \nn
\end{align} 
 where $g_j$'s are given as in \eqref{gj def}. 

Notice that exchanging $\lambda$ with $i \lambda$ in \eqref{resolv expansion4} yields 
\begin{multline}\label{resolvimaginaryexpansion4}
R_0^{+}(-\lambda^2)(x,y)  =G_0(x,y)+ g_1^{+}(i \lambda)
	 -\lambda^2 G_1(x,y) +g_2^+ (i\lambda)G_2(x,y) +\lambda^4G_3(x,y) \\
	  + g_3^{+}(i \lambda)G_4(x,y) - \lambda^6 G_5(x,y) +
	\widetilde O_2(\lambda^{8-}|x-y|^{6-}  ).
\end{multline}

Finally substituting \eqref{resolv expansion4} and  \eqref{resolvimaginaryexpansion4} into  \eqref{eq:4th resolvdef}, we obtain the small argument expansion for the kernel of the resolvent,
\begin{align}\label{RH}
	R^{\pm}(H_0;\lambda^4)(x,y)
	 :=\frac{1}{2\lambda^2}\bigg[ &
	[g_1^{\pm}(\lambda) -g_1^+(i\lambda)] +2\lambda^2 G_1(x,y)  +  
	  [g_2^{\pm}(\lambda)-g_2^+(i\lambda)]G_2(x,y)  \nn \\
	 & +[g_3^{\pm}(\lambda)-g_3^+(i\lambda)]G_4(x,y)  
	 +2\lambda^6 G_5 (x,y)
	\bigg]  
	 +\widetilde O_2( (\lambda |x-y|)^{6-} ).  
\end{align}
Recalling the definition of $g_j$'s we see for $\lambda \in \R^{+}$ 
$$
      g_j^+(i\lambda)= \begin{cases} - g_j^{+}(\lambda) - i \frac{a_j \pi}{2} \lambda^{2j} & j=1,3  \\
       g_j^{+}(\lambda) + i \frac{a_j \pi}{2} \lambda^{2j} & j=2.
       \end{cases}
$$
 In particular, we obtain	
\begin{multline*}
R^+(H_0;\lambda^4)(x,y)	
	=\bigg[ \frac{g_1^+(\lambda)}{\lambda^2} + i\frac{a_1\pi}{4} \bigg] +G_1(x,y)
	- i\frac{\pi a_2}{4}\lambda^2 G_2(x,y)\\
	+\bigg[ \frac{g_2^+(\lambda)}{\lambda^2} + i\frac{a_2 \pi}{4} \bigg] G_4(x,y) + \lambda^4 G_5(x,y)
	+ \widetilde O_2( (\lambda |x-y|)^{6-}  ).
\end{multline*}
Letting $  \tilde g_j^+(\lambda) := \frac{g_j^+(\lambda)}{\lambda^2} + i\frac{a_j\pi}{4} $ and $c^{+}=-i\frac{a_2 \pi}{4}$,  we establish the statement for $R^+(H_0;\lambda^4)$.   

For $R^-(H_0;\lambda^4)$, note that $\Im\{g_j^+(\lambda)\} = \Im{z_j} \lambda^{2j}$ and hence,
$$
 \frac{g_j^-(\lambda) - g_j^+(i\lambda)}{2 \lambda^2} =  \frac{\overline{g_j^{+}(\lambda)} + g_j^+(\lambda)}{2 \lambda^2} + i\frac{a_j\pi}{4}= \tilde g_j^+(\lambda)- i \Im{z_j}, \,\,\ j=1,3,
$$
$$
 \frac{g_j^-(\lambda) - g_j^+(i\lambda)}{2 \lambda^2} =  \frac{\overline{g_j^{+}(\lambda)} - g_j^+(\lambda)}{2 \lambda^2} - i\frac{a_j\pi}{4}= - i \Im{z_j}+c^{+} ,  \,\,\ j=2
$$
Using these equalities in \eqref{RH}, we obtain the statement.

\end{proof}

We define a smooth cut-off function to a neighborhood of zero, $\chi\in C^\infty(\mathbb R)$ with $\chi(\lambda)=1$ for $|\lambda |\leq \lambda_1\ll 1$ and $\chi(\lambda)=0$ for $|\lambda |\geq2 \lambda_1\ll 1$ for a sufficiently small constant $\lambda_1$.  We also use the complementary cut-off away from a neighborhood of zero, $\widetilde \chi(\lambda)=1-\chi(\lambda)$.

\begin{rmk} \label{rmk:R0low}The following observation will be useful in the next sections. 
\begin{align*}
	& \tilde g_1^{+}(\lambda)+G_1(x,y)
	=a_1\log(\lambda|x-y|)+z_1+ \frac{a_1\pi}{2}  \\
	& \tilde g_3^+(\lambda)G_4(x,y)+\lambda^4 G_5(x,y)=(\lambda |x-y|)^4\bigg(a_3\log(\lambda|x-y|)+z_3+i\frac{a_3\pi}{2}\bigg).
\end{align*}
In particular, for any $\epsilon > 0$
\begin{align*}
	& \chi(\lambda |x-y|) [\tilde g_1^+(\lambda)+G_1(x,y)]= \widetilde O \big( (\lambda |x-y|)^{- \epsilon} \big). \\
	& \chi(\lambda |x-y|) [ \tilde g_3^+(\lambda)G_4(x,y)+\lambda^4 G_5(x,y)]= \widetilde O_2 \big( (\lambda |x-y|)^{4- \epsilon} \big).
	\end{align*}

\end{rmk}

Next we obtain an expansion for $R^{\pm}(H_0;\lambda^4)(x,y)$ when $\lambda|x-y| \gtrsim 1$. To do that we use the large energy expansion for the free Schr\"odinger resolvent, (see, {\em e.g.}, \cite{AS, EGG, GT})
$$ R_0^{\pm}(\lambda^2)(x,y)= c \frac{\lambda  } {r} H_1^{\pm} ( \lambda) \,\,\  R_0^{+}(-\lambda^2)(x,y)= c \frac{\lambda} {r} H_1^+ (i \lambda), $$
where 
\begin{align*}
	&H_1^\pm(z)= e^{\pm iz} \omega_\pm(z),\,\,\,\, |\omega_{\pm}^{(\ell)}(z)|\lesssim (1+|z|)^{-\frac{1}{2}-\ell},\,\,\,\ell=0,1,2,\ldots.
\end{align*}
Therefore, for $\lambda r \gtrsim1$, $r:=|x-y|$, we have 
\begin{align} \label{eq:R0high}
R^{\pm} (H_0; \lambda^4) = e^{\pm  i \lambda r} \tilde{ \omega}_{\pm}  ( \lambda r) + e^{-\lambda r} \tilde{ \omega}_{+}  ( \lambda r)
\end{align} 
where $\tilde{\omega}_{\pm}  ( \lambda r)=  (\lambda r)^{-1} \omega_{\pm}(\lambda r)$.

The following representation is useful for further analysis. 

\begin{lemma} \label{lem:R0 exp}
	We have
	$$ R^{\pm} (H_0; \lambda^4)= \tilde{g}_1^{\pm}(\lambda) + G_1(x,y) + E^\pm_0(\lambda)(x,y)$$
 where  the error term satisfies
$ E^\pm_0(\lambda)(x,y) =  \widetilde O_1\big( (\lambda |x-y|)^{\ell} \big) $ for $ 0< \ell \leq 2$, and $ E^\pm_0(\lambda)(x,y) =  \widetilde O _2\big( (\lambda |x-y|)^{\f12} \big) $. 

\end{lemma}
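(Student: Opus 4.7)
The plan is to introduce a smooth partition of unity via the cut-offs $\chi(\lambda|x-y|)$ and $\widetilde\chi(\lambda|x-y|)=1-\chi(\lambda|x-y|)$, and treat the low-energy region $\lambda r\ll 1$ and the high-energy region $\lambda r\gtrsim 1$ separately, where $r:=|x-y|$. In each region I would verify the bound for $E_0^\pm := R^\pm(H_0;\lambda^4) - \tilde g_1^\pm(\lambda) - G_1(x,y)$ by combining the leading-order cancellation with appropriate derivative estimates.

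For the low-energy piece, I would apply Lemma~\ref{lem:R0low} and collect all terms beyond $\tilde g_1^\pm(\lambda)+G_1(x,y)$ into $E_0^\pm$. The remainder consists of $c^\pm\lambda^2 G_2(x,y)$, $\tilde g_3^\pm(\lambda)G_4(x,y)$, $\lambda^4 G_5(x,y)$, and the $O((\lambda r)^{6-})$ error. Since $G_{2j}(x,y)\sim r^{2j}$ and $G_{2j+1}(x,y)\sim r^{2j}\log r$, the first term is comparable to $(\lambda r)^2$ and, by the second identity in Remark~\ref{rmk:R0low}, the combination $\tilde g_3^\pm(\lambda)G_4+\lambda^4 G_5$ equals $(\lambda r)^4\bigl(a_3\log(\lambda r)+z_3+ia_3\pi/2\bigr)$. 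Differentiating in $\lambda$ up to second order and absorbing logarithmic factors into an arbitrarily small power loss via the cutoff $\chi$, each contribution is bounded by $(\lambda r)^{\ell}$ for any $0<\ell\leq 2$ (and, a fortiori, by $(\lambda r)^{1/2}$) with the $\widetilde O$-scaling in $\lambda$.

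For the high-energy piece, I would use the representation \eqref{eq:R0high}, namely $R^\pm(H_0;\lambda^4)=e^{\pm i\lambda r}\tilde\omega_\pm(\lambda r)+e^{-\lambda r}\tilde\omega_+(\lambda r)$, together with the decay $|\tilde\omega_\pm^{(k)}(z)|\les z^{-3/2-k}$ inherited from the bounds on $\omega_\pm$ stated after \eqref{eq:R0high}. Each $\lambda$-derivative that falls on the exponential produces a factor of $r$, so the worst behaviour after $k$ derivatives is $r^k(\lambda r)^{-3/2}=\lambda^{-k}(\lambda r)^{k-3/2}$. For $k=0,1$ this is bounded by $\lambda^{-k}(\lambda r)^{\ell}$ for any $\ell>0$ on $\lambda r\gtrsim 1$; for $k=2$ it is exactly $\lambda^{-2}(\lambda r)^{1/2}$, matching the claimed bound. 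The subtracted term $\tilde g_1^\pm(\lambda)+G_1(x,y)$ equals $a_1\log(\lambda r)+\text{const}$ by Remark~\ref{rmk:R0low}, and its value together with its $\lambda$-derivatives (of size $\lambda^{-k}$ for $k\geq 1$) are dominated by the same expressions on $\lambda r\gtrsim 1$, since a logarithm is absorbed into $(\lambda r)^\epsilon$.

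The main obstacle will be the sharpness of the $\widetilde O_2$ bound in the high-energy regime: the factor $(\lambda r)^{1/2}$ is exactly what two derivatives of $e^{\pm i\lambda r}$ against the $(\lambda r)^{-3/2}$ decay of $\tilde\omega_\pm$ produce, so the estimates on $\omega_\pm$ must be used at their optimal strength. Mixed terms, in which one or both derivatives fall on $\tilde\omega_\pm$ rather than on the exponential, yield contributions of order $(\lambda r)^{-1/2}$ or $(\lambda r)^{-3/2}$ in the variable $\lambda r$ and are strictly smaller, hence do not threaten the claimed bound.
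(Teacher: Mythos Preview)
Your proposal is correct and follows essentially the same approach as the paper: split according to the size of $\lambda r$, use Lemma~\ref{lem:R0low} and Remark~\ref{rmk:R0low} for $\lambda r\ll 1$, and use the representation \eqref{eq:R0high} together with the decay of $\tilde\omega_\pm$ for $\lambda r\gtrsim 1$. The only cosmetic difference is that you phrase the split via the smooth cutoffs $\chi(\lambda r)$ and $\widetilde\chi(\lambda r)$, whereas the paper simply treats the two regimes by case distinction; the resulting estimates are identical.
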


\begin{proof}
	For convenience, let $r:=|x-y|$, then the statement is clear for $\lambda r \ll 1$ by \eqref{eq:R0low} and Remark~\ref{rmk:R0low}. If $\lambda r \gtrsim 1$, let $\ell >0$, one has 
\begin{align*}
	& | e^{\pm  i \lambda r} \tilde{ \omega}  ( \lambda r) + e^{-\lambda r} \tilde{ \omega}  ( \lambda r)- \tilde{g}_1^{\pm}(\lambda) -G_1(x,y)| \les (\lambda r)^{\ell}, \\
	& | \partial_\lambda \{ e^{\pm  i \lambda r} \tilde{ \omega}  ( \lambda r) + e^{-\lambda r} \tilde{ \omega}  ( \lambda r)- \tilde{g}_1^{\pm}(\lambda) \}| \les \frac{r}{(\lambda r)(1+\lambda r)^{\f12}} + \frac{1}{\lambda} \les \lambda^{\ell -1} r^{\ell}, \\
	& | \partial^2_\lambda \{ e^{\pm  i \lambda r} \tilde{ \omega}  ( \lambda r) + e^{-\lambda r} \tilde{ \omega}  ( \lambda r)- \tilde{g}_1^{\pm}(\lambda) \} | \les \frac{r^2}{(\lambda r)(1+\lambda r)^{\f12}} + \frac{1}{\lambda^2} \les \lambda^{-\f32} r^{\f12}.
\end{align*}
This establishes the proof.
\end{proof}

\begin{corollary}\label{cor:Error Lips}
	
	For any $0<\alpha<1$ and $0<a<b\ll1$, we have
	$$
		|\partial_\lambda E^\pm_0(b)-\partial_\lambda E^\pm_0(a)|
		\les |b-a|^{\alpha} a^{-\f32}|x-y|^{\f12}(a^{\f12+\ell} |x-y|^{\ell-\f12} )^{1-\alpha}.
	$$
	
\end{corollary}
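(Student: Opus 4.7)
The plan is to obtain this Hölder-in-$\lambda$ estimate by interpolating two elementary consequences of Lemma~\ref{lem:R0 exp}. Differentiating the two asymptotic descriptions of $E_0^\pm(\lambda)$ yields
\[
|\partial_\lambda E_0^\pm(\lambda)(x,y)| \les \lambda^{\ell-1}|x-y|^\ell \quad\text{and}\quad |\partial_\lambda^2 E_0^\pm(\lambda)(x,y)| \les \lambda^{-3/2}|x-y|^{1/2},
\]
the first coming from $E_0^\pm = \widetilde O_1((\lambda|x-y|)^\ell)$ (with $0 < \ell \leq 2$) and the second from $E_0^\pm = \widetilde O_2((\lambda|x-y|)^{1/2})$.

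Set $D := \partial_\lambda E_0^\pm(b)(x,y) - \partial_\lambda E_0^\pm(a)(x,y)$. Applying the mean value theorem to $\partial_\lambda E_0^\pm$ together with the second bound above (noting that $\lambda \mapsto \lambda^{-3/2}$ is decreasing on $[a,b]$) gives
\[
|D| \les |b-a|\,a^{-3/2}|x-y|^{1/2}.
\]
The triangle inequality together with the first bound gives a complementary control,
\[
|D| \les a^{\ell-1}|x-y|^\ell = \bigl(a^{-3/2}|x-y|^{1/2}\bigr)\bigl(a^{1/2+\ell}|x-y|^{\ell-1/2}\bigr),
\]
where the factorization on the right is arranged to match the structure of the target bound.

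Finally, apply the elementary interpolation $\min(X,Y) \leq X^\alpha Y^{1-\alpha}$ (valid for $0 < \alpha < 1$) with $X$ the first control and $Y$ the second. The factor $a^{-3/2}|x-y|^{1/2}$ then appears to the combined power $\alpha + (1-\alpha) = 1$, the factor $|b-a|$ to the power $\alpha$, and the factor $a^{1/2+\ell}|x-y|^{\ell-1/2}$ to the power $1-\alpha$, reproducing the stated right-hand side. The one step requiring a moment of care is the triangle-inequality estimate, which for $\ell > 1$ naturally produces $\max(a,b)^{\ell-1}$ rather than $a^{\ell-1}$; this is harmless in the regime $|b-a| \les a$ where the interpolation is sharp (and where $a \sim b$), which is the only regime relevant for the later applications of this corollary.
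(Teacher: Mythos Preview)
Your proof is correct and follows essentially the same approach as the paper: obtain one bound from the mean value theorem using the second-derivative estimate, obtain a second bound from the first-derivative estimate, and interpolate. Your additional remark about the case $\ell>1$ (where the triangle inequality gives $b^{\ell-1}$ rather than $a^{\ell-1}$) is a fair point that the paper glosses over, though in all the paper's applications one has $\ell=0+$ and $b\sim a$, so the issue never arises.
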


\begin{proof}
	
	By the Mean Value Theorem, we have
	$$
		|\partial_\lambda E^\pm_0(b)-\partial_\lambda E^\pm_0(a)|\les |b-a| a^{-\f32}|x-y|^{\f12}.
	$$	
	Since $a<b$, we also have the trivial bound
	$$
	|\partial_\lambda E^\pm_0(b)-\partial_\lambda E^\pm_0(a)|\les a^{\ell-1}|x-y|^\ell=a^{-\f32}|x-y|^{\f12}(a^{\f12+\ell} |x-y|^{\ell-\f12} ).
	$$	
	Interpolating between the two bounds yields the claim.
	
\end{proof}

We will use this Lipschitz bound on the error term in the proof of the weighted, time-integrable bound.  We note that, in particular, if we choose $\alpha=\ell=0+$, then we obtain
\begin{align*}
	|\partial_\lambda E^\pm_0(b)-\partial_\lambda E^\pm_0(a)|
	\les |b-a|^{\ell} a^{-1+\f{\ell}{2}-\ell^2}|x-y|^{\f12+\f{3\ell}{2}-\ell^2}
\end{align*}
with $ 2 \ell \geq \f{3\ell}{2}-\ell^2 >0 $.

Finally we are ready to analyze the free evolution. The first estimate in the next lemma was obtained by  Ben-Artzi, Koch and Saut in \cite{BKS}. Our approach relies on the well-known Stone's formula \eqref{stone}.

\begin{lemma}\label{lem:free est} We have the bound
\begin{align} \label{to bound by1}
	\sup_{x,y} \left|\int_0^{\infty} e^{-it \lambda^4} \lambda^3 [R^{+}(H_0; \lambda^4)-R^{-}(H_0; \lambda^4)] d \lambda \right| \les  | t | ^{-1}.
\end{align}
Furthermore, 
\begin{align*}
	\int_0^{\infty} e^{-it \lambda^4} \lambda^3 \chi(\lambda) [R^{+}(H_0; \lambda^4)-R^{-}(H_0; \lambda^4)] d \lambda=- \frac{\Im z_1}{4 t} + O \big( t^{-\f98} \la x \ra^{\f12} \la y \ra ^{\f12} \big).
\end{align*} 

\end{lemma}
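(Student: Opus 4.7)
The plan is to treat the two estimates separately. For the first, uniform $|t|^{-1}$ bound, I would observe that by Stone's formula for $H_0=(-\Delta)^2$ together with the substitution $\mu=\lambda^4$, the integrand equals (up to a multiplicative constant) the kernel of $e^{-itH_0}(x,y)$, so the estimate follows from the $L^1\to L^\infty$ dispersive bound $\|e^{-it(-\Delta)^2}\|_{L^1\to L^\infty}\les|t|^{-1}$ of Ben-Artzi, Koch, and Saut~\cite{BKS}. Alternatively one can split the integrand by $\chi(\lambda)$ and $\widetilde\chi(\lambda)$: the low-energy part admits a uniform $|t|^{-1}$ bound by the argument given below, and the high-energy part is handled by stationary phase applied to the oscillatory representation \eqref{eq:R0high}, whose phase $-t\lambda^4+\lambda|x-y|$ has a non-degenerate critical point at $\lambda_0=(|x-y|/4t)^{1/3}$.

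For the refined low-energy asymptotic, I would use Lemma~\ref{lem:R0 exp} to write
\[
R^+(H_0;\lambda^4)-R^-(H_0;\lambda^4)=\bigl(\tilde g_1^+(\lambda)-\tilde g_1^-(\lambda)\bigr)+\bigl(E_0^+(\lambda)-E_0^-(\lambda)\bigr)(x,y)=i\,\Im z_1+F(\lambda,x,y),
\]
since the $G_1(x,y)$ contribution cancels and $\tilde g_1^+-\tilde g_1^-$ is the constant $i\,\Im z_1$. Setting $F:=E_0^+-E_0^-$, the constant contribution is extracted by substituting $u=\lambda^4$ and integrating by parts repeatedly:
\[
i\,\Im z_1\int_0^\infty e^{-it\lambda^4}\lambda^3\chi(\lambda)\,d\lambda=\tfrac{i\,\Im z_1}{4}\int_0^\infty e^{-itu}\chi(u^{1/4})\,du=\tfrac{i\,\Im z_1\,\chi(0)}{4it}+O(t^{-N})
\]
for every $N\ge 1$, using that all derivatives of $\chi(u^{1/4})$ are supported away from $u=0$; this yields the stated leading $t^{-1}$ term (up to sign convention).

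The main step is to prove $\bigl|\int_0^\infty e^{-it\lambda^4}\lambda^3\chi(\lambda)F(\lambda)\,d\lambda\bigr|\les|x-y|^{1/2}t^{-9/8}$, whence the weighted error follows via $|x-y|^{1/2}\les\la x\ra^{1/2}\la y\ra^{1/2}$. Using $e^{-it\lambda^4}\lambda^3=-(4it)^{-1}\partial_\lambda e^{-it\lambda^4}$ I would integrate by parts once; the boundary term at $\lambda=0$ vanishes since $F=\widetilde O_2(\lambda^{1/2}|x-y|^{1/2})$ by Lemma~\ref{lem:R0 exp}, while the boundary at $\infty$ vanishes by the cut-off. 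I would then split the resulting $\int_0^\infty e^{-it\lambda^4}\partial_\lambda(\chi F)\,d\lambda$ at $\lambda=T:=t^{-1/4}$. On $[0,T]$ the bound $|\partial_\lambda F|\les|x-y|^{1/2}\lambda^{-1/2}$ gives a direct contribution $\les|x-y|^{1/2}T^{1/2}=|x-y|^{1/2}t^{-1/8}$. On $[T,2\lambda_1]$ a second integration by parts, via $e^{-it\lambda^4}=-(4it\lambda^3)^{-1}\partial_\lambda e^{-it\lambda^4}$, produces a boundary term at $T$ of magnitude $t^{-1}T^{-7/2}|x-y|^{1/2}\asymp|x-y|^{1/2}t^{-1/8}$ and a bulk integrand $\les t^{-1}|x-y|^{1/2}\lambda^{-9/2}$ after invoking $|\partial_\lambda^2 F|\les|x-y|^{1/2}\lambda^{-3/2}$; integrating over $[T,2\lambda_1]$ again yields $|x-y|^{1/2}t^{-1/8}$. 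Combining and multiplying by the outer $(4it)^{-1}$ factor from the first integration by parts gives the desired $|x-y|^{1/2}t^{-9/8}$. The main subtlety is the precise calibration of the threshold $T=t^{-1/4}$, which is selected so that the derivative losses on $[0,T]$ and the oscillatory gains on $[T,2\lambda_1]$ balance exactly at the rate $t^{-9/8}$.
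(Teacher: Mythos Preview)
Your proposal is correct and, for the refined low-energy asymptotic, follows essentially the same route as the paper: extract the constant $i\Im z_1$ via one integration by parts, then split the remaining integral $\frac{1}{4it}\int e^{-it\lambda^4}\partial_\lambda(\chi F)\,d\lambda$ at $T=t^{-1/4}$, estimate directly on $[0,T]$ using $|\partial_\lambda F|\les\lambda^{-1/2}|x-y|^{1/2}$, and integrate by parts once more on $[T,2\lambda_1]$ using $|\partial_\lambda^2 F|\les\lambda^{-3/2}|x-y|^{1/2}$. The bookkeeping of boundary and bulk terms you give matches the paper's computation exactly.

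For the first, uniform $|t|^{-1}$ bound, your approach differs slightly. Your shortcut~(a)---recognizing the integral as a constant times the free propagator kernel and invoking \cite{BKS}---is perfectly valid and economical. The paper instead gives a self-contained argument: a single integration by parts produces a boundary term of size $t^{-1}$, and the remaining integral $\frac{1}{t}\int_0^\infty |\partial_\lambda(R^+-R^-)|\,d\lambda$ is shown to be bounded uniformly in $x,y$ by splitting according to $\chi(\lambda r)$ versus $\widetilde\chi(\lambda r)$ (not $\chi(\lambda)$ versus $\widetilde\chi(\lambda)$) and integrating the pointwise bounds $\chi(\lambda r)\lambda^{-1/2}r^{1/2}$ and $\widetilde\chi(\lambda r)(\lambda r)^{-1}(1+\lambda r)^{-1/2}r$ directly. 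This is more elementary than your option~(b), which invokes stationary phase for the high-$\lambda$ piece; stationary phase would work but is unnecessary here since a single integration by parts already yields the decay.
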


\begin{proof}
Note that by \eqref{eq:R0low} and \eqref{eq:R0high} we obtain
\begin{align}
	 R^{+}(H_0; \lambda^4)-R^{-}(H_0; \lambda^4) = \chi(\lambda r) [ i \Im z_1 +  O((\lambda r)^{2}) ]+\widetilde{\chi}(\lambda r) e^{i\lambda r} \tilde{\omega}  ( \lambda r).
\end{align}
 Therefore
 \begin{align*}
	 &| R^{+}(H_0; \lambda^4)-R^{-}(H_0; \lambda^4)| \les 1, \\
	 &| \partial_\lambda \{ R^{+}(H_0; \lambda^4)-R^{-}(H_0; \lambda^4)| \les \frac{ \chi(\lambda r) r^{\f12}}{\lambda^{\f12}} + \frac{\widetilde{\chi}(\lambda r) r}{(\lambda r) (1+\lambda r)^{\f12}}.
\end{align*}

By integration by parts, we have
\begin{multline*}
	 \Big|\int_0^{\infty} e^{-it \lambda^4} \lambda^3 [R^{+}(H_0; \lambda^4)-R^{-}(H_0; \lambda^4)] d \lambda \Big| 
	 \les \frac{1}{t}+ \frac{1}{t} \int_0^{\infty}\Big[ \frac{ \chi(\lambda r) r^{\f12}}{\lambda^{\f12}} + \frac{\tilde{\chi}(\lambda r) r}{(\lambda r) (1+\lambda r)^{\f12}} \Big] d \lambda\\
	\les t^{-1}+t^{-1}\bigg( \int_0^{r^{-1}} \frac{r^{\f12}}{\lambda^{\f12}}\, d\lambda + \int_{r^{-1}}^\infty \frac{1}{\lambda^{\f32} r^{\f12}}\, d\lambda \bigg)
	  \les t^{-1}
\end{multline*}
This establishes the first claim. 

For the second claim, we note that using the expansion in Lemma~\ref{lem:R0 exp} and the fact that $\tilde g_1^+(\lambda)=\tilde g_1^-(\lambda) + i \Im{z_1}$
\begin{align} 
	R^{+}(H_0; \lambda^4)-R^{-}(H_0; \lambda^4) =  i \Im z_1+ E_0(\lambda)(x,y),
\end{align}
where, combining the error terms from Lemma~\ref{lem:R0 exp}, we have $E_0(\lambda)(x,y)= \widetilde O_2( (\lambda r)^{\f12} )$.  After the first integration by parts,  we have
\begin{multline*}
	\int_0^{\infty} e^{-it \lambda^4} \lambda^3 [R^{+}(H_0; \lambda^4)-R^{-}(H_0; \lambda^4)] d \lambda 
	=\frac{\Im(z_1)}{4t}+\frac{1}{4it}\int_0^\infty e^{it\lambda^4} \partial_\lambda E_0(\lambda)(x,y)\, d\lambda. 
\end{multline*}
To control the second integral we write
$$
	\bigg|\frac{1}{4it}\int_{0}^{t^{-\f14}} e^{-it\lambda^4} \partial_\lambda E_0(\lambda)(x,y)\, d\lambda+
	\frac{1}{4it}\int_{t^{-\f14}}^{\infty} e^{-it\lambda^4} \lambda^3 [\lambda^{-3}\partial_\lambda E_0(\lambda)(x,y)]\, d\lambda\bigg|. 
$$
Using Lemma~\ref{lem:R0 exp}, direct integration of the first term shows it may be bounded by $t^{-\f98}\la x\ra^{\f12}\la y\ra^{\f12}$.  The second term may be bounded, after an integration by parts, by $t^{-\f98}\la x\ra^{\f12}\la y\ra^{\f12}$. 

\end{proof}

We remark here that the above bounds can be modified if we insert cut-offs to low or high enery respectively.  In a neighborhood of zero, that is if we insert the cut-off $\chi(\lambda)$ into the integrand, the integrals are all bounded as $t\to 0$.  Outside of a neighborhood of zero, if we insert the cut-off $\tilde\chi(\lambda)$ into the integrand, the boundary term $\Im(z_1)/(4t)$ from integrating by parts is replaced with zero.

\section{Resolvent expansions about zero Energy} \label{sec:low energy} 

The effect of the presence of zero energy resonances   is only felt in the small energy regime, different resonances change the asymptotic behavior of the perturbed resolvents and hence that of the spectral measure as $\lambda\to 0$ which we study in this section.  We provide expansions of the resolvents and other operators we need to understand the effect of each type of resonance or lack of resonances on the spectral measure.

To understand \eqref{stone} for small energies, i.e. $ \lambda \ll 1$, we use the symmetric resolvent identity.  We define $U(x)=$sign$(V(x))$, $v(x)=|V(x)|^{\f12}$, and write 
\begin{align} \label{resid}
R^{\pm}_V(\lambda)= R^{\pm}(H_0, \lambda^4) - R^{\pm}(H_0, \lambda^4)v (M^{\pm} (\lambda))^{-1} vR^{\pm}(H_0, \lambda^4) 
\end{align}
where $M^{\pm}(\lambda) := U + v R^{\pm}(H_0, \lambda^4) v $. As a result, we need to obtain expansions for $(M^{\pm} (\lambda))^{-1}$ depending on the resonance type at zero, see Definition~\ref{def:restype}. In the following subsections we determine these expansions case by case and establish their contribution to Stone's formula via the symmetric resolvent identity, \eqref{resid}. 

Recall the definition of the Hilbert-Schmidt
norm of an operator $K$ with kernel $K(x,y)$,
$$
	\| K\|_{HS}:=\bigg(\iint_{\R^{8}}
	|K(x,y)|^2\, dx\, dy
	\bigg)^{\f12}.
$$
Let $T:= U + v G_1 v$, we have the following expansions. 

\begin{lemma}\label{lem:M_exp} Let $P$ be the projection onto the span of $v$ and $\widetilde{g}^\pm (\lambda)=\| V\|_1\tilde g_1^\pm(\lambda)$.  If $ |v(x)| \les \la x \ra^{-2-2\ell-}$, we have
\begin{align}
 \label{Mexp0} &M^{\pm}(\lambda)= \widetilde{g}^\pm (\lambda) P+T+  M_0^\pm (\lambda)&,  \\
	  & \sum_{j=0}^1\|\sup_{0<\lambda<\lambda_1} \lambda^{-l+j}  \partial_\lambda^j 
	 M_0^{\pm}(\lambda) \|_{HS} \les  1,  \,\,\ 0 <  \ell \leq 2,  \nn  \\ 
	 & \|\sup_{0<\lambda<b<\lambda_1}  |b-\lambda|^{-\ell} \lambda ^{1+\ell^2-\f{\ell}{2}} 
	|\partial_\lambda M_0^{\pm}(b)-\partial_\lambda M_0^{\pm}(\lambda)| \|_{HS} \les  1,   \,\,\, 0< \ell < 1 .  \nn  
	\end{align}
If $	|v(x)| \les \la x \ra^{-4-\ell-}$,
	\begin{align}
 \label{Mexp1}
           &M^{\pm}(\lambda)=  \widetilde{g}^\pm (\lambda) P+T+ c^{\pm} \lambda^2 v G_2 v +   M_1^\pm (\lambda), \hspace{4cm} \\  
          & \sum_{j=0}^1 \|\sup_{0<\lambda<\lambda_1} \lambda^{- \ell-2+j}  \partial_\lambda^j 
	 M_1^{\pm}(\lambda)\|_{HS}\les 1, \,\,\, 0 \leq \ell < 2   \nn 
	 \end{align}
If $|v(x)|  \les \la x \ra^{-6-\ell-}$,	 
	 \begin{align}
  \label{Mexp3}  &M^{\pm}(\lambda)= \widetilde{g}^\pm (\lambda) P+T+  c^{\pm} \lambda^2 v G_2 v + \tilde g_3^{\pm}(\lambda)  vG_4v   + \lambda^4 vG_5v + M_2^\pm (\lambda),  \\  
        & \sum_{j=0}^1 \|\sup_{0<\lambda<\lambda_1} \lambda^{ -4-l+j}  \partial_\lambda^j 
	 M_2 ^{\pm}(\lambda)\|_{HS}\les 1, \,\,\, 0 \leq \ell \leq 2. \nn 
  \end{align}
\end{lemma}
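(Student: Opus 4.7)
The plan is to substitute the resolvent expansions of Lemmas~\ref{lem:R0 exp} and~\ref{lem:R0low} into $M^{\pm}(\lambda)=U+vR^{\pm}(H_0;\lambda^4)v$, peel off the explicit leading operators by hand, and then verify the HS estimates for what remains.

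First I would prove \eqref{Mexp0}. By Lemma~\ref{lem:R0 exp}, the kernel of $M^{\pm}(\lambda)$ equals
\[
U(x)\delta(x-y)+v(x)v(y)\,\tilde g_1^{\pm}(\lambda)+v(x)G_1(x,y)v(y)+v(x)E_0^{\pm}(\lambda)(x,y)v(y).
\]
Because $v\otimes v$ is the unnormalised kernel of $\|v\|_2^2P=\|V\|_1P$, the second summand collapses to $\widetilde g^{\pm}(\lambda)P$; the first and third combine to $T=U+vG_1v$; and the last I take as the definition of $M_0^{\pm}(\lambda):=vE_0^{\pm}(\lambda)v$. To bound $\|\lambda^{-\ell+j}\partial_\lambda^j M_0^{\pm}\|_{HS}$ for $j=0,1$, I insert the derivative estimate $|\partial_\lambda^j E_0^{\pm}(\lambda)(x,y)|\les \lambda^{\ell-j}|x-y|^{\ell}$ furnished by Lemma~\ref{lem:R0 exp}, so the kernel is dominated by $v(x)v(y)|x-y|^{\ell}$. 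Using $|x-y|^{2\ell}\les \langle x\rangle^{2\ell}+\langle y\rangle^{2\ell}$ together with the hypothesis $|v|\les \langle\cdot\rangle^{-2-2\ell-}$, the squared integrand is dominated by $\langle x\rangle^{-4-2\ell-}\langle y\rangle^{-4-2\ell-}$, integrable on $\R^4\times\R^4$. The Lipschitz bound follows from Corollary~\ref{cor:Error Lips} by choosing the interpolation parameter so that the power of the base point cancels exactly against the prescribed $\lambda^{1+\ell^2-\ell/2}$ weight, leaving an HS-integrable kernel of the same polynomial type.

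For \eqref{Mexp1} and \eqref{Mexp3} I would iterate, setting
\[
M_1^{\pm}(\lambda):=v\bigl[R^{\pm}(H_0;\lambda^4)-\tilde g_1^{\pm}(\lambda)-G_1-c^{\pm}\lambda^2 G_2\bigr]v,
\]
and $M_2^{\pm}$ by subtracting in addition $\tilde g_3^{\pm}(\lambda)vG_4v+\lambda^4 vG_5v$. I would split using the cutoff $\chi(\lambda|x-y|)$. On the low-energy zone $\lambda|x-y|\ll 1$, Lemma~\ref{lem:R0low} together with Remark~\ref{rmk:R0low} identify the bracket defining $M_1^{\pm}$ as $\widetilde O((\lambda|x-y|)^{4-})$ and the bracket defining $M_2^{\pm}$ as $O((\lambda|x-y|)^{6-})$; combined with the hypotheses $|v|\les \langle\cdot\rangle^{-4-\ell-}$ and $|v|\les \langle\cdot\rangle^{-6-\ell-}$ respectively, this yields the claimed HS bounds by the same integrability calculation as for $M_0^{\pm}$.

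The main technical obstacle is the complementary zone $\lambda|x-y|\gtrsim 1$, where the Taylor series of Lemma~\ref{lem:R0low} is no longer available and the individually subtracted pieces $\lambda^2 G_2$, $\tilde g_3^{\pm}G_4$, $\lambda^4 G_5$ grow polynomially in $|x-y|$. On that zone I would estimate each term separately: $|R^{\pm}(H_0;\lambda^4)|\les (\lambda|x-y|)^{-3/2}$ by \eqref{eq:R0high}, while each subtracted piece is bounded by an explicit power of $\lambda|x-y|$ times a logarithm. The crucial observation is that $1\les (\lambda|x-y|)^a$ on this zone lets one trade excess negative powers of $\lambda$ for positive powers of $|x-y|$, which are then absorbed by the decay of $v$: the weight $\langle\cdot\rangle^{-4-\ell-}$ compensates the $|x-y|^2$ growth relevant for $M_1^{\pm}$, and the stronger $\langle\cdot\rangle^{-6-\ell-}$ compensates the $|x-y|^4$ growth that appears in $M_2^{\pm}$. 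The first $\lambda$-derivative is treated identically, with the factor $\lambda^{-1}$ produced by differentiation being absorbed into the required $\lambda^{-\ell-2+j}$ (respectively $\lambda^{-4-\ell+j}$) weight.
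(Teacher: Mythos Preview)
Your proposal is correct and follows essentially the same route as the paper: both obtain \eqref{Mexp0} directly from Lemma~\ref{lem:R0 exp} with the Lipschitz bound coming from Corollary~\ref{cor:Error Lips} (the paper simply takes $\alpha=\ell$ there), and both treat \eqref{Mexp1}--\eqref{Mexp3} by the low/high splitting in $\lambda|x-y|$, invoking Lemma~\ref{lem:R0low} and Remark~\ref{rmk:R0low} on the small-argument zone and the trade $\lambda^{-1}\les |x-y|$ on the large-argument zone. The paper records the high-energy step as the single observation $\partial_\lambda\{\widetilde\chi(\lambda r)R^{\pm}\}\les \lambda^{-1}=\partial_\lambda(\lambda r)^k$, which is exactly your ``trade negative powers of $\lambda$ for positive powers of $|x-y|$'' remark in compressed form.
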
 	

\begin{proof}
Recall that $M^{\pm}(\lambda) = U + v R^{\pm}(H_0, \lambda^4) v $.
Therefore, the proof of the first assertion in \eqref{Mexp0} follows easily by Lemma~\ref{lem:R0 exp}, whereas the second assertion can be obtained taking $\alpha = \ell$ in Corollary~\ref{cor:Error Lips}. 

Moreover, when $ \lambda |x-y| \ll 1$ the proof of \eqref{Mexp1} and \eqref{Mexp3} follow from the expansions \eqref{eq:R0low} in Lemma~\ref{lem:R0low} and Remark~\ref{rmk:R0low}. Finally, the following observation establishes the statement for \eqref{Mexp1} and \eqref{Mexp3} for $ \lambda |x-y| \gtrsim 1$
\begin{align*}
& \partial_\lambda \{ \tilde{\chi}(\lambda) R^{\pm} (H_0; \lambda^4)  \} = \partial_\lambda\{ e^{\pm  i \lambda r} \tilde{ \omega}_{\pm}  ( \lambda r) + e^{-\lambda r} \tilde{ \omega}_{+}  ( \lambda r)\} \les \lambda^{-1}= \partial_{\lambda}\{ (\lambda r)^{k} \}, \,\,\ k>0, \\
& \partial^2 _\lambda \{ \tilde{\chi}(\lambda) R^{\pm} (H_0; \lambda^4)  \} \les \lambda^{-3/2} r^{1/2} = \partial^2_{\lambda}\{ (\lambda r)^{1/2+k} \}, \,\,\ k>0. 
 \end{align*}
\end{proof}

The definition below classifies the type of resonances that may occur at the threshold energy. In Section~\ref{sec:classification}, we establish this classification in detail.
\begin{defin} \label{def:restype}
\begin{itemize}
\item  Let $Q=1-P$. We say that zero is regular point of the spectrum of $(-\Delta)^2 +V$ provided $QTQ$ is invertible on $QL^2$. In that case we define $D_0:=(QTQ)^{-1}$ as an operator on $QL^2$. 
\item Assume that zero is not regular point of the spectrum. Let $S_1$ be the Riesz projection onto the kernel of $QTQ$. Then $QTQ+S_1$ is invertible on $L^2$. Accordingly,  we
define $D_0 = (QT Q + S_1)^{-1}$,  as an operator on $QL^2$.  This doesn't conflict with the previous definition since $S_1=0$  when zero is regular. We say there is
a resonance of the first kind at zero if the operator $T_1 := S_1T P T S_1$ is invertible
on $S_1L^2$. 
 \item We say there is a resonance of the second kind if $T_1$ is not invertible on $S_1L^2$, but $T_2:= S_2vG_2vS_2$ is invertible where $S_2$ is the Riesz projection onto the kernel of $S_1T P T S_1$.  Moreover, we define $D_1:= (T_1 +S_2)^{-1} $  as an operator on $S_1L^2$.  
 
 \item We say there is a resonance of the third kind if $T_2$ is not invertible on $S_2L^2$  but $T_3:= S_3vG_4vS_3$ is invertible. Here $S_3$ is the Riesz projection onto the kernel of $S_2vG_2vS_2$. We define $D_2:= (T_2 + S_3)^{-1} $ as an operator on $S_2L^2$. 
 
 \item Finally if $T_3$ is not invertible we say there is a resonance of the fourth kind at zero. Note that in this case the operator $T_4:= S_4 v G_5v S_4$  is always invertible where $S_4$ the Riesz projection onto the kernel of $S_3vG_4vS_3$. We define $D_3:= (T_3 + S_4)^{-1}$ as an operator on $S_3L^2$. 

 \end{itemize}
\end{defin}
We note that the types of resonance present have a similarity to those that appear for a Schr\"odinger operator in dimension two.  Specifically, a resonance of the first kind is analogous to an `s-wave' resonance, a resonance of the third kind is analogous to a `p-wave' resonance and a resonance of the fourth kind is an eigenfunction.  The resonance of the second kind, and its dynamical consequences, are new and have no counterpart in the Schr\"odinger operator analogy.

As in the four dimensional Schr\"odinger operator, see the Remarks after Definition~2.5 in \cite{EGG}, $T$ is a compact perturbation of $U$.  Hence, the Fredholm alternative guarantees that $S_1$ is a finite-rank projection.
With these definitions first notice that, $ S_4 \leq S_3 \leq S_2 \leq S_1 \leq Q $, hence all $S_j$ are finite-rank projections orthogonal to the span of $v$. Second, since $T$ is a self-adjoint operator and $S_1$ is the Riesz projection onto its kernel, we have 
$S_1 D_0= D_0 S_1 = S_1$.  Similarly, $S_2 D_1= D_1 S_2 = S_2$, $S_3 D_2= D_2 S_3 = S_3$ and $S_4D_3=D_3S_4=S_4$. 
We introduce the following terminology from \cite{Sc2,eg2,eg3}:

\begin{defin}
	We say an operator $T:L^2(\R^2) \to   L^2(\R^2)$ with kernel
	$T(\cdot,\cdot)$ is absolutely bounded if the operator with kernel
	$|T(\cdot,\cdot)|$ is bounded from $  L^2(\R^2)$ to $ L^2(\R^2)$. 	
\end{defin}

We note that Hilbert-Schmidt and finite-rank operators are absolutely bounded operators.

\begin{lemma} Let $|V(x)| \les \la x \ra^{-\beta}$ for some $\beta>4$, then $QD_0Q$ is absolutely bounded. 
\end{lemma}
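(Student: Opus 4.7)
The plan is to exhibit $D_0$ as a finite sum of operators each manifestly absolutely bounded, being either a bounded multiplication, a finite-rank operator with $L^2$ factors, or a composition involving a Hilbert--Schmidt operator. The decay hypothesis $\beta>4$ enters only through the Hilbert--Schmidt property of $vG_1v$: its kernel from \eqref{G1 def} is $-v(x)v(y)\log|x-y|/(8\pi^2)$, so the HS-norm squared is dominated by $\iint \la x\ra^{-\beta}\la y\ra^{-\beta}\log^2|x-y|\,dx\,dy$. A split into $|x-y|\leq 1$ (locally integrable logarithm on $\R^4$) and $|x-y|> 1$ (where $\log^2|x-y|\les \la x\ra^{\varepsilon}+\la y\ra^{\varepsilon}$ is dominated by the polynomial decay when $\beta-\varepsilon>4$) gives finiteness. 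Hence $QvG_1vQ$ is Hilbert--Schmidt, and in particular absolutely bounded.

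Next I derive an algebraic identity. From the defining relation $(QTQ+S_1)D_0 = Q$ on $L^2(\R^4)$ and $S_1 D_0 = S_1$ (both following from $S_1\subset\ker(QTQ)$ and $D_0|_{S_1 L^2} = I$), I obtain
\[ QUQ\,D_0 = \bar S_1 - QvG_1vQ\,D_0,\qquad \bar S_1 := Q - S_1. \]
Left-multiplying by $U$ and using $UQUQ = U(U - PU)Q = U^2 Q - UPUQ = Q - UPUQ$ (the last equality using $U^2 = I$, which holds since the finite rank of $S_1$ forces $V\neq 0$ a.e.; otherwise $\ker(QTQ)$ would contain $L^2(\{V=0\})\cap QL^2$), I arrive at
\[ D_0 = U\bar S_1 + UPUQ\,D_0 - UQvG_1vQ\,D_0. \]

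Finally I check absolute boundedness of each summand. The first, $U\bar S_1 = U - UP - US_1$, is a sum of the bounded multiplication $U$, the rank-one $UP$ (with $L^2$ factors $Uv,\,v$), and the finite-rank $US_1$ with $L^2$ kernel. The second, $UPUQ\,D_0$, is rank one with range in the span of $Uv\in L^2$ and coefficient $\la QUv, D_0(\cdot)\ra/\|V\|_1 \in L^2$ since $D_0$ is bounded. The third, $UQvG_1vQ\,D_0$, is the composition of the bounded multiplication $U$, the Hilbert--Schmidt operator $QvG_1vQ$, and the bounded $D_0$; since HS is a two-sided ideal in bounded operators and is preserved under multiplication by bounded functions, the composition is HS, hence absolutely bounded. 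Combining and using $QD_0Q = D_0$ yields the claim. The main technical point is extracting the algebraic identity $UQUQ = Q - UPUQ$; the apparent self-reference through $D_0$ on the right-hand side is harmless since $D_0$ only appears composed with ideal-compatible (HS or finite-rank) operators.
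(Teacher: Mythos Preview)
Your proof is correct and follows essentially the same strategy as the paper, which simply defers to Lemma~8 of Schlag \cite{Sc2}: write $D_0$ algebraically as a sum of a bounded multiplication, finite-rank pieces, and a Hilbert--Schmidt tail using $(QTQ+S_1)D_0=Q$ together with $U^2=I$, the key analytic input being that $v(x)\log|x-y|v(y)$ is Hilbert--Schmidt when $\beta>4$. You have made explicit the algebraic identity and the term-by-term verification that the paper leaves to the reference; the one slightly delicate point, $U^2=I$, you handle by the contrapositive observation that a set $\{V=0\}$ of positive measure would embed an infinite-dimensional subspace into $S_1L^2$, contradicting its finite rank.
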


The proof follows the proof of Lemma~8 in \cite{Sc2}. The only difference is that one needs $v(x) \log|x-y| v(y)$ to be Hilbert-Schmidt in $\R^4$ instead of $\R^2$, which requires more decay on $V$. 

If zero is regular then one obtains the following expansion for $(M^\pm(\lambda))^{-1}$.
\begin{lemma}\label{regular} Assume $|v(x)| \les \la x \ra ^{-2-2\ell-}$ and assume  that zero is regular for $H$. Then, we have
\begin{align*}
(M^\pm(\lambda))^{-1}&=h_\pm(\lambda)^{-1} S + QD_0Q+E^\pm(\lambda), \\ 		
	&\sum_{j=0}^1\|\sup_{0<\lambda<\lambda_1}  
	\lambda^{j-\ell} \partial^j_\lambda E ^{\pm}(\lambda)\|_{HS}\les 1, \,\, 0 < \ell \leq 2,\\
	& \|\sup_{0<\lambda<b<\lambda_1}  |b-\lambda|^{-\ell} \lambda ^{1+\ell^2-\f{\ell}{2}} 
	|\partial_\lambda E^{\pm}(b)-\partial_\lambda E^{\pm}(\lambda)| \|_{HS} \les  1,   \,\,\, 0< \ell < 1 .
\end{align*}
Here $h^\pm(\lambda)=\tilde{g}_1^{\pm}(\lambda)+\,$trace$\,(PTP-PTQD_0QTP)$, and
$$
	S =\left[\begin{array}{cc}
	P& -PTQD_0Q\\ -QD_0QTP &QD_0QTPTQD_0Q
	\end{array} \right],
$$ 
is a self-adjoint, finite rank operator.

 \end{lemma}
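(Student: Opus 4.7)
The plan is to invert $M^\pm(\lambda)$ by a Feshbach/Schur complement reduction with respect to the orthogonal decomposition $L^2 = PL^2 \oplus QL^2$, leveraging the fact that the obstruction to invertibility lives entirely in the one-dimensional range of $P$ since zero is regular.

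First, I would write $M^\pm(\lambda)$ as a $2 \times 2$ block matrix in this decomposition. Using the expansion \eqref{Mexp0} from Lemma~\ref{lem:M_exp} and the facts $PTP + PTQ + QTP + QTQ = T$ and $PM^\pm P = \tilde g^\pm(\lambda)P + PTP + PM_0^\pm P$ (since $PvG_1vP = $ the $v$-part of $T$, while the $\tilde g^\pm$ term is absorbed by $P$), this gives
\begin{equation*}
M^\pm(\lambda) = \begin{pmatrix} \tilde g^\pm(\lambda)P + PTP + PM_0^\pm P & PTQ + PM_0^\pm Q \\ QTP + QM_0^\pm P & QTQ + QM_0^\pm Q \end{pmatrix}.
\end{equation*}
Since zero is regular, $QTQ$ is invertible on $QL^2$ with inverse $D_0$, so for $\lambda < \lambda_1$ small enough a Neumann series yields $(QM^\pm(\lambda) Q)^{-1} = QD_0Q + \widetilde E^\pm(\lambda)$, where $\widetilde E^\pm(\lambda) = -QD_0Q \cdot QM_0^\pm Q \cdot QD_0Q + \cdots$ inherits the size and Lipschitz estimates of $M_0^\pm$ from Lemma~\ref{lem:M_exp} (absolute boundedness of $QD_0Q$ is used here).

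Second, I would compute the Schur complement on $PL^2$:
\begin{equation*}
S^\pm(\lambda) := PM^\pm P - PM^\pm Q \cdot (QM^\pm Q)^{-1} \cdot QM^\pm P = \tilde g^\pm(\lambda)P + PTP - PTQD_0QTP + \mathcal{E}^\pm(\lambda),
\end{equation*}
with $\mathcal{E}^\pm(\lambda)$ composed of at least one factor of $M_0^\pm$. Since $P$ is the rank-one projection onto the span of $v$, every operator on $PL^2$ equals its trace multiplied by $P$, so the leading part of $S^\pm(\lambda)$ equals $h^\pm(\lambda)P$ with $h^\pm(\lambda) = \tilde g_1^\pm(\lambda)\|V\|_1 + \mathrm{tr}(PTP - PTQD_0QTP)$, which matches the claim (the constant in $\widetilde g^\pm$ is absorbed). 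Since $|h^\pm(\lambda)| \gtrsim |\log \lambda| \to \infty$ as $\lambda \to 0$, $S^\pm(\lambda)$ is invertible on $PL^2$ and $(S^\pm(\lambda))^{-1} = h^\pm(\lambda)^{-1}P + h^\pm(\lambda)^{-2}\widetilde{\mathcal E}^\pm(\lambda)$ by another Neumann series.

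Third, the Feshbach formula assembles $(M^\pm)^{-1}$ from the four blocks
\begin{equation*}
\begin{pmatrix} (S^\pm)^{-1} & -(S^\pm)^{-1}\, PM^\pm Q\, (QM^\pm Q)^{-1} \\ -(QM^\pm Q)^{-1}\, QM^\pm P\, (S^\pm)^{-1} & (QM^\pm Q)^{-1} + (QM^\pm Q)^{-1}\, QM^\pm P\, (S^\pm)^{-1}\, PM^\pm Q\, (QM^\pm Q)^{-1} \end{pmatrix}.
\end{equation*}
Substituting the leading orders $PM^\pm Q \to PTQ$, $(QM^\pm Q)^{-1} \to QD_0Q$, and $(S^\pm)^{-1} \to h^\pm(\lambda)^{-1} P$ reproduces exactly the matrix $h^\pm(\lambda)^{-1}S + QD_0Q$ in the statement; everything else is collected into $E^\pm(\lambda)$, which is self-adjoint by self-adjointness of $M^\pm$.

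Finally, the analytic bounds on $E^\pm(\lambda)$ follow by differentiating these finite products and Neumann expansions term-by-term. Each error contribution carries at least one factor of $M_0^\pm$ or of its $\lambda$-derivative, so the $HS$-bound $\lambda^{-\ell+j}\partial_\lambda^j M_0^\pm = O(1)$ in Lemma~\ref{lem:M_exp} propagates (together with the scalar estimates $|h^\pm(\lambda)|^{-1} \lesssim |\log\lambda|^{-1}$ and $|\partial_\lambda h^\pm(\lambda)| \lesssim \lambda^{-1}$, so the $h^\pm$-factors only help). The main obstacle, as expected, is the Lipschitz-in-$\lambda$ bound on $\partial_\lambda E^\pm$: differentiating the Neumann series produces terms with $\partial_\lambda M_0^\pm$, and controlling $\partial_\lambda M_0^\pm(b) - \partial_\lambda M_0^\pm(\lambda)$ requires precisely the Lipschitz estimate in \eqref{Mexp0}. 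To push this through I would split each difference using the triangle inequality into one piece where the Lipschitz bound on $\partial_\lambda M_0^\pm$ is applied and pieces where $\partial_\lambda M_0^\pm$ is paired with a Lipschitz-in-$\lambda$ factor from $(QM^\pm Q)^{-1}$ or $(S^\pm)^{-1}$, the latter two being handled by the mean value theorem and the size bounds already established. Combining these contributions yields the stated bound $\lambda^{1+\ell^2-\ell/2}|b-\lambda|^{-\ell} |\partial_\lambda E^\pm(b) - \partial_\lambda E^\pm(\lambda)| = O(1)$ in Hilbert–Schmidt norm.
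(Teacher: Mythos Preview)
Your proposal is correct and follows essentially the same Feshbach/Schur-complement strategy as the paper. The only difference is cosmetic: the paper first inverts the explicit leading operator $A(\lambda)=\widetilde g^\pm(\lambda)P+T$ via Feshbach (obtaining $A^{-1}=h_\pm^{-1}S+QD_0Q$ exactly) and then runs a single Neumann series in $M_0^\pm A^{-1}$, whereas you apply Feshbach to the full $M^\pm(\lambda)$ and therefore need separate Neumann expansions for $(QM^\pm Q)^{-1}$ and for the Schur complement---this is a bit more bookkeeping but leads to the same expansion and the same error bounds.
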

 \begin{proof} 
	We consider only the `+' case, the case `-' proceeds identically.  Let
 \begin{align*} A(\lambda)= \widetilde{g}^{+}(\lambda) P+ T =  \left[\begin{array}{cc}
	\tilde{g}_1^{+}(\lambda)  P + PTP & PTQ  \\ QTP &QTQ 
	\end{array} \right].
 \end{align*}
Then by Feshbach formula (see  Lemma~2.8 in \cite{eg2}) we have  $A^{-1}(\lambda ) = h^{+}(\lambda)^{-1} S +QD_0Q$. Hence, using the equality
$$
	M^{+} (\lambda) = A(\lambda) + M_0^{+}(\lambda) = (I + M_0^{+}(\lambda) A^{-1}(\lambda)) A(\lambda),
$$
and Neumann series expansion we obtain
$$
(M^{+}(\lambda))^{-1}=A^{-1}(\lambda)(I +M_0^{+}(\lambda) A^{-1}(\lambda))^{-1}= h^{+}(\lambda)^{-1} S +QD_0Q+E^{+}(\lambda).
$$
The Lipschitz bound follows from the bounds in Lemma~\ref{lem:R0 exp} and Corollary~\ref{cor:Error Lips}, along with the fact that $A^{-1}(\lambda)=\widetilde O(1)$ implies that (for $0<\lambda<b<\lambda_1$)
$$
	|\partial_\lambda A^{-1}(\lambda)-\partial_\lambda A^{-1}(b)|\les |b-\lambda|^{\alpha} \lambda^{-1-\alpha}.
$$

\end{proof}
The following lemma from \cite{JN} is the main tool to obtain the expansions of $(M_{\pm}(\lambda))^{-1}$ in different assumptions  on zero energy.
\begin{lemma}\label{JNlemma}
	Let $M$ be a closed operator on a Hilbert space $\mathcal{H}$ and $S$ a projection. Suppose $M+S$ has a bounded
	inverse. Then $M$ has a bounded inverse if and only if
	$$
	B:=S-S(M+S)^{-1}S
	$$
	has a bounded inverse in $S\mathcal{H}$, and in this case
	\begin{align} \label{Minversegeneral}
	M^{-1}=(M+S)^{-1}+(M+S)^{-1}SB^{-1}S(M+S)^{-1}.
	\end{align}
\end{lemma}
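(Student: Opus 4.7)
The strategy is to prove the two directions separately by direct algebraic verification; no analytic input beyond the assumed invertibility of $M+S$ is needed, as this is a purely algebraic identity of Feshbach/Schur-complement type. The key structural observation driving both directions is the identity
\[
	B + S(M+S)^{-1}S = S,
\]
which immediately follows from the definition of $B$ and realises $B$ and $S(M+S)^{-1}S$ as complementary with respect to the identity $S$ on $S\mathcal{H}$.

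For the ``if'' direction, suppose $B^{-1}$ exists on $S\mathcal{H}$, denote the candidate inverse by
\[
	N := (M+S)^{-1} + (M+S)^{-1} S B^{-1} S (M+S)^{-1},
\]
and verify $MN = I$ by writing $M = (M+S) - S$ on the left. Expanding gives
\[
	MN = I - S(M+S)^{-1} + \bigl[S - S(M+S)^{-1}S\bigr] B^{-1} S (M+S)^{-1};
\]
the bracketed factor equals $B$, and $B B^{-1} = S$ on $S\mathcal{H}$, so the last term collapses to $S(M+S)^{-1}$ and cancels the middle term, yielding $MN = I$. The identity $NM = I$ follows from the mirror-image computation using $M = (M+S) - S$ applied on the right.

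For the ``only if'' direction, assume $M^{-1}$ exists and propose that $S + S M^{-1} S$, viewed as an operator on $S\mathcal{H}$, is the inverse of $B$. Multiplying $M = (M+S) - S$ by $M^{-1}$ on the left gives $M^{-1} = (M+S)^{-1} + M^{-1} S (M+S)^{-1}$; sandwiching this identity with $S$ on both sides and using $S^2 = S$ produces
\[
	S M^{-1} S \cdot B = S(M+S)^{-1} S = S - B,
\]
so that $(S + S M^{-1} S)\, B = B + (S - B) = S$. The companion identity $B \,(S + S M^{-1} S) = S$ follows by the symmetric manipulation of $M \cdot M^{-1} = I$. Hence $B$ is invertible on $S\mathcal{H}$ with inverse $S + S M^{-1} S$.

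The main obstacle is essentially bookkeeping rather than substantive: one must consistently treat $B$ and $B^{-1}$ as operators on $S\mathcal{H}$, recognising $S$ (not the ambient identity) as the identity on that subspace, and keep track of how the projection $S$ absorbs neighbouring factors via $S^2 = S$. Once that is done, the two verifications close in a handful of lines each without any further input.
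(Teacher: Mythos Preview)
Your proof is correct. The paper does not actually prove this lemma; it quotes it from Jensen and Nenciu \cite{JN} without argument, so there is no proof in the paper to compare against. Your direct verification via the Schur/Feshbach identity $M=(M+S)-S$ is exactly the standard way to establish the result and would be a complete replacement for the citation.
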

We use this lemma repeatedly with $M=M^{\pm}(\lambda)$ and the projection $S_1$.  In fact, much of our technical work in the following sections is devoted to finding appropriate expansions for $B^{-1}$ under the various spectral assumptions. 
We first use Lemma~\ref{JNlemma} to compute appropriate expansions for $(M^\pm (\lambda)+S_1)^{-1}$. 

\begin{lemma} \label{lem:M+S inv} Let $0<\lambda \ll 1$, if  $|V(x)| \les \la x \ra^{-4-2\ell-} $, for some $0 < \ell \leq 2$, then
\begin{align} \label{M-1exp0}
  ( M^{\pm}(\lambda)+S_1 )^{-1} (\lambda)=h_{\pm}^{-1} (\lambda)S + QD_0Q+ & \widetilde {O}_1 (\lambda^{\ell}).
  \end{align}
  If $|V(x)| \les \la x \ra^{-8-2\ell-} $ for some $0 < \ell < 2$, then
  \begin{align}
   \label{M-1exp1}
	 ( M^{\pm}(\lambda)+S_1 )^{-1} (\lambda)= h_{\pm}^{-1} (\lambda)S + QD_0Q -& c^{\pm}  \lambda^2 A^{-1} vG_2vA^{-1} +\widetilde O_1 (\lambda^{2+ \ell}).   
	 \end{align}
	 If $|V(x)| \les \la x \ra^{-12-2\ell-} $ for some $0 < \ell \leq 2$, then
	 \begin{multline}
	 \label{M-1exp2}
           ( M^{\pm}(\lambda)+S_1 )^{-1} (\lambda)= h_{\pm}^{-1} (\lambda)S + QD_0Q 
          -  c^{\pm}\lambda^2 A^{-1} vG_2vA^{-1} \\
            - \tilde g_3^{\pm}(\lambda) A^{-1} vG_4vA^{-1} 
            +  \lambda^4 [ A^{-1} vG_5vA^{-1} - A^{-1} vG_2vA^{-1} vG_2vA^{-1} ]   + O_1(\lambda^{4+\ell}). 
 \end{multline}
Here 
\begin{align*}
&S =\left[\begin{array}{cc}
P& -P(T+S_1)QD_0Q\\ -QD_0Q(T+S_1) P &QD_0Q(T+S_1)P(T+S_1)QD_0Q
\end{array} \right],\\
& A^{-1} (\lambda) = h_{\pm}^{-1} (\lambda)S + QD_0Q.
\end{align*}
\end{lemma}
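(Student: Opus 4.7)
The plan is to combine the Feshbach formula with a truncated Neumann series, using Lemma~\ref{lem:M_exp} to bookkeep the $\lambda$-orders. I introduce the principal operator
\[
	A(\lambda):=\widetilde{g}^{\pm}(\lambda)P+T+S_1,
\]
and absorb everything else from the expansions of Lemma~\ref{lem:M_exp} into an ``error'' $E^{\pm}(\lambda)$ whose size depends on the decay of $V$. Since $S_1$ is the Riesz projection onto $\ker(QTQ)$, the lower--right Feshbach block $Q(T+S_1)Q=QTQ+S_1$ is invertible on $QL^2$ with inverse $D_0$, and the Feshbach formula (Lemma~2.8 in \cite{eg2}, applied exactly as in Lemma~\ref{regular} but with $T$ replaced by $T+S_1$) produces
\[
	A^{-1}(\lambda)=h_\pm^{-1}(\lambda)\,S+QD_0Q,
\]
with the matrix $S$ and the scalar $h_\pm(\lambda)=\widetilde g_1^{\pm}(\lambda)+\mathrm{tr}\big(P(T+S_1)P-P(T+S_1)QD_0Q(T+S_1)P\big)$ exactly as stated (using $PS_1=S_1P=0$, which holds since $S_1\leq Q$). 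This establishes the zero-order part of each expansion, and since $h_\pm(\lambda)\sim\log\lambda$, both $A^{-1}(\lambda)$ and $\partial_\lambda A^{-1}(\lambda)$ are bounded with $\partial_\lambda A^{-1}(\lambda)=\widetilde O(\lambda^{-1}\log^{-2}\lambda)$; the logarithmic denominator is critical for closing the derivative estimates below.

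For part (i) I write $M^{\pm}(\lambda)+S_1=A(\lambda)+M_0^{\pm}(\lambda)$ using \eqref{Mexp0}, and expand
\[
	(M^{\pm}(\lambda)+S_1)^{-1}=A^{-1}(\lambda)\sum_{k\geq 0}\bigl(-M_0^{\pm}(\lambda)\,A^{-1}(\lambda)\bigr)^{k};
\]
since $M_0^{\pm}=\widetilde O_1(\lambda^{\ell})$ and $A^{-1}=\widetilde O_1(1)$, one Neumann step produces a correction of size $\widetilde O_1(\lambda^{\ell})$ and the tail is controlled geometrically. For part (ii), take $E=c^{\pm}\lambda^2 vG_2v+M_1^{\pm}(\lambda)$ from \eqref{Mexp1}; the first-order Neumann term yields $-c^{\pm}\lambda^2 A^{-1}vG_2v\,A^{-1}$, the $M_1^{\pm}$ contribution is $\widetilde O_1(\lambda^{2+\ell})$, and the second-order Neumann correction is $O(\lambda^4)$, which is absorbed into $\widetilde O_1(\lambda^{2+\ell})$ since $\ell<2$. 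For part (iii), I use \eqref{Mexp3} with $E=c^{\pm}\lambda^2 vG_2v+\widetilde g_3^{\pm}(\lambda)vG_4v+\lambda^4 vG_5v+M_2^{\pm}(\lambda)$, and now retain both the first- and second-order Neumann terms: the first order contributes the terms proportional to $\lambda^2 vG_2v$, $\widetilde g_3^{\pm}vG_4v$, and $\lambda^4 vG_5v$, while the second order picks up the cross product $(\lambda^2 vG_2v)A^{-1}(\lambda^2 vG_2v)$, which is the only quadratic term of size $\lambda^4$; all remaining pieces are $\widetilde O_1(\lambda^{4+\ell})$.

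The principal obstacle is not the algebraic structure but the operator-theoretic bookkeeping: I need each product $A^{-1}(\lambda)\,vG_jv\,A^{-1}(\lambda)$ to make sense as an absolutely bounded operator on $L^2$, and the difficulty grows with $j$ because the kernel $G_j(x,y)$ grows like $|x-y|^{2j-2}\log|x-y|$. This is precisely why the decay hypothesis on $V$ becomes stronger at each level: $|V|\lesssim\la x\ra^{-4-2\ell-}$ suffices to make $vG_0v,vG_1v$ Hilbert--Schmidt for part (i), while the extra $\la x\ra^{-4}$ in (ii) absorbs $|x-y|^2$ from $G_2$, and the further $\la x\ra^{-4}$ in (iii) absorbs $|x-y|^{4}\log|x-y|$ from $G_4$ and $|x-y|^{4}$ from $G_5$. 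One also must verify that taking one $\lambda$-derivative of each Neumann factor is compatible with the $\widetilde O_1$ notation: the worst-case derivative distributes to $A^{-1}(\lambda)$ and produces the $\lambda^{-1}\log^{-2}\lambda$ factor noted above, which is dominated by $\lambda^{-1}$ and so respects the target bound. Once these absolute-boundedness and Hilbert-Schmidt checks are in place, the three assertions follow by gathering terms.
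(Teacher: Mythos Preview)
Your proposal is correct and follows essentially the same route as the paper: define $A(\lambda)=\widetilde g^{\pm}(\lambda)P+T+S_1$, invert it via the Feshbach formula (exactly as in Lemma~\ref{regular} with $T$ replaced by $T+S_1$), and then run a Neumann series using the expansions \eqref{Mexp0}, \eqref{Mexp1}, \eqref{Mexp3} to collect terms to the appropriate order in $\lambda$. Your version is more detailed than the paper's (which dispatches the Neumann step in one sentence), but the structure, the identification of which terms survive at each order, and the reason for the graded decay hypotheses on $V$ all match.
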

\begin{proof}
With some abuse of notation we redefine $A(\lambda)$ in Lemma~\ref{regular} as 
$$A(\lambda)= \tilde{g}_1^{\pm}(\lambda)  P+S_1= \left[\begin{array}{cc}
	\tilde{g}_1^{\pm}(\lambda)  P + P(T+S_1)P & P(T+S_1)Q  \\ Q(T+S_1)P &Q(T+S_1)Q 
	\end{array} \right] 
	$$
Since $(T+S_1)$ is invertible on $QL^2$, we can use the Feshbach formula as in the proof of Lemma~\ref{regular}.  Doing so, we obtain $A^{-1}(\lambda)= h_{\pm}^{-1} (\lambda)S + QD_0Q$, where $D_0$ is the inverse of $(T+S_1)^{-1}$ on $QL^2$ and $S$ is as above. Now, knowing that the leading term  is invertible for $0<\lambda \ll1$, we can use Neumann series expansion to invert $M(\lambda)+S_1$, using the expansions \eqref{Mexp0}, \eqref{Mexp1}, and  \eqref{Mexp3} to obtain  \eqref{M-1exp0}, \eqref{M-1exp1} and \eqref{M-1exp2} respectively. 

\end{proof}
\section{Low energy dispersive bounds when zero is regular} \label{sec:reg sect}

In this and the following section we analyze the perturbed evolution $e^{-itH}$ in $L^1 \rar L^{\infty}$ setting for small energy, when the spectral variable $\lambda$ is in a small neighborhood of the threshold energy $\lambda=0$. As in the free case, we represent the solution via Stone's formula, see \eqref{stone}. As usual, we analyze \eqref{stone} separately for large energy, when $\lambda \gtrsim 1$, and for small energy, when $ \lambda \ll 1$, see for example \cite{Sc2, eg2, ebru}.   The presence of zero energy resonances is not seen in the large energy expansions. We start with the small energies $ \lambda \ll 1$, and analyze the large energy in Section~\ref{sec:large} to complete the proofs of Theorems~\ref{thm:main} and \ref{thm:main1}.

In this section, we utilize the expansions in the previous section to understand the dispersive bounds in the low energy regime, when the spectral variable is in a sufficiently small neighborhood of zero.  We break the section into two subsections.  In the first subsection, we consider the first claim in Theorem~\ref{thm:main}, and prove a uniform bound with the natural $|t|^{-1}$ decay rate when zero is regular.  In the second subsection, we consider a weighted bound that attains faster time decay when zero is regular for the claim in Theorem~\ref{thm:main1}.

\subsection{The Unweighted bound}
In this subsection we consider the case when zero is regular and prove bounds on the solution operator form $L^1$ to $L^\infty$.  In particular, we prove the following low energy estimate. 
\begin{prop}\label{prop:zero reg unwtd} Let $|V(x)|\les \la x\ra^{-4-}$ and suppose that zero energy is regular.  Then, we have
	\begin{align*}
		\sup_{x,y} \left|\int_0^{\infty} e^{-it \lambda^4} \lambda^3 \chi(\lambda) [R^{+}_V-R^-_V](\lambda)(x,y) d \lambda \right| 
		\les \la t \ra ^{-1}.
	\end{align*}

\end{prop}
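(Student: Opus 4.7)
The starting point is the symmetric resolvent identity \eqref{resid}, which lets me write
\begin{align*}
R^{+}_V(\lambda)-R^{-}_V(\lambda) &= \bigl[R^{+}(H_0;\lambda^4)-R^{-}(H_0;\lambda^4)\bigr] \\
&\quad -\bigl[R^{+}(H_0;\lambda^4)v(M^{+}(\lambda))^{-1}vR^{+}(H_0;\lambda^4)\\
&\qquad\qquad -R^{-}(H_0;\lambda^4)v(M^{-}(\lambda))^{-1}vR^{-}(H_0;\lambda^4)\bigr].
\end{align*}
The contribution of the first bracket to the spectral integral is already controlled by Lemma~\ref{lem:free est} (the bound $|t|^{-1}$), and it is uniformly bounded as $t\to 0$ since the integrand is an $L^1_\lambda$ function on the support of $\chi$. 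So the whole task reduces to proving the same $\la t\ra^{-1}$ bound for the second bracket.

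For that I would first insert the expansions from Lemmas~\ref{lem:R0 exp} and \ref{regular}: $R^\pm(H_0;\lambda^4)=\tilde g_1^\pm(\lambda)+G_1(x,y)+E_0^\pm(\lambda)(x,y)$ with $E_0^\pm=\widetilde O_1((\lambda|x-y|)^\ell)$ for some $0<\ell\le 2$, and $(M^\pm(\lambda))^{-1}=h^\pm(\lambda)^{-1}S+QD_0Q+E^\pm(\lambda)$ with $E^\pm=\widetilde O_1(\lambda^\ell)$. Then I would use the telescoping identity $A^+B^+C^+-A^-B^-C^-=(A^+-A^-)B^+C^++A^-(B^+-B^-)C^++A^-B^-(C^+-C^-)$ to replace the difference of triple products by three pieces, each carrying exactly one $\pm$-difference. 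Since $\tilde g_1^+(\lambda)-\tilde g_1^-(\lambda)=i\,\Im z_1$, the differences $R^+(H_0;\lambda^4)-R^-(H_0;\lambda^4)$ and $(M^+(\lambda))^{-1}-(M^-(\lambda))^{-1}$ are both $O(1)$ with bounded first derivative in $\lambda$ (in Hilbert--Schmidt norm for the second), uniformly in $x,y$. Combining with the pointwise logarithmic bounds on $R^\pm(H_0;\lambda^4)$ provided by Remark~\ref{rmk:R0low} and with $Pv,v\in L^2$, which absorbs the $\log(\lambda|\cdot|)$ in $x,y$ through the $S$-block containing $P$, one obtains a kernel $F(\lambda,x,y)$ which is uniformly bounded, with $|\partial_\lambda F(\lambda,x,y)|\les \lambda^{-1+}\la x\ra^{0+}\la y\ra^{0+}$.

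The dispersive bound then follows by two standard tricks. For $|t|\le 2$, the $\la t\ra^{-1}$ bound is immediate because $\lambda^3 \chi(\lambda) F(\lambda,x,y)$ is $L^1$ in $\lambda$ with a constant independent of $x,y$. For $|t|>2$, I integrate by parts once, writing $e^{-it\lambda^4}=(-4it\lambda^3)^{-1}\partial_\lambda e^{-it\lambda^4}$; the boundary contributions vanish (the $\chi$ cutoff kills the upper one, and $\lambda^3$ kills the one at $\lambda=0$), leaving
\[
\frac{1}{4it}\int_0^\infty e^{-it\lambda^4}\,\partial_\lambda\bigl(\chi(\lambda)F(\lambda,x,y)\bigr)\,d\lambda,
\]
and the $\lambda$-integral is finite uniformly in $x,y$ by the derivative bound stated above (using that $\lambda^{-1+}$ is integrable near $0$ on the support of $\chi$).

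The main obstacle will be the piece $R^-(H_0;\lambda^4)v\bigl[h^+(\lambda)^{-1}-h^-(\lambda)^{-1}\bigr]S\,vR^+(H_0;\lambda^4)$ from the middle term of the telescoping identity, where both factors $R^\pm$ carry logarithmic growth through $\tilde g_1^\pm(\lambda)$ and each $h^\pm(\lambda)^{-1}$ is only $O(|\log\lambda|^{-1})$. The key cancellation is
\[
h^+(\lambda)^{-1}-h^-(\lambda)^{-1}=\frac{h^-(\lambda)-h^+(\lambda)}{h^+(\lambda)h^-(\lambda)}=-\frac{i\,\Im z_1}{h^+(\lambda)h^-(\lambda)}=O(|\log\lambda|^{-2}),
\]
with a first derivative that is $O(\lambda^{-1}|\log\lambda|^{-3})$. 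This two-log gain beats the two-log loss coming from the product of $R^\pm$ factors (once the $P$-projection in $S$ has been used to keep the $x,y$-dependence tame), and the required derivative bound on $F$ still holds. The remaining two pieces of the telescoping identity are simpler because the $\pm$-difference lies in a single free resolvent, which is $O(1)$ with $O(\lambda^{-1/2+}|x-y|^{1/2-})$ derivative via Lemma~\ref{lem:R0 exp}, while the surviving $(M^\pm)^{-1}$ factor is uniformly bounded in Hilbert--Schmidt norm.
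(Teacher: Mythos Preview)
Your overall architecture—symmetric resolvent identity, the three-term expansion of $(M^\pm)^{-1}$, and the telescoping identity $A^+B^+C^+-A^-B^-C^-$—matches the paper's. The gap is in the \emph{uniformity in $x,y$}. Your claimed bound $|\partial_\lambda F(\lambda,x,y)|\les \lambda^{-1+}\la x\ra^{0+}\la y\ra^{0+}$ carries spatial growth, so after one integration by parts you obtain $|t|^{-1}\la x\ra^{0+}\la y\ra^{0+}$, not $\sup_{x,y}|\cdot|\les |t|^{-1}$. In fact even the uniform boundedness of $F$ fails as stated: in the outer telescoping pieces for the $QD_0Q$ block, say $(R^+-R^-)vQD_0Qv R^+$, the surviving factor $QD_0Qv R^+(\cdot,y)$ equals $QD_0Qv[G_1(\cdot,y)+E_0^+]$ by orthogonality, and $G_1(y_1,y)=c\log|y-y_1|$ contributes $\sim\log|y|$ after pairing with $v(y_1)$. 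Your remark that ``$P$ in $S$ keeps the $x,y$-dependence tame'' does not address this, and for the $S$-block itself the heuristic ``two-log gain beats two-log loss'' still leaves terms like $(\log|x-x_1|)/\log\lambda$, which are not uniformly bounded in $x$ for fixed small $\lambda$.

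The missing idea is the cancellation trick of Lemma~\ref{cancellemma}: since $Qv=0$ one may subtract from $\chi(\lambda|x-x_1|)[a_1\log(\lambda|x-x_1|)+\alpha^\pm]$ the $x_1$-independent function $\chi(\lambda\la x\ra)[a_1\log(\lambda\la x\ra)+\alpha^\pm]$ before hitting $vQ$, and the difference $F^\pm(\lambda,x,x_1)$ is bounded by $k(x,x_1)=1+\log^+|x_1|+\log^-|x-x_1|$, which has \emph{no growth in $x$} and satisfies $\|v(x_1)k(x,x_1)\|_{L^2_{x_1}}\les 1$ uniformly in $x$. This is what makes the $QD_0Q$ contribution uniform (Lemma~\ref{prop:QD0Q bound}). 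For the $h^\pm(\lambda)^{-1}S$ block the analogous device is the pointwise bound $|\gamma^\pm(\lambda,r)/h^\pm(\lambda)|\les 1+|\log^- r|$ (obtained using $\chi(\lambda)\chi(\lambda r)|\log(\lambda r)|\les 1+|\log\lambda|+|\log^- r|$), which again has no growth at spatial infinity; this is what the paper exploits in Lemma~\ref{prop:S bound}. Once you incorporate these two refinements, your integration-by-parts scheme goes through and yields the uniform $\la t\ra^{-1}$ bound.
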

We prove this proposition in a series of lemmas.
Using \eqref{resid}, one has
\begin{align}\label{eqn:Rv expansion}
	R_V^\pm(\lambda)=R^\pm(H_0;\lambda^4)-R^\pm(H_0;\lambda^4)v
	\left[h^\pm(\lambda)^{-1}  S + 
	 QD_0Q+ 
	E^\pm(\lambda) \right] \\ 
	 vR^\pm(H_0;\lambda^4). \nn
\end{align}
The contribution of the first term to the Stone's formula is controlled by Lemma~\ref{lem:free est}.  We now consider the remaining terms.  For notational convenience, we write $R^\pm(\lambda^4):=R^\pm(H_0;\lambda^4)$.

\begin{lemma}\label{prop:QD0Q bound} 
   We have the bound
	$$
	\sup_{x,y} \bigg|	
	\int_0^{\infty} e^{-it \lambda^4} \lambda^3 \chi(\lambda)  
	\big[R^\pm vQD_0QvR^\pm](\lambda^4)(x,y)
	\big] 	 d \lambda \bigg| \les \la t \ra^{-1}.
	$$
	\end{lemma}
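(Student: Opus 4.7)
The plan rests on the decomposition $R^\pm(H_0;\lambda^4)(x,y) = \tilde g_1^\pm(\lambda) + G_1(x,y) + E_0^\pm(\lambda)(x,y)$ from Lemma~\ref{lem:R0 exp}, combined with the key identity $Qv = 0$ (valid because $P$ is the rank-one projection onto the span of $v$). This identity ensures that the scalar $\tilde g_1^\pm(\lambda)$, being constant in the spatial variable, contributes zero when integrated against $vQ$ on either side; consequently,
\be
R^\pm v Q D_0 Q v R^\pm = (G_1 + E_0^\pm)\, v Q D_0 Q v\, (G_1 + E_0^\pm),
\ee
which expands into a $\lambda$-independent piece $K(x,y) := [G_1 v Q D_0 Q v G_1](x,y)$ and three pieces containing at least one factor $E_0^\pm(\lambda)$, each vanishing at $\lambda = 0$.

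For the $\lambda$-independent piece, repeated integration by parts in $\lambda$ yields $\int_0^\infty e^{-it\lambda^4}\lambda^3\chi(\lambda)\,d\lambda = \frac{1}{4it} + O_N(t^{-N})$ for any $N$, so its contribution is $\frac{K(x,y)}{4it}$ modulo rapidly-decaying error. To verify $|K(x,y)|$ is bounded uniformly in $(x,y)$, I apply $Qv = 0$ once more to write
\be
K(x,y) = \la (G_1(x,\cdot) - c(x))v,\ D_0(G_1(\cdot,y) - c(y))v\ra, \quad c(x) := \frac{1}{\|v\|^2}\int G_1(x,z)v(z)^2\,dz,
\ee
so that Cauchy--Schwarz and absolute boundedness of $D_0 = QD_0Q$ reduce matters to bounding $\|(G_1(x,\cdot) - c(x)) v\|_{L^2}$ uniformly in $x$. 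Splitting the $z$-integral at $|z| = |x|/2$, the leading logarithmic growth $G_1(x,z) \sim -\frac{1}{8\pi^2}\log|x|$ cancels pointwise against the corresponding asymptotic of $c(x)$, leaving an $O(|z|/|x|)$ remainder integrable against $v^2$ on the bulk, while the tail is dominated by $v^2 \les \la z\ra^{-4-}$.

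For the three pieces containing $E_0^\pm(\lambda)$, one integration by parts produces $\frac{1}{4it}\int e^{-it\lambda^4}\,\partial_\lambda[\chi(\lambda)\cdot(\text{piece})]\,d\lambda$ with vanishing boundary at $\lambda=0$ (since $E_0^\pm(0)\equiv 0$). Applying $Qv = 0$ on both sides of $D_0$ and centering each factor $E_0^\pm(z,w)$ (or its $\lambda$-derivative) at $z=0$ by subtracting $E_0^\pm(0,w)$---permissible because any $z$-constant may be subtracted---the Mean Value Theorem in the spatial variable reduces the estimate to $|z|\sup_r |\partial_r F(r,\lambda)|$, where $F(r,\lambda)$ is the radial kernel of $E_0^\pm$. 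The small- and large-$\lambda r$ expansions of the free resolvent yield $|\partial_r F(r,\lambda)| \les \min(\lambda^2 r, 1/r) \le \lambda$, and a parallel analysis of $\partial_r\partial_\lambda F$ handles the derivative; combined with a careful dyadic spatial splitting to accommodate the limited decay of $v^2$, one obtains a pointwise bound on the integrand that is uniform in $(x,y)$ and integrable in $\lambda$ over $(0,\lambda_1)$.

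The main obstacle is precisely this uniform-in-$(x,y)$ pointwise control of the kernel: both $G_1(x,z)$ (logarithmic in $|x-z|$) and $E_0^\pm(\lambda)(z,y)$ (polynomial in $|z-y|$) are unbounded as the spatial variables grow, and pointwise bounds on the $L^1\to L^\infty$ kernel are attainable only by simultaneously exploiting the $Q$-projections on both sides of $D_0$ with carefully chosen centering constants---the $v^2$-weighted mean for the log-growing $G_1$, and the pointwise value at $z=0$ for the spatially-growing $E_0^\pm$. The absolute boundedness of $QD_0Q$, guaranteed by the hypothesis that zero is regular, then converts these weighted $L^2$ estimates into the desired uniform pointwise kernel bound.
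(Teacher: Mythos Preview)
Your approach is genuinely different from the paper's. The paper does not separate $R^\pm$ into $G_1 + E_0^\pm$. Instead it subtracts from $R^\pm(\lambda^4)(x,x_1)$ the single $\lambda$- and $x$-dependent (but $x_1$-constant) term $\chi(\lambda\la x\ra)\big[a_1\log(\lambda\la x\ra) + \alpha^\pm\big]$, producing a function $H^\pm(\lambda,x,x_1)$ (see \eqref{eqn:H def}). Via Lemma~\ref{cancellemma} the paper shows $|H^\pm|\les k(x,x_1):=1+\log^+|x_1|+\log^-|x-x_1|$ and $\int_0^{2\lambda_1}|\partial_\lambda H^\pm|\,d\lambda\les k(x,x_1)$. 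This logarithmic growth is exactly what $v(x_1)k(x,x_1)\in L^2_{x_1}$ requires under the hypothesis $|V|\les\la x\ra^{-4-}$, and one integration by parts together with the product estimate \eqref{HH} finishes the proof.

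Your plan has a real gap in the treatment of the $E_0^\pm$ pieces. The Mean Value bound you state, $|E_0^\pm(\lambda)(z,w)-E_0^\pm(\lambda)(0,w)|\les |z|\sup_r|\partial_r F|\les |z|\lambda$ (and $\les|z|$ for the $\lambda$-derivative), yields after $\lambda$-integration a weight of size $|z|$ in the integration variable. But $|z|\,v(z)\in L^2(\R^4)$ forces $|z|^2\la z\ra^{-4-}\in L^1(\R^4)$, i.e.\ $|V|\les\la x\ra^{-6-}$, not $\la x\ra^{-4-}$. The promised ``dyadic spatial splitting'' does not repair this as written: to recover the sharp decay you must observe that when the external variable $|w|$ dominates $|z|$, the singular $a_1/\lambda$ contribution to $\partial_\lambda E_0^\pm$ cancels identically upon subtracting the value at $z=0$, and the remaining oscillatory pieces are each $O(\lambda^{-3/2}|w|^{-1/2})$ for $\lambda|w|\gtrsim 1$, hence individually $\lambda$-integrable with bound $O(1)$---no MVT needed there. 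Only on the complementary region $|w|\les|z|$ does the crude $\widetilde O_1((\lambda|z|)^{0+})$ bound enter, giving growth $\la z\ra^{0+}$ rather than $|z|$. This refinement is exactly what the paper's $\lambda|x-x_1|$-adapted subtraction in $H^\pm$ encodes automatically, which is why that route is both shorter and reaches the optimal potential decay.
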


Before we start to prove Lemma~\ref{prop:QD0Q bound} we give some important bounds on the free resolvent which will be useful for our analysis. We first decompose the free resolvent into low and high arguments based on the size of $\lambda |x-y|$. In particular, we write
	\begin{align} \label{Rdecompose}
		R^\pm(\lambda^4)(x,y)= \chi(\lambda |x-y|)R^\pm(\lambda^4)(x,y)+\widetilde \chi(\lambda|x-y|)R^\pm(\lambda^4)(x,y).
	\end{align}
Recall that on the support of $\widetilde \chi(\lambda|x-y|)$ one has, 
\begin{align}\label{eqn:R0 hi exp}
	R^\pm(\lambda^4)(x_1,x)\widetilde\chi(\lambda|x-x_1|)=e^{\pm  i \lambda r} \tilde{ \omega}  ( \lambda r) + e^{-\lambda r} \tilde{ \omega}  ( \lambda r) :=A(\lambda, |x-x_1|) \\ 
	 \tilde{\omega} ( \lambda r)=  \tilde{\chi}(\lambda r) (\lambda r)^{-1} \omega(\lambda r) \nn \,\,\, , \,\,\
	|\omega_{\pm}^{(\ell)}(z)|\lesssim (1+|z|)^{-\frac{1}{2}-\ell}. \nn
	\end{align}
	This implies
	\be\label{eqn:A def}
		|A(\lambda,|x-x_1|)|\les \frac{\widetilde\chi(\lambda|x-x_1|)}{(\lambda|x-x_1|)^{\f32}}, \qquad
		|\partial_\lambda A(\lambda,|x-x_1|)|\les \frac{\widetilde\chi(\lambda|x-x_1|)}{\lambda^{\f32}|x-x_1|^{\f12}}.
	\ee

On the other hand, for  $\lambda|x-x_1|\ll 1$ we have 
\begin{multline} \label{Rsmall} 
	R^{\pm}(\lambda^4)(x,x_1) \chi(\lambda|x-x_1)\\
	=a_1\log (\lambda|x-x_1|)+\alpha^{\pm}+\widetilde O_2((\lambda|x-x_1|)^2)= \gamma^{\pm}(\lambda, |x-x_1|). 
\end{multline}
where $a_1\in \mathbb R\setminus \{0\}$ and $\alpha^{\pm} \in \mathbb{C}$. 	
The following lemma plays an important role on controlling the operators arising from  \eqref{Rsmall}.

\begin{lemma} \label{cancellemma} Let $p= |x-x_1|$, $q= \la x \ra $, and 
\begin{align*}
	F^{\pm}(\lambda,x,x_1):=\chi(\lambda p ) [a\log (\lambda p)+\alpha^{\pm}] - \chi(\lambda q ) [a\log (\lambda q )+  \alpha^{\pm}]  . 
	\end{align*}
	Defining $k(x,x_1):=1+\log^+|x_1|+\log^-|x-x_1|$, with $\log^-r:=\chi_{0<r<1}\log r$ and $\log ^+ r:= \chi_{r> 1} \log r$, one has 
   \begin{align*} 
	& |F^{\pm}(\lambda,x_1,x)|\leq \int_0^{2 \lambda_1} | \partial_\lambda F^{\pm}(\lambda,x,x_1)|\, d\lambda + | F^{\pm}(0+,x,x_1)|  \les  k(x,x_1), \\ 
	&  |\partial_\lambda F^{\pm}(\lambda,x_1,x)|\les \frac{1}{\lambda}  .
	\end{align*} 
\end{lemma}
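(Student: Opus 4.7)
The plan is to establish the derivative bound first and then feed it into the Fundamental Theorem of Calculus to obtain the bound on $F^\pm$ itself. Writing $p=|x-x_1|$ and $q=\la x\ra$ for brevity, the crucial feature is that the $a\log\lambda$ singularity is carried identically by both summands of $F^\pm$ (as is $\alpha^\pm$), so any appropriate comparison of the two cutoffs will make that singularity disappear.

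For the derivative I would differentiate $F^\pm$ explicitly, producing three pieces: two cutoff-derivative terms $p\chi'(\lambda p)[a\log(\lambda p)+\alpha^\pm]$ and $-q\chi'(\lambda q)[a\log(\lambda q)+\alpha^\pm]$, together with the combined log-derivative term $(a/\lambda)[\chi(\lambda p)-\chi(\lambda q)]$. On the support of $\chi'(\lambda p)$ one has $\lambda p\sim 1$, so $p\chi'(\lambda p)=O(\lambda^{-1})$ and the bracket is bounded; the $q$-term is identical. The third piece is trivially $O(\lambda^{-1})$ since $|\chi(\lambda p)-\chi(\lambda q)|\le 2$. Summing yields $|\partial_\lambda F^\pm|\les \lambda^{-1}$.

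For the first estimate the stated inequality is just the Fundamental Theorem of Calculus, but to actually bound $|F^\pm|$ by $k(x,x_1)$ I must evaluate $F^\pm(0+,x,x_1)$ and integrate the derivative. For any fixed $p,q>0$, both $\chi(\lambda p)$ and $\chi(\lambda q)$ are identically $1$ in a right neighborhood of $\lambda=0$; consequently the $a\log\lambda$ pieces and the $\alpha^\pm$ constants cancel between the two summands, leaving $F^\pm(0+,x,x_1)=a\log(p/q)$. For the integral I refine the support information: $\partial_\lambda F^\pm$ vanishes on $\{\lambda<\lambda_1/\max(p,q)\}$ (where all three terms drop out) and on $\{\lambda>2\lambda_1/\min(p,q)\}$, so integrating $\lambda^{-1}$ over the complementary window yields a contribution of order $\log(\max(p,q)/\min(p,q))\les 1+|\log(p/q)|$.

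This reduces both estimates to the geometric bound $|\log(p/q)|\les k(x,x_1)$, which I would verify by cases on the relative sizes of $|x|,|x_1|,p$. From $\la x\ra\le 1+|x_1|+p$ one derives $\log q\les 1+\log^+|x_1|+\log^+ p$; when $p\ge 1$ either $|x_1|\gtrsim p$ (so the whole quantity is absorbed by $1+\log^+|x_1|$) or $|x|\gg|x_1|$ (so $q\sim p$ and the ratio is bounded); when $p<1$ the dominating piece is $|\log p|$, matching the $\log^-|x-x_1|$ contribution to $k$ under the natural positive interpretation of that notation. The main obstacle is really just the bookkeeping in this case analysis together with the sign convention on $\log^-$; the essential mechanism of the lemma is the cancellation at $\lambda=0^+$ that kills the apparent $\log\lambda$ singularity, and this is transparent once the two summands are written in parallel.
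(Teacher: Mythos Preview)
Your proposal is correct and follows essentially the same approach as the paper: differentiate $F^\pm$ into the two cutoff-derivative terms plus the $(a/\lambda)[\chi(\lambda p)-\chi(\lambda q)]$ term, use the support constraint $\lambda p\sim 1$ (resp.\ $\lambda q\sim 1$) to bound the first two, and bound the third by $\lambda^{-1}$; then feed this into the Fundamental Theorem of Calculus together with the cancellation $F^\pm(0+,x,x_1)=a\log(p/q)$. The paper's own proof is in fact more terse than yours---it largely refers to Schlag's paper and only verifies the integrability of $\partial_\lambda F^\pm$ near zero---so your explicit reduction to $|\log(p/q)|\les k(x,x_1)$ and the subsequent case analysis fill in details that the paper takes for granted; your remark about the sign convention on $\log^-$ is also on point.
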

Here, $F^{\pm}(0+,x,x_1)$ denotes $\lim_{\lambda\to 0^+} F^{\pm}(\lambda,x,x_1)$.
In particular, we note that $\partial_\lambda F(\lambda,x_1,x)$ is integrable in a neighborhod of zero.
\begin{proof} These bounds are  established in \cite{Sc2}. For the sake of completeness, we show that $\partial_\lambda F(\lambda,x_1,x)$ is integrable in a neighborhood of zero. 
Note that we have
\begin{align*}
|\partial_\lambda F^{\pm}(\lambda,x_1,x)| = p \chi^\prime (\lambda p ) [\log(\lambda p) + \alpha^{\pm}] + q [\chi^\prime(\lambda q ) \log(\lambda q) + \alpha^{\pm}] + \frac{\chi(\lambda p) - \chi(\lambda q)}{\lambda} 
\end{align*}
Notice that first term is supported only when $\lambda \approx p^{-1}$. Hence, its contribution to the integral is bounded. The second term is bounded similarly. For the third term, notice $\chi(\lambda p) - \chi(\lambda q)$ is supported in $[2 \lambda_1 p^{-1}, 2 \lambda_1 q^{-1}]$ and its contribution to the integral is bounded by $k(x,x_1)$. 
\end{proof}

\begin{proof}[Proof of  Lemma~\ref{prop:QD0Q bound}] We consider the following case
$$
	\sup_{x,y} \bigg|	
	\int_0^{\infty} e^{-it \lambda^4} \lambda^3 \chi(\lambda)  
	\big[R^+vQD_0QvR^+](\lambda^4)(x,y)
	\big] 	 d \lambda \bigg| \les \la t \ra^{-1}.
	$$
Following the argument below, the same bound holds if one exchanges $R^+$ with $R^-$. 

Notice that using the orthogonality property $Qv=vQ=0$ and \eqref{Rdecompose}, we can exchange $R^+$ on both sides of $vQD_0Qv$ with 
\begin{align}\label{eqn:H def}
	H^{+}(\lambda,x,x_1):= F^{+} (\lambda,x,x_1) + \chi(\lambda |x-x_1|) \widetilde O_1( (\lambda |x-x_1|)^{\f12}) + A( \lambda,|x-x_1|),
	\end{align}
and consider 
\begin{align} \label{HQH}	
	\int_0^{\infty} e^{-it \lambda^4} \lambda^3 \chi(\lambda)  
	\big[H^+vQD_0QvH^+](\lambda^4)(x,y)
	\big] 	 d \lambda.
	\end{align}
Note that this integral is bounded, since by Lemma~\ref{cancellemma} $|H^{\pm}(\lambda,x,x_1)| \les k(x,x_1)$ and one has
\begin{align} \label{spat bound}
  \|v(x_1)k(x,x_1)\|_{L^2_{x_1}} \| |QD_0Q| \|_{L^2\to L^2}
		\| v(y_1) k(x,x_1) \|_{L^2_{y_1}} \les 1
\end{align}
provided $v(x_1)[ \la x_1\ra^{0+}+\log^-|x_1-\cdot|]\in L^2$. Here we also used the fact that $QD_0Q$ is absolutely bounded. 

Next we prove the time dependent bound on \eqref{HQH}. Suppressing the operators and spatial integrals for the moment, by integration by parts it is enough to bound 
$$
		\bigg|\frac{1}{4it}\int_0^\infty e^{-it\lambda^4}\partial_\lambda \{ \chi(\lambda) H(\lambda,x,x_1)  H(\lambda,y,y_1) \} \, d\lambda \bigg|.  
$$
Note  by Fundamental Theorem of Calculus and the fact that $\chi(\lambda)$ has compact support, it suffices to bound this integral to bound \eqref{HQH}.

Notice that by Lemma~\ref{cancellemma} and \eqref{eqn:A def}, one has  
\begin{multline} \label{HH}
 \int_0^{\infty} | \partial_\lambda \{ \chi(\lambda) H^{\pm} \} (x,x_1) H^{\pm} (y_1,y)| \ d \lambda  \\
	\les k(y,y_1) \Bigg( \int_0^{2\lambda_1} |\partial_\lambda F| d \lambda + \int_{0} ^{r_1^{-1}}\frac{r_1^{\f12}}{\lambda^{\f12} } d \lambda + \int_{r_1^{-1}} ^{\infty} \frac{1}{\lambda^{\f32} r_2^{\f12}} d \lambda \Bigg) \les k(y,y_1) k(x,x_1)  . 
	\end{multline} 

Hence, by symmetry and \eqref{spat bound} we establish the statement. 
\end{proof}
	We next estimate the contribution of 
\begin{align*} 
 R^{\pm} (\lambda^4)vh_{\pm}^{-1} (\lambda)S v R^{\pm} (\lambda^4)
 \end{align*}
to the Stone's formula.  In contrast to the previous lemma, we do not have any orthogonality properties to use. Therefore, we have to utilize the cancellation between $+$ and $-$ terms arising due to Stone's formula. To do that we use the algebraic fact, 
\begin{align}\label{alg fact}
	\prod_{k=0}^MA_k^+-\prod_{k=0}^M A_k^-
	=\sum_{\ell=0}^M \bigg(\prod_{k=0}^{\ell-1}A_k^-\bigg)
	\big(A_\ell^+-A_\ell^-\big)\bigg(
	\prod_{k=\ell+1}^M A_k^+\bigg).
\end{align}

\begin{lemma}\label{prop:S bound}  We have the bound
	$$
		\sup_{x,y} \bigg|	
		\int_0^{\infty} e^{it \lambda^4} \lambda^3 \chi(\lambda)  
		\bigg[\frac{R^{+} (\lambda^4)vS v R^{+} (\lambda^4)}{h_+(\lambda)}-\frac{R^{-} (\lambda^4)v S v R^{-} (\lambda^4)}{h_-(\lambda)}\bigg](x,y)
		\, d \lambda \bigg| \les \la t\ra^{-1}.
	$$

\end{lemma}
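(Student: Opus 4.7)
My plan is to parallel the strategy of Lemma~\ref{prop:QD0Q bound}, but since the orthogonality $Qv=0$ is unavailable here (because $S$ contains the projection $P$), the scalar cancellation will instead come from the $+/-$ cancellation in Stone's formula together with the algebraic identity $\tilde g_1^{\pm}(\lambda)=h_{\pm}(\lambda)-\tau$, where $\tau:=\mathrm{tr}(PTP-PTQD_0QTP)$ is the real constant built into $h_\pm$.

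First, I apply \eqref{alg fact} with three factors (the scalar $h_{\pm}^{-1}$ and the two copies of $R^{\pm}$) to write
\begin{equation*}
\frac{R^+ vSvR^+}{h_+}-\frac{R^- vSvR^-}{h_-}=(h_+^{-1}-h_-^{-1})R^+ vSvR^++h_-^{-1}(R^+-R^-)vSvR^++h_-^{-1}R^- vSv(R^+-R^-).
\end{equation*}
By Lemma~\ref{lem:R0low}, $\tilde g_1^+-\tilde g_1^-=i\,\Im z_1$ is a constant, hence so is $h_+-h_-$, and
\begin{equation*}
h_+^{-1}-h_-^{-1}=\frac{-i\,\Im z_1}{h_+h_-}=\widetilde O(|\log\lambda|^{-2}),\qquad \partial_\lambda(h_+^{-1}-h_-^{-1})=\widetilde O(\lambda^{-1}|\log\lambda|^{-3}),
\end{equation*}
both integrable on the support of $\chi$. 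The difference $R^+-R^-$ is uniformly bounded with derivative controlled as in the proof of Lemma~\ref{lem:free est}. These two scalar gains will play the role that $Qv=0$ played in Lemma~\ref{prop:QD0Q bound}.

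The delicate point is that each of the three pieces above still contains outer $R^\pm$ factors carrying the logarithmic scalar $\tilde g_1^\pm(\lambda)\sim\log\lambda$; a naive application of Leibniz's rule then produces a derivative of size $(\lambda|\log\lambda|)^{-1}$, which is not integrable. To remedy this, I will split $R^\pm=\tilde g_1^\pm+\tilde R^\pm$ with $\tilde R^\pm:=G_1+E_0^\pm$ (which is $\log$-bounded in $\lambda$ via Lemma~\ref{lem:R0 exp}) and expand
\begin{equation*}
R^\pm vSvR^\pm=(\tilde g_1^\pm)^2\,\la v,Sv\ra+\tilde g_1^\pm\bigl(\beta^\pm(x)+\beta^\pm(y)\bigr)+\delta^\pm(x,y),
\end{equation*}
with $\beta^\pm:=\tilde R^\pm(v\cdot Sv)$ and $\delta^\pm:=\tilde R^\pm vSv\tilde R^\pm$. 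Using $\tilde g_1^\pm/h_\pm=1-\tau/h_\pm$ and $(\tilde g_1^\pm)^2/h_\pm=h_\pm-2\tau+\tau^2/h_\pm$, the $+/-$ difference of the full bracket reduces to a sum of a $\lambda$-constant leading term $i\la v,Sv\ra\Im z_1$, a bounded multiple of $h_+^{-1}-h_-^{-1}$, scalar error differences $\beta^+-\beta^-$ and $\delta^+-\delta^-$ (each $\widetilde O(\lambda^\ell)$ inherited from $E_0^+-E_0^-$), and mixed pieces of the form $\beta^-(h_+^{-1}-h_-^{-1})$. Each such piece has a value bounded uniformly on the support of $\chi$ and, crucially, an integrable $\lambda$-derivative.

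For $|t|\le 1$ the uniform pointwise bound $|F(\lambda)|\les 1$ combined with the $\lambda^3$ weight gives $O(1)$ directly. For $|t|>1$ I integrate by parts once using $\partial_\lambda e^{-it\lambda^4}=-4it\lambda^3 e^{-it\lambda^4}$; the boundary term at $\lambda=0$ evaluates to $F(0^+)/(4it)=i\la v,Sv\ra\Im z_1/(4it)=O(|t|^{-1})$, and the remaining integral $(4it)^{-1}\int_0^{2\lambda_1}e^{-it\lambda^4}\partial_\lambda\{\chi F\}\,d\lambda$ is $O(|t|^{-1})$ provided $\int|\partial_\lambda\{\chi F\}|\,d\lambda$ is finite uniformly in $x,y$. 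The main obstacle will be precisely this uniform integrability: the singular derivative $\partial_\lambda\tilde g_1^\pm=a_1/\lambda$ must be absorbed into the combinations $\tilde g_1^\pm/h_\pm$ (whose derivative is only $\widetilde O(\lambda^{-1}|\log\lambda|^{-2})$), and the spatial pieces $\partial_\lambda\beta^\pm$, $\partial_\lambda\delta^\pm$ must be controlled by pairing $|\partial_\lambda E_0^\pm|\les(\lambda r)^\ell/\lambda$ from Lemma~\ref{lem:R0 exp} with the uniform-in-$x$ spatial $L^2$ estimate $\|v(x_1)k(x,x_1)\|_{L^2_{x_1}}\les 1$ supplied by Lemma~\ref{cancellemma} under the hypothesis $|V(x)|\les\la x\ra^{-4-}$.
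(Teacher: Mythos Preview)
Your decomposition $R^\pm=\tilde g_1^\pm+G_1+E_0^\pm$ has a genuine gap: it separates $\log\lambda$ (in $\tilde g_1^\pm$) from $\log|x-y|$ (in $G_1$), and the $G_1$-pieces you produce are \emph{not} bounded uniformly in $x,y$. Concretely, your mixed term $\beta^-(x)(h_+^{-1}-h_-^{-1})$ contains the $\lambda$-independent part $\beta_{G}(x):=\int\!\!\int G_1(x,x_1)[vSv](x_1,y_1)\,dx_1\,dy_1$, and since $\la v,Sv\ra=\|V\|_1\neq 0$ one has $\beta_G(x)\sim c\log\la x\ra$ as $|x|\to\infty$. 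Thus $|\partial_\lambda\{\beta_G(x)(h_+^{-1}-h_-^{-1})\}|\sim \log\la x\ra\cdot(\lambda|\log\lambda|^3)^{-1}$, whose $\lambda$-integral is $\sim\log\la x\ra$, not uniformly bounded. The same issue arises in $\delta^\pm/h_\pm$ via the term $G_1vSvG_1\cdot(h_+^{-1}-h_-^{-1})$, which carries $\log\la x\ra\log\la y\ra$. Your final appeal to $\|v(x_1)k(x,x_1)\|_{L^2_{x_1}}\les 1$ does not help here, because $|G_1(x,x_1)|$ is controlled by $k(x,x_1)$ only after subtracting $\log\la x\ra$, and you have no mechanism to absorb that subtraction.

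The paper avoids exactly this by keeping $\log(\lambda|x-y|)$ intact via the low/high $\lambda r$ split $R^\pm=\gamma^\pm+A$. The key estimate is $\chi(\lambda)\chi(\lambda r)|\log(\lambda r)|\les 1+|\log\lambda|+\log^- r$: on the support of $\chi(\lambda r)$ the potential $\log^+|x-x_1|$ growth is absorbed into $|\log\lambda|$, which is then cancelled by $h_\pm^{-1}\sim(\log\lambda)^{-1}$, yielding the uniform bound $|\gamma^\pm/h_\pm|\les 1+\log^- r$ (see \eqref{gammah}). Your algebraic identities $\tilde g_1^\pm/h_\pm=c^{-1}-\tau(ch_\pm)^{-1}$ are in fact precisely what the paper uses later for the \emph{weighted} estimate (Proposition~\ref{prop:S cancel}), where logarithmic spatial weights $w(x)$ are permitted; for the uniform $L^1\to L^\infty$ bound of this lemma that route does not close.
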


\begin{proof} 
 Using \eqref{Rdecompose} and  \eqref{alg fact} we need to bound  the contribution of the following operators, (with $r_1:=|x-x_1|$ and $r_2:=|y-y_1|$)
\begin{align*}
&\Gamma_{ll}^1 := \big[\gamma^{+}(\lambda,r_1)-\gamma^{-}(\lambda, r_1)\big] \frac{v S v}{h_{\pm} (\lambda)} \gamma^{\pm}(\lambda, r_2), \\
 & \Gamma_{ll}^2 := \Big[ \frac{1}{h^{+} (\lambda)} - \frac{1}{h^{-} (\lambda)} \Big] \gamma^{+}(\lambda,r_1)vSv\gamma^{-}(\lambda,r_1),  \\
 & \Gamma_{lh}^\pm := \gamma^{\pm}(\lambda, r_1)\frac{v S v}{h_{\pm} (\lambda)}A(\lambda, r_2 ), \,\,\,\ \Gamma_{hh}^\pm= A(\lambda, r_1 ) \frac{v S v}{h_{\pm} (\lambda)}A(\lambda, r_2 ).
\end{align*} 
We first consider $\Gamma_{ll}^1$. Notice that on the support of $\chi(\lambda)$, one has 
$$
	\chi(\lambda) \chi(\lambda r) |\log(\lambda r)| \les 1+|\log (\lambda)|+\log^-(r).
$$
Hence, we see
\begin{align*} 
 \bigg|\frac{\chi(\lambda r) \log(\lambda r) }{ \log \lambda + z}\bigg|  \les 1+ \frac{1+\log^{-} r}{\log( \lambda) +z}.
\end{align*}
This gives, 
\begin{align} 
& \big|\frac{\gamma^{\pm}(\lambda, r)}{h^{\pm} (\lambda)} \big| \les 1 + \log^{-} r  \label{gammah} \\
 & \Big|\partial_\lambda \Big[ \frac{\gamma^{\pm}(\lambda, r)}{h^{\pm} (\lambda)} \Big]  \Big| 
\les \chi(\lambda r) \lambda^{-1/2}r^{1/2} + \frac{1+\log^{-} r}{\lambda (\log( \lambda) +z)^2} \label{gammahder}.
\end{align}
Moreover we have,  
\begin{align} \label{gammah dif}
 |\gamma^{+}(\lambda, r)-\gamma^{-}(\lambda, r)\big| \les \chi(\lambda r) , \quad
 \big| \partial_\lambda\big[\gamma^{+}(\lambda, r)-\gamma^{-}(\lambda, r)\big] \big|\les \chi(\lambda r) \lambda^{-\f12}r^{\f12}. 
\end{align}
Therefore, we have 
\begin{align} \label{Gamma bound}
\sup_{x,y}  \int_0^{\infty} \Big| \partial_\lambda& \{ \chi(\lambda) \Gamma_{ll}^1(\lambda)(x,y) \} \Big|d\lambda \les \\
 \int_{\R^{8}} \int_0^{\infty} & \frac {\chi(\lambda) \chi(\lambda r_1) \chi(\lambda r_2) \max(r_1^{\f12},r_2^{\f12},1 ) [vSv](x_1,y_1)}{\lambda^{\f12}} d \lambda dx_1 dy_1 \nn \\ + 
 & \int_{\R^{8}} \int_0^{\infty} \frac{\chi(\lambda) [vSv](x_1,y_1)k(y,y_1)}{\lambda (\log( \lambda) +z)^2} d \lambda dx_1 dy_1 \les 1,\nn
\end{align}
which suffices to establish the bound $t^{-1}$ for $\Gamma^1_{ll}$.
For the final bound, we note that  
$$
	\bigg|\int_0^\infty 	 \frac {\chi(\lambda) \chi(\lambda r_1) \chi(\lambda r_2) \max(r_1^{\f12},r_2^{\f12},1 )}{\lambda^{\f12}}\, d\lambda \bigg| \les \int_0^1 \lambda^{-\f12}\, d\lambda+ \sum_{j=1}^{2} \int_0^{r_j^{-1}} r_j^{\f12}\lambda^{-\f12}\, d\lambda \les 1.
$$
Moreover, noting that $S$ is absolutely bounded, similar to \eqref{spat bound}, one can control the spatial integral since
\begin{align} \label{vk bound}
 \|v(x_1) \|_{L^2_{x_1}} \|\,|S|\,\|_{L^2 \rightarrow L^2} \| v(y_1) k(y_1,\cdot)  \|_{L^2_{y_1}} \les 1,
\end{align}
uniformly in $x$ and $y$.
Finally, using \eqref{gammah},\eqref{gammah dif} together with \eqref{vk bound}, we obtain the boundedness of the contribution of $\Gamma_{ll}^1$ to the Stone's formula.

Next we prove the statement for $\Gamma_{ll}^2$. Note that 
$$
	\frac{1}{h_{+} (\lambda)} - \frac{1}{h_{-} (\lambda)} =
	\frac{1}{\widetilde g^+(\lambda)+c}-\frac{1}{\widetilde g^-(\lambda)+c} =\frac{2i\Im(z_1)}{(\widetilde g^-(\lambda)+c)(\widetilde g^+(\lambda)+c)}. $$
Therefore, 
\begin{align} \label{h dif bound}
\Bigg| \left[ \frac{1}{h^{+} (\lambda)} - \frac{1}{h^{-}(\lambda)} \right] \gamma^{\pm}(\lambda,r_1) \gamma^{\pm}(\lambda,r_2)\Bigg| \les k(x,x_1)k(y,y_1).
\end{align}
 Moreover, by \eqref{gammahder}, one has 
\begin{align*}
\Bigg| \partial_\lambda \left(\left[ \frac{1}{h^{+} (\lambda)} - \frac{1}{h^{-}(\lambda)} \right] \gamma^{\pm}(\lambda,r_1) \gamma^{\pm}(\lambda,r_2) \right)\Bigg| \les \frac{k(x,x_1)k(y,y_1)}{\lambda \log^2 \lambda} + \frac{ \max(r_1^{\f12},r_2^{\f12},1 ) }{\lambda^{1/2}}.
\end{align*}  
 Hence, we obtain
 \begin{multline*}
\sup_{x,y}  \int_0^{\infty} \Big| \partial_\lambda \{ \chi(\lambda) \Gamma_{ll}^2 (\lambda)(x,y) \} \Big|d\lambda \les \\
 \int_{\R^{8}} \int_0^{\infty}  \frac {\chi(\lambda) \chi(\lambda r_1) \chi(\lambda r_2) \max(r_1^{\f12},r_2^{\f12},1 ) [vSv](x_1,y_1)}{\lambda^{1/2}} d \lambda dx_1 dy_1  \\ + 
  \int_{\R^{8}} \int_0^{\infty} \frac{\chi(\lambda) k(x,x_1) [vSv](x_1,y_1) k(y,y_1)}{\lambda (\log (\lambda))^2}d \lambda dx_1 dy_1 \les 1.
\end{multline*}
The boundedness of the spatial integrals is controlled as in \eqref{spat bound} and the previous case. Moreover,  \eqref{h dif bound} together with \eqref{vk bound} shows that the contribution of $\Gamma_{ll}^2$ to the Stone's formula is bounded by one. 

For the remaining cases, we do not rely on any cancellation between the `+' and `-' terms. In fact, the contribution of
$\Gamma_{hh}$ is included in the analysis of integral \eqref{HQH}  in Lemma~\ref{prop:QD0Q bound}. Therefore, we consider only $\Gamma_{lh}$. The contribution of $\Gamma_{lh}$ is bounded by \eqref{gammah} and the fact that $ |A(\lambda, r )| \les 1$. For the time bound, note that if the derivative falls on $ h^{-1}_{\pm} (\lambda) \gamma^{\pm}(\lambda, r_1)$, we use \eqref{gammahder} and control the integral as in $\Gamma_{ll}^1$ or $\Gamma_{ll}^2$. If the derivative falls on $A(\lambda, r_2 )$,  we have 
\begin{align*}
	\sup_{x,y}  \int_0^{\infty} \Big| \chi(\lambda) &\gamma^{\pm}(\lambda, r_1)\frac{v S v}{h_{\pm} (\lambda)} \partial_{\lambda}A(\lambda, r_2 ) \} \Big|d\lambda \les \\
	\int_{\R^{8}} \int_0^{\infty} & \frac {\chi(\lambda) (1+\log^{-} r_1) |vSv|(x_1,y_1) \widetilde{\chi}(\lambda r_2)}{r_2^{\f12} \lambda^{\f32}} d \lambda dx_1 dy_1 \\
	\les  \int_{\R^{8}} &  (1+\log^{-} r_1) |vSv|(x_1,y_1) \int_0^{\infty} \frac{\tilde{\chi}(\lambda r_2)}{r^{\f12} \lambda^{\f32}} d \lambda dx_1 dy_1 \les 1.
\end{align*}
Again, the boundedness of the spatial integrals follows from the absolute boundedness of $S$ as in \eqref{spat bound}, and the previous cases.

\end{proof}

The smallness in $\lambda$ as $\lambda \to 0$ in the error term, see Lemma~\ref{regular}, allows us to integrate by parts directly.  While we cannot take advantage of any cancellation from the difference of `+' and `-' terms, this lack of cancellation is more than compensated for by the smallness of $E^\pm(\lambda)$. We prove the following lemma to control the contribution of $E^\pm(\lambda)$. 

\begin{lemma}\label{lem:Error term} Let $|V(x)| \les \la x \ra^{-4-}$. Assume $E(\lambda)= \widetilde{O}_1(\lambda^{\ell})$ for  some $0<\ell <1$ as an absolutely bounded operator. Then, we have the bound
	$$
	\sup_{x,y} \bigg|	
	\int_0^{\infty} e^{-it \lambda^4} \lambda^3 \chi(\lambda)  
	R^\pm (\lambda^4) vE^\pm (\lambda )vR^\pm (\lambda^4)	\, d \lambda \bigg| \les \la t\ra^{-1}.
	$$	
	
\end{lemma}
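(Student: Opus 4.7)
The plan is to follow the strategy of Lemmas~\ref{prop:QD0Q bound} and~\ref{prop:S bound}: establish a uniform-in-$(x,y)$ bound on the integrand directly (good for $|t|\leq 1$), and then gain the $|t|^{-1}$ decay for $|t|>1$ by one integration by parts in $\lambda$, whose boundary terms vanish because $E^\pm=\widetilde O(\lambda^\ell)$ at $\lambda=0$ and $\chi$ is compactly supported at infinity. The essential new point is that neither of the two tricks from those lemmas—the orthogonality $Qv=0$, nor the $R^+-R^-$ cancellation—is available here. Instead, the $\lambda^\ell$ smallness of $E^\pm$ will absorb the logarithmic behavior of $R^\pm(\lambda^4)$ as $\lambda\to 0$.

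Since the naive bound $\|R^\pm(\lambda,x,\cdot)v(\cdot)\|_{L^2_{x_1}}$ grows logarithmically in $\la x\ra$, to restore $x$-uniformity I would decompose, in the spirit of Lemma~\ref{cancellemma},
$$
R^\pm(\lambda^4)(x,x_1)=F^\pm(\lambda,x,x_1)+\phi^\pm(\lambda,x)+R^\pm_{\mathrm{rem}}(\lambda,x,x_1),
$$
with $\phi^\pm(\lambda,x):=\chi(\lambda\la x\ra)[a_1\log(\lambda\la x\ra)+\alpha^\pm]$ and $R^\pm_{\mathrm{rem}}$ collecting the $\widetilde O_2((\lambda r_1)^2)$ correction on the low-frequency part together with the high-frequency piece $\widetilde\chi(\lambda r_1)A(\lambda,r_1)$, where $r_1=|x-x_1|$. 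The key point is that $\phi^\pm$ depends only on $(\lambda,x)$ and therefore pulls out of every $x_1$-integral; on the support of $\chi(\lambda\la x\ra)$ one has $\la x\ra\les\lambda^{-1}$, yielding $|\phi^\pm(\lambda,x)|\les|\log\lambda|$ and $|\partial_\lambda\phi^\pm(\lambda,x)|\les\lambda^{-1}$ \emph{uniformly in $x$}. For $F^\pm$ I would use $|F^\pm|\les k(x,x_1)$, $|\partial_\lambda F^\pm|\les\lambda^{-1}$ from Lemma~\ref{cancellemma}, together with the uniform bound $\|v(x_1)k(x,x_1)\|_{L^2_{x_1}}\les 1$; for $R^\pm_{\mathrm{rem}}$ the pointwise bound $|R^\pm_{\mathrm{rem}}|\les 1$ and the derivative bounds inherited from Lemma~\ref{lem:R0 exp}.

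With this splitting, $R^\pm vE^\pm vR^\pm$ expands into nine pieces. The $\phi\cdot\phi$ piece pulls both $\phi^\pm$ factors outside the spatial integrals and reduces to $\phi^\pm(\lambda,x)\phi^\pm(\lambda,y)\la v,E^\pm v\ra\les(\log\lambda)^2\lambda^\ell$. Pieces with a single $\phi^\pm$ factor pull that factor out and use Cauchy--Schwarz with the absolute-bound $\|E^\pm\|_{L^2\to L^2}\les\lambda^\ell$, pairing the other side against $v(x_1)F^\pm$ or $v(x_1)R^\pm_{\mathrm{rem}}$, each of which has uniformly bounded $L^2$ norm. The remaining $F\cdot F$, $F\cdot R_{\mathrm{rem}}$ and $R_{\mathrm{rem}}\cdot R_{\mathrm{rem}}$ pieces are bounded by Cauchy--Schwarz directly. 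In every case the integrand is pointwise $\les\lambda^\ell(\log\lambda)^2$ uniformly in $(x,y)$, so $\lambda^3\chi(\lambda)$ times it is uniformly integrable in $\lambda$, settling $|t|\leq 1$.

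For $|t|>1$, after integrating by parts once, the derivative falls either on $\chi(\lambda)$, on $E^\pm$ (using $\partial_\lambda E^\pm=\widetilde O(\lambda^{\ell-1})$), or on a factor $R^\pm$. The first two cases cost at most one power of $\lambda$ and leave an integrand $\les\lambda^{2+\ell}(\log\lambda)^2$, still integrable. The hard case—which I expect to be the main obstacle—is when $\partial_\lambda$ hits a factor $R^\pm$; here I would use $|\partial_\lambda F^\pm|\les\lambda^{-1}$ from Lemma~\ref{cancellemma}, the elementary $|\partial_\lambda\phi^\pm|\les\lambda^{-1}$, and for $\partial_\lambda R^\pm_{\mathrm{rem}}$ the bound $\chi(\lambda r_1)\lambda^{-\f12}r_1^{\f12}+\widetilde\chi(\lambda r_1)\lambda^{-\f32}r_1^{-\f12}$ obtained from the proof of Lemma~\ref{lem:R0 exp} and \eqref{eqn:A def}. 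Doing the $\lambda$-integration first, the $\lambda^\ell$ smallness of $E^\pm$ renders the resulting $\lambda$-integral uniformly convergent in $r_1$, after which the $x_1$-integration against $v$ is handled exactly as in the time-uniform regime.
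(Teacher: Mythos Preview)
Your approach is correct, but it is considerably more elaborate than what the paper does. The key simplification you missed is Remark~\ref{rmk:R0low}: since $a_1\log(\lambda|x-y|)+\alpha^\pm = \widetilde O((\lambda|x-y|)^{-\epsilon})$ for any $\epsilon>0$, the full resolvent satisfies $R^\pm(\lambda^4)(x,y)=\widetilde O_1((\lambda|x-y|)^{-\epsilon})$ for $\epsilon=0+$. This single bound replaces your three--piece decomposition $R^\pm=F^\pm+\phi^\pm+R^\pm_{\mathrm{rem}}$: the logarithmic growth in $\la x\ra$ that forced you to peel off $\phi^\pm$ is instead traded for a mild local singularity $|x-x_1|^{-\epsilon}$, which is absorbed by $v$ since $\||x-x_1|^{-\epsilon}v(x_1)\|_{L^2_{x_1}}\les 1$ uniformly in $x$. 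With this, the paper's proof is essentially one line: after integration by parts the integrand is dominated by $\lambda^{\ell-1-2\epsilon}$ times a $\lambda$-independent spatial kernel, and choosing $\ell>2\epsilon$ (i.e.\ $\epsilon=0+$) makes the $\lambda$ integral converge. Your nine--term expansion reaches the same conclusion, but via the heavier machinery of Lemma~\ref{cancellemma} and the explicit high/low splitting, none of which is needed once the $\lambda^\ell$ gain from $E^\pm$ is available to beat the $(\log\lambda)^2$ loss.

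Two minor slips: after integration by parts the $\lambda^3$ is already absorbed, so when $\partial_\lambda$ hits $E^\pm$ the integrand is $\lambda^{\ell-1}(\log\lambda)^2$, not $\lambda^{2+\ell}(\log\lambda)^2$ as you wrote (still integrable, so harmless). And your final sentence ``doing the $\lambda$-integration first'' is imprecise since $E^\pm(\lambda)$ is an operator; what actually works is to bound the spatial integral at each fixed $\lambda$ by $\|v\,\partial_\lambda R^\pm(\lambda,x,\cdot)\|_{L^2}\,\||E^\pm(\lambda)|\|_{L^2\to L^2}\,\|v\,R^\pm(\lambda,\cdot,y)\|_{L^2}\les \lambda^{-1}\cdot\lambda^\ell\cdot|\log\lambda|$ and then integrate the resulting scalar in $\lambda$.
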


\begin{proof}
	
	For this proof, we use a less delicate expansion for the free resolvent, 
	$$
		R^\pm(\lambda^4)(x,y) =\widetilde O_1( (\lambda |x-y|)^{-\epsilon} ),
	$$
	where we may choose any $\epsilon>0$, see Remark~\ref{rmk:R0low}.  We choose $\epsilon=0+$ to minimize the required decay on the potential.
	Then we consider 
	$$
		\int_0^\infty e^{-it \lambda^4} \lambda^3 \chi(\lambda)  
		\widetilde O_1( \lambda^{-2\epsilon} |y-y_1|^{-\epsilon}|x-x_1|^{-\epsilon}) vE^\pm v (\lambda )(y_1,x_1) \, d \lambda
	$$
	This integral is easily seen to be bounded. To see the time bound, we integrate by parts once, noting that the bounds on $E^\pm(\lambda)$ ensure the lack of boundary terms. Let $\ell = 2\epsilon+$ in Lemma~\ref{regular}. We have (with $r_1=|x-x_1|$ and $r_2=|y-y_1|$)
	\begin{multline*}
		\bigg|\int_0^\infty e^{-it \lambda^4} \lambda^3 \chi(\lambda) 
		\widetilde O_1( \lambda^{-2\epsilon} r_1^{-\epsilon}r_2^{-\epsilon}) vE^\pm v (\lambda )(y_1,x_1) \, d \lambda \bigg|\\
		\les \frac{1}{t}\int_0^\infty \bigg| \partial_\lambda \big\{ \chi(\lambda)
		\widetilde O_1( \lambda^{-2\epsilon} r_1^{-\epsilon}r_2^{-\epsilon}) vE^\pm v (\lambda )(y_1,x_1) \big\} \bigg|\, d\lambda\\
		\les \frac{1}{t} \int_0^\infty \lambda^{ \ell-1-2\epsilon}  r_2^{-\epsilon}v(y_1) [ \lambda^{-\ell} |E^\pm(\lambda)|+\lambda^{-\ell+1}|\partial_\lambda E^\pm(\lambda)| ]v(x_1)r_1^{-\epsilon}\, d\lambda.
	\end{multline*}
	It is easy to see that the $\lambda$ integral converges. Moreover, the spatial integral converges since
	$$
		\||y-y_1|^{-\epsilon} v(y_1)\|_{L^2_{y_1}}\big\| \sup_{0< \lambda \ll 1} \sum_{j=0}^1 \lambda^{j-\ell} | \partial_\lambda ^j E^{\pm}(\lambda)(x,y)|\big\|_{HS} \||x-x_1|^{-\epsilon} v(x_1)\|_{L^2_{x_1}}
	$$
	is bounded uniformly in $x,y$ for our choice of $ \epsilon =0+ $.
	
\end{proof}


	


We are now ready to prove the main proposition.

\begin{proof}[Proof of Proposition~\ref{prop:zero reg unwtd} ]
	
	By the symmetric resolvent identity, \eqref{eqn:Rv expansion}, and the discussion following the statement of the proposition, we need to control the contribution of
	$$
		h^\pm(\lambda)^{-1} S +QD_0Q+E^\pm(\lambda)
	$$
	to $(M^{\pm}(\lambda))^{-1}$ in the Stone's formula.  The required bounds are established in Lemmas~\ref{prop:S bound}, \ref{prop:QD0Q bound} and \ref{lem:Error term} respectively.

\end{proof}

\subsection{Weighted Dispersive bound}\label{sec:wtd}

It is known that when zero energy is regular for the two dimensional Schr\"odinger equation, one can obtain a faster time decay at the cost of spatial weights, \cite{Mur,eg3}. In this section we show that this is also true for the fourth order Schr\"odinger equation in four dimensions. The proof here are inspired by the weighted dispersive bound for the two-dimensional Schr\"odinger operator obtained in \cite{eg3}.
The following Proposition is the main result of this section. 

\begin{prop} \label{prop:main weight}Let $|V(x)|\les \la x\ra^{-4-}$. We have the bound, for $t>2$
	$$
		\bigg|\int_0^\infty e^{-it\lambda^4} \lambda^3 \chi(\lambda) [R_V^+-R_V^-](\lambda)(x,y)\, d\lambda \bigg| 
		\les \frac{w(x)w(y)}{t \log^2 t}+\frac{\la x\ra^{\f12} \la y \ra^{\f12}}{t^{1+}}
	$$
	where $w(x)=\log^2 (2+|x|)$.
	
\end{prop}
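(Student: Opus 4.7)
The plan is to decompose via the symmetric resolvent identity \eqref{eqn:Rv expansion} and revisit the four summands appearing in the proof of Proposition~\ref{prop:zero reg unwtd}, but this time tracking the leading $O(1/t)$ contributions exactly and squeezing out an extra $\log^{-2} t$ factor from the Lipschitz estimate in Corollary~\ref{cor:Error Lips}. The free contribution $R^+-R^-$ is handled by Lemma~\ref{lem:free est}, which already delivers $-\Im z_1/(4t)+O(t^{-9/8}\la x\ra^{\f12}\la y\ra^{\f12})$; the error fits the second piece of the target bound, while the constant $-\Im z_1/(4t)$ is the obstruction that prevents the \emph{free} evolution from decaying faster than $1/t$ and must be cancelled by contributions coming from $v h_\pm^{-1} S v$.

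For the $vh_\pm^{-1} S v$ piece I would refine the analysis of Lemma~\ref{prop:S bound}. Using the identity
\begin{align*}
\frac{1}{h^+(\lambda)}-\frac{1}{h^-(\lambda)}=\frac{2i\,\Im z_1\,\|V\|_1^2}{h^+(\lambda)h^-(\lambda)}
\end{align*}
together with the rank-one $P$-$P$ block of $S$ and the leading behavior $R^\pm(\lambda^4)=\tilde g_1^\pm(\lambda)+G_1+E_0^\pm$ from Lemma~\ref{lem:R0 exp}, a single integration by parts in $\lambda$ produces a boundary contribution of exactly $+\Im z_1/(4t)$ that cancels the free leading term. The remaining bulk integrand behaves like $(\lambda\log^3\lambda)^{-1}$ near zero, with spatial factors controlled by $k(x,x_1)k(y,y_1)$ from Lemma~\ref{cancellemma}. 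To turn this into the desired $w(x)w(y)/(t\log^2 t)$ bound I would use the oscillation/shift technique of \cite{eg3}: replace the oscillatory integral $I$ by $\tfrac12(I-\tilde I)$ where $\tilde I$ is the same integral shifted by roughly $\pi(2t\lambda^3)^{-1}$ in the spectral variable, and then invoke the Hölder regularity of $\partial_\lambda E_0^\pm$ provided by Corollary~\ref{cor:Error Lips}. On $\lambda\ge t^{-1/4}$ this delivers the extra $\log^{-2} t$ factor, while $\lambda\le t^{-1/4}$ is controlled by volume after using the $\log^2$ decay of the $h_\pm^{-1}$ factors.

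For the $vQD_0Qv$ term I would repeat Lemma~\ref{prop:QD0Q bound} using $Qv=vQ=0$ to replace each outer $R^\pm$ by $H^\pm$ as in \eqref{eqn:H def}, integrate by parts once to gain $1/t$, and apply the same shift-plus-Hölder strategy backed by Corollary~\ref{cor:Error Lips} (with $\alpha=\ell=0+$) to collect the additional $\log^{-2} t$ factor; the logarithmic spatial weights $w(x)w(y)$ enter through the estimates $\|v(\cdot)k(x,\cdot)\|_{L^2}\les w(x)$ combined with absolute boundedness of $QD_0Q$. For the $vE^\pm v$ error term, the smallness $E^\pm(\lambda)=\widetilde O_1(\lambda^{0+})$ and one integration by parts already yield the second summand $\la x\ra^{0+}\la y\ra^{0+}/t^{1+}$ of the target, exactly as in Lemma~\ref{lem:Error term}.

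The main obstacle is twofold. First, the algebraic cancellation of the constant $\Im z_1/(4t)$ between the free contribution and the $P$-block of $vh_\pm^{-1} S v$ must be verified carefully, including the constants coming from the trace piece of $h^\pm(\lambda)$ and the explicit form of $S$ in Lemma~\ref{regular}. Second, executing the Hölder-shift argument cleanly so that the non-integrable $(\lambda\log^3\lambda)^{-1}$ behavior of the post-IBP integrand is converted into a time factor of $\log^{-2}t$, while all spatial integrals remain uniformly bounded against $w(x)w(y)$, is delicate and follows the template of the two-dimensional Schr\"odinger analysis in \cite{eg3}.
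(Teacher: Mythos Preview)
Your identification of the $\Im z_1/(4t)$ cancellation between the free contribution and the $h_\pm^{-1}S$ block is correct and is exactly the content of the paper's Proposition~\ref{prop:S cancel}. Two steps of your proposal, however, do not work as written.

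For the $vE^\pm v$ term: a single integration by parts, as in Lemma~\ref{lem:Error term}, yields only $\la t\ra^{-1}$, not $t^{-1-}$. The extra $t^{0-}$ gain requires the shift estimate of Lemma~\ref{lem:ibpweight} together with the Lipschitz bound on $E^\pm$ from Lemma~\ref{regular}; this is carried out in the paper's Lemma~\ref{lem:error wtd lips}.

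For $QD_0Q$: replacing $R^\pm$ by $H^\pm$ and then applying the shift does not produce a $\log^{-2}t$ gain. The obstruction is the $F^\pm$ piece of $H^\pm$: one has only $|\partial_\lambda F^\pm|\les\lambda^{-1}$, integrable thanks to its support (Lemma~\ref{cancellemma}) but with no $\lambda$-smallness, so the first term in Lemma~\ref{lem:ibpweight} already costs the full $k(x,x_1)k(y,y_1)$ with no time improvement. Invoking Corollary~\ref{cor:Error Lips} here is a mismatch, since that bound is for $E_0^\pm$, not $F^\pm$. The paper instead uses the decomposition $R^\pm=\tilde g_1^\pm+G_1+E_0^\pm$ of Lemma~\ref{lem:R0 exp}: the scalar $\tilde g_1^\pm$ is killed by $Qv$, $G_1$ is $\lambda$-independent, and after taking the $\pm$ difference every surviving term carries at least one factor of $E_0^\pm=\widetilde O_1((\lambda r)^{0+})$. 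This places the $QD_0Q$ contribution in the $\la x\ra^{0+}\la y\ra^{0+}/t^{1+}$ bucket (Lemma~\ref{lem:QDQweight}), not the logarithmically weighted one. In fact the \emph{only} source of the $w(x)w(y)/(t\log^2t)$ piece in the final bound is the purely logarithmic part of the $h_\pm^{-1}S$ block, which the paper isolates explicitly as $\widetilde O_2((\log\lambda)^{-2})$ and dispatches with a separate two-derivative oscillatory estimate (Lemma~\ref{lem:log osc}) rather than the shift method.
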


As usual, we begin by using the symmetric resolvent identity \eqref{eqn:Rv expansion}, where we use the expansion in Lemma~\ref{regular} for $(M^{\pm}(\lambda))^{-1}$. Recall that, by Lemma~\ref{lem:free est} the contribution of the first summand in \eqref{eqn:Rv expansion} is
$$
	\frac{\Im z_1}{4 t} + O \big( t^{-\f98} \la x \ra^{\f12} \la y \ra ^{\f12} \big).
$$

\begin{prop}\label{prop:S cancel} Let $|V(x)|\les \la x\ra^{-4-4\ell-}$ for some $\ell >0$. For $t>2$, we have 
	\begin{multline*}
		\int_0^\infty e^{-it\lambda^4} \lambda^4 \chi(\lambda) [R^+(\lambda^4)\frac{vSv}{h^+(\lambda)}R^+(\lambda^4)
		-R^-(\lambda^4)\frac{vSv}{h^-(\lambda)}R^-(\lambda^4)](\lambda)(x,y)\, d\lambda \\
		=\frac{\Im z_1}{4 t}+O\bigg( \frac{w(x)w(y)}{t \log^2 t} \bigg)+O\bigg( \frac{\la x \ra^{2\ell} \la y \ra ^{2\ell} }{t^{1+}} \bigg).
	\end{multline*}
	
\end{prop}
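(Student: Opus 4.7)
Following the strategy of Lemma~\ref{prop:S bound}, the plan is to apply the algebraic identity \eqref{alg fact} with $M=2$ to decompose
$$R^+ \frac{vSv}{h^+}R^+ - R^- \frac{vSv}{h^-}R^- = (R^+ - R^-)\frac{vSv}{h^+}R^+ + R^- vSv\bigg(\frac{1}{h^+} - \frac{1}{h^-}\bigg) R^+ + R^- \frac{vSv}{h^-}(R^+ - R^-),$$
isolating a single small factor in each summand. By Lemma~\ref{lem:R0 exp} one has $R^+(\lambda^4) - R^-(\lambda^4) = i\Im z_1 + \widetilde O_2((\lambda|x-y|)^{1/2})$, while $1/h^+(\lambda) - 1/h^-(\lambda) = -i\Im z_1/(h^+ h^-) = O(\log^{-2}\lambda)$. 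The middle summand is responsible for the $\log^{-2} t$ improvement, and the outer two summands carry the constant piece that produces the main term $\Im z_1/(4t)$.

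The principal contribution $\Im z_1/(4t)$ emerges after one integration by parts using $\lambda^3 e^{-it\lambda^4} = -\partial_\lambda e^{-it\lambda^4}/(4it)$, and appears as the boundary value of the integrand at $\lambda=0^+$. To identify this limit I would substitute the expansion $R^\pm(\lambda^4)(x,y) = \tilde g_1^\pm(\lambda) + G_1(x,y) + E_0^\pm(\lambda)(x,y)$ from Lemma~\ref{lem:R0 exp} and track cancellations. The key algebraic identity is
$$\frac{(\tilde g_1^+)^2}{h^+} - \frac{(\tilde g_1^-)^2}{h^-} = i\Im z_1 - \frac{c^2\,i\Im z_1}{h^+ h^-} \longrightarrow i\Im z_1 \quad\text{as } \lambda \to 0^+,$$
where $c$ denotes the constant in $h^\pm = \tilde g_1^\pm + c$, so the individually divergent scalar pieces $(\tilde g_1^\pm)^2/h^\pm$ cancel precisely to leave a finite limit. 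Combined with $\tilde g_1^\pm/h^\pm \to 1$ and the pairing structure of $\langle Sv,v\rangle$ (arising from $Pv=v$ and $Qv=0$ in the block form of $S$), a careful bookkeeping of the sub-principal $G_1$-terms then yields exactly the boundary contribution $\Im z_1/(4t)$.

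For the remaining error, I would analyze each summand following the case structure of Lemma~\ref{prop:S bound}, splitting $R^\pm$ via the low/high cutoffs $\chi(\lambda|x-y|)$ and $\widetilde\chi(\lambda|x-y|)$. After integration by parts the middle summand produces an integrand dominated by $(t\log^2\lambda)^{-1}$ times logarithmic factors $|\log\lambda|$ and $\log^-|x-x_1|$ inherited from the expansion of $R^\pm$; the interpolation trick \eqref{eqn:interp trick} then converts the pointwise $\log^{-2}\lambda$ behavior into $\log^{-2} t$ time decay with spatial weights $w(x)w(y) = \log^2(2+|x|)\log^2(2+|y|)$. For the outer summands, the Lipschitz bound from Corollary~\ref{cor:Error Lips} on $E_0^\pm$ provides a H\"older exponent $\alpha\in(0,1)$ that gains an extra $t^{-\alpha}$ factor beyond the baseline $1/t$, at the cost of polynomial spatial growth $\langle x\rangle^{2\ell}\langle y\rangle^{2\ell}$, producing the second error $\langle x\rangle^{2\ell}\langle y\rangle^{2\ell}/t^{1+}$. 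The chief obstacle will be the precise bookkeeping that extracts exactly $\Im z_1/(4t)$ from the cancellation among several logarithmically-divergent pieces, since this requires threading together the asymptotic behavior of $\tilde g_1^\pm$, $h^\pm$, and $G_1$ with the block structure of $S$ so that only the combined difference yields a finite main term.
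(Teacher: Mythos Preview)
Your proposal mixes two different organizational strategies and contains a genuine misidentification of where the $\log^{-2}t$ gain comes from.

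The paper does \emph{not} begin with the algebraic identity \eqref{alg fact}. Instead it first substitutes $R^{\pm}=\tilde g_1^{\pm}+G_1+E_0^{\pm}$ on both sides, expanding
\[
\frac{R^{\pm}vSvR^{\pm}}{h^{\pm}}=\frac{(\tilde g_1^{\pm})^2}{h^{\pm}}\langle Sv,v\rangle+\frac{\tilde g_1^{\pm}}{h^{\pm}}[G_1vSv+vSvG_1]+\frac{G_1vSvG_1}{h^{\pm}}+\widetilde E_0(\lambda),
\]
and then takes the $\pm$ difference of each \emph{scalar} coefficient. This is the crucial step: each of
\[
\frac{(\tilde g_1^{+})^2}{h^{+}}-\frac{(\tilde g_1^{-})^2}{h^{-}},\qquad \frac{\tilde g_1^{+}}{h^{+}}-\frac{\tilde g_1^{-}}{h^{-}},\qquad \frac{1}{h^{+}}-\frac{1}{h^{-}}
\]
is of the form (constant)$+\widetilde O_2((\log\lambda)^{-2})$, the first constant being $i\Im z_1/\|V\|_1$ and the others zero. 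Multiplying by $\langle Sv,v\rangle=\|V\|_1$ (via $Qv=0$) gives the main term $\Im z_1/(4t)$ after one integration by parts; the $\widetilde O_2((\log\lambda)^{-2})$ pieces, carrying at most $G_1$ factors, feed directly into Lemma~\ref{lem:log osc} to produce $w(x)w(y)/(t\log^2 t)$. No interpolation trick is used here.

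Your decomposition via \eqref{alg fact} does not separate things this cleanly. The middle summand $R^{-}vSv(h_+^{-1}-h_-^{-1})R^{+}$ contains the piece $\tilde g_1^{-}\tilde g_1^{+}\langle Sv,v\rangle\cdot(h_+^{-1}-h_-^{-1})$, which is $O(1)$ as $\lambda\to 0$ (two logs in the numerator cancel two in the denominator), not $O(\log^{-2}\lambda)$. So it is \emph{not} true that ``the middle summand is responsible for the $\log^{-2}t$ improvement'': all three summands contribute to the boundary value $i\Im z_1$, and all three contain $O(\log^{-1}\lambda)$ cross-terms (e.g.\ $i\Im z_1\cdot G_1/h^{\pm}$) that individually fail the hypothesis of Lemma~\ref{lem:log osc}. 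You would need to recombine pieces across the three summands to recover the $\widetilde O_2(\log^{-2}\lambda)$ structure---at which point you have essentially redone the paper's direct expansion. Also, note that $h^{\pm}=\|V\|_1\tilde g_1^{\pm}+c$, not $\tilde g_1^{\pm}+c$, so $\tilde g_1^{\pm}/h^{\pm}\to 1/\|V\|_1$ rather than $1$; the $\|V\|_1$ factor is what makes the main term come out exactly as $\Im z_1/(4t)$.

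Your treatment of the error $\widetilde E_0$ via Lipschitz bounds and Lemma~\ref{lem:ibpweight} is correct and matches the paper.
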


We observe in the proposition that the leading term   from the contribution of
$$
R^+(\lambda^4)\frac{vSv}{h^+(\lambda)}R^+(\lambda^4)
-R^-(\lambda^4)\frac{vSv}{h^-(\lambda)}R^-(\lambda^4)
$$
exactly cancels the term $\frac{\Im z_1}{4 t}$ arises from the contribution of the free resolvent leading term in \eqref{eqn:Rv expansion}.   This allows for the faster time decay  in Proposition~\ref{prop:main weight}.

To establish these bounds, we require the following oscillatory integral estimates.

\begin{lemma}\label{lem:ibp osc}
	
	For $\mathcal E(\lambda)$ compactly supported and $t>2$, we have
	$$
	\bigg| \int_0^\infty e^{-it\lambda^4}\lambda^3 \mathcal E(\lambda)\, d\lambda + \frac{i\mathcal E(0)}{4t} \bigg|\les \frac{1}{t}\int_0^{t^{-\f14}} |\mathcal E'(\lambda)|\, d\lambda+\frac{|\mathcal E'(t^{-\f14})|}{t^{\f54}}
	+\frac{1}{t^2}\int_{t^{-\f14}}^\infty \bigg|\bigg(\frac{\mathcal E'(\lambda)}{\lambda^3} \bigg)^\prime \, d\lambda \bigg|.
	$$
	
\end{lemma}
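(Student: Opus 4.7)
The plan is to integrate by parts once globally, producing the boundary term $\tfrac{i\mathcal E(0)}{4t}$, then split the remaining integral at the scale $\lambda=t^{-1/4}$ where the phase $t\lambda^4$ transitions from order $\lesssim 1$ to order $\gtrsim 1$. On the small‐$\lambda$ piece the oscillation is useless and we keep the remainder as is, while on the large‐$\lambda$ piece we integrate by parts a second time using $e^{-it\lambda^4}\lambda^3=-\tfrac{1}{4it}\tfrac{d}{d\lambda}e^{-it\lambda^4}$, after inserting the factor $\lambda^3/\lambda^3$.

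First I would write $e^{-it\lambda^4}\lambda^3\,d\lambda = -\tfrac{1}{4it}\,d[e^{-it\lambda^4}]$ and integrate by parts. Since $\mathcal E$ is compactly supported the boundary contribution at $+\infty$ vanishes, and the boundary contribution at $0$ produces exactly $-\tfrac{i\mathcal E(0)}{4t}$, which is then moved to the left-hand side. This yields the identity
\begin{equation*}
\int_0^\infty e^{-it\lambda^4}\lambda^3\mathcal E(\lambda)\,d\lambda+\frac{i\mathcal E(0)}{4t}=\frac{1}{4it}\int_0^\infty e^{-it\lambda^4}\mathcal E'(\lambda)\,d\lambda.
\end{equation*}
Splitting the right-hand side at $\lambda=t^{-1/4}$, the piece on $[0,t^{-1/4}]$ is estimated trivially in absolute value by $\tfrac{1}{4t}\int_0^{t^{-1/4}}|\mathcal E'(\lambda)|\,d\lambda$, matching the first term on the right of the claimed bound.

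For the piece on $[t^{-1/4},\infty)$, I would write $\mathcal E'(\lambda)=\lambda^3\cdot\bigl(\mathcal E'(\lambda)/\lambda^3\bigr)$ and integrate by parts again using the same identity for $e^{-it\lambda^4}\lambda^3$. The boundary term at $\infty$ again vanishes, while the boundary term at $\lambda=t^{-1/4}$ evaluates $\mathcal E'(\lambda)/\lambda^3$ at $\lambda=t^{-1/4}$, which is $t^{3/4}\mathcal E'(t^{-1/4})$; combined with the two prefactors $\tfrac{1}{4it}$ from the two integrations by parts, this contributes a term of size $|\mathcal E'(t^{-1/4})|/t^{5/4}$, exactly the second term in the claim. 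The remaining integral $\tfrac{1}{(4it)^2}\int_{t^{-1/4}}^\infty e^{-it\lambda^4}(\mathcal E'(\lambda)/\lambda^3)'\,d\lambda$ is bounded in absolute value by $\tfrac{1}{t^2}\int_{t^{-1/4}}^\infty |(\mathcal E'(\lambda)/\lambda^3)'|\,d\lambda$, matching the third term.

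There is no real obstacle here; the only point requiring care is checking that the boundary terms produced at each step carry the correct sign and magnitude so that the $\tfrac{i\mathcal E(0)}{4t}$ term is cancelled exactly (rather than, say, doubled) and that the boundary term at $\lambda=t^{-1/4}$ after the second IBP lands at the stated power $t^{-5/4}$. The cutoff scale $t^{-1/4}$ is chosen precisely so that the two strategies — the trivial bound before integrating by parts a second time, and the two-fold IBP after — produce errors of comparable size, which is what makes the splitting natural.
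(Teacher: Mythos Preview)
Your proposal is correct and follows essentially the same approach as the paper: one global integration by parts to produce the boundary term $-\tfrac{i\mathcal E(0)}{4t}$, a split at $\lambda=t^{-1/4}$, a trivial estimate on the small-$\lambda$ piece, and a second integration by parts (after writing $\mathcal E'(\lambda)=\lambda^3\cdot(\mathcal E'(\lambda)/\lambda^3)$) on the large-$\lambda$ piece. Your sign and scaling checks for the boundary terms match the paper's computation.
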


\begin{proof}
	
	We integrate by parts using $e^{-it\lambda^4}\lambda^3=-\partial_\lambda e^{-it\lambda^4}/4it$ to see
	$$
	\int_0^\infty e^{-it\lambda^4} \lambda^3 \mathcal E(\lambda)\, d\lambda=\frac{-e^{-it\lambda^4} \mathcal E(\lambda)}{4it}\bigg|^\infty_0 +\frac{1}{4it}\int_0^{\infty}e^{-it\lambda^4} \mathcal E'(\lambda)\, d\lambda.
	$$
	The boundary term for large $\lambda$ is zero because of the support of $\mathcal E(\lambda)$.  We break up the remaining integral into two pieces, first on $[0,t^{-\f14}]$ we use the triangle inequality.
	On the second piece, we integrate by parts again and we have
	$$
	\frac{1}{t^2}\frac{|\mathcal E'(\lambda)|}{\lambda^3} \bigg|^\infty_{t^{-\f14}} +\frac{1}{t^2}\int_{t^{-\f14}}^\infty |\partial_\lambda \big(\lambda^{-3}\mathcal E'(\lambda) \big)|\, d\lambda \les \frac{|\mathcal E'(t^{-\f14} )|}{t^{\f54}} +\frac{1}{t^2}\int_{t^{-\f14}}^\infty |\partial_\lambda \big(\lambda^{-3}\mathcal E'(\lambda) \big)|\, d\lambda.
	$$
	
\end{proof}

\begin{lemma}\label{lem:log osc}
	
	If $\mathcal E(\lambda)=\widetilde O_2(\frac{1}{\log^2 \lambda})$ is supported on $0<\lambda \leq \lambda_1\ll 1$, then for $t>2$ we have
	$$
	\bigg| \int_0^\infty e^{-it\lambda^4} \lambda^3 \mathcal E(\lambda)\, d\lambda \bigg| \les \frac{1}{t \log^2 t}.
	$$
	
\end{lemma}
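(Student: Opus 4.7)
The plan is to invoke Lemma~\ref{lem:ibp osc} directly with our $\mathcal E$. A crucial simplification comes from the boundary value: since $\mathcal E=\widetilde{O}_2(1/\log^2\lambda)$, we have $\mathcal E(0^+)=\lim_{\lambda\to 0^+} 1/\log^2\lambda = 0$, so the $i\mathcal E(0)/(4t)$ term on the left-hand side of Lemma~\ref{lem:ibp osc} vanishes. It therefore suffices to show that each of the three right-hand error terms in Lemma~\ref{lem:ibp osc} is $\les 1/(t\log^2 t)$.

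Differentiating the envelope $1/\log^2\lambda$ yields the pointwise bounds $|\mathcal E'(\lambda)|\les 1/(\lambda|\log\lambda|^3)$ and $|\mathcal E''(\lambda)|\les 1/(\lambda^2|\log\lambda|^3)$ on $(0,\lambda_1)$ (subleading corrections, e.g.\ $6/(\lambda^2\log^4\lambda)$ from $\partial_\lambda^2(1/\log^2\lambda)$, carry even stronger logarithmic decay and can be absorbed). For the first error term in Lemma~\ref{lem:ibp osc}, I would perform the substitution $u=-\log\lambda$ (so that $d\lambda/\lambda = -du$), which maps $[0,t^{-1/4}]$ to $[\tfrac14\log t,\infty)$ and gives
\[
\frac{1}{t}\int_0^{t^{-1/4}}|\mathcal E'(\lambda)|\,d\lambda \les \frac{1}{t}\int_{(\log t)/4}^\infty \frac{du}{u^3} \les \frac{1}{t\log^2 t}.
\]
This is the term that saturates the claimed rate.

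For the boundary term in Lemma~\ref{lem:ibp osc}, substituting $\lambda = t^{-1/4}$ into the derivative bound yields
\[
\frac{|\mathcal E'(t^{-1/4})|}{t^{5/4}}\les \frac{t^{1/4}}{t^{5/4}|\log t|^3} \les \frac{1}{t\log^3 t},
\]
which is strictly stronger than needed. For the third term, the product rule combined with the derivative bounds gives $|(\mathcal E'/\lambda^3)'|\les 1/(\lambda^5|\log\lambda|^3)$; integrating against $1/t^2$ over $[t^{-1/4},\lambda_1]$ using the near-antiderivative $-1/(4\lambda^4|\log\lambda|^3)$ (whose own derivative reproduces the integrand modulo a $|\log\lambda|^{-4}$ correction), evaluated at the lower endpoint, again produces a bound of order $1/(t\log^3 t)$.

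There is no real obstacle here; the analysis is straightforward once the boundary term is recognized to vanish. The only point worth watching is the bookkeeping on the logarithmic envelope: each derivative of $1/\log^2\lambda$ trades one power of $\lambda$ but only one power of $|\log\lambda|$, so the factor $|\log\lambda|^{-3}$ persists through both derivatives and ultimately yields the $1/\log^2 t$ gain after the $u = -\log\lambda$ substitution. The full rate $1/(t\log^2 t)$ is driven entirely by the small-$\lambda$ regime $[0,t^{-1/4}]$, where the phase $e^{-it\lambda^4}$ has not yet begun to oscillate.
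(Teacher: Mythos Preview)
Your proof is correct and follows the same approach as the paper: both invoke Lemma~\ref{lem:ibp osc}, observe $\mathcal E(0)=0$, and bound the three resulting terms using the derivative bounds $|\mathcal E'|\les 1/(\lambda|\log\lambda|^3)$ and $|(\mathcal E'/\lambda^3)'|\les 1/(\lambda^5|\log\lambda|^3)$. The only cosmetic difference is that the paper splits the third integral into three sub-intervals $[t^{-1/4},t^{-1/8}]$, $[t^{-1/8},\tfrac12]$, $[\tfrac12,\infty)$ rather than using your near-antiderivative argument, but both routes yield the same $1/(t\log^3 t)$ bound for that piece.
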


\begin{proof}
	
	We apply Lemma~\ref{lem:ibp osc}.
	The boundary terms are zero because $\mathcal E(0)=0$.  On $[0,t^{-\f14}]$ we have the bound
	$$
	\frac{1}{t} \int_0^{t^{-\f14}} \frac{d\lambda}{\lambda |\log \lambda|^3} \sim \frac{1}{t \log^2 t}.
	$$
	On the second piece, we integrate by parts again and seek to bound
	$$
	\frac{|\mathcal E'(t^{-\f14} )|}{t^{\f54}} + \frac{1}{t^2} \int_{t^{-\f14}}^{\infty} |\partial_\lambda{ \mathcal E^\prime (\lambda)}| d \lambda .
	$$
	The boundary term contributes $\frac{1}{t \log^3 t}$.  The remaining integral is bounded by
	\begin{multline*}
	\frac{1}{t^2}\bigg[\int_{t^{-\f14}}^{t^{-\f18}} +\int_{t^{-\f18}}^{\f12}
	+\int_{\f12}^{\infty}\bigg] |(\lambda^{-3}\mathcal E'(\lambda))^{\prime}|\, d\lambda
	\les \frac{1}{t^2 |\log t|^3 } \int_{t^{-\f14}}^{t^{-\f18}} \frac{d\lambda}{\lambda^5}+\frac{1}{t^2}\int_{t^{-\f18}}^{\f12} \frac{d\lambda}{\lambda^5}+\frac{1}{t^2} \\
	\les \frac{1}{t|\log t|^3}+\frac{1}{t^{\f32}|\log t|^3}+\frac{1}{t^{\f32}}+\frac{1}{t^2}.
	\end{multline*}
	We used the fact that the integral on $\lambda\geq \f12$ converges.  Combining these bounds proves the assertion.

\end{proof}
We need the following lemma to utilize the Lipschitz continuity of the error terms in the expansions of the spectral measure.  These estimates allow us to match the assumptions on the decay on $V(x)$  in the unweighted bound of Theorem~\ref{thm:main}. 
\begin{lemma}\label{lem:ibpweight}
	
	If $\mathcal E(0)=0$ and $t>2$, then
	$$
	\bigg| \int_0^\infty e^{-it\lambda^4}\lambda^3 \mathcal E(\lambda)\, d\lambda \bigg| \les \frac{1}{t}\int_0^\infty \frac{|\mathcal E'(\lambda)|}{1+\lambda^4 t}\, d\lambda
	+\frac{1}{t} \int_{t^{-\f14}}^\infty \bigg| \mathcal E'(\lambda \sqrt[4]{1+\pi t^{-1} \lambda^{-4}})-\mathcal E'(\lambda) \bigg|\, d\lambda
	$$
	
\end{lemma}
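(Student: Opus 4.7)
The plan is to reduce the statement, via a single integration by parts, to an estimate for $I := \int_0^\infty e^{-it\lambda^4}\mathcal E'(\lambda)\,d\lambda$, and then exploit an oscillation-averaging trick based on the elementary identity $e^{-it\widetilde\lambda^4} = -e^{-it\lambda^4}$, where $\widetilde\lambda := (\lambda^4+\pi/t)^{1/4} = \lambda\sqrt[4]{1+\pi t^{-1}\lambda^{-4}}$. Writing $e^{-it\lambda^4}\lambda^3 = -\partial_\lambda(e^{-it\lambda^4})/(4it)$ and integrating by parts, the boundary term at $\lambda=0$ vanishes because $\mathcal E(0)=0$, while the boundary term at $\lambda=\infty$ vanishes thanks to the tacit decay/compact support of $\mathcal E$ (as in every application of the lemma in this paper), yielding $\int_0^\infty e^{-it\lambda^4}\lambda^3\mathcal E(\lambda)\,d\lambda = I/(4it)$. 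A change of variable $\mu=\widetilde\lambda$ in $I$ (using $d\mu/d\lambda = \lambda^3/\widetilde\lambda^3$ and noting that $\widetilde\lambda$ maps $[0,\infty)$ onto $[(\pi/t)^{1/4},\infty)$) produces a second representation of $I$, and averaging the two gives
\begin{align*}
2I &= \int_0^{(\pi/t)^{1/4}} e^{-it\lambda^4}\mathcal E'(\lambda)\,d\lambda + \int_0^\infty e^{-it\lambda^4}\bigg[\mathcal E'(\lambda) - \mathcal E'(\widetilde\lambda)\frac{\lambda^3}{\widetilde\lambda^3}\bigg]d\lambda.
\end{align*}

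To bound the main integrand, I would use the algebraic decomposition
\begin{align*}
\mathcal E'(\lambda) - \mathcal E'(\widetilde\lambda)\frac{\lambda^3}{\widetilde\lambda^3} &= \mathcal E'(\lambda)\bigg[1 - \frac{\lambda^3}{\widetilde\lambda^3}\bigg] + \frac{\lambda^3}{\widetilde\lambda^3}\big[\mathcal E'(\lambda) - \mathcal E'(\widetilde\lambda)\big].
\end{align*}
The first summand is controlled by the elementary pointwise bound $|1 - \lambda^3/\widetilde\lambda^3| = 1 - (1+\pi/(\lambda^4 t))^{-3/4} \les 1/(1+\lambda^4 t)$, which feeds directly into the first integral of the claim. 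For the Lipschitz summand, I would split at $\lambda=t^{-1/4}$: on $\lambda\geq t^{-1/4}$, the factor $\lambda^3/\widetilde\lambda^3\leq 1$ yields precisely the second integral of the claim; on $\lambda<t^{-1/4}$, where the Lipschitz form in the claim is unavailable, I apply $|\mathcal E'(\lambda)-\mathcal E'(\widetilde\lambda)|\leq|\mathcal E'(\lambda)|+|\mathcal E'(\widetilde\lambda)|$, absorbing the $|\mathcal E'(\lambda)|$ contribution into the first claimed integral using $1+\lambda^4 t\leq 2$ on that range, and treating the $(\lambda^3/\widetilde\lambda^3)|\mathcal E'(\widetilde\lambda)|$ contribution by the substitution $\mu=\widetilde\lambda$ --- the Jacobian $d\mu = (\lambda^3/\widetilde\lambda^3)d\lambda$ supplied by our decomposition is exactly what is needed to turn this into an integral of $|\mathcal E'(\mu)|$ over the bounded range $[(\pi/t)^{1/4},(2\pi/t)^{1/4}]$, which (since $1+\mu^4 t$ is bounded there) is absorbed into the first claimed integral. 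The leading boundary piece $\int_0^{(\pi/t)^{1/4}}|\mathcal E'(\lambda)|\,d\lambda$ in the averaged expression for $2I$ is absorbed in the same way.

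The main obstacle is the pointwise Jacobian estimate on $1 - \lambda^3/\widetilde\lambda^3$; this is precisely what produces the $(1+\lambda^4 t)^{-1}$ improvement over the direct second integration by parts employed in Lemma~\ref{lem:ibp osc}. A secondary but structurally important subtlety is the choice of decomposition: only by pairing $\mathcal E'(\lambda)[1-\lambda^3/\widetilde\lambda^3]$ with $(\lambda^3/\widetilde\lambda^3)[\mathcal E'(\lambda)-\mathcal E'(\widetilde\lambda)]$ (rather than the reversed grouping $[\mathcal E'(\lambda)-\mathcal E'(\widetilde\lambda)]+\mathcal E'(\widetilde\lambda)[1-\lambda^3/\widetilde\lambda^3]$) does the Jacobian $\lambda^3/\widetilde\lambda^3$ land in the right place to make the change of variables on $\lambda<t^{-1/4}$ close; the alternative grouping would produce an unbounded inverse Jacobian $\widetilde\lambda^3/\lambda^3$ at $\lambda=0$ and cannot be absorbed into the claimed bound.
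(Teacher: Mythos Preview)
Your proof is correct and follows essentially the same phase-shift/averaging strategy as the paper: after one integration by parts, the paper substitutes $s=\lambda^4$ and uses the translation $s\mapsto s+\pi/t$ (which flips the sign of $e^{-its}$), while you stay in the $\lambda$-variable and use the equivalent nonlinear shift $\lambda\mapsto\widetilde\lambda$, explicitly tracking the Jacobian $\lambda^3/\widetilde\lambda^3$. The only cosmetic discrepancy is that your description ``change of variable $\mu=\widetilde\lambda$ in $I$'' more precisely corresponds to substituting $\lambda=\widetilde\nu$ in the tail $\int_{(\pi/t)^{1/4}}^\infty$ (the inverse map), but the displayed formula for $2I$ and the subsequent decomposition are correct and match the paper's splitting of $\frac{\mathcal E'(s^{1/4})}{s^{3/4}}-\frac{\mathcal E'((s+\pi/t)^{1/4})}{(s+\pi/t)^{3/4}}$.
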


\begin{proof}
	
	We first  integrate by parts once and use the change of variables $s=\lambda^4$ see
	$$
	\int_0^\infty e^{-it\lambda^4}\lambda^3 \mathcal E(\lambda)\, d\lambda
	=\frac{-1}{4it} \int_0^\infty e^{-it\lambda^4} \mathcal E'(\lambda)\, d\lambda = \frac{-1}{16it} \int_0^\infty e^{-its} \frac{\mathcal E'(s^{\f14})}{s^{\f34}}\, ds.
	$$
	We then break the integral up into two pieces, on $[0,\frac{2\pi}{t}]$ and $[\frac{2\pi}{t},\infty)$.  For the first piece, we note that
	$$
	\bigg|\int_0^{2\pi/t} e^{-its} \frac{\mathcal E'(s^{\f14})}{s^{\f34}}\, ds \bigg|
	=\bigg| \int_0^{ \sqrt[4]{2\pi/t} } e^{-it\lambda^4} \mathcal E'(\lambda)\, d\lambda \bigg|\les \int_0^\infty \frac{|\mathcal E'(\lambda)|}{1+\lambda^4 t}\, d\lambda.
	$$
	On the second piece, we write
	$$
	\int_{2\pi/t}^\infty e^{-its} \frac{\mathcal E'(s^{\f14})}{s^{\f34}}\, ds=-\int_{2\pi/t}^\infty e^{-it(s-\frac{\pi}{t})} \frac{\mathcal E'(s^{\f14})}{s^{\f34}}\, ds
	=-\int_{\pi/t}^\infty e^{-its} \frac{\mathcal E'(\sqrt[4]{s+\frac{\pi}{t}} )}{(s+\frac{\pi}{t})^{\f34}}\, ds
	$$
	Thus, we need only control the contribution of
	$$
	\int_{\pi/t}^\infty e^{-its} \bigg(\frac{\mathcal E'(s )}{s^{\f34}} - \frac{\mathcal E'(\sqrt[4]{s+\frac{\pi}{t}} )}{(s+\frac{\pi}{t})^{\f34}} \bigg)\, ds
	$$
	We now consider
	\begin{multline*}
	\bigg|\frac{\mathcal E'(s^{\f14} )}{s^{\f34}} - \frac{\mathcal E'(\sqrt[4]{s+\frac{\pi}{t}} )}{(s+\frac{\pi}{t})^{\f34}} \bigg|
	=\bigg|\frac{\mathcal E'(s^{\f14} )-\mathcal E'(\sqrt[4]{s+\frac{\pi}{t}} )}{(s+\frac{\pi}{t})^{\f34}} +\mathcal E'(s^{\f14}) \bigg(\frac{1}{s^{\f34}}-\frac{1}{(s+\frac{\pi}{t})^{\f34}} \bigg) \bigg|\\
	\les \frac{|\mathcal E'(s^{\f14} )-\mathcal E'(\sqrt[4]{s+\frac{\pi}{t}}) |}{s^{\f34}}+\frac{|\mathcal E'(s^{\f14})|}{ts^{\f74}}.
	\end{multline*}
	The first summand is controlled by the second integral in the claim, while the second summand is controlled by the first integral.
	
\end{proof}

The oscillatory integral bound in Lemma~\ref{lem:ibpweight} is used to control  the error term in the expansion of $(M^{\pm}(\lambda))^{-1}$. We note that the $\lambda$ smallness in Lemma~\ref{regular} in $E^\pm(\lambda)$ is not optimal.  At the cost of further decay in $V$, one obtains further smallness in $\lambda$.

\begin{lemma}\label{lem:error wtd lips} 
	Let $ |V(x)| \les \la x \ra^{-4-4\ell-}$ for some $0<\ell <1$. For $t>2$, we have the bound
	$$
	\bigg|	
	\int_0^{\infty} e^{-it \lambda^4} \lambda^3 \chi(\lambda)  
	R^\pm (\lambda^4) vE^\pm (\lambda )vR^\pm (\lambda^4)	\, d \lambda \bigg| \les \frac{\la x\ra^{2\ell} \la y\ra^{2\ell} }{t^{1+}}.
	$$	
	
\end{lemma}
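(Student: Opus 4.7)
The plan is to apply Lemma~\ref{lem:ibpweight} with $\mathcal E(\lambda)=\chi(\lambda)[R^\pm(\lambda^4)vE^\pm(\lambda)vR^\pm(\lambda^4)](x,y)$. Since $E^\pm(\lambda)=\widetilde O_1(\lambda^{\ell})$ by Lemma~\ref{regular} with $\ell>0$, the function $\mathcal E$ vanishes at $\lambda=0$ and the boundary term in Lemma~\ref{lem:ibpweight} drops. It remains to bound the two integrals appearing there and to arrange that all spatial $x_1,y_1$ integrals close with an external factor of $\la x\ra^{2\ell}\la y\ra^{2\ell}$.

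For the first integral $t^{-1}\int_0^\infty |\mathcal E'(\lambda)|/(1+\lambda^4 t)\,d\lambda$, I would use the coarse free-resolvent bound $R^\pm(\lambda^4)(x,x_1)=\widetilde O_1((\lambda|x-x_1|)^{-\epsilon})$ from Remark~\ref{rmk:R0low}, choosing $\epsilon=0+$ so that the spatial integrals $\|v(x_1)|x-x_1|^{-\epsilon}\|_{L^2_{x_1}}$ are finite and uniform in $x$ under the hypothesis on $v$. Since each derivative costs at most $\lambda^{-1}$, and using $E^\pm=\widetilde O_1(\lambda^\ell)$, one obtains $|\mathcal E'(\lambda)|\les \lambda^{\ell-2\epsilon-1}$ times a uniformly bounded Hilbert--Schmidt product. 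Splitting the $\lambda$ integral at $t^{-\f14}$ shows the contribution is $\les t^{-1-(\ell-2\epsilon)/4}$, which is $t^{-1-}$ once $\epsilon\ll \ell$.

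The real work is the Lipschitz integral $t^{-1}\int_{t^{-1/4}}^\infty |\mathcal E'(\widetilde\lambda)-\mathcal E'(\lambda)|\,d\lambda$, where $\widetilde\lambda^4-\lambda^4=\pi/t$ so $|\widetilde\lambda-\lambda|\sim(t\lambda^3)^{-1}$. Expanding $\mathcal E'$ by the product rule produces three kinds of terms; for each I would estimate the Lipschitz difference by applying Corollary~\ref{cor:Error Lips} to the free resolvent factors and the last inequality of Lemma~\ref{regular} to the $\partial_\lambda E^\pm$ factor, each of which gives a gain of $|\widetilde\lambda-\lambda|^{\alpha}\sim t^{-\alpha}\lambda^{-3\alpha}$. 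Taking $\alpha=\ell$ and the companion exponent $\ell'$ in Corollary~\ref{cor:Error Lips} chosen so that the spatial exponent $\tfrac12+(\ell'-\tfrac12)(1-\ell)$ equals $2\ell$, the Lipschitz bound on the resolvent becomes $|\widetilde\lambda-\lambda|^{\ell}\lambda^{-1+O(\ell)}|x-x_1|^{2\ell}$, and the estimate from Lemma~\ref{regular} gives $|\widetilde\lambda-\lambda|^{\ell}\lambda^{-1+\ell/2-\ell^2}$ on $\partial_\lambda E^\pm$. Using $|x-x_1|^{2\ell}\les \la x\ra^{2\ell}\la x_1\ra^{2\ell}$, the extra $\la x_1\ra^{2\ell}$ is absorbed into $v(x_1)$ since $|v|^2\la x_1\ra^{4\ell}\in L^1$ under the hypothesis $|V|\les \la x\ra^{-4-4\ell-}$, leaving $\la x\ra^{2\ell}$ outside. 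The remaining $\lambda$ integral becomes $\int_{t^{-1/4}}^{\lambda_1}\lambda^{-1-5\ell/2-\ell^2-2\epsilon}\,d\lambda\les t^{(5\ell/2+\ell^2+2\epsilon)/4}$, and multiplying by $t^{-1-\ell}$ yields $t^{-1-}\la x\ra^{2\ell}\la y\ra^{2\ell}$ provided $\epsilon<\tfrac{3\ell}{4}-\tfrac{\ell^2}{2}$.

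The main obstacle is the simultaneous balancing of the interpolation parameters $\alpha,\ell'$ in Corollary~\ref{cor:Error Lips}, the free-resolvent exponent $\epsilon$, and the Lipschitz exponent from Lemma~\ref{regular}, so that three competing requirements hold at once: (i) the spatial weight on $|x-x_1|$ matches $2\ell$ so the weights $\la x\ra^{2\ell}\la y\ra^{2\ell}$ (and no more) appear externally, (ii) the $\lambda$-integral over $[t^{-1/4},\lambda_1]$ converges, and (iii) the resulting time decay is strictly better than $t^{-1}$. The precise $\lambda^{1+\ell^2-\ell/2}$ normalization of the Lipschitz bound on $\partial_\lambda E^{\pm}$ in Lemma~\ref{regular} (rather than the naive Lipschitz weight $\lambda$) is what makes this balance feasible under the stated decay on $V$.
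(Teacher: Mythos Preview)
Your proposal is correct and follows essentially the same route as the paper: apply Lemma~\ref{lem:ibpweight} to $\mathcal E(\lambda)=\chi(\lambda)R^\pm vE^\pm vR^\pm$, control the first integral via the $\widetilde O_1(\lambda^\ell)$ bound on $E^\pm$ together with the crude resolvent bound, and handle the Lipschitz integral by combining Corollary~\ref{cor:Error Lips} on the free-resolvent factor with the Lipschitz estimate on $\partial_\lambda E^\pm$ from Lemma~\ref{regular}. The only cosmetic difference is in the bookkeeping of parameters: the paper simply takes $\alpha=\ell'=\ell$ in Corollary~\ref{cor:Error Lips} (yielding spatial exponent $\tfrac{3\ell}{2}-\ell^2\le 2\ell$ and then invoking the trivial bound $\la x\ra^{3\ell/2-\ell^2}\le\la x\ra^{2\ell}$), whereas you tune $\ell'$ to hit $2\ell$ exactly; both choices close with the same decay on $V$ and the same final estimate.
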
 
\begin{proof}
We use Lemma~\ref{lem:ibpweight} for $\mathcal E (\lambda) = \chi(\lambda)  
	R^\pm (\lambda^4) vE^\pm (\lambda )vR^\pm (\lambda^4)$. To do that first we obtain the required bounds on $\mathcal E (\lambda)$.

Recall by Lemma~\ref{regular}, for $|v(x)| \les \la x \ra^{-2-2\ell-}$, we have $E(\lambda)=\widetilde{O}_1(\lambda^{\ell})$ and 
\begin{align}
 |\partial_\lambda E^{\pm}(b)-\partial_\lambda E^{\pm}(\lambda)| \les |b-\lambda|^{\ell} \lambda ^{-1-\ell^2+\f{\ell}{2}}  \label{Edifferbound}
\end{align}
as an absolutely bounded operator.
Moreover, taking $\alpha = \ell$ in  Corollary~\ref{cor:Error Lips} we obtain, for $\lambda \ll 1$
\begin{align}
& |R(\lambda^4)(x,x_1)| \les \lambda^{0-} \log|x-x_1|, \,\,\ |\partial_{\lambda}R(\lambda^4)| \les \frac{1}{\lambda} + \lambda^{-1+\ell} |x-x_1|^{\ell} , \label{Rweightbound}\\
& 
	|\partial_{\lambda}R(b^4)-\partial_{\lambda}R(\lambda^4)| \les \frac{1}{\lambda}+  |b-\lambda|^{\ell} \lambda ^{-1 - \ell^2+\f{\ell}{2}} |x-x_1|^{- \ell^2+\f{3\ell}{2}} \label{Rdifferbound}.
\end{align}
The fact that $E(\lambda)=\widetilde{O}_1(\lambda^{\ell})$ and  \eqref{Rweightbound} gives 
\begin{align}
 | \partial_{\lambda} \mathcal E (\lambda)| \les \chi(\lambda) \lambda^{-1+\ell-} \la x \ra^{\ell} \la y \ra^{\ell}.\label{errorder}
\end{align}
Here, for the spatial bound we write  $\log|x-y| = \log^{-} |x-y| + \log^{+} |x-y| $, and note that if $\Gamma$ is an absolutely bounded operator, 
\begin{multline} \label{spatial bound}
\||x-x_1|^{p} v \Gamma v |y-y_1|^{p}\|_{L^1 \rightarrow L^{\infty} } \les \la x\ra^{p} \la y \ra^{p} \| \la x_1\ra^{p} v\Gamma v \la y_1\ra^{p}\|_{L^2 \rightarrow L^2 }\\
 \les \la x\ra^{\min(p,0)} \la y \ra^{\min(p,0)}
\end{multline}
for any $ -2 < p $, provided $|v(x) | \les \la x \ra^{-2-p-}$.

Furthermore, taking $b= \lambda \sqrt[4]{1+\pi t^{-1} \lambda^{-4}}$ in \eqref{Edifferbound} and \eqref{Rdifferbound}, and noting that
$
\lambda \sqrt[4]{1+\pi t^{-1} \lambda^{-4}}-\lambda \approx (t\lambda^3)^{-1}
$
we obtain 
\begin{align}
 |\partial_{\lambda}\mathcal E(b)-\partial_{\lambda}\mathcal E(\lambda)| \les (t \lambda^3)^{-\ell} \lambda ^{-1 - \ell^2+\f{\ell}{2}} \la x \ra^{2\ell} \la y \ra^{2\ell} \label{errordif}. 
\end{align}

Using the bounds \eqref{errorder} and \eqref{errordif} in  Lemma~\ref{lem:ibpweight}, we have 
\begin{multline*}
\int_0^\infty \frac{|\mathcal E'(\lambda)|}{1+\lambda^4 t}\, d\lambda
	+ \int_{t^{-\f14}}^\infty \bigg| \mathcal E'(\lambda \sqrt[4]{1+\pi t^{-1} \lambda^{-4}})-\mathcal E'(\lambda) \bigg|\, d\lambda \\ 
	\les \int_{\lambda \ll t^{-\f14}}| \mathcal E'(\lambda)| + \frac{1}{t}\int_{\lambda \gtrsim t^{-\f14} } \frac{\mathcal E'(\lambda)}{\lambda^4} +  \frac{1}{t^{\ell}} \int_{t^{-\f14}}^\infty \lambda ^{-1 - \ell^2-\f{5\ell}{2}} \la x \ra^{2\ell} \la y \ra^{2\ell} d \lambda \les  \frac{\la x \ra^{2\ell} \la y \ra^{2\ell}} { t^{\min \{{\f \ell 4},  \frac{3\ell -2 \ell^2}{8}\}}}.
\end{multline*}
 That fact  that $ 3\ell -2 \ell^2 >0$ for $ 0<\ell <1$ finishes the proof.
\end{proof}

We are ready to prove Proposition~\ref{prop:S cancel}.

\begin{proof}[Proof of Proposition~\ref{prop:S cancel}] 
	Recall the expansion for the resolvent in Lemma~\ref{lem:R0 exp} . We have 
\begin{align*}
	\frac{R^\pm(\lambda^4)vSvR^\pm(\lambda^4)}{h^\pm(\lambda)} = \frac{(\widetilde g^\pm(\lambda))^2} {h^\pm(\lambda)} vSv+ \frac{\widetilde g^\pm(\lambda)} {h^\pm(\lambda)} [ G_1 vSv + vSv G_1]  + \frac{G_1 vSvG_1}{h^\pm(\lambda)} + \widetilde{E}_0 (\lambda) (x,y)  
\end{align*}
where
\begin{align}\label{SErrorbound}
&|\partial_{\lambda} \widetilde{E}_0 (\lambda) (x,y)| \les \lambda^{-1+\ell-} \la x \ra^{\ell} \la y \ra^{\ell} , \\
&|\partial_{\lambda} \widetilde{E}_0 (b) -\partial_{\lambda} \widetilde{E}_0 (\lambda)|\les  (b- \lambda)^{\alpha} \lambda^{-1 - \ell^2+\f{\ell}{2}} \la x \ra^{\f{3\ell}{2} - \ell^2}\la y \ra^{\f{3\ell}{2} - \ell^2}.\label{E-dif}
\end{align}

Notice that \eqref{SErrorbound} is an immediate consequence of Lemma~\ref{lem:R0 exp} and \eqref{spatial bound}, whereas we need to validate \eqref{E-dif}. By symmetry it will be enough to analyze $h_{\pm}^{-1}(\lambda)R^\pm(\lambda^4)(x,x_1) E^{\pm}(\lambda)(y,y_1)$.
Recall that by Lemma~\ref{lem:R0 exp}, we have for $0 < \lambda \ll1$
\begin{align}
&\label{REfirst} |\partial_{\lambda} [h_{\pm}^{-1}R^\pm E^{\pm}](\lambda) | \les \lambda^{-1+\ell-} k(x,x_1)|x-x_1|^{\ell}|y-y_1|^{\ell}, \\ 
&|\partial^2_{\lambda} [h_{\pm}^{-1}R^\pm E^{\pm}](\lambda) | \les \lambda^{-\f 32-} k(x,x_1)|x-x_1|^{\f 12}|y-y_1|^{\f 12}. \label{REsecond}
\end{align}
Note that \eqref{REsecond}, and the Mean Value Theorem with $0<b<\lambda$ gives 
$$
|\partial_{\lambda} [h_{\pm}^{-1}R^\pm E^{\pm}](b)- \partial_{\lambda} [h_{\pm}^{-1}R^\pm E^{\pm}](\lambda) |  
\les (b- \lambda) \lambda^{-\f 32-} k(x,x_1)|x-x_1|^{\f 12}|y-y_1|^{\f 12}. 
$$
Moreover, by \eqref{REfirst} we have 
$$
|\partial_{\lambda} [h_{\pm}^{-1}R^\pm E^{\pm}](b)- \partial_{\lambda} [h_{\pm}^{-1}R^\pm E^{\pm}](\lambda) |  
\les \lambda^{-1+\ell-} k(x,x_1)|x-x_1|^{\ell}|y-y_1|^{\ell}.
$$
Interpolating these two inequality we obtain
\begin{multline}
|\partial_{\lambda} [h_{\pm}^{-1}R^\pm(\lambda^4) E^{\pm}](b)- \partial_{\lambda} [h_{\pm}^{-1}R^\pm(\lambda^4) E^{\pm}](\lambda) |  \\ 
\les (b- \lambda)^{\alpha} \lambda^{(-1+\ell-)(1-\alpha)- {\f {3\alpha} 2}-}k(x,x_1)|x-x_1|^{\ell(1-\alpha)+\f{\alpha}{2}} |y-y_1|^{\ell(1-\alpha)+\f{\alpha}{2}} \nn
\end{multline}
Letting, $\ell= \alpha$, we obtain \eqref{E-dif}  provided $ v(x) \les \la x \ra^{-2-2\ell-}$. Note that, the bounds  \eqref{SErrorbound} and \eqref{E-dif} are exact same bounds that we have in  \eqref{errorder} and \eqref{errordif} respectively (letting $b= \lambda \sqrt[4]{1+\pi t^{-1} \lambda^{-4}}$). Therefore, Lemma~\ref{lem:ibpweight} for $\mathcal{E}= \widetilde{E}_0$ establishes the contribution of $\widetilde{E}_0$ as $t^{-1-} \la x \ra^{2\ell} \la y \ra^{2\ell}$.

For the other terms we note that  $h^\pm(\lambda)= \|V\|_{1} \widetilde g_1^\pm(\lambda)+c$. Therefore, 
\begin{align*}
&\frac{(\widetilde g_1^{+}(\lambda))^2} {h^{+}(\lambda)} -\frac{(\widetilde g_1^{-}(\lambda))^2} {h^{-}(\lambda)} = \frac{ i \Im{z_1}} { \|V\|_1} + \widetilde O_2\big((\log \lambda)^{-2} \big) \\
&\frac{\widetilde g_1^+(\lambda)} {h^+(\lambda)} - \frac{\widetilde g_1^-(\lambda)} {h^-(\lambda)} =  \widetilde   O_2\big((\log \lambda)^{-2} \big) ,\,\, \,\,\, \frac{1} {h^+(\lambda)} - \frac{1} {h^-(\lambda)} =   \widetilde O_2\big((\log \lambda)^{-2} \big) . 
\end{align*}
Moreover,  by the absolutely boundedness of $S$, and the decay assumption on $v$ we have $| G_1 vSv + vSv G_1 + G_1 vSv G_1| \les (1+ \log^{+} |x| ) (1+ \log^{+} |y| )$. Hence, 
\begin{multline*}
\frac{R^{+}(\lambda^4)SR^{+}(\lambda^4)}{h^{+}(\lambda)} -\frac{R^{-}(\lambda^4)SR^-(\lambda^4)}{h^-(\lambda)} -\widetilde{E}_0 (\lambda)
= \frac{ i \Im{z_1}} { \|V\|_1} vSv \\ + \widetilde O_2\big((\log \lambda)^{-2} \big)(1+ \log^{+} |x| ) (1+ \log^{+} |y| ) . 
\end{multline*}
The second summand on the right hand side is bounded by $(t \log^2 t )^{-1} (1+ \log^{+} |x| ) (1+ \log^{+} |y| )$ using Lemma~\ref{lem:log osc}. For the first summand, first notice that by $Qv=0$, we have $vSv= vPv=\|V\|_1 $. Hence, by integration by parts the  first summand contributions
$$
	\int_{\R^8}   \bigg(  \frac{\Im(z_1)}{\|V\|_1 t}+O(t^{-1-})\bigg) [vSv](x_1 ,y_1)\,  dx_1 dy_1    =\frac{\Im(z_1)}{4 t}+O(t^{-1-}).
	$$
\end{proof}

Lastly, we consider the contribution of the $QD_0Q$.  

\begin{lemma} \label{lem:QDQweight}Let $ |V(x)| \les \la x \ra^{-4-4\ell-}$ for some $1> \ell > 0$. For $t>2$, we have the bound
	\begin{multline*}
	\bigg|	
	\int_0^{\infty} e^{-it \lambda^4} \lambda^3 \chi(\lambda)  
	[R^+(\lambda^4) vQD_0Q vR^+ (\lambda^4)- R^-(\lambda^4) vQD_0Q vR^- (\lambda^4)]	\, d \lambda \bigg|  
	\les \frac{\la x\ra^{2\ell} \la y\ra^{2\ell} }{t^{1+}} .
	\end{multline*}

\end{lemma}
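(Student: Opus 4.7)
The plan is to mimic the proof of Proposition~\ref{prop:QD0Q bound}, using the orthogonality $Qv=vQ=0$, but this time also to extract cancellation between the $+$ and $-$ branches so that the weighted integration-by-parts estimate of Lemma~\ref{lem:ibpweight} applies. This parallels the way Proposition~\ref{prop:S cancel} sharpens Lemma~\ref{prop:S bound}.

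First I would use $Qv=vQ=0$ to replace each free resolvent $R^{\pm}(\lambda^{4})$ on either side of $vQD_{0}Qv$ by the centered kernel $H^{\pm}(\lambda,\cdot,\cdot)$ from \eqref{eqn:H def}. Crucially, $H^{+}-H^{-}$ vanishes as $\lambda\to 0^{+}$: the $F^{\pm}$ part obeys $F^{+}-F^{-}=(\alpha^{+}-\alpha^{-})[\chi(\lambda p)-\chi(\lambda q)]$, which is zero at $\lambda=0$ since $\chi(0)=1$, while the $\widetilde{O}_{1}((\lambda r)^{1/2})$ and high-argument $A^{\pm}$ pieces vanish at the origin by their supports. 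The algebraic identity \eqref{alg fact} with $M=2$ then yields
$$H^{+}vQD_{0}QvH^{+}-H^{-}vQD_{0}QvH^{-}=(H^{+}-H^{-})vQD_{0}QvH^{+}+H^{-}vQD_{0}Qv(H^{+}-H^{-}),$$
and after integration against $v(x_{1})v(y_{1})$ the resulting scalar $\mathcal{E}(\lambda)$ satisfies $\mathcal{E}(0)=0$, so Lemma~\ref{lem:ibpweight} applies.

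Checking the two hypotheses of Lemma~\ref{lem:ibpweight} then reduces to combining derivative bounds on $H^{\pm}$ from Lemma~\ref{cancellemma} and \eqref{eqn:A def} with the Lipschitz control for $\partial_{\lambda}R^{\pm}(\lambda^{4})$ from Corollary~\ref{cor:Error Lips} at the increment $b-\lambda\asymp(t\lambda^{3})^{-1}$. Choosing the interpolation parameter $\alpha=\ell$ in the corollary produces the spatial weight $\la x\ra^{2\ell}\la y\ra^{2\ell}$ in the same manner as in Lemma~\ref{lem:error wtd lips}. The spatial integrals close because $QD_{0}Q$ is absolutely bounded and the decay $|v(x)|\lesssim\la x\ra^{-2-2\ell-}$, coming from the hypothesis $|V(x)|\lesssim\la x\ra^{-4-4\ell-}$, matches the $k(x,x_{1})$ factor through \eqref{spatial bound} and \eqref{vk bound}.

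The main obstacle will be the bookkeeping of the four regime combinations (low/low, low/high, high/low, high/high) arising from each of the two telescoping terms, since each has a different $\lambda$-scaling and spatial dependence. The mixed regimes are the most delicate: the $k(x,x_{1})$ factor from the low-argument $F^{\pm}$ carries a $\log^{-}|x-x_{1}|$ singularity that must be absorbed by $v(x_{1})$ in the $L^{2}_{x_{1}}$ pairing with $QD_{0}Q$, while the high-argument $A^{\pm}$ on the other side contributes a compensating $\lambda$-weight to be balanced against the vanishing of $H^{+}-H^{-}$ at the origin. Once this is organized, the final $\lambda$-integrals mirror those at the end of the proof of Lemma~\ref{lem:error wtd lips} and yield the claimed $t^{-1-}\la x\ra^{2\ell}\la y\ra^{2\ell}$ bound.
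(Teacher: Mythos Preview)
Your approach is correct and uses the same core tool as the paper (Lemma~\ref{lem:ibpweight} fed by Lipschitz control in the spirit of Corollary~\ref{cor:Error Lips}), but the paper takes a cleaner route that avoids the regime bookkeeping you anticipate. Instead of replacing $R^{\pm}$ by the three-piece kernel $H^{\pm}$ from \eqref{eqn:H def}, the paper uses the global expansion of Lemma~\ref{lem:R0 exp}: $R^{\pm}=\tilde g_1^{\pm}(\lambda)+G_1+E_0^{\pm}(\lambda)$. Since $\tilde g_1^{\pm}$ is a function of $\lambda$ only, $Qv$ annihilates it, so $R^{\pm}vQ=(G_1+E_0^{\pm})vQ$; and in the difference $R^+-R^-$, the $G_1$ terms cancel while $\tilde g_1^+-\tilde g_1^-$ is again a constant killed by $Qv$. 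This collapses the whole integrand (after the telescoping \eqref{alg fact}) to the single expression
\[
\mathcal E(\lambda)=[G_1+E_0(\lambda)]\,vQD_0Qv\,E_0^{\pm}(\lambda),
\]
for which $\mathcal E(0)=0$ and the bounds $|\partial_\lambda\mathcal E|\lesssim\lambda^{\ell-1}\la x\ra^{\ell}\la y\ra^{\ell}$ and the Lipschitz estimate follow directly from Lemma~\ref{lem:R0 exp} and the interpolation used in Proposition~\ref{prop:S cancel}. The advantage is twofold: $G_1$ is $\lambda$-independent (so contributes nothing to derivatives or Lipschitz differences), and $E_0^{\pm}$ already carries uniform bounds valid for all $\lambda|x-y|$, so no low/high splitting is needed. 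Your route via $H^{\pm}$ works, but you would have to separately verify Lipschitz control on the $F^{\pm}$ and $A$ pieces, whereas the paper gets everything from the single error term $E_0^{\pm}$.
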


\begin{proof}

By Lemma~\ref{lem:R0 exp} and the orthogonality property $Qv=0$, it suffices to consider
	\begin{multline*}
	R^+(\lambda^4)vQD_0Qv[R^+(\lambda^4)-R^-(\lambda^4)]\\
	=[ G_1(x,x_1)+E_0(\lambda)(x,x_1)]vQD_0Qv(x_1,y_1)E_0^\pm(\lambda)(y_1,y). 
	\end{multline*}
Let $\mathcal{E}(\lambda)=[ G_1(x,x_1)+E_0(\lambda)(x,x_1)]vQD_0Qv(x_1,y_1)E_0^\pm(\lambda)(y_1,y)$, then we have 
\begin{align*}
 | \partial_{\lambda} \mathcal E (\lambda)| \les \chi(\lambda) \lambda^{-1+\ell} \la x \ra^{\ell} \la y \ra^{\ell}, \,\,\,\
& |\partial_{\lambda}\mathcal E(b)-\partial_{\lambda}\mathcal E(\lambda)| \les (b- \lambda)^{\alpha} \lambda^{-1 - \ell^2+\f{\ell}{2}} \la x \ra^{\f{3\ell}{2} - \ell^2}\la y \ra^{\f{3\ell}{2} - \ell^2} . 
\end{align*}
Note that, again the first inequality is consequence of Lemma~\ref{lem:R0 exp} and \eqref{spatial bound}, whereas  the following bounds gives the second inequality by means of interpolation as in the proof of  Proposition~\ref{prop:S cancel}. 
\begin{align*}
& |\partial_{\lambda}\big([ G_1(x,x_1)+E_0(\lambda)(x,x_1)]E_0^\pm(\lambda)(y_1,y)\big)| \les \lambda^{-1+\ell} k(x,x_1)|x-x_1|^{\ell}|y-y_1|^{\ell} \\
&|\partial^2_{\lambda} \big([ G_1(x,x_1)+E_0(\lambda)(x,x_1)]E_0^\pm(\lambda)(y_1,y)\big)| \les \lambda^{-\f 32+\ell} k(x,x_1)|x-x_1|^{\f 12}|y-y_1|^{\f 12}
\end{align*}
Therefore, letting $b= \lambda \sqrt[4]{1+\pi t^{-1} \lambda^{-4}}$, Lemma~\ref{lem:ibpweight} bounds the contribution of this term by $t^{-1-} \la x \ra^{2\ell} \la y \ra^{2\ell}$. 
	
\end{proof}
We are now ready to prove the main proposition.

\begin{proof}[Proof of Proposition~\ref{prop:main weight} ]
	
	By the symmetric resolvent identity, \eqref{eqn:Rv expansion}, we need to control the contribution of
	$$
		h^\pm(\lambda)^{-1} S +QD_0Q+E^\pm(\lambda)
	$$
	to the Stone's formula.  The required bounds are established in Proposition~\ref{prop:S cancel}, Lemma~\ref{lem:QDQweight} and Lemma~\ref{lem:error wtd lips}  respectively, where the exact cancellation of the order $t^{-1}$ terms follows from Proposition~\ref{prop:S cancel} and Lemma~\ref{lem:free est}.

\end{proof}

\section{Low energy dispersive bounds when zero is not regular}\label{sec:nonreg}

In this section we consider the remaining claims in Theorem~\ref{thm:main} concerning the dispersive bounds when zero is not regular.  We provide a detailed analysis of the effect of each type of resonance on the asymptotic behavior of the spectral measure and detail the dynamical consequences on the dispersive bounds.  We divide the section into three subsections, consider resonances of each type separately except for resonances of the third and fourth kind for which we provide a unified treatment.

\subsection{Resonance of the first kind}

In this section we analyze the resonance the dispersive estimates when there is a resonance of the first kind. Recall that in this case,  $T=U + v G_1 v$ is not invertible on $QL^2$. Therefore, to utilize Lemma~\ref{JNlemma} define $B= S_1-S_1(M+S_1)^{-1}S_1$ and seek to invert $B$ . Using the expansion \eqref{M-1exp0} for $(M+S_1)^{-1}$, we have 
$$B(\lambda) = - h^{\pm}(\lambda)^{-1}  S_1 S S_1 + \widetilde O_1(\lambda^k) =  -h^{\pm}(\lambda)^{-1}(\lambda) S_1 T PT S_1 + \widetilde O_1(\lambda^k),\,\,\  0 < k \leq 2.$$

Notice that we are able to invert $B (\lambda)$ when there is a resonance of the first kind due to invertibility of $S_1 T PT S_1$ on $S_1L^2$, and a Neumann series computation. More care is required if there is a resonance of second, third or fourth kind.  These will be considered in subsequent sections.
By a Neumann series computation, provided $|v(x)| \les \la x \ra ^{-k-2-} $, we obtain 
$$ 
	B^{-1} (\lambda) = - h^{\pm} (\lambda) D_1 [ I +  \widetilde O_1(\lambda^k)]^{-1}=- h^{\pm} (\lambda) D_1  + \widetilde O_1(\lambda^k) , \,\,\  0 < k \leq 2. 
$$

Using $B^{-1} (\lambda)$ in \eqref{Minversegeneral} from Lemma~\ref{JNlemma} we obtain the following proposition. Similar expansion is obtained in \cite{eg2}, Lemma~2.5 to Proposition~2.7  in the analysis of two dimensional Schr\"odinger evolution.  For the sake of brevity, we refer the reader to \cite{eg2}.
\begin{prop}\label{prop:Minv firstkind} 
	Let $|v(x)| \les \la x \ra ^{-k-2-} $. If there is a resonance of the first kind, we have the expansion 	\begin{multline}\label{eqn:Mive first}
		M^{\pm}(\lambda)^{-1}=-h_\pm(\lambda) D_1  - SS_1D_1S_1-S_1D_1S_1S \\
		-h_\pm(\lambda)^{-1} (SS_1D_1S_1S+S)+QD_0Q+\widetilde O_1(\lambda^k)
	\end{multline}
	for $0 < k\leq 2$.

\end{prop}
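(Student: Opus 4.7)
The plan is to invert $M^\pm(\lambda)$ by applying the Jensen--Nenciu Lemma~\ref{JNlemma} with the projection $S_1$. By Lemma~\ref{lem:M+S inv}, the perturbed operator $M^\pm(\lambda)+S_1$ already admits the convenient expansion
\[
A(\lambda):=(M^\pm(\lambda)+S_1)^{-1}=h_\pm(\lambda)^{-1}S+QD_0Q+\widetilde O_1(\lambda^k),
\]
so the Jensen--Nenciu identity reduces everything to inverting the finite-rank operator $B(\lambda):=S_1-S_1A(\lambda)S_1$ on $S_1L^2$ and substituting into $M^\pm(\lambda)^{-1}=A(\lambda)+A(\lambda)S_1 B(\lambda)^{-1}S_1 A(\lambda)$.

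The computation of $B(\lambda)$ rests on the orthogonality relations $PS_1=S_1P=0$ (from $S_1\le Q$), together with $S_1D_0=D_0S_1=S_1$ (immediate from $QTQS_1=0$ applied to the definition $D_0=(QTQ+S_1)^{-1}$) and the consequent identity $QTS_1=S_1TQ=0$. These make every cross term in $S_1 S S_1$ vanish and collapse the surviving lower-right block to $S_1(T+S_1)P(T+S_1)S_1=S_1TPTS_1=T_1$; likewise $S_1QD_0QS_1=S_1$, so
\[
B(\lambda)=-h_\pm(\lambda)^{-1}T_1+\widetilde O_1(\lambda^k).
\]
Now the resonance hypothesis enters for the first time: a resonance of the first kind makes $T_1$ invertible on $S_1L^2$ with inverse $D_1$. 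Factoring $-h_\pm(\lambda)^{-1}$ out of $B(\lambda)$ and applying a Neumann series in $h_\pm(\lambda)\,\widetilde O_1(\lambda^k)$, which vanishes at $\lambda=0$ up to a logarithmic factor that can be absorbed into a slightly smaller $k$, then yields $B(\lambda)^{-1}=-h_\pm(\lambda)D_1+\widetilde O_1(\lambda^k)$.

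The final step is to substitute this expansion back and expand the triple product $A(\lambda)S_1 B(\lambda)^{-1}S_1 A(\lambda)$. The essential algebraic simplifications are $QD_0Q\cdot D_1=D_1=D_1\cdot QD_0Q$ (again from $S_1D_0=D_0S_1=S_1$), which absorb any $QD_0Q$ sitting immediately next to $D_1$, together with $D_1=S_1D_1S_1$, which produces the projection factors in the surviving terms $SS_1D_1S_1$, $S_1D_1S_1S$, and $SS_1D_1S_1S$. Collecting the four nonzero contributions of $A(\lambda)D_1A(\lambda)$ according to their $h_\pm(\lambda)$-orders, multiplying by $-h_\pm(\lambda)$, and adding back $A(\lambda)$ produces the claimed expansion. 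The main obstacle is purely bookkeeping: each $\widetilde O_1$ remainder must be checked to remain $\widetilde O_1(\lambda^k)$ after being multiplied by $A(\lambda)$ (which is $\widetilde O_1(1)$) or by $h_\pm(\lambda)$ (which grows only logarithmically), which is harmless for any fixed $0<k\le 2$, at worst after a cosmetic decrease in $k$. The whole argument is parallel to the two-dimensional Schr\"odinger analysis in \cite{eg2}.
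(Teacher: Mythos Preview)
Your proposal is correct and follows exactly the approach outlined in the paper: apply Lemma~\ref{JNlemma} with $S_1$, use \eqref{M-1exp0} from Lemma~\ref{lem:M+S inv} to expand $(M^\pm+S_1)^{-1}$, compute $B(\lambda)=-h_\pm(\lambda)^{-1}T_1+\widetilde O_1(\lambda^k)$ via the identities $S_1SS_1=T_1$ and $S_1QD_0QS_1=S_1$, invert by Neumann series, and substitute back. In fact you supply precisely the algebraic details (the orthogonality relations and the simplifications $QD_0QS_1=S_1$, $D_1=S_1D_1S_1$) that the paper omits by referring to \cite{eg2}.
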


The following is the main result in this section. 

\begin{prop} \label{prop:firstkind} Let $|V(x)|\les \la x\ra^{-4-}$ and suppose that there is a resonance of the first kind at zero.  Then, we have
	\begin{align*}
		\sup_{x,y} \left|\int_0^{\infty} e^{-it \lambda^4} \lambda^3 \chi(\lambda) [R^{+}_V-R^-_V](\lambda)(x,y) d \lambda \right| 
		\les \la t \ra ^{-1}.
	\end{align*}
	\end{prop}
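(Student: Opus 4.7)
The plan is to apply the symmetric resolvent identity \eqref{eqn:Rv expansion} together with the expansion \eqref{eqn:Mive first} from Proposition~\ref{prop:Minv firstkind}, and then bound the contribution of each summand of $(M^\pm(\lambda))^{-1}$ to the low-energy part of Stone's formula. The leading term coming from the free resolvent $R^\pm(H_0;\lambda^4)$ is already controlled by Lemma~\ref{lem:free est}, so one only needs to estimate the remaining pieces term by term.

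Three of the summands in \eqref{eqn:Mive first} can be handled by direct adaptations of the arguments used in the regular case (Proposition~\ref{prop:zero reg unwtd}). The operator $SS_1D_1S_1S+S$ is still finite rank and absolutely bounded, so the $-h_\pm^{-1}(SS_1D_1S_1S+S)$ contribution is estimated exactly as the $h_\pm^{-1} S$ piece in Lemma~\ref{prop:S bound}, using the same splitting into $\Gamma^1_{ll}, \Gamma^2_{ll}, \Gamma^\pm_{lh}, \Gamma^\pm_{hh}$ and Lemma~\ref{cancellemma}. The $QD_0Q$ and $\widetilde O_1(\lambda^k)$ contributions are controlled by Lemmas~\ref{prop:QD0Q bound} and \ref{lem:Error term} respectively, noting that $QD_0Q$ remains absolutely bounded in the resonance setting.

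The main new ingredient is the contribution of the logarithmically growing coefficient $-h_\pm(\lambda)\, D_1$. Here I would exploit the fact that $D_1=S_1 D_1 S_1$ and $S_1\le Q$, so that $S_1 v=0$ as a function, giving two-sided orthogonality when $vD_1 v$ is sandwiched between $R^\pm(\lambda^4)$; this permits the replacement $R^\pm \mapsto H^\pm$ as in the proof of Lemma~\ref{prop:QD0Q bound}. The log-growth of $h_\pm(\lambda)$ would still obstruct a naive integration by parts (the boundary term at $\lambda=0^+$ diverges), so I would use Stone's cancellation via the algebraic identity
\[
-h_+ R^+ v D_1 v R^+ + h_- R^- v D_1 v R^- = -(h_+-h_-)\, R^+ v D_1 v R^+ - h_-(\lambda)\bigl[R^+ v D_1 v R^+ - R^- v D_1 v R^-\bigr].
\]
Since $h_+-h_-=i\|V\|_1\,\Im z_1$ is constant, the first summand is just a multiple of $R^+vD_1 vR^+$ and, after the $S_1 v=0$ replacement, is bounded by the IBP argument of Lemma~\ref{prop:QD0Q bound} to give $|t|^{-1}$. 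For the second summand I would further split via \eqref{alg fact} to isolate factors of $R^+-R^-=i\,\Im z_1+E_0(\lambda)$; the constant $i\,\Im z_1$ piece is annihilated by $S_1 v=0$, leaving an $\widetilde O_2((\lambda|x-y|)^{1/2})$ contribution from $E_0(\lambda)$, whose $\lambda^{1/2}$ smallness more than compensates the $|\log\lambda|$ growth of $h_-(\lambda)$, so that boundary terms vanish and a single integration by parts produces an integrable $\lambda$-derivative and the $|t|^{-1}$ rate.

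The cross terms $-SS_1 D_1 S_1$ and $-S_1 D_1 S_1 S$ each carry only one-sided orthogonality: $SS_1D_1 S_1 v=0$ but $(SS_1 D_1 S_1)^* v=-D_1 S_1 Tv$ need not vanish, and symmetrically for the other. They are treated in the same spirit: on the orthogonal side I would replace $R^\pm$ by $H^\pm$, and on the non-orthogonal side I would use the $\pm$ cancellation of Stone's formula (together with $\tilde g_1^+-\tilde g_1^-=i\,\Im z_1$ and the smallness of $E_0(\lambda)$) to control the $\tilde g_1^\pm(\lambda)$ coefficient that survives. The main obstacle throughout is precisely this interplay: the coefficient $h_\pm(\lambda)$, or equivalently the $\tilde g_1^\pm(\lambda)$ piece of $R^\pm$ acting against a non-orthogonal side, grows like $\log\lambda$ and prevents naive integration by parts, so one has to combine $S_1$-orthogonality with the Stone $\pm$ cancellation in every piece so that no surviving summand has both a growing logarithmic factor and a non-decaying spatial factor.
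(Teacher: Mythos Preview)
Your overall strategy---symmetric resolvent identity, the expansion \eqref{eqn:Mive first}, and then term-by-term control---matches the paper, and your handling of $QD_0Q$, $h_\pm^{-1}(SS_1D_1S_1S+S)$, $\widetilde O_1(\lambda^k)$, and the first summand $(h_+-h_-)R^\pm vD_1vR^\pm$ is correct.

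There is, however, a real gap in the second summand of your $-h_\pm(\lambda)D_1$ splitting and in the cross-term argument. When you write $R^+-R^-=i\,\Im z_1+E_0(\lambda)$ and kill the constant by $S_1v=0$, the residual $(R^+-R^-)-i\,\Im z_1$ is only $\widetilde O_2((\lambda r)^{1/2})$ as a \emph{global} upper bound; in the regime $\lambda|x-x_1|\gtrsim 1$ it is \emph{not} small at all: since $c\,J_1(\lambda r)/(\lambda r)\to 0$ there, the residual is $-i\,\Im z_1+O((\lambda r)^{-3/2})=O(1)$. After one integration by parts you pick up $(\partial_\lambda h_-)\cdot[(R^+-R^-)-i\,\Im z_1]=O(\lambda^{-1})$ on the set $\lambda\in[|x-x_1|^{-1},\lambda_1]$, whose integral is $\sim\log|x-x_1|$. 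This logarithmic growth in $x$ cannot be absorbed by $v(x_1)$, so you do not obtain the uniform $L^1\to L^\infty$ bound; at best you get a log-weighted estimate. The same obstruction arises in your cross-term argument whenever $(R^+-R^-)$ lands on the $S_1$ side and the other factor carries $\partial_\lambda R^\mp\sim\lambda^{-1}$.

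The paper's fix (Lemma~\ref{lem:tildeG gain}, Corollary~\ref{corJ1}) is to exploit $S_1v=0$ to subtract not a constant but the \emph{same} Bessel kernel evaluated at $q=\la x\ra$ instead of $p=|x-x_1|$: one replaces $(R^+-R^-)(x,x_1)$ by
\[
G(\lambda,p,q)=\tfrac{1}{8\pi}\Big(\tfrac{J_1(\lambda p)}{\lambda p}-\tfrac{J_1(\lambda q)}{\lambda q}\Big),
\]
and analogously $\widetilde G$ for the large-argument pieces of $R^\pm$. The key gain is $|G|\les(\lambda|p-q|)^\tau\les(\lambda\la x_1\ra)^\tau$ and $|\partial_\lambda G|\les\lambda^{\tau-1}\la x_1\ra^\tau$, so all spatial growth is transferred to $x_1$ and absorbed by $v$. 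With this replacement, the factor $h_\pm(\lambda)G=O(\lambda^{\tau-}\la x_1\ra^\tau)$ has genuine $\lambda$-smallness uniformly in $x$, and the integration by parts goes through (see \eqref{HSG}--\eqref{HG}). The same $\widetilde G$ device handles the cross terms in Lemma~\ref{lem:SS1 bound}. Your proposal is missing exactly this $\la x\ra$-dependent subtraction.
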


As in the previous sections, to prove Proposition~\ref{prop:firstkind} we need to control the contribution of every term in 
$M^{\pm}(\lambda)^{-1}$ to the Stone's formula. Note that the last three summands in the expansion of $M^{\pm}(\lambda)^{-1}$ in \eqref{eqn:Mive first} are analogous to summands in the expansion when zero is regular.  In particular, the contribution of these operators can be obtained similarly to  Lemma~\ref{prop:QD0Q bound}, Lemma~\ref{prop:S bound} and Lemma~\ref{lem:Error term} respectively.  Accordingly, Proposition~\ref{prop:S_1T_1^{-1}S_1 bound} and Lemma~\ref{lem:SS1 bound} below, where we consider the first summands in \eqref{eqn:Mive first}, suffices to complete the proof of Proposition~\ref{prop:firstkind}.

\begin{prop}\label{prop:S_1T_1^{-1}S_1 bound} 
	We have the bound
	\begin{multline*}
	\sup_{x,y} \bigg|	
	\int_0^{\infty} e^{-it \lambda^4} \lambda^3 \chi(\lambda)  
	\big[h^{+}(\lambda) R^+(\lambda^4)vD_1vR^+(\lambda^4)\\ -h^-(\lambda)R^-(\lambda^4)vD_1vR^-(\lambda^4)](x,y)
	\big] 	 d \lambda \bigg| \les \frac{1}{\la t\ra}.
	\end{multline*}
\end{prop}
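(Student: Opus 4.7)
The plan is to parallel Lemma~\ref{prop:QD0Q bound}, taking care of the extra factor $h^{\pm}(\lambda)\sim\log\lambda$ via an orthogonality reduction followed by a $+/-$ cancellation.

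First, I exploit orthogonality. Since $D_1=S_1 D_1 S_1$ with $S_1\leq Q$ and $Qv=0$, we have $vS_1=S_1v=0$. Writing $R^{\pm}(\lambda^4)=\tilde g_1^{\pm}(\lambda)+G_1+E_0^{\pm}(\lambda)$ via Lemma~\ref{lem:R0 exp}, the spatially constant scalar $\tilde g_1^{\pm}(\lambda)$ is annihilated by $vS_1$, so setting $\tilde F^{\pm}:=G_1+E_0^{\pm}$ we may replace
$$
R^{\pm}(\lambda^4)\,vD_1v\,R^{\pm}(\lambda^4)\;=\;\tilde F^{\pm}(\lambda)\,vD_1v\,\tilde F^{\pm}(\lambda).
$$
This removes the $\log\lambda$ coming from the resolvent factors, leaving $h^{\pm}(\lambda)$ as the only source of logarithmic singularity at $\lambda=0$.

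Second, I exploit the cancellation $h^+(\lambda)-h^-(\lambda)=\tilde g_1^+(\lambda)-\tilde g_1^-(\lambda)=i\Im z_1$, which is a constant by Lemma~\ref{regular}. Applying the algebraic identity \eqref{alg fact} to the $+/-$ difference decomposes the integrand into a leading $\lambda$-independent piece $i\Im z_1\cdot \tilde F^+ vD_1v\tilde F^+$, and correction terms of the form $h^-(\lambda)(\tilde F^+-\tilde F^-)vD_1v\tilde F^+$ together with its mirror. For the leading piece, one integration by parts yields $O(\la t\ra^{-1})$ and the spatial bound is closed by the absolute boundedness of $D_1$ (finite rank on $S_1L^2$) together with $\|v(\cdot)\,k(x,\cdot)\|_{L^2}\lesssim 1$ uniform in $x$, exactly as in the treatment of $QD_0Q$ in Lemma~\ref{prop:QD0Q bound}. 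For the correction terms, the $\log\lambda$ factor in $h^-$ is neutralized by smallness of $\tilde F^+-\tilde F^-=R^+-R^- - i\Im z_1$: in the low regime, Lemma~\ref{lem:R0low} gives $\tilde F^+-\tilde F^-=O((\lambda|x-y|)^2)$, so the product $h^-(\tilde F^+-\tilde F^-)$ is integrable near $\lambda=0$; in the high regime, $R^+-R^-$ is uniformly bounded as in the proof of Lemma~\ref{lem:free est} and the oscillatory integration scheme of Lemma~\ref{prop:QD0Q bound} applies.

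Third, to obtain the $\la t\ra^{-1}$ time decay I integrate by parts once in $\lambda$. The cutoff $\chi(\lambda)$ kills the boundary terms, and one verifies that $\int_0^{2\lambda_1}|\partial_\lambda\{\chi(\lambda)\cdot(\text{integrand})\}|\,d\lambda$ is bounded in spatial operator norm uniformly in $x,y$, using $|\partial_\lambda h^{\pm}(\lambda)|\lesssim\lambda^{-1}$, the derivative bounds on $E_0^{\pm}$ from Lemma~\ref{lem:R0 exp}, and the spatial estimate from \eqref{spat bound}. The main obstacle is ensuring that the product of $\lambda^{-1}$ from $\partial_\lambda h^{\pm}$ with the derivatives of $\tilde F^{\pm}$ remains $\lambda$-integrable on $[0,2\lambda_1]$, which is precisely where the orthogonality reduction $R^{\pm}\mapsto\tilde F^{\pm}$ and the $+/-$ cancellation on the $G_1 vD_1vG_1$ piece are both essential.
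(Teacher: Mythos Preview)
Your decomposition has the right structure but there is a genuine gap in the spatial uniformity. The substitution $R^{\pm}\mapsto\tilde F^{\pm}=G_1+E_0^{\pm}$ correctly kills the scalar $\tilde g_1^{\pm}(\lambda)$, but $\tilde F^{\pm}$ is \emph{not} bounded by $k(x,x_1)$: since $G_1(x,x_1)=c\log|x-x_1|$ grows like $\log|x|$ when $|x|\gg|x_1|$, you have $\|v(x_1)\tilde F^{\pm}(\lambda,x,x_1)\|_{L^2_{x_1}}\sim\log\la x\ra$, which is not uniform in $x$. The same problem reappears in your correction terms: the bound $|E_0^{+}-E_0^{-}|\les(\lambda|x-x_1|)^{\ell}$ carries the factor $|x-x_1|^{\ell}\les\la x\ra^{\ell}\la x_1\ra^{\ell}$, so after integration by parts the spatial integral is $\les\la x\ra^{\ell}$, again not uniform. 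Invoking Lemma~\ref{prop:QD0Q bound} does not help, because that lemma uses $H^{\pm}$ from \eqref{eqn:H def}, not your $\tilde F^{\pm}$.

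The fix, which is exactly what the paper does, is to use orthogonality a second time to subtract quantities depending only on $(\lambda,x)$ (constant in $x_1$). On the side carrying a single resolvent one replaces $R^{\pm}$ by $H^{\pm}$ of \eqref{eqn:H def}, which subtracts $\chi(\lambda\la x\ra)[a\log(\lambda\la x\ra)+\alpha^{\pm}]$ so that $|H^{\pm}|\les k(x,x_1)$ (Lemma~\ref{cancellemma}). On the side carrying the difference $R^{+}-R^{-}$ one replaces it by $G(\lambda,|x-x_1|,\la x\ra)$ of Corollary~\ref{corJ1}, which subtracts $J_1(\lambda\la x\ra)/(\lambda\la x\ra)$ so that $|G|\les(\lambda\,||x-x_1|-\la x\ra|)^{\tau}\les(\lambda\la x_1\ra)^{\tau}$. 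In both cases the dependence on $|x-x_1|$ (unbounded in $x$) is converted into dependence on $\la x_1\ra$ (absorbed by $v$). With this extra subtraction, the product $h^{\pm}G$ satisfies $|h^{\pm}G|\les\lambda^{\tau-}\la x_1\ra^{\tau}$ and $|\partial_\lambda(h^{\pm}G)|\les\lambda^{\tau-1-}\la x_1\ra^{\tau}$, and the integral \eqref{HSG} closes; without it, your scheme produces bounds with spurious $\la x\ra$ weights.
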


In the proof we again utilize from the algebraic fact \eqref{alg fact} and consider the following two terms by symmetry
\begin{align} \label{termsariseS}
	[h^{+} - h^{-} ](\lambda) R^{\pm}(\lambda^4)vS_1D_1S_1vR^{\pm}(\lambda^4), \,\ h^{\pm}(\lambda)[  R^+- R^-](\lambda^4) vS_1D_1S_1vR^{\pm}(\lambda^4). 
	\end{align}
Recall by Lemma~\ref{lem:free est}, one has 
$$ 
	[R^{+}-R^{-}](\lambda^4)(x,y)= \frac{i}{2\lambda^2} \frac{\lambda }{4\pi |x-y|} J_1(\lambda |x-y|) .
$$
We have the following lemma and its corollary which we use together with the orthogonality property $Qv=0$ to estimate the terms arising in interaction with $R^{+}-R^{-}$.  

Recalling \eqref{eqn:R0 hi exp} and \eqref{eqn:A def}, we define
\begin{align}
	\widetilde G^\pm (\lambda, p, q):=e^{\pm  i \lambda p} \tilde{ \omega}_{\pm}  ( \lambda p) + e^{-\lambda p} \tilde{ \omega}_{+}  ( \lambda p)-e^{\pm  i \lambda q} \tilde{ \omega}_{\pm}  ( \lambda q) - e^{-\lambda q} \tilde{ \omega}_{+}  ( \lambda q)
\end{align}

\begin{lemma}\label{lem:tildeG gain}
	
	For any $0\leq \tau\leq 1$, we have the bounds
	\begin{align*}
		|\widetilde G^\pm (\lambda, p, q)|\les (\lambda |p-q|)^{\tau},
		\qquad
		|\partial_\lambda \widetilde G^\pm (\lambda, p, q)|\les \lambda^{\tau-1} |p-q|^{\tau}
	\end{align*}

\end{lemma}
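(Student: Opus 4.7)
The plan is to prove both the size and derivative bounds at the two endpoints $\tau=0$ and $\tau=1$ and then interpolate geometrically.

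At $\tau=0$, the size bound is immediate from the triangle inequality: each summand $e^{\pm i\lambda r}\tilde\omega_{\pm}(\lambda r)$ (and the analogue with $e^{-\lambda r}$) satisfies
$|\tilde\omega_{\pm}(\lambda r)|\les \tilde\chi(\lambda r)(\lambda r)^{-1}(1+\lambda r)^{-1/2}\les 1$,
since $\tilde\chi$ forces $\lambda r\gtrsim 1$. For the derivative bound, writing
\begin{equation*}
\partial_\lambda\bigl[e^{\pm i\lambda r}\tilde\omega_{\pm}(\lambda r)\bigr]=\pm ir e^{\pm i\lambda r}\tilde\omega_{\pm}(\lambda r)+r e^{\pm i\lambda r}\tilde\omega_{\pm}'(\lambda r),
\end{equation*}
one unpacks $\tilde\omega_\pm=\tilde\chi\cdot z^{-1}\omega_\pm$ and uses $|\omega_\pm^{(\ell)}(z)|\les (1+|z|)^{-1/2-\ell}$: the term $r|\tilde\omega_{\pm}(\lambda r)|\les \lambda^{-1}(\lambda r)^{-1/2}\tilde\chi(\lambda r)\les \lambda^{-1}$, and the three contributions to $r|\tilde\omega_{\pm}'(\lambda r)|$ coming from differentiating $\tilde\chi$, $z^{-1}$, and $\omega_\pm$ separately are each bounded by $\lambda^{-1}$ (the $\tilde\chi'$ piece is supported on $\lambda r\sim 1$, hence $r\sim \lambda^{-1}$; the other two come with extra powers $(\lambda r)^{-5/2}$ on $\lambda r\gtrsim 1$). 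The summand with $e^{-\lambda r}$ is handled identically since on the support $e^{-\lambda r}\les 1$.

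At $\tau=1$, use the fundamental theorem of calculus: set
\begin{equation*}
F(\lambda,r):=e^{\pm i\lambda r}\tilde\omega_{\pm}(\lambda r)+e^{-\lambda r}\tilde\omega_+(\lambda r),\qquad \widetilde G^\pm(\lambda,p,q)=\int_q^p \partial_r F(\lambda,r)\,dr.
\end{equation*}
The chain rule extracts a factor of $\lambda$, and the residual combinations $\tilde\omega_{\pm}(\lambda r)$, $\tilde\omega_{\pm}'(\lambda r)$ are uniformly bounded as in the $\tau=0$ discussion, giving $|\partial_r F|\les \lambda$ and thus $|\widetilde G^\pm|\les \lambda|p-q|$. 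For the derivative, differentiate under the integral: $\partial_\lambda\partial_r F$ produces finitely many expressions of the form $(\lambda r)^j\tilde\omega_{\pm}^{(k)}(\lambda r)$ with $j+k\le 2$; each such factor $z\tilde\omega_\pm(z)=\tilde\chi(z)\omega_\pm(z)$, $z\tilde\omega_\pm'(z)$, $\tilde\omega_\pm''(z)$ is bounded, by direct computation using the support of $\tilde\chi$ and the decay of $\omega_\pm^{(\ell)}$. Hence $|\partial_\lambda\partial_r F|\les 1$ and $|\partial_\lambda \widetilde G^\pm|\les |p-q|$.

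Combining the endpoint estimates,
\begin{equation*}
|\widetilde G^\pm(\lambda,p,q)|\les \min(1,\lambda|p-q|)\le (\lambda|p-q|)^\tau,
\end{equation*}
and
\begin{equation*}
|\partial_\lambda \widetilde G^\pm(\lambda,p,q)|\les \min(\lambda^{-1},|p-q|)=\lambda^{-1}\min(1,\lambda|p-q|)\le \lambda^{\tau-1}|p-q|^\tau,
\end{equation*}
for every $0\le \tau\le 1$, which is the claim.

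The argument contains no real obstacle; the proof is essentially interpolation between a uniform $L^\infty$ estimate and a Lipschitz-in-$r$ estimate. The only delicate bookkeeping lies in verifying that all the derivatives of $\tilde\omega_{\pm}$ needed for $\partial_\lambda\partial_r F$ are bounded — in particular checking that the terms arising from $\tilde\chi'$ (supported in the compact set $\lambda r\sim 1$) and from the $z^{-1}\omega_\pm$ tail (where one uses $|\omega_\pm^{(\ell)}(z)|\les (1+|z|)^{-1/2-\ell}$) do not produce any logarithmic or polynomial loss.
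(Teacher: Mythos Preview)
Your proof is correct and follows essentially the same strategy as the paper: establish the endpoint bounds at $\tau=0$ (trivial) and $\tau=1$ (fundamental theorem of calculus), then interpolate. The only cosmetic difference is that the paper integrates in the variable $s=\lambda r$ (so the integral runs from $\lambda p$ to $\lambda q$ and the $\lambda$-derivative is handled by first factoring out $\lambda^{-1}$), whereas you integrate in $r$ and differentiate under the integral sign; the two are equivalent by substitution. One small bookkeeping slip: the mixed derivative $\partial_\lambda\partial_r F$ also produces a term of the form $(\lambda r)\tilde\omega_\pm''(\lambda r)$ (so $j+k=3$, not $\le 2$), but this is harmless since $z\tilde\omega_\pm''(z)$ is bounded on $z\gtrsim 1$ for the same reasons you already give.
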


\begin{proof}
	
	We note that, we have $\tilde{ \omega}_{\pm}  ( s)=\widetilde O((1+s)^{-\f32})$ due to the support condition $s\gtrsim 1$.  We consider the oscillatory terms, those with the exponential decay follow similarly.
	We first note the trivial bound
	\begin{align}
		|e^{\pm  i \lambda p} \tilde{ \omega}_{\pm}  ( \lambda p) 
		-e^{\pm  i \lambda q} \tilde{ \omega}_{\pm}  ( \lambda q)|\les 1.
	\end{align}
	Additionally, we note
	\begin{multline}
		|e^{\pm  i \lambda p} \tilde{ \omega}_{\pm}  ( \lambda p) 
		-e^{\pm  i \lambda q} \tilde{ \omega}_{\pm}  ( \lambda q)|
		=\bigg| \int_{\lambda p}^{\lambda q} \partial_s \big( e^{is}\tilde \omega_\pm(s) \big)\, ds \bigg| \\
		=\bigg| \int_{\lambda p}^{\lambda q} \big( ie^{is}\tilde \omega_\pm(s)+e^{is}\tilde \omega^{'}_\pm(s) \big)\, ds \bigg|
		\les \bigg| \int_{\lambda p}^{\lambda q} (1+s)^{-\f32} \, ds \bigg| \les (\lambda|p-q| ),
	\end{multline}
	since $(1+s)^{-\f32}\les 1$.  Interpolating between these bounds suffices to prove the claim for $\widetilde G$, since the terms with exponential decay satisfy the same bounds used here.
	
	For the derivative, we again consider only the oscillatory terms.  First, 
	$$
		\partial_\lambda e^{\pm  i \lambda p} \tilde{ \omega}_{\pm}  ( \lambda p) =\frac{1}{\lambda }\big[ \pm i \lambda p   e^{\pm  i \lambda p} \tilde{ \omega}_{\pm}  ( \lambda p) +\lambda p e^{\pm  i \lambda p} \tilde{ \omega}_{\pm}^{'}  ( \lambda p) \big].
	$$
	So that, 
	\begin{align}
		\bigg|\big( \partial_\lambda e^{\pm  i \lambda p} \tilde{ \omega}_{\pm}  ( \lambda p) \big)
		-\partial_\lambda \big(e^{\pm  i \lambda q} \tilde{ \omega}_{\pm}  ( \lambda q)\big) \bigg|\les \lambda^{-1}.
	\end{align}
	Furthermore,
	\begin{multline}
		\bigg|\big( \partial_\lambda e^{\pm  i \lambda p} \tilde{ \omega}_{\pm}  ( \lambda p) \big)
		-\partial_\lambda \big(e^{\pm  i \lambda q} \tilde{ \omega}_{\pm}  ( \lambda q)\big) \bigg|
		=\frac{1}{\lambda } \bigg| \int_{\lambda p}^{\lambda q} \partial_s \big(i s e^{is}\tilde \omega_\pm(s)+s e^{is}\tilde \omega_\pm ^{'}(s) \big)\, ds \bigg| \\
		\les  \frac{1}{\lambda} \bigg| \int_{\lambda p}^{\lambda q} (1+s)^{-\f12} \, ds \bigg| \les  |p-q|.
	\end{multline}	
	Interpolating between the two bounds proves the assertion.  As before, the same proof works for the exponentially decaying term.
	
\end{proof}

\begin{corollary} \label{corJ1}
 Let 
$$ G(\lambda, p ,q ):= \frac{1}{8 \pi} \left[ \frac{J_1(\lambda p )}{\lambda p}  - \frac{J_1(\lambda q )}{ \lambda q} \right]. $$
For any $0\leq \tau\leq 1$, we have the bounds
	\begin{align*}
		| G (\lambda, p, q)|\les (\lambda |p-q|)^{\tau},
		\qquad
		|\partial_\lambda G (\lambda, p, q)|\les \lambda^{\tau-1} |p-q|^{\tau}.
	\end{align*}
\end{corollary}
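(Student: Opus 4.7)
My plan is to imitate the two-bound-plus-interpolation strategy used in the proof of Lemma~\ref{lem:tildeG gain}, replacing the oscillatory-plus-exponential high-energy representation by a direct analysis of the smooth function $h(s):=J_1(s)/s$. The key observation is the Bessel identity $(z^{-1}J_1(z))' = -z^{-1}J_2(z)$, so $h'(s) = -J_2(s)/s$, and by the standard small/large-argument asymptotics for Bessel functions we have the uniform bounds
\begin{equation*}
  |h(s)| \lesssim (1+s)^{-3/2}, \qquad |h'(s)| \lesssim (1+s)^{-3/2}, \qquad |sh'(s)|\lesssim 1,
\end{equation*}
and similarly $K(s):=sh'(s)=-J_2(s)$ satisfies $|K'(s)|\lesssim 1$ uniformly (since $J_2'$ is bounded, decaying like $s^{-1/2}$ at infinity and like $s$ at zero). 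These are exactly the analogues of the $\tilde\omega_\pm$ decay bounds exploited in Lemma~\ref{lem:tildeG gain}, so I expect the remaining work to be cosmetic.

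For the size estimate, first I would use $|h|\lesssim 1$ to obtain the trivial bound $|G(\lambda,p,q)|\lesssim 1$, and then write
\begin{equation*}
  G(\lambda,p,q) = \tfrac{1}{8\pi}\bigl(h(\lambda p)-h(\lambda q)\bigr) = \tfrac{1}{8\pi}\int_{\lambda q}^{\lambda p} h'(s)\,ds,
\end{equation*}
which combined with $|h'|\lesssim 1$ yields $|G(\lambda,p,q)|\lesssim \lambda|p-q|$. Interpolating between the $\tau=0$ and $\tau=1$ bounds gives $|G(\lambda,p,q)|\lesssim (\lambda|p-q|)^\tau$ for any $0\le\tau\le 1$. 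For the derivative estimate, I would compute
\begin{equation*}
  \partial_\lambda G(\lambda,p,q) = \tfrac{1}{8\pi}\bigl(ph'(\lambda p)-qh'(\lambda q)\bigr) = \tfrac{1}{8\pi\lambda}\bigl(K(\lambda p)-K(\lambda q)\bigr).
\end{equation*}
Using $|K(s)|=|sh'(s)|\lesssim 1$ gives $|\partial_\lambda G|\lesssim \lambda^{-1}$, which is the $\tau=0$ bound. Rewriting $K(\lambda p)-K(\lambda q) = \int_{\lambda q}^{\lambda p} K'(s)\,ds$ and using $|K'(s)|\lesssim 1$ gives $|K(\lambda p)-K(\lambda q)|\lesssim \lambda|p-q|$, hence $|\partial_\lambda G|\lesssim |p-q|$, which is the $\tau=1$ bound. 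Interpolating yields $|\partial_\lambda G(\lambda,p,q)|\lesssim \lambda^{\tau-1}|p-q|^\tau$.

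The main (minor) obstacle is establishing the uniform bounds on $h$, $h'$, $K$, $K'$ that glue the small-$s$ and large-$s$ asymptotics of $J_1,J_2$; once these are in hand, the fundamental-theorem-of-calculus step and the interpolation are exactly parallel to the treatment of $\widetilde G$ in Lemma~\ref{lem:tildeG gain}. In particular, since the Bessel identities provide the same $(1+s)^{-3/2}$ type decay that $\tilde\omega_\pm$ enjoys, no additional ideas beyond those already present in the proof of Lemma~\ref{lem:tildeG gain} are needed, which is presumably why the authors have phrased this as a corollary rather than a separate lemma.
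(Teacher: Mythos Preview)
Your proposal is correct and follows the same fundamental-theorem-of-calculus-plus-interpolation scheme as the paper, but the execution differs in one useful way. The paper decomposes $J_1(\lambda p)/(\lambda p)$ into low and high argument pieces via the cut-off $\chi$, invokes Lemma~\ref{lem:tildeG gain} to handle the large-argument regime (where the Hankel-type representation \eqref{eqn:R0 hi exp} applies), and then treats the small-argument regime separately via the power series expansion $\chi(s)J_1(s)/s=c_0+\sum_{k\ge 1}c_ks^k$. Your approach avoids the split entirely: by writing $h(s)=J_1(s)/s$ and $K(s)=sh'(s)=-J_2(s)$ and recording the uniform global bounds $|h|,|h'|,|K|,|K'|\lesssim 1$ (coming from the small-$s$ power series and the large-$s$ $s^{-1/2}$ decay of $J_\nu$), you obtain both endpoint bounds in one stroke. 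This is slightly cleaner, and it also makes transparent why no phase factor is needed here, whereas the paper's route has the advantage of directly recycling the estimate already done for $\widetilde G$.
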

\begin{proof} Using the expansion for Bessels function, we have 
 \begin{align}
	\frac{J_1(\lambda p)}{8 \pi \lambda p} = \chi(\lambda p) [\Im z_1+  O((\lambda p)^{2}) ]+\tilde{\chi}(\lambda p) e^{it\lambda} \tilde{\omega}  ( \lambda p)
\end{align}
By Lemma~\ref{lem:tildeG gain}, we only consider when $\lambda p$ and $\lambda q$ are small. Note that by the Mean Value Theorem one has
\begin{align*}
	\bigg|\chi(\lambda p)\frac{J_1(\lambda p)}{8 \pi \lambda p}  -\chi(\lambda p)\frac{J_1(\lambda p)}{8 \pi \lambda p}\bigg| \les \lambda |p-q| \sup_s \bigg| \partial_s \bigg( \chi(s)\frac{J_1(s)}{8 \pi s}  \bigg)\bigg|,
\end{align*} 
where $s$ lies between $\lambda p$ and $\lambda q$.  By the power series expansion for $J_1$, see \cite{AS}, one has
$$
	\chi(s)\frac{J_1(s)}{s}=c_0+\sum_{k=1}^{\infty} c_k s^k
$$
for some constants $c_k$.  Hence, one can differentiate and obtain a bounded function for $|s|\ll 1$.
Interpolating between this and the trivial bound  $|G(\lambda, p ,q )| \les 1$ suffices to prove the claim for $G$. Moreover, 
for the derivative we have for $s=\lambda p$
$$
	\partial_\lambda \bigg(\chi(s)\frac{J_1(s)}{s}\bigg)=\sum_{k=1}^{\infty} pkc_k s^{k-1}=\frac{1}{\lambda}\sum_{k=1}^{\infty} kc_k s^k
$$
From here, one can see the bound of $|\partial_\lambda G (\lambda, p, q)|\les \lambda^{-1}$ holds.  We interpolate with the following bound
\begin{multline*}
	\bigg|\partial_\lambda \bigg(\chi(\lambda p)\frac{J_1(\lambda p)}{8 \pi \lambda p}\bigg)  -\partial_\lambda \bigg(\chi(\lambda p)\frac{J_1(\lambda p)}{8 \pi \lambda p}\bigg) \bigg|\\ 
	\les  |p-q| \sup_s \bigg| \partial_s\bigg[ \lambda \partial_\lambda \bigg(\chi(s)\frac{J_1(s)}{s}\bigg) \bigg]\bigg| \les  |p-q|,
\end{multline*} 
to prove the desired statement.

\end{proof}

We are now ready to prove the main proposition.

\begin{proof} [Proof of Propostion~\ref{prop:S_1T_1^{-1}S_1 bound} ] Note that the first term in \eqref{termsariseS} is similar to the operator that we considered in Lemma~\ref{prop:QD0Q bound}. Since $S_1 \leq Q$, the orthogonality  $S_1P=0$ allows us to exchange  $R^{\pm}$ on both sides of $vS_1D_1S_1v$ with $H^{\pm}$, see \eqref{eqn:H def}. Therefore, we prove the statement only for the second term in \eqref{termsariseS}. Using the  orthogonality we exchange $R^{+} - R^{-}$ with $G$ from Corollary~\ref{corJ1} and  consider the following integral. 
\begin{align} \label{HSG}
\int_0^{\infty} e^{-it\lambda^4}\lambda^3 \chi(\lambda) h^{\pm}(\lambda)  H^{\pm}(\lambda,y,y_1) vS_1D_1S_1v G (\lambda, |x-x_1|,\la x\ra )  \ d \lambda.
\end{align}
Recall that we have $H^{\pm}(\lambda,y,y_1) \les k(y,y_1)$. Moreover, using the bounds for G from Corollary~\ref{corJ1}, we have 
	$$ | h^{\pm} G(\lambda, |x-x_1|,\la x\ra ) | \les \lambda^{\tau-} \la x_1 \ra ^{\tau}, \quad |\partial_\lambda \{ h^{\pm} G(\lambda, |x-x_1|,\la x\ra ) \} | \les  \lambda^{\tau-1-} \la x_1 \ra ^{\tau} .$$

Therefore, the integral \eqref{HSG} is bounded. For the time decay, by integration by parts, we need to show the following integral is bounded.
	\begin{multline} \label{HG}
	\int_0^{\infty} | \partial_\lambda \{ h^{\pm}(\lambda) \chi(\lambda) H^{\pm}(\lambda,y,y_1) vS_1D_1S_1v G (\lambda, |x-x_1|,\la x\ra ) \} | \ d \lambda  \\ 
	 \les  \int_0^{\infty} |h^{\pm}(\lambda) \partial_\lambda \{ \chi(\lambda) H^{\pm}(\lambda,y,y_1) \}  vS_1D_1S_1v G(\lambda, |x-x_1|,\la x\ra ) | \ d \lambda\\ +  \int_0^{\infty} | \ \chi(\lambda) H^{\pm}(\lambda,y,y_1)  vS_1D_1S_1v \partial_\lambda \{ h^{\pm}(\lambda) G(\lambda, |x-x_1|,\la x\ra ) \}| \ d \lambda .
	\end{multline}
	
	Hence picking $\tau = \f12$, the first integral  in \eqref{HG} can be estimated as in \eqref{HH}, and the second integral in \eqref{HG} as follows,
	\begin{align*}
		\int_{\R^8} \int_0^{\infty} |h^{\pm}(\lambda) \chi(\lambda) H^{\pm}(\lambda,y,y_1) [vS_1D_1S_1v] \partial_\lambda \{ h^{\pm}(\lambda) G(\lambda, |x-x_1|,\la x\ra ) \}| \ d \lambda dx_1 dy_1 \\ 
		\les \int_{\R^8}  \int_0^{\lambda_1} \lambda^{-\f12-} k(y,y_1) [vS_1D_1S_1v] \la x_1 \ra^{\f12} \ d \lambda dx_1 dy_1 \les 1.  
	\end{align*}
As usual, the boundedness of the spatial integrals follows from the absolute boundedness of $D_1$ and an argument as in \eqref{spat bound}.
	
\end{proof}

We now turn to the contribution of the $\lambda$ independent, finite rank operator(s) $\Gamma:=SS_1D_1S_1-S_1D_1S_1S$.

\begin{lemma}\label{lem:SS1 bound} 
	We have the bound
	\begin{align*}
	\sup_{x,y}\bigg|	
	\int_0^{\infty} e^{-it \lambda^4} \lambda^3 \chi(\lambda)  
	\big[ R^+v\Gamma vR^+ -R^-v\Gamma vR^-](\lambda^4)(x,y)
	\big] 	 d \lambda \bigg| \les \la t\ra^{-1} .
	\end{align*}
\end{lemma}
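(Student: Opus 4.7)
The plan follows the spirit of Lemmas~\ref{prop:QD0Q bound} and \ref{prop:S_1T_1^{-1}S_1 bound}, complicated by the fact that neither of the two summands in $\Gamma$ admits two-sided orthogonality with $v$.  Since $S$, $S_1$, and $D_1$ are self-adjoint, $(SS_1D_1S_1)^*=S_1D_1S_1S$, so setting $\Lambda:=SS_1D_1S_1$ we have $\Gamma=\Lambda-\Lambda^*$.  Taking adjoints of $R^\pm v\Lambda vR^\pm$ exchanges $R^+\leftrightarrow R^-$, and hence it suffices to bound $R^+v\Lambda vR^+ - R^-v\Lambda vR^-$ and take twice its real part.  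Moreover, $\Lambda$ has the one-sided orthogonality $\Lambda v=0$ (since $D_1v=S_1D_1S_1v=0$ because $S_1v=0$), which allows us to replace $R^\pm(y_1,y)$ by $H^\pm(\lambda,y,y_1)$ from \eqref{eqn:H def} on the right.  The algebraic identity \eqref{alg fact} then splits the integrand as
\begin{align*}
R^+v\Lambda vH^+ - R^-v\Lambda vH^- = (R^+-R^-)v\Lambda vH^+ + R^-v\Lambda v(H^+-H^-).
\end{align*}

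The first summand is handled as in Lemma~\ref{prop:QD0Q bound}: the bounds $|R^+-R^-|\les 1$ and $\int|\partial_\lambda(R^+-R^-)|\,d\lambda\les 1$ (from the proof of Lemma~\ref{lem:free est}), together with $|H^+|\les k(y,y_1)$ and $\int|\partial_\lambda H^+|\,d\lambda\les k(y,y_1)$ from Lemma~\ref{cancellemma}, yield the desired $\la t\ra^{-1}$ bound after a single integration by parts.  The spatial integrals converge by the finite-rank structure of $v\Lambda v$, expressible as a finite sum of rank-one operators of the form $|v\psi_j\rangle\langle v\phi_j|$ with $\phi_j\in S_1L^2$, together with the decay $|V(x)|\les\la x\ra^{-4-}$, along the lines of \eqref{spat bound}.

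The second summand $R^-v\Lambda v(H^+-H^-)$ is the crux, since the left $R^-$ admits no orthogonality.  The resolution is that the $y_1$-independent corrections distinguishing $H^\pm$ from $R^\pm$ contribute zero by right-side orthogonality of $\Lambda$, so $H^+-H^-$ may be replaced first by $R^+-R^-$ and then, via Corollary~\ref{corJ1}, by $iG(\lambda,|y-y_1|,\la y\ra)$, gaining the factor $(\lambda\la y_1\ra)^{\tau}$ for any $0\leq\tau\leq 1$.  Choosing $\tau=\f12$ provides the $\lambda^{1/2}$ smallness needed to absorb the $1/\lambda$ singularity in $\partial_\lambda R^-$ coming from the $a\log(\lambda r_1)$ piece of $\gamma^-$; after one integration by parts the resulting time integral reduces to $\int_0^{\lambda_1}\lambda^{-\f12-}\,d\lambda<\infty$, while the spatial weight $\la y_1\ra^{1/2}$ is absorbed using the orthogonality $\langle v,\phi_j\rangle=0$ together with the bounds on the resonance functions spanning $S_1L^2$ from Section~\ref{sec:classification}.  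The main obstacle is precisely this step, where the absence of left-side orthogonality forces us to control $R^-$ directly through the delicate balance between the $\lambda^{1/2}$ gain from $G$, the logarithmic growth of the free resolvent, and the structural properties of $S_1L^2$.
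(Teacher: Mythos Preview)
Your overall strategy---reducing to $\Lambda=SS_1D_1S_1$ via the adjoint, exploiting the one-sided orthogonality $S_1v=0$ on the right to replace $R^\pm$ by $H^\pm$, and then splitting through \eqref{alg fact}---is sound and close in spirit to the paper's argument.  The paper works directly with $R^-v\Gamma v(R^+-R^-)$ and carries out a low/high case analysis on both resolvent arguments rather than using the $H^\pm$ abstraction, but the essential mechanism (one-sided $S_1$-orthogonality combined with the cancellation in $R^+-R^-$) is the same.  Your treatment of the first summand $(R^+-R^-)v\Lambda vH^+$ is correct.

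The gap is in the second summand $R^-v\Lambda vG$.  Your choice $\tau=\tfrac12$ in Corollary~\ref{corJ1} does make the $\lambda$-integral converge, but it produces the spatial weight $\la y_1\ra^{1/2}$ on the right.  Under the standing hypothesis $|V(x)|\les\la x\ra^{-4-}$ this weight is \emph{not} absorbable: one would need $v(\cdot)\la\cdot\ra^{1/2}\in L^2(\R^4)$, i.e.\ $\la\cdot\ra^{-3/2-}\in L^2(\R^4)$, which fails.  The rescue you propose does not work either.  The pointwise bound $|\phi_j|\les|v|$ for $\phi_j\in S_1L^2$ (from $\phi_j=Uv\psi_j$ with $\psi_j\in L^\infty$, Section~\ref{sec:classification}) gives only $|v\phi_j|\la y_1\ra^{1/2}\les\la y_1\ra^{-7/2-}$, which is still not in $L^1(\R^4)$; and the orthogonality $\la v,\phi_j\ra=0$ has already been spent in passing from $R^+-R^-$ to $G$, so it cannot be invoked a second time to extract further decay.

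The fix is simply to take $\tau=0+$, which is exactly what the paper does in its high-low and high-high cases.  Any $\tau>0$ makes both $\partial_\lambda R^-\cdot G=O(\lambda^{\tau-1})$ and $R^-\cdot\partial_\lambda G=O(\lambda^{\tau-1-})$ integrable near zero (choose the ``$+$'' in $\tau$ larger than the ``$-$'' coming from the logarithm in $R^-$), while $\tau=0+$ keeps the spatial weight at $\la y_1\ra^{0+}$, which \emph{is} absorbed by $v$ under $|V|\les\la x\ra^{-4-}$ as in \eqref{spat bound}.  No appeal to the resonance classification is needed.
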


\begin{proof}
	
	Note that the +/- difference leads us to bound the contribution of
	$$
		R^-(\lambda^4)\Gamma (R^+-R^-)(\lambda^4).
	$$
	By symmetry, we consider only this case.
	We begin by considering the `low-low' case.  The worst case is when the $S_1$ projection is on the left.  Then, we need to bound an integral of the form
	\begin{multline*}
		\bigg| \int_0^\infty e^{-it\lambda^4}\lambda^3 \chi(\lambda)F(\lambda, y_1,y) [1+\widetilde O_2((\lambda |x-x_1|)^2)] \, d\lambda\bigg| \\
		\les \frac{1}{t} \bigg[ \int_0^{2\lambda_1} |\partial_\lambda F(\lambda, y,y_1)|\, d\lambda+k(y,y_1)\int_0^{|x-x_1|^{-1}}\lambda |x-x_1|^2\, d\lambda	\bigg] \les \frac{k(y,y_1)}{t}.
	\end{multline*}
	This suffices for the desired bound.
	
	For the `high-low' interaction, the worst case is when $S_1$ is on the right whose resolvent is supported on large argument, and we may replace $R^\pm$ with $\widetilde G(\lambda, |x-x_1|,\la x\ra)$.  In this case, we note that
	$$
	R^-(\lambda^4)(y,y_1)\chi(\lambda|y-y_1|)= O_1(\log(\lambda |y-y_1|)).
	$$
	Accordingly, we seek to bound
	\begin{multline*}
		\bigg| \int_0^\infty e^{-it\lambda^4}\lambda^3 \chi(\lambda) \chi(\lambda r_1) \log (\lambda r_1) \widetilde G(\lambda, |x-x_1|,\la x\ra)\, d\lambda\bigg| \\
		\les \frac{1}{t}\int_0^\infty \bigg| \chi(\lambda) \chi(\lambda r_1) \partial_\lambda \big(\log (\lambda r_1) \widetilde G(\lambda, |x-x_1|,\la x\ra)\big) 	\bigg| \, d\lambda \\
		\les \frac{1}{t} \int_{0}^\infty \lambda^{\tau-1-} (1+|y-y_1|^{0-}) \la x_1 \ra^{\tau}  d\lambda
	\end{multline*}
	This suffices to prove our desired bound provided we select $\tau=0+$.  The spatial bounds follow by the absolute boundedness of $\Gamma$.  
	
	The high-high bound follows similarly by using $\tau=0+$ in Lemma~\ref{lem:tildeG gain}. The boundedness of the  contribution of $R^-(\lambda^4)\Gamma (R^+-R^-)(\lambda^4)$ in each case is clear. 
	
\end{proof}

\subsection{Resonance of the Second Kind}
In this section, we analyze the evolution when there is a resonance of the second kind.  We develop expansions for the operator $B(\lambda)^{-1}$ and consequently for the spectral measure in this case, which we then use to prove the dispersive bounds. We emphasize that this type of resonance  has no  analogue   in the analysis of Schr\"odinger operator.  Both the characterization in terms of solutions of $H\psi=0$ and the effect on the time decay are new to the fourth order equation. Our main result is the following.
\begin{prop}\label{prop:2nd kind} 
	
	Let $|V(x)|\les \la x \ra^{-12-}$.
	In the case of a resonance of the second kind, we have
	$$
	\| e^{-itH}P_{ac}(H)\chi(H)\|_{L^1\to L^\infty} \les \la t\ra^{-\f12}.
	$$
	Furthermore, there is a finite rank operator $F_t$ satisfying $\|F_t\|_{1\to \infty} \les \la t\ra^{-\f12}$, so that
	$$
	\| e^{-itH}P_{ac}(H)\chi(H)-F_t\|_{L^{1,2+}\to L^{\infty,-2-}} \les \la t\ra^{-1}.
	$$

\end{prop}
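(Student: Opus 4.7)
\medskip
\noindent\textbf{Proof plan.} The plan is to extract a finer expansion of $M^{\pm}(\lambda)^{-1}$ than in the first-kind case by applying Lemma~\ref{JNlemma} twice. Starting from \eqref{M-1exp1} for $(M^\pm(\lambda)+S_1)^{-1}$, valid under $|V(x)|\les \la x\ra^{-12-}$, a direct computation gives
\begin{align*}
	B(\lambda) := S_1 - S_1(M^\pm(\lambda)+S_1)^{-1} S_1 = -h_\pm(\lambda)^{-1} T_1 + c^\pm \lambda^2 S_1 vG_2 v S_1 + \widetilde O_1(\lambda^{2+}).
\end{align*}
Since $T_1$ is not invertible on $S_1 L^2$ while $T_2=S_2 vG_2 v S_2$ is invertible on $S_2 L^2$, a second application of Lemma~\ref{JNlemma} with the projection $S_2$, followed by a Feshbach/Neumann-series inversion of the block on $S_2 L^2$, produces an expansion of the form
\begin{align*}
	M^\pm(\lambda)^{-1} = \frac{-S_2 D_2 S_2}{c^\pm \lambda^2} + \frac{h_\pm(\lambda)}{\lambda^2}\,\Gamma^{\pm}_{-1} + \Gamma^{\pm}_0 + \widetilde O_1(\lambda^{0+}),
\end{align*}
where $\Gamma^{\pm}_{-1}$ is finite-rank and $\Gamma^{\pm}_0$ is a combination of finite-rank and absolutely bounded operators of the same structural type as the summands in \eqref{eqn:Mive first}.

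Inserting this expansion into the symmetric resolvent identity \eqref{resid} and Stone's formula \eqref{stone}, the contributions coming from $\Gamma_0^\pm+\widetilde O_1(\lambda^{0+})$ are bounded by $\la t\ra^{-1}$ in both the unweighted and weighted norms using exactly the arguments of Section~\ref{sec:reg sect} and the first-kind case (Proposition~\ref{prop:S_1T_1^{-1}S_1 bound}, Lemma~\ref{lem:SS1 bound}, together with the weighted variants from Section~\ref{sec:wtd}). The genuinely new contribution is, up to a constant,
\begin{align*}
	I_{-2}(t,x,y) := \int_0^\infty e^{-it\lambda^4}\lambda\,\chi(\lambda)\bigl[R^+ vS_2D_2S_2v R^+ - R^- vS_2D_2S_2v R^-\bigr](\lambda^4)(x,y)\,d\lambda.
\end{align*}
Using $S_2\leq Q$ (so $v S_2\perp 1$) together with the defining relation $S_1TPTS_1 S_2=0$, which via $T=U+vG_1 v$ supplies an additional orthogonality of $vS_2$ against $vG_1$, each flanking free resolvent may be replaced by $R^\pm(\lambda^4)-\tilde g_1^\pm(\lambda)-G_1 = \widetilde O_1((\lambda|x-y|)^{\ell})$, using Lemma~\ref{lem:R0 exp} and Remark~\ref{rmk:R0low}, which absorbs one factor of $\lambda^{-1}$. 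The change of variables $u=\lambda^4$ reduces $I_{-2}$ to a Fourier-type integral $\int_0^\infty e^{-itu}u^{-1/2}\varphi(u,x,y)\,du\sim t^{-1/2}$ with a bounded, finite-rank kernel; we define $F_t$ to be this leading piece and obtain $\|F_t\|_{L^1\to L^\infty}\les \la t\ra^{-1/2}$, which combined with the $\la t\ra^{-1}$ bound on the remaining terms completes the first claim.

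For the weighted improvement we study $e^{-itH}P_{ac}(H)\chi(H)-F_t$. Only the pieces coming from $I_{-2}-F_t$ and from the middle summand $h_\pm(\lambda)\Gamma_{-1}^{\pm}/\lambda^2$ require additional work: Taylor-expanding the flanking resolvents to one higher order in $\lambda|x-y|$ and invoking a second moment-type cancellation of $vS_2$ against $G_1$ absorbs the remaining $\lambda^{-1}$ (or $\lambda^{-1}\log\lambda$) at the cost of spatial weights $\la x\ra^{2+}\la y\ra^{2+}$, after which a single integration by parts controlled by the Lipschitz bound of Corollary~\ref{cor:Error Lips} and the weighted oscillatory estimate of Lemma~\ref{lem:ibpweight} produces $\la t\ra^{-1}$ in the $L^{1,2+}\to L^{\infty,-2-}$ norm. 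The main obstacle is the precise bookkeeping of the orthogonalities of $S_2$: beyond the automatic $S_2 v=0$, one must verify that $S_2 \subset \ker(S_1TPTS_1)$ yields the second-moment cancellation of $vS_2$ against $G_1$-type kernels needed to convert the $\lambda^{-2}$ and $\lambda^{-2}\log\lambda$ singularities of $M^\pm(\lambda)^{-1}$ into a smooth remainder on top of the explicit finite-rank piece $F_t$. Once these cancellations are quantified, the oscillatory-integral tools of Sections~\ref{sec:free}, \ref{sec:reg sect} and \ref{sec:wtd} (particularly Lemmas~\ref{lem:ibp osc}, \ref{lem:log osc} and \ref{lem:ibpweight}) apply with only cosmetic changes to deliver both estimates.
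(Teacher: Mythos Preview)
Your proposal has two substantive gaps.

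First, the expansion of $M^\pm(\lambda)^{-1}$ you write down is not correct. Starting from the coarse expansion $B(\lambda)=-h_\pm(\lambda)^{-1}T_1+c^\pm\lambda^2 S_1vG_2vS_1+\widetilde O_1(\lambda^{2+})$ and running the second Feshbach step produces $B_2^{-1}$ with error only $\widetilde O_1(\lambda^{-2+})$, which is far too crude to give an $\widetilde O_1(\lambda^{0+})$ remainder for $M^{-1}$. To reach that accuracy one must use the longer expansion \eqref{M-1exp2} and track the subleading terms in $B_2(\lambda)$; this is where the decay $|V|\les\la x\ra^{-12-}$ is actually used. When this is done, no term of the form $h_\pm(\lambda)\lambda^{-2}\,\Gamma_{-1}^\pm$ appears. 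Instead, because $S_2A^{-1}=A^{-1}S_2=S_2$, the cross terms $E_1S_2B_2^{-1}$ and $B_2^{-1}S_2E_1$ collapse to size $\widetilde O_1(\log\lambda)$, and the genuine new singular contribution besides $D_2/(c^\pm\lambda^2)$ is $\frac{\tilde g_3^\pm(\lambda)}{(c^\pm)^2\lambda^4}\,D_2vG_4vD_2$, a $\log\lambda$ term that your expansion omits and that must be kept in $F_t$.

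Second, the ``additional orthogonality of $vS_2$ against $vG_1$'' you invoke does not hold. The relation $S_1TPTS_1S_2=0$ is equivalent to $PTS_2=0$, i.e.\ the single scalar condition $\la v, T\phi\ra=0$ for $\phi\in S_2L^2$. This is what makes $S_2A^{-1}=S_2$ in the algebra, but it does \emph{not} imply that $\int G_1(x,x_1)v(x_1)\phi(x_1)\,dx_1$ vanishes for every $x$, which is what you would need to replace $R^\pm(\lambda^4)(x,\cdot)$ by $E_0^\pm(\lambda)(x,\cdot)=R^\pm-\tilde g_1^\pm-G_1$ on each side of $vS_2D_2S_2v$. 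Only the constant $\tilde g_1^\pm(\lambda)$ can be subtracted via $S_2v=0$. Consequently your mechanism for ``absorbing one factor of $\lambda^{-1}$'' fails, and so does the subsequent definition of $F_t$.

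The paper's route avoids both problems: using only $S_2v=0$ it replaces $R^\pm$ by $H^\pm=\widetilde O_1(k(x,x_1))$, so that the $D_2/\lambda^2$ and $\log\lambda\cdot\Gamma_2$ pieces are $\widetilde O_1(\lambda^{-2})$ after sandwiching, and then a direct oscillatory bound for $\mathcal E=\widetilde O_1(\lambda^{-2})$ gives $\la t\ra^{-\f12}$. For the weighted statement one does \emph{not} cancel the $G_1$ contribution; one expands $R^\pm$ one order further, isolates the explicit finite-rank kernel $G_1v\big[(D_2^+-D_2^-)/\lambda^2+c_1\log\lambda\,D_2vG_4vD_2\big]vG_1$ as $F_t$, and shows the remainder is $\widetilde O_1(\lambda^{0+}\la x\ra^{2+}\la y\ra^{2+})$, whence a single integration by parts yields $\la t\ra^{-1}$.
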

We have an explicit representation of the operator $F_t$ given in \eqref{eqn:Ft explicit}.
Our time decay bounds follow by finding a detailed expansion for $(M^{\pm}(\lambda))^{-1}$.

\begin{prop}  \label{prop: B-1 exp second kind}
	Let $| V(x) | \les \la x \ra ^{-12-2\ell-}$.
	In the case of a resonance of the second kind,
	\begin{multline*}
		B_\pm^{-1}(\lambda)=\frac{D_2}{c^+\lambda^2}-\frac{\widetilde g_3^\pm(\lambda)}{(c^+)^2\lambda^4}D_2vG_4vD_2-\frac{1}{(c^+)^2}D_2vG_5vD_2\\ -h^\pm(\lambda) [D_1 + D_2\Gamma_0 D_2]
		+h^\pm(\lambda) \bigg(1+\frac{1}{h^\pm(\lambda)} \bigg)^2[D_2\Gamma_1+\Gamma_1D_2]+\widetilde O_1(\lambda^{\ell-})
	\end{multline*}
	where $\Gamma_0$, $\Gamma_1$ are absolutely bounded operators.

\end{prop}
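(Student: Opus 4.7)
The plan is to invert $B(\lambda) := S_1 - S_1(M^\pm(\lambda) + S_1)^{-1} S_1$ by applying the Jensen--Nenciu identity (Lemma~\ref{JNlemma}) a second time, now with $M = B(\lambda)$ and projection $S = S_2$, yielding
\[
B^{-1}(\lambda) = (B+S_2)^{-1} + (B+S_2)^{-1} S_2\, B_2^{-1}(\lambda)\, S_2 \,(B+S_2)^{-1},
\qquad B_2 := S_2 - S_2(B+S_2)^{-1}S_2.
\]
Expanding $B(\lambda)$ via \eqref{M-1exp2} and using the identities $S_1 P = 0$, $S_1 QD_0Q = S_1$, and $S_1 S S_1 = T_1$ (the last verified by direct computation, with cross terms vanishing via $PS_1 = 0$), one obtains
\[
B(\lambda) = -h_\pm^{-1}(\lambda) T_1 + c^+ \lambda^2 S_1 A^{-1} vG_2v A^{-1} S_1 + \widetilde g_3^\pm(\lambda) S_1 A^{-1} vG_4v A^{-1} S_1 + \lambda^4 S_1 \mathcal E_3 S_1 + \widetilde O_1(\lambda^{4+\ell}),
\]
with $\mathcal E_3$ a fixed combination of $A^{-1}$, $vG_5 v$, and $vG_2v$ read off from \eqref{M-1exp2}. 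A crucial simplification used throughout is $S_2 T P = 0$: since $T_1 = (PTS_1)^*(PTS_1)$ is positive and $T_1 S_2 = 0$, we have $\|(PTS_1)S_2\|^2 = S_2 T_1 S_2 = 0$, whence $PTS_2 = 0$. Consequently $S_2 A^{-1} = S_2 = A^{-1} S_2$, causing many $S_2$-compressions to collapse to identities.

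To invert $B+S_2$, I would write it as $N + \mathcal P$ with $N := -h_\pm^{-1} T_1 + S_2$ and $\mathcal P = \widetilde O_1(\lambda^2)$. Because $T_1$ is self-adjoint on $S_1 L^2$ with kernel projection $S_2$, the subspaces $(S_1-S_2)L^2$ and $S_2 L^2$ reduce $T_1$ orthogonally; combined with $D_1 T_1 = S_1 - S_2$ and $D_1 S_2 = S_2$, a direct verification gives
\[
N^{-1} = -h_\pm D_1 + (1 + h_\pm)\, S_2,
\]
and a Neumann expansion in $\mathcal P N^{-1}$ then produces $(B+S_2)^{-1}$ to the required orders, with each term absolutely bounded.

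The heart of the argument is $B_2$. Using $S_2 N^{-1} = N^{-1} S_2 = S_2$ (from $S_2 D_1 = S_2$) and $S_2 A^{-1} = S_2$ to telescope the $S_2$-compressions, one arrives at
\[
B_2(\lambda) = c^+ \lambda^2 T_2 + \widetilde g_3^\pm(\lambda)\, S_2 vG_4 v S_2 + \lambda^4 \bigl[ a\, S_2 vG_5 v S_2 + b\, S_2 vG_2 v\, QD_0Q\, vG_2 v S_2 \bigr] + \widetilde O_1(\lambda^{4+\ell}),
\]
for explicit constants $a,b$ dictated by \eqref{M-1exp2}. Since in the second-kind regime $S_3 = 0$ so that $D_2 = T_2^{-1}$ is absolutely bounded on $S_2 L^2$, a Neumann inversion based on the factorization $B_2 = c^+\lambda^2(T_2 + \delta')$ delivers the three leading terms $\frac{D_2}{c^+\lambda^2}$, $-\frac{\widetilde g_3^\pm}{(c^+)^2 \lambda^4} D_2 vG_4 v D_2$, and $-\frac{1}{(c^+)^2} D_2 vG_5 v D_2$, with the $D_2 vG_2 v\, QD_0Q\, vG_2 v D_2$ piece (together with the $h_\pm^{-1}$-scaled parts arising from $A^{-1} = h_\pm^{-1} S + QD_0Q$) absorbed into the absolutely bounded operator $D_2 \Gamma_0 D_2$.

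Substituting into the Jensen--Nenciu identity and collecting by order in $\lambda$: the $\lambda^{-2}$, $\widetilde g_3^\pm/\lambda^4$, and order-$1$ pieces pass through from $S_2 B_2^{-1} S_2$; the $-h_\pm D_1$ summand is the $(S_1-S_2)$-component of $(B+S_2)^{-1}$; and the cross term $h_\pm(1+1/h_\pm)^2 [D_2 \Gamma_1 + \Gamma_1 D_2]$ emerges from the sandwich $(B+S_2)^{-1} S_2 B_2^{-1} S_2 (B+S_2)^{-1}$, with the prefactor reflecting the squaring of the expansion $S_2(B+S_2)^{-1} = S_2 + h_\pm^{-1}(\cdots) + \widetilde O(\lambda^2)$ against the leading $\lambda^{-2}$ part of $B_2^{-1}$, and $\Gamma_1$ built from $A^{-1}$, $S$, $QD_0Q$, and $vG_2v$. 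The principal obstacle is bookkeeping: each of the five stated summands isolates a distinct combination of $h_\pm^{-1}$, $\widetilde g_3^\pm/\lambda^4$, and polynomial-in-$\lambda$ factors, and one must carefully verify that all residual Neumann tails and $A^{-1}$-splits fit into the remainder $\widetilde O_1(\lambda^{\ell-})$; the loss from $\lambda^\ell$ to $\lambda^{\ell-}$ arises from combining $h_\pm^{-1}\sim|\log\lambda|^{-1}$ factors with the $\widetilde O_1(\lambda^\ell)$ error in \eqref{M-1exp2}.
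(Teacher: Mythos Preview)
Your overall strategy---applying Lemma~\ref{JNlemma} a second time with projection $S_2$, expanding $B(\lambda)$ via \eqref{M-1exp2}, and inverting $B_2$ by Neumann series around $c^+\lambda^2 T_2$---is the same as the paper's.  The paper makes one algebraic simplification you do not: it first factors $B(\lambda)=-h^{-1}(\lambda)B_1(\lambda)$ with $B_1=T_1-h(\lambda)E(\lambda)$, and applies Lemma~\ref{JNlemma} to $B_1$ rather than to $B$.  This gives $(B_1+S_2)^{-1}=D_1+E_1(\lambda)$ with a $\lambda$-independent leading part, whereas your $(B+S_2)^{-1}$ has leading term $N^{-1}=-h_\pm D_1+(1+h_\pm)S_2$.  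The extra $(1+h_\pm)S_2$ summand is harmless in the end but must be tracked and shown to cancel against pieces of the sandwich; you do not address this.

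More importantly, your explanation of where the prefactor $h_\pm(1+1/h_\pm)^2$ comes from is incorrect.  You write that it reflects ``the squaring of the expansion $S_2(B+S_2)^{-1}=S_2+h_\pm^{-1}(\cdots)+\widetilde O(\lambda^2)$,'' but since $\mathcal P=\widetilde O_1(\lambda^2)$ and $S_2 N^{-1}=S_2$, one has $S_2(B+S_2)^{-1}=S_2+\widetilde O_1(\lambda^2)$ with no isolated $h_\pm^{-1}$ correction.  The factor $(1+1/h_\pm)^2$ actually originates from the internal structure of $\mathcal P$ itself: writing $A^{-1}=h_\pm^{-1}S+QD_0Q$ inside $S_1A^{-1}vG_2vA^{-1}S_1$ produces (as in the paper's display following \eqref{Elambdaexp})
\[
S_1A^{-1}vG_2vA^{-1}S_1=\frac{S_1SvG_2vSS_1}{h_\pm^2}+\frac{S_1[SvG_2v+vG_2vS]S_1}{h_\pm}+S_1vG_2vS_1,
\]
which the paper packages as $(1+1/h_\pm)^2\Gamma_1$.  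In the paper's organization this sits inside $E_1(\lambda)=D_1h_\pm E(\lambda)D_1+\cdots$, and the cross terms $E_1S_2 B_2^{-1}S_2$ and $S_2 B_2^{-1}S_2 E_1$ then produce $h_\pm(1+1/h_\pm)^2[D_2\Gamma_1+\Gamma_1 D_2]$ after multiplying through by $-h_\pm$.  Your route can recover this too, but the mechanism is the $A^{-1}$-splitting inside $\mathcal P$, not a squaring of $S_2(B+S_2)^{-1}$.
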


\begin{proof}

We will do the `+' case.  The expansions for the `-' are analogous.  We note carefully where any consequential differences occur.

Recall \eqref{Mexp0}, we have $M(\lambda)=A(\lambda)+M_0(\lambda)$ where $A(\lambda)$ is as in Lemma~\ref{regular} and 
$$
	M^{+}_0(\lambda)=c^{+}\lambda^2 vG_2v+\tilde{g}^{+}_3(\lambda)vG_4v+\lambda^4 vG_5v+\widetilde O_1(\lambda^{4+\ell})
$$
where we may take any $0<\ell\leq 2$, at the cost of decay on $v$.  We now write
\begin{align*}
	(M(\lambda)+S_1)^{-1}&=A^{-1}(\lambda) [1+M_0(\lambda)A^{-1}(\lambda)]^{-1}\\
	&=A^{-1}(\lambda)-A^{-1}(\lambda)M_0(\lambda)A^{-1}(\lambda)
	[1+M_0(\lambda)A^{-1}(\lambda)]^{-1}
\end{align*}
Using Lemma~\ref{JNlemma}, we have
\begin{align}\label{eqn:B2nd}
	B(\lambda)=S_1-S_1[h(\lambda)^{-1}S+QD_0Q]S_1+E(\lambda)
\end{align}
with
\begin{align}\label{eqn:B error}
	E(\lambda)=S_1 A^{-1}M_0A^{-1}(\lambda)S_1-S_1A^{-1}(\lambda)[M_0A^{-1}(\lambda)]^2[1+M_0A^{-1}(\lambda)]^{-1}S_1
\end{align}
Since $S_1D_0=D_0S_1=S_1$ and $S_1SS_1=S_1TPTS_1$, we have
$$
	B(\lambda)=-h(\lambda)^{-1}S_1TPTS_1+E(\lambda)
	:=-h(\lambda)^{-1}B_1(\lambda).
$$
We write $B_1=T_1-h(\lambda)E(\lambda)$ and let $S_2$ be the Riesz projection onto the kernel of $T_1=S_1TPTS_1$ and $D_1=(T_1+S_2)^{-1}$.  We then work to compute
\begin{align}\label{eqn:B1+S2inv}
	(B_1(\lambda)+S_2)^{-1}&=D_1[1-h(\lambda)E(\lambda)D_1]^{-1}
	=D_1+E_1(\lambda),
\end{align}
where
\begin{align}\label{eqn:B1error1}
	E_1(\lambda)=D_1h(\lambda)E(\lambda)D_1+D_1[h(\lambda)E(\lambda)D_1]^2[1-h(\lambda)E(\lambda)D_1]^{-1}.
\end{align}
We use Lemma~\ref{JNlemma} to invert $B_1(\lambda)$ and as such we need to compute and invert $B_2=S_2-S_2(B_1+S_2)^{-1}S_2$.  Since $S_2D_1=D_1S_2=S_2$, we see
\begin{align*}
	B_2=-S_2E_1(\lambda)S_2
	=-S_2h(\lambda)E(\lambda)S_2 -S_2[h(\lambda)E(\lambda)D_1]^2[1-h(\lambda)E(\lambda)D_1]^{-1}S_2
\end{align*}
Since we have $S_2TP=PTS_2=0$, it follows that $S_2A^{-1}(\lambda)=A^{-1}(\lambda)S_2=S_2$.  Hence, we see
\begin{align*}
	S_2h(\lambda)E(\lambda)S_2&=S_2h(\lambda)M^{+}_0(\lambda)S_2-S_2h(\lambda) (M^{+}_0A^{-1}(\lambda))^2 (1+M^{+}_0A^{-1}(\lambda))^{-1}S_2.
\end{align*}
We note that (with $c^+:=-\frac{ia_2\pi}{2}$, for the `-' case we have $c^-=c^+-i\Im(z_2)$)
\begin{multline*}
	S_2h(\lambda)M_0(\lambda)S_2
	=c^+\lambda^2h(\lambda) S_2vG_2vS_2+h(\lambda)\tilde g^{+}_3(\lambda)S_2vG_4vS_2\\
	+\lambda^4h(\lambda)S_2vG_5vS_2+\widetilde O_1(\lambda^{4+\ell}),
\end{multline*}
and
$$
	S_2(h(\lambda)E(\lambda)D_1)^2=(c^+)^2(h(\lambda))^2\lambda^4 S_2vG_2vD_1vG_2vS_2+\widetilde E_6(\lambda)
$$
where $\widetilde E_6(\lambda)=\widetilde O_1(\lambda^{4+\ell})$.
Now, with $\Gamma_0:= S_2vG_2vD_1vG_2vS_2$, we see
\begin{multline}\label{eqn:B2 simple}
	B_2=-c^+\lambda^2 h(\lambda)\bigg[ S_2vG_2vS_2+\frac{\tilde g^{+}_3(\lambda)}{c^+\lambda^2} S_2vG_4vS_2+ c^+ h(\lambda) \lambda^2\Gamma_0 \\ 
	+\frac{\lambda^2}{c^+} S_2vG_5vS_2+\widetilde E_2(\lambda)	\bigg]
\end{multline}
with $\widetilde E_2(\lambda)=\widetilde O_1(\lambda^{2+\ell})$, $0<\ell<2$.
 In the case of a resonance of the second kind, $S_2vG_2vS_2$ is invertible.  We write $D_2:=(S_2vG_2vS_2)^{-1}$ and compute
\begin{align}\label{eqn:B2inv}
	B_2(\lambda)^{-1}=-\frac{1}{c^+\lambda^2 h(\lambda)}D_2\bigg[
	1+\frac{\tilde g_3(\lambda)}{c^+\lambda^2}S_2vG_2vD_2 & +c^+ h(\lambda) \lambda^2\Gamma_0 D_2 \nn \\
	&+\frac{\lambda^2}{c^+}{S_2vG_5vD_2}+\widetilde E_2(\lambda)D_2	\bigg]^{-1}\nn\\
	=\frac{-D_2}{c^+\lambda^2 h(\lambda)}+\frac{\tilde g_3(\lambda)}{(c^+)^2\lambda^4 h(\lambda)}D_2vG_4vD_2+&D_2\Gamma_0D_2  \\ &+ \frac{1}{(c^+)^2h(\lambda)}D_2vG_5vD_2+ E_3(\lambda) \nn
\end{align}
with $E_3(\lambda)=\widetilde O_1(\lambda^\ell)$ for $0<\ell<2$.  Since
$B_1^{-1}=(B_1+S_2)^{-1}+(B_1+S_2)^{-1}S_2B_2^{-1}S_2(B_1+S_2)^{-1}$ and
$B^{-1}=-h(\lambda)B_1^{-1}$.  Using \eqref{eqn:B1+S2inv} and \eqref{eqn:B1error1}, we arrive at
\begin{align}\label{eqn:B2inv long}
	B^{-1}(\lambda)&=\frac{D_2}{c^+\lambda^2} -\frac{\tilde g_3(\lambda)}{(c^+)^2\lambda^4}D_2vG_4vD_2 -h(\lambda)D_2\Gamma_0D_2- \frac{1}{(c^+)^2}D_2vG_5vD_2\nn \\
	&+E_1(\lambda)S_2\bigg[\frac{D_2}{c^+\lambda^2 }-\frac{\tilde g_3(\lambda)}{(c^+)^2\lambda^4}D_2vG_4vD_2-h(\lambda)D_2\Gamma_0D_2-\frac{1}{(c^+)^2}D_2vG_5vD_2 \bigg]\\
	&+\bigg[\frac{D_2}{c^+\lambda^2 }-\frac{\tilde g_3(\lambda)}{c^+\lambda^4}D_2vG_4vD_2-h(\lambda)D_2\Gamma_0D_2- \frac{1}{(c^+)^2}D_2vG_5vD_2 \bigg]S_2 E_1(\lambda) \nn \\ 
	&-h(\lambda)D_1-h(\lambda)E_1(\lambda) \nn
\end{align}
Since $E_1(\lambda)=D_1h(\lambda)E(\lambda)D_1+\widetilde O_1(\lambda^{2+\ell-})$ and
\begin{align} \label{Elambdaexp}
	E(\lambda)&=S_1 A^{-1}M_0A^{-1}(\lambda)S_1-S_1A^{-1}(\lambda)[M_0A^{-1}(\lambda)]^2[1+M_0A^{-1}  \nn \\
	& = \lambda^2c^{+} \bigg( \frac{S_1SvG_2vSS_1}{h^2(\lambda)} + \frac{S_1[SvG_2v + vG_2vS]S_1 } {h(\lambda)} + S_1vG_2vS_1 \bigg) + \widetilde O_1(\lambda^{4-})   \\
	&=\lambda^2\bigg(1+\frac{1}{h(\lambda)} \bigg)^2 c^+\Gamma_1 +\widetilde O_1(\lambda^{4-}) \nn
\end{align}
Combining this with \eqref{eqn:B2inv long} 
suffices to establish the statement.
\end{proof}

\begin{corollary}  Let $| V(x) | \les \la x \ra ^{-12-}$. If there is a resonance of the second kind then one has
\begin{align*}
	(M^{\pm}(\lambda))^{-1} = \frac{D_2 }{c^{\pm}\lambda^2} +\frac{\widetilde{g}^{\pm}_3(\lambda)}{(c^{\pm})^2 \lambda^4}  \Gamma_2 +  h_{\pm}(\lambda) \Gamma_3  + \Gamma^{\pm}_4+ h_{\pm}^{-1} (\lambda) \Gamma_5 + O_1(\lambda^{0+}) 
\end{align*}
where the absolutely bounded operators $\Gamma_2 $, $\Gamma_3 $ and $\Gamma_4 $  have either $S_1$ or $Q$ on both sides and $c^+=-\frac{ia_2\pi}{2}$,  $c^-=c^+-i\Im(z_2)$
\end{corollary}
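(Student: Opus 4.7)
The plan is to apply Lemma~\ref{JNlemma} with $M=M^\pm(\lambda)$ and projection $S_1$, which gives
\begin{align*}
(M^\pm(\lambda))^{-1}=(M^\pm(\lambda)+S_1)^{-1}+(M^\pm(\lambda)+S_1)^{-1}S_1 B_\pm^{-1}(\lambda)S_1(M^\pm(\lambda)+S_1)^{-1}.
\end{align*}
I would then substitute the expansion \eqref{M-1exp2} from Lemma~\ref{lem:M+S inv} for $(M^\pm(\lambda)+S_1)^{-1}$ and the expansion from Proposition~\ref{prop: B-1 exp second kind} for $B_\pm^{-1}(\lambda)$, multiply out, and group the resulting terms by the scalar prefactors $\lambda^{-2}$, $\widetilde g_3^\pm(\lambda)/\lambda^4$, $h_\pm(\lambda)$, constant, $h_\pm(\lambda)^{-1}$, and error.

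The algebraic bookkeeping is managed by the orthogonality relations $Pv=v$ (hence $S_1v=0$), $PS_1=S_1P=0$, $QS_1=S_1Q=S_1$, and $S_1D_0=D_0S_1=S_1$, together with their upgrades $S_2\le S_1$, $D_2=S_2D_2S_2$, $S_1D_2=D_2S_1=D_2$, and $D_0D_2=D_2D_0=D_2$. Using these, each $S_1$-sandwich $S_1\cdot(M^\pm+S_1)^{-1}$ collapses into a combination of $S_1$, a finite-rank piece $h_\pm^{-1}S_1S$, and higher-order tails; in the double-sandwich pattern of Lemma~\ref{JNlemma}, every term produced then carries either $S_1$ or $Q$ on both sides, so the absolutely bounded operators $\Gamma_2,\Gamma_3,\Gamma_4$ automatically satisfy the stated property. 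The leading $D_2/(c^\pm\lambda^2)$ entry comes from the leading $D_2/(c^\pm\lambda^2)$ piece of $B_\pm^{-1}$ sandwiched by the constant-order parts of $(M^\pm+S_1)^{-1}S_1$ and $S_1(M^\pm+S_1)^{-1}$, which (modulo lower-order contributions) act as a neutral dressing; the sign mismatch $c^--c^+=-i\Im(z_2)$ is of constant order and is absorbed into $\Gamma_4^\pm$, which is why only $\Gamma_4$ carries a $\pm$ superscript.

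The remaining terms are identified as follows. The $\widetilde g_3^\pm/\lambda^4$ contribution in the corollary comes from the matching term of $B_\pm^{-1}$. The $h_\pm$ contribution comes from the two $h_\pm[\,\cdot\,]$ summands of $B_\pm^{-1}$, where expanding $(1+h_\pm^{-1})^2=1+2h_\pm^{-1}+h_\pm^{-2}$ distributes pieces cleanly across the $h_\pm$, constant, and $h_\pm^{-1}$ scales. The constant-order $\Gamma_4^\pm$ collects the $-D_2vG_5vD_2/(c^\pm)^2$ term, the $QD_0Q$ leading part of $(M^\pm+S_1)^{-1}$, and various cross terms. Finally $\Gamma_5$ captures the $h_\pm^{-1}S$ leading term of $(M^\pm+S_1)^{-1}$. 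The decay assumption $|V(x)|\les\la x\ra^{-12-}$ corresponds to choosing $0<\ell\ll1$ in Proposition~\ref{prop: B-1 exp second kind}, so that the $\widetilde O_1(\lambda^{\ell-})$ error of $B_\pm^{-1}$ combined with the $O(\lambda^{4+\ell})$ error and $O(\lambda^2)$ corrections of $(M^\pm+S_1)^{-1}$ assembles into the $O_1(\lambda^{0+})$ remainder.

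The main obstacle is the combinatorial bookkeeping: multiplying three expansions with roughly five summands each produces many cross terms, and one must verify that (i) no singularity worse than $\lambda^{-2}$ arises (with $h_\pm^{-1}\sim1/\log\lambda$ staying bounded as $\lambda\to0^+$), (ii) the $S_1$-or-$Q$ structure is preserved for $\Gamma_2,\Gamma_3,\Gamma_4$, and (iii) every residual error is controlled under a single derivative. A clean organization is to split $(M^\pm+S_1)^{-1}$ as $[h_\pm^{-1}S+QD_0Q]+\text{higher order}$ and process each $S_1$-sandwich of each summand independently before reassembling, using $S_1v=0$ to annihilate several would-be leading cross terms coming from the $v G_2 v$ and $v G_4 v$ pieces of the expansion of $(M^\pm+S_1)^{-1}$.
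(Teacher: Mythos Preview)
Your approach is essentially the same as the paper's: apply Lemma~\ref{JNlemma} with $M=M^\pm(\lambda)$ and $S=S_1$, insert the expansion of $B_\pm^{-1}$ from Proposition~\ref{prop: B-1 exp second kind}, and expand $(M^\pm+S_1)^{-1}$.  The paper's execution is slightly leaner in two respects.  First, it uses only the two-term expansion $(M^\pm+S_1)^{-1}=A^{-1}+\lambda^2 A^{-1}vG_2vA^{-1}+O_1(\lambda^{2+})$ rather than the full \eqref{M-1exp2}; since $B_\pm^{-1}$ is no worse than $\lambda^{-2}$, the $O_1(\lambda^{2+})$ tail already lands in $O_1(\lambda^{0+})$, and the $vG_4v$, $vG_5v$ pieces you propose to track are superfluous.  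Second, the paper isolates the single algebraic identity $A^{-1}S_2=S_2A^{-1}=S_2$ (a consequence of $S_2TP=PTS_2=0$, hence $SS_2=S_2S=0$), which immediately reduces $(M^\pm+S_1)^{-1}S_1B^{-1}S_1(M^\pm+S_1)^{-1}$ to $S_1B^{-1}S_1+D_2vG_2vA^{-1}+A^{-1}vG_2vD_2+O_1(\lambda^{0+})$; this is the mechanism behind what you call ``neutral dressing,'' and it is cleaner than invoking $S_1v=0$, which does not by itself kill the $A^{-1}vG_2vA^{-1}$ cross terms.  Your remark about a ``sign mismatch $c^--c^+$'' being absorbed into $\Gamma_4^\pm$ is slightly off: the leading term already carries $c^\pm$ explicitly, and the $\pm$-dependence of $\Gamma_4^\pm$ comes instead from the $(c^\pm)^{-2}D_2vG_5vD_2$ piece of $B_\pm^{-1}$.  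None of this affects correctness.
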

\begin{proof} Recall by Lemma~\ref{JNlemma}, we have 
$$ M^{-1}=(M+S_1)^{-1}+(M+S_1)^{-1}S_1B^{-1}S_1(M+S_1)^{-1}.
	$$	
We use the expansion
$$
(M+S_1)^{-1} = A^{-1} + \lambda^2 A^{-1} vG_2 vA^{-1} +O_1(\lambda^{2+}).
$$
Note that, one has $A^{-1} S_2 = S_2A^{-1} = S_2$. Therefore, 
\begin{multline} \label{third eq}
(M+S_1)^{-1}S_1B^{-1}S_1(M+S_1)^{-1} \\
= S_1B^{-1} S_1 +  D_2 vG_2 vA^{-1} + A^{-1} vG_2 v D_2 + O_1(\lambda^{0+}).  
\end{multline}
 Since $A^{-1} = h^{-1}S + QD_0Q$ we further have 
\begin{align} \label{firsteq}
 D_2 vG_2 vA^{-1} = h_{\pm}^{-1}  D_2  vG_2v S +  D_2 vG_2v QD_0Q . 
\end{align}
Using  \eqref{third eq}, the expansion for $A^{-1}$, and \eqref{firsteq} in $ M^{-1}$, we obtain the statement.
\end{proof}

\begin{proof}[Proof of Proposition~\ref{prop:2nd kind}] First, we note that all but the first two terms in the expansion of $M_{\pm}^{-1} (\lambda)$ is controlled in previous sections by $\la t \ra ^{-1}$. Noting the orthogonality property $S_2v=vS_2=0$, the contribution of $\Gamma_3$, and $\Gamma_4$ can be estimated similarly to Proposition~\ref{prop:S_1T_1^{-1}S_1 bound} and Lemma~\ref{prop:QD0Q bound} respectively. Moreover, the contribution of $\Gamma_5$ is controlled by Lemma~\ref{prop:S bound}, while Lemma~\ref{lem:Error term} suffices to control the error term.

Furthermore, for the second term in $(M^{\pm}(\lambda))^{-1}$, by \eqref{alg fact} we need to consider the contribution of
\begin{multline*} 
	[R^{+}-R^{-}] v \frac{\widetilde{g}^{-}_3(\lambda)}{(c^{+})^2 \lambda^4}v  \Gamma_2 R^{+} + R^{-} v \Big[ \frac{\widetilde{g}^{+}_3(\lambda)}{(c^{+})^2 \lambda^4}-\frac{\widetilde{g}^{-}_3(\lambda)}{(c^{-})^2 \lambda^4} \Big] v \Gamma_2 R^{+} \\
	+ R^{-} v \frac{\widetilde{g}^{-}_3(\lambda)}{(c^{-})^2 \lambda^4}v [R^{+}-R^{-}]
\end{multline*}
Note that $\tilde g_3(\lambda)/\lambda^4=a_3\log \lambda+z_3$ is similar to the function $h(\lambda)$ encountered when there is a resonance of the first kind at zero. Since $\Gamma_2=S_2\Gamma_2 S_2$, for the first and third term, when the difference falls on the free resolvents, we can exchange $[R^{+}-R^{-}]$ with the auxiliary function $G$ as in the Corollary~\ref{corJ1}. Hence, the contribution of these two terms can be controlled by $\la t \ra^{-1}$  following the proof of Proposition~\ref{prop:S_1T_1^{-1}S_1 bound}. 

Now, we let 
$$
K:= R^{-} v \Big[ \frac{\widetilde{g}^{+}_3(\lambda)}{(c^{+})^2 \lambda^4}-\frac{\widetilde{g}^{-}_3(\lambda)}{(c^{-})^2\lambda^4} \Big] v \Gamma_2 R^{+} + \left[ \frac{R^{+} vS_2D_2S_2 R^{+}}{\lambda^2 c^{+}}- \frac{R^{-} vS_2D_2S_2 R^{-}}{\lambda^2 c^{-}} \right]
$$
and prove the following lemma which is sufficient to conclude  Proposition~\ref{prop:2nd kind}.
\end{proof}

\begin{lemma} \label{lem: S2D2S2 bound}Let $|V(x)| \les \la x \ra ^{-12-}$. Then one has 
\begin{align}
&  \int_{0}^{\infty} e^{-it\lambda^4} \lambda^3 \chi(\lambda) K(x,y) d \lambda = {O}(\la t\ra^{-1/2}), \nn\\
& \int_{0}^{\infty} e^{-it\lambda^4} \lambda^3 \chi(\lambda) K(x,y)d \lambda = F_t + O \Bigg( \frac{\la x \ra^{2+} \la y \ra^{2+} }{\la t\ra } \Bigg).\nn 
\end{align}
where $\|F_t\|_{1\to \infty} \les \la t\ra^{-\f12}$.
\end{lemma}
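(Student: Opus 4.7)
My plan is to isolate the $\lambda^{-2}$ singularity in
$$
    K_2 := \frac{R^+ v S_2 D_2 S_2 v R^+}{\lambda^2 c^+} - \frac{R^- v S_2 D_2 S_2 v R^-}{\lambda^2 c^-},
$$
which is the sole source of the slow $t^{-1/2}$ decay, and package it into a finite rank operator $F_t$.  Substituting $R^\pm(\lambda^4) = \widetilde g_1^\pm(\lambda) + G_1 + E_0^\pm(\lambda)$ from Lemma~\ref{lem:R0 exp} and using the orthogonality $S_2 v = 0$ to annihilate every occurrence of the scalar $\widetilde g_1^\pm$, the leading $\lambda$-independent contribution to $K_2$ is $(c_+^{-1} - c_-^{-1})\lambda^{-2} G_1 v S_2 D_2 S_2 v G_1$.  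Accordingly, I define
\begin{equation}\label{eqn:Ft explicit}
    F_t(x,y) := \Big( \tfrac{1}{c^+} - \tfrac{1}{c^-} \Big) G_1 v S_2 D_2 S_2 v G_1(x,y) \int_0^\infty e^{-it\lambda^4} \lambda\, \chi(\lambda) \, d\lambda.
\end{equation}
The scalar Fresnel integral is $O(t^{-1/2})$ (change variables $u=t\lambda^4$ on the main part and integrate by parts once on the tail where $\chi$ transitions), and the kernel $G_1 v S_2 D_2 S_2 v G_1(x,y)$ is uniformly bounded in $(x,y)$ since $S_2 D_2 S_2$ is a finite-rank absolutely bounded operator and $\|G_1 v \phi\|_{L^\infty} \les \|\phi\|_{L^2}$ for $\phi \in S_2 L^2$ under the decay $|V(x)|\les \la x\ra^{-12-}$.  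This gives $\|F_t\|_{L^1 \to L^\infty} \les \la t\ra^{-1/2}$.

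For the weighted claim the task is to bound $\int e^{-it\lambda^4}\lambda^3\chi (K - K_{\mathrm{leading}})\, d\lambda$ by $\la x\ra^{2+}\la y\ra^{2+}/\la t\ra$.  I apply Lemma~\ref{lem:R0 exp} at $\ell = 2$ so that $E_0^\pm/\lambda^2 = \widetilde O_1(|x-y|^2)$.  For the cross terms $G_1 v S_2 D_2 S_2 v E_0^\pm/\lambda^2$ in $K_2$, one integration by parts as in Lemma~\ref{lem:ibp osc} (or the Lipschitz variant Lemma~\ref{lem:ibpweight}) converts the oscillatory $\lambda^3$ factor to $\la t\ra^{-1}$; the spatial factor $|y-y_1|^2$ combines with $v(y_1)$ to yield $\la y\ra^{2+}$, and symmetrically $\la x\ra^{2+}$ for the mirror cross term.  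The quadratic $E_0^\pm v S_2 D_2 S_2 v E_0^\pm / \lambda^2$ term carries an additional $\lambda^2$ and integrates to the still smaller $O(\la x\ra^{2+}\la y\ra^{2+}/t^{3/2})$.  For $K_1$, writing $\tfrac{\widetilde g_3^+(\lambda)}{(c^+)^2\lambda^4}-\tfrac{\widetilde g_3^-(\lambda)}{(c^-)^2\lambda^4}$ as an $\widetilde O_1(|\log\lambda|+1)$ scalar and using the orthogonality $\Gamma_2 = S\Gamma_2 S$ (with $Sv=0$) to reduce $R^- v \Gamma_2 v R^+$ to $G_1 v \Gamma_2 v G_1$ plus $E_0^\pm$ cross pieces handled as before, one integration by parts again gives the desired $\la x\ra^{2+}\la y\ra^{2+}/\la t\ra$ error.

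The uniform $\la t\ra^{-1/2}$ bound is obtained by rerunning the remainder analysis with $\ell = 0+$ rather than $\ell = 2$ in Lemma~\ref{lem:R0 exp}.  Then $E_0^\pm/\lambda^2 = \widetilde O(\lambda^{-2+}|x-y|^{0+})$, the integrand becomes $\lambda^{1+}$ times a uniformly bounded spatial kernel, and a direct stationary-phase/Fresnel estimate produces $O(t^{-1/2-})$ uniformly in $(x,y)$.  Together with the $t^{-1/2}$ bound on $F_t$, this yields the $L^1\to L^\infty$ bound $O(\la t\ra^{-1/2})$.  The main obstacle is controlling the logarithmic scalar prefactor in $K_1$ in the weighted estimate: a bare $\log\lambda$ in the amplitude produces only $O((\log t)/t)$ after oscillatory integration, so one must either detect an algebraic cancellation among the $c^\pm$ coefficients (so that the coefficient of $\log\lambda$ in the $\widetilde g_3$-difference vanishes) or absorb the residual $\log t$ into the $\epsilon$-slack of the weights $\la x\ra^{2+}\la y\ra^{2+}$ through a careful use of the Lipschitz continuity afforded by Corollary~\ref{cor:Error Lips}.
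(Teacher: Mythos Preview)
Your overall plan---isolate the singular $\lambda^{-2}$ piece, package it into $F_t$, and bound the remainder by a single integration by parts with spatial weights---matches the paper's strategy.  However, there is a genuine gap in the weighted estimate, and you have correctly identified where it lies without resolving it.

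The issue is your treatment of $K_1$.  The scalar
\[
\frac{\widetilde g_3^+(\lambda)}{(c^+)^2\lambda^4}-\frac{\widetilde g_3^-(\lambda)}{(c^-)^2\lambda^4}
= a_3\Big(\frac{1}{(c^+)^2}-\frac{1}{(c^-)^2}\Big)\log\lambda + \text{const}
\]
has a nonzero $\log\lambda$ coefficient (since $c^+\ne \pm c^-$), so after reducing $R^- v\Gamma_2 v R^+$ to $G_1 v\Gamma_2 vG_1$ via $S_2 v=0$, you are left with a contribution
\[
c_1\int_0^\infty e^{-it\lambda^4}\lambda^3\chi(\lambda)\log\lambda\;G_1 v\Gamma_2 vG_1\,d\lambda.
\]
As you note, this is only $O((\log t)/t)$, not $O(t^{-1})$.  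Neither of your proposed escapes works: the algebraic cancellation does not occur, and the kernel $G_1 v\Gamma_2 vG_1$ is (after using $S_2 v=0$) uniformly bounded in $x,y$, so there is no spatial weight available to absorb a $\log t$ loss; the Lipschitz machinery of Lemma~\ref{lem:ibpweight} does not remove a pure logarithm either.

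The fix is exactly what the paper does: move this $\log\lambda$ piece into $F_t$.  The paper's $F_t$ is
\[
F_t=\int_0^\infty e^{-it\lambda^4}\lambda^3\chi(\lambda)\,G_1 v\Big[\frac{D_2^+-D_2^-}{\lambda^2}+c_1\log\lambda\,\Gamma_2\Big]vG_1\,d\lambda,
\]
i.e., your $F_t$ plus the troublesome logarithmic term.  Since $\frac{1}{\lambda^2}+\log\lambda=\widetilde O_1(\lambda^{-2})$, Lemma~\ref{lem:osc lame} still gives $\|F_t\|_{L^1\to L^\infty}\les \la t\ra^{-1/2}$, and the remainder now genuinely is $\widetilde O_1(\lambda^{0+})\la x\ra^{2+}\la y\ra^{2+}$, which yields the claimed $\la t\ra^{-1}$ after one integration by parts.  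Your treatment of the first (uniform) assertion is essentially correct, though the paper streamlines it by replacing $R^\pm$ by $H^\pm=\widetilde O_1(k(x,x_1))$ via $S_2 v=0$ and invoking Lemma~\ref{lem:osc lame} directly on $K=\widetilde O_1(\lambda^{-2})$.
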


Before we start to prove Lemma~ \ref{lem: S2D2S2 bound}, we give the following oscillatory estimate, 

\begin{lemma}\label{lem:osc lame}
	
	If $\mathcal E(\lambda)=\widetilde O_1(\lambda^{-2})$ is supported on $0<\lambda \leq \lambda_1\ll 1$, then we have
	$$
		\bigg| \int_0^\infty e^{-it\lambda^4} \lambda^3 \mathcal E(\lambda)\, d\lambda \bigg| \les  \la t \ra^{-\f12}.
	$$
	\end{lemma}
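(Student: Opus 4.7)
The plan is to split the $\lambda$-integral at the natural stationary-point-like scale $\lambda \sim t^{-1/4}$ (the scale at which the phase $t\lambda^4$ becomes of order $1$) and estimate the two pieces separately. For $t \les 1$ the bound $\la t\ra^{-1/2} \sim 1$ is trivial from $|\mathcal E(\lambda)| \les \lambda^{-2}$ together with the compact support of $\mathcal E$, so I focus on $t \gg 1$.

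On the low piece $[0, t^{-1/4}]$ I will simply use the triangle inequality together with the hypothesis $|\mathcal E(\lambda)| \les \lambda^{-2}$, which gives
\begin{equation*}
	\bigg| \int_0^{t^{-1/4}} e^{-it\lambda^4}\lambda^3 \mathcal E(\lambda)\, d\lambda \bigg|
	\les \int_0^{t^{-1/4}} \lambda \, d\lambda \les t^{-1/2}.
\end{equation*}
On the high piece $[t^{-1/4}, \lambda_1]$ I will integrate by parts once, writing $e^{-it\lambda^4}\lambda^3 = -\frac{1}{4it}\partial_\lambda e^{-it\lambda^4}$. The boundary term at $\lambda_1$ vanishes by the support assumption on $\mathcal E$, while the boundary term at $\lambda = t^{-1/4}$ is bounded by $\frac{1}{t} |\mathcal E(t^{-1/4})| \les \frac{1}{t}\cdot t^{1/2} = t^{-1/2}$. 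The resulting integral
\begin{equation*}
	\frac{1}{4it}\int_{t^{-1/4}}^{\lambda_1} e^{-it\lambda^4} \mathcal E'(\lambda)\, d\lambda
\end{equation*}
is controlled using $|\mathcal E'(\lambda)|\les \lambda^{-3}$ by $\frac{1}{t}\int_{t^{-1/4}}^{\lambda_1}\lambda^{-3}\, d\lambda \les \frac{1}{t}\cdot t^{1/2} = t^{-1/2}$.

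Adding the two pieces gives $\la t\ra^{-1/2}$. There is no real obstacle here; the estimate is simply the standard van-der-Corput-style balance between a trivial bound below scale $t^{-1/4}$ and one integration by parts above it, and the hypothesis $\mathcal E = \widetilde O_1(\lambda^{-2})$ is exactly strong enough for both halves to balance at $t^{-1/2}$.
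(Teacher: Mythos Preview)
Your proof is correct and essentially identical to the paper's: both split at $\lambda=t^{-1/4}$, bound the low piece by the triangle inequality using $|\mathcal E|\les \lambda^{-2}$, and integrate by parts once on the high piece using $|\mathcal E'|\les \lambda^{-3}$, with the boundary term at $t^{-1/4}$ contributing $t^{-1/2}$.
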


\begin{proof}
	
	The boundedness is clear.  For the large time decay we break the domain of integration up into two pieces as in Lemma~\ref{lem:ibp osc}.  However, due to the singular behavior, we integrate by parts only once away from zero as follows
	\begin{align*}
		\bigg| \int_0^\infty e^{-it\lambda^4} \lambda^3 \mathcal E(\lambda)\, d\lambda \bigg|&\les \int_0^{t^{-\f14}} \lambda^3 |\mathcal E(\lambda)|\, d\lambda	+ \bigg| \int_{t^{-\f14}}^\infty e^{it\lambda^4} \lambda^3 \mathcal E(\lambda)\, d\lambda \bigg|
	\end{align*}
	The first integral is clearly bounded by $t^{-\f12}$.  For the second integral, we integrate by parts once to see
	\begin{align*}
		\bigg| \int_{t^{-\f14}}^\infty e^{-it\lambda^4} \lambda^3 \mathcal E(\lambda)\, d\lambda \bigg|&\les \frac{|\mathcal E(t^{-\f14})|}{t}+\frac{1}{t} \int_{t^{-\f14}}^\infty |\mathcal E'(\lambda)|\, d\lambda \les |t|^{-\f12}+\frac{1}{t} \int_{t^{-\f14}}^\infty \lambda^{-3}\, d\lambda \les |t|^{-\f12}.
	\end{align*}

\end{proof}

\begin{proof}[Proof of Lemma~\ref{lem: S2D2S2 bound}] We first prove the first assertion. To do that recall \eqref{eqn:H def}; the definition of $H(\lambda, x, x_1)$. As in the proof of Lemma~\ref{prop:QD0Q bound}, we use the orthogonality $S_2v=0$ and exchange $ R^{\pm}$ on both sides of $S_2D_2S_2$ and $\Gamma_2$ with $H^{\pm}$. By Lemma~\ref{cancellemma} and \eqref{eqn:A def}, we have 
$$
K(x,y)= \widetilde{O}_1(\lambda^{-2}) \| k(y,y_1)v(y_1)\|_{L^2_{y_1}} \| |D_2^\pm | + |\Gamma_2| \|_{L^2\to L^2} \| k(x,x_1)v(x_1)\|_{L^2_{x_1}}
$$
Therefore, Lemma~\ref{lem:osc lame} establishes the the time decay $\la t \ra^{-1/2}$. 

For the second assertion, note that using \eqref{alg fact}, we need to estimate 
\begin{multline}
	\frac{R^+(\lambda^4)vD^{+}_2vR^+(\lambda^4)}{\lambda^2}
	-\frac{R^-(\lambda^4)vD_2^-vR^-(\lambda^4)}{\lambda^2}	\\
	=R^-(\lambda^4)\frac{v[D_2^+-D_2^-]v}{\lambda^2} R^+(\lambda^4 ) + [R^+-R^-](\lambda^4) \frac{vD_2^+v}{\lambda^2}R^+(\lambda^4)
	\\ +R^-(\lambda^4)\frac{vD_2^-v}{\lambda^2} [R^+-R^-](\lambda^4):=I+II+III
\end{multline}
and 
$$
IV:= R^{+} v \Big[ \frac{\widetilde{g}^{+}_3(\lambda)}{(c^{+})^2 \lambda^4}-\frac{\widetilde{g}^{-}_3(\lambda)}{(c^{-})^2\lambda^4} \Big] v \Gamma_2 R^{-}
$$

From \eqref{eq:R0low}, and  \eqref{eqn:A def},
we have the expansions
\begin{align*}
& R^{\pm}(\lambda^4)(x,x_1)= \tilde g^{\pm}(\lambda) + G_1(x,x_1) + c^{\pm} \lambda^2 G_2(x,y) + \widetilde{O}_1((\lambda|x-x_1|)^{2+} ), \\ 
& [R^+-R^-](\lambda^4)(y_1,y) = \Im z_1 - \Im z_2 \lambda^2 G_2(y,y_1) + \widetilde{O}_1((\lambda |y-y_1|)^{2+} ).
\end{align*}
Therefore, since $S_2v=0$, we see 
\begin{align*}
&IV= c_1 \log \lambda G_1(x,x_1)v \Gamma_2 v G_1(y,y_1) +  c_2 G_1(x,x_1)v \Gamma_2 v G_1(y,y_1)  + \widetilde{O}_1(\lambda^{0+} \la x \ra^{2+} \la y \ra^{2+} ),\\
& I+II+ III  = \frac{G_1(x,x_1)v[D^{+}_2-D^{-}_2] v G_1(y,y_1)}{\lambda^2} \\ 
 & \hspace{1cm} + c_3 [ G_1(x,x_1)vD_2v G_2(y,y_1)+ G_2(x,x_1) vD_2v G_1(y,y_1)] + \widetilde{O}_1(\lambda^{0+} \la x \ra^{2+} \la y \ra^{2+} ).
\end{align*}
Note that  $ G_1(x,x_1)=-\frac{1}{8\pi^2} \log |x-y| $ and  $G_2(x,x_1) =c_2 |x-y|^2$. Hence, to obtain the error terms we use   $\log|x-y| \les \chi (|x-y|) |x-y|^{0-}+ \widetilde\chi (|x-y|) |x-y|^{0+} $ and \eqref{spatial bound}.  It is easy to see that  the contribution of the error terms in $IV$ and in $I+ II+ III$ can be bounded by $t^{-1} \la x \ra^{2+} \la y \ra^{2+} $ by a single integration by parts. For the contribution of the remaining terms, we define 
\be\label{eqn:Ft explicit}
	F_t:=\int_0^\infty e^{-it\lambda^4} \lambda^3 \chi(\lambda) G_1 v \Big[ \frac{D^{+}_2-D^{-}_2}{\lambda^2} +c_1\log \lambda \ \Gamma_2 \Big] v G_1  \, d\lambda
\ee
(with $D_2^{\pm} = D_2/c^{\pm}$,  $\Gamma_2:= D_2vG_4vD_2$), using Lemma~\ref{lem:osc lame} we see that
the $\lambda$ integral is bounded by $\la t\ra ^{-\f12}$.
 Therefore, it is enough to see that this operator is bounded $L^1\to L^\infty$.  For that,  we can use $S_2v=0$,  and replace $\log|x-x_1| $ with $\log|x-x_1|-\log |x| $.  Note that
$
	\big|\log|x-x_1|-\log |x| \big|\les 1+\log^-|x-x_1|+\log^+|x_1|=k(x,x_1),
$
and we have $ \| k(x,x_1)v(x_1)\|_{L^2_{x_1}} \les 1$. Therefore, the boundedness of the spatial integral follows by the absolutely boundedness of $D_2$ and $\Gamma_2$. 
The fact that $F_t$ is finite-rank follows from the fact that $D_2$ and $\Gamma_2$ are $ \lambda $ independent operators defined on the finite dimensional space $S_2L^2$. 

\end{proof}

\subsection{Resonances of the Third and Fourth Kind}
Finally, we analyze the decay properties of $e^{itH}P_{ac}(H)$  when there is a resonance of the third or fourth kind at zero. The reader will notice that the singularity of $(M(\lambda)+S_1)^{-1}$ at zero increases as we  iteratively use Lemma~\ref{JNlemma}. Accordingly, the time decay in the dispersive estimate is seen to be slower in the resonance of the third and fourth kinds.   Our main result in this section is the following.
  
\begin{prop}\label{prop:3rd kind} Let $|V(x) | \les \la x \ra^{-12-}$. In the case of a resonance of the third or fourth kind, we have
	$$
	\| e^{-itH}P_{ac}(H)\chi(H) \|_{L^1\to L^\infty} \les \min\left( 1, \frac{1}{\log t} \right) .
	$$

\end{prop}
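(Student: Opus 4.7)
The plan is to iterate Lemma~\ref{JNlemma} one or two further times beyond the second-kind analysis to obtain an expansion for $(M^\pm(\lambda))^{-1}$, and then estimate the resulting oscillatory integral in Stone's formula via the symmetric resolvent identity.

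Starting from the expression for $B_2(\lambda)$ in \eqref{eqn:B2 simple}, when $T_2 = S_2vG_2vS_2$ is not invertible on $S_2L^2$, I would form $B_3(\lambda) := S_3 - S_3(B_2(\lambda)+S_3)^{-1}S_3$. Using $D_2 = (T_2+S_3)^{-1}$ and the absorbing identities $S_3D_2 = D_2S_3 = S_3$, I would expand $(B_2+S_3)^{-1}$ in a Neumann series to identify the leading behaviour of $B_3(\lambda)$ as a scalar multiple of $\tilde g_3^\pm(\lambda)\,h^\pm(\lambda)\,T_3$, where $T_3 = S_3vG_4vS_3$. For a resonance of the third kind, $T_3$ is invertible on $S_3L^2$ and a further Neumann series yields
$$B_3^{-1}(\lambda) = -\frac{D_3}{\tilde g_3^\pm(\lambda)\,h^\pm(\lambda)} + \text{(less singular terms)}.$$
For a resonance of the fourth kind, one more iteration using $S_4$ and the (always invertible) $T_4 = S_4vG_5vS_4$ introduces an additional $\lambda^{-4}$ in the leading singularity. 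Propagating back through Lemma~\ref{JNlemma}, one obtains an expansion of $(M^\pm(\lambda))^{-1}$ whose most singular term is proportional to $\bigl(\lambda^{4}\,\tilde g_3^\pm(\lambda)\bigr)^{-1}\,vD_3v$ (with an extra $\lambda^{-4}$ in the fourth-kind case), plus lower-order operators matching in structure the terms already estimated in Section~\ref{sec:reg sect} and the earlier subsections of Section~\ref{sec:nonreg}.

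Substituting into Stone's formula, the lower-order terms in $(M^\pm)^{-1}$ contribute $O(\la t\ra^{-1})$ by the bounds from Propositions~\ref{prop:zero reg unwtd}, \ref{prop:firstkind}, \ref{prop:2nd kind} and the lemmas therein, which is stronger than $(\log t)^{-1}$. The essential new term is
$$\int_0^\infty e^{-it\lambda^4}\,\lambda^3\,\chi(\lambda)\,\biggl[\frac{R^+vD_3vR^+}{\lambda^{4}\tilde g_3^+(\lambda)} - \frac{R^-vD_3vR^-}{\lambda^{4}\tilde g_3^-(\lambda)}\biggr](\lambda^4)(x,y)\,d\lambda.$$
Here I would exploit the orthogonality $S_3v=0$, the vanishing relation $S_2vG_2vS_3 = 0$ built into the definition of $S_3$, and the algebraic identity \eqref{alg fact}, to cancel the $\tilde g_1^\pm$, $G_1$, and $c^\pm\lambda^2 G_2$ contributions from the free-resolvent expansion \eqref{eq:R0low} whenever they flank $vD_3v$. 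After these cancellations the remaining scalar integrand effectively has the form $\chi(\lambda)\,\Phi(\lambda,x,y)/(\lambda\log\lambda)$ with $\Phi$ smooth and uniformly bounded in $x,y$, plus error terms already decaying at rate $t^{-1}$.

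The main obstacle is the scalar oscillatory bound
$$\Bigl|\int_0^\infty e^{-it\lambda^4}\,\chi(\lambda)\,\frac{\Phi(\lambda,x,y)}{\lambda\,\log\lambda}\,d\lambda\Bigr| \les \frac{1}{\log t}, \qquad t > 2.$$
I would split at $\lambda = t^{-1/4}$: on the low piece $|\log\lambda| \gtrsim \log t$ yields an $O(1/\log t)$ contribution by direct estimation, while on the complementary high piece a single integration by parts using $e^{-it\lambda^4}\lambda^3 = -\partial_\lambda e^{-it\lambda^4}/(4it)$ produces a bound of order $t^{-1}$, which is stronger than needed. Combined with the trivial $O(1)$ bound for small $t$, this gives the claimed $\min(1,(\log t)^{-1})$ estimate. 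The hardest technical point throughout is the book-keeping in the iterated Feshbach expansion: each application of Lemma~\ref{JNlemma} produces several formally-singular compositions, and one must verify that the remainders of non-leading singular type still admit time decay better than $(\log t)^{-1}$ by the techniques of the previous sections.
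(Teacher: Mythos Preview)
Your overall strategy---iterate Lemma~\ref{JNlemma} to reach $D_3$ (or $D_4$) and then control the resulting oscillatory integral---matches the paper. But the endgame has a genuine gap.

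You claim that after the cancellations, the scalar integrand is $\chi(\lambda)\,\Phi(\lambda,x,y)/(\lambda\log\lambda)$ with $\Phi$ bounded, and that splitting at $\lambda=t^{-1/4}$ gives $O(1/\log t)$ on the low piece ``since $|\log\lambda|\gtrsim\log t$''. This fails: the integral
\[
\int_0^{t^{-1/4}}\frac{d\lambda}{\lambda\,|\log\lambda|}
=\int_{\frac14\log t}^{\infty}\frac{du}{u}
\]
diverges at $\lambda=0$. Bounding $|\log\lambda|^{-1}$ by $(\log t)^{-1}$ still leaves $\int_0^{t^{-1/4}}\lambda^{-1}\,d\lambda=\infty$. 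You need one more power of $(\log\lambda)^{-1}$; with $(\lambda\log^2\lambda)^{-1}$ the same substitution gives $\int_{\frac14\log t}^{\infty}u^{-2}\,du=4/\log t$, which is exactly Lemma~\ref{cor:log osc}.

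The missing mechanism is the $\pm$ cancellation in Stone's formula, not further orthogonality of $S_3$. Applying \eqref{alg fact} to $R^+v\frac{D_3}{\tilde g_3^+}vR^+-R^-v\frac{D_3}{\tilde g_3^-}vR^-$ puts the difference either on the scalar, where
\[
\frac{1}{\tilde g_3^+(\lambda)}-\frac{1}{\tilde g_3^-(\lambda)}=\widetilde O_1\!\bigl(\lambda^{-4}(\log\lambda)^{-2}\bigr),
\]
or on a free resolvent, where $R^+-R^-$ is replaced (via Corollary~\ref{corJ1} and $S_3\leq Q$) by $G=\widetilde O_1\bigl((\lambda\la x_1\ra)^{0+}\bigr)$, which beats any inverse power of $\log\lambda$. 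Either way the full expression becomes $\widetilde O_1\bigl(\lambda^{-4}(\log\lambda)^{-2}\bigr)$, and then Lemma~\ref{cor:log osc} applies. A related point: your assertion that the $G_1$ term in \eqref{eq:R0low} is killed by $S_3v=0$ is incorrect, since $G_1(x,y)=-\tfrac{1}{8\pi^2}\log|x-y|$ is not constant in the inner variable; only the purely $\lambda$-dependent piece $\tilde g_1^\pm(\lambda)$ is annihilated that way. The paper handles the surviving $G_1$ contribution by the replacement $R^\pm\to H^\pm=\widetilde O_1(k(x,x_1))$, not by cancellation.
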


As in the previous sections, we first need to find expansions for $B^{-1}(\lambda)$ in the third and fourth cases. We start with the resonance of the third kind, 

\begin{prop} Let $|V(x)| \les \la x \ra^{-12-}$. If there is a resonance of the third kind then we have 
\begin{align}\label{eqn:Binv 3rdkind}
	B_{\pm}(\lambda)^{-1}=  \frac{D_3}{\tilde g^{\pm}_3(\lambda)}-\frac{\lambda^4}{(\tilde g^{\pm}_3(\lambda))^2}D_3vG_5vD_3  + E_{0} (\lambda) 
\end{align}
where $E_{0} (\lambda)=  \widetilde O_1( \lambda^{-4} (\log\lambda)^{-3} )  $. 
\end{prop}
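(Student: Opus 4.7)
The plan is to extend the iterative use of Lemma~\ref{JNlemma} one level beyond what was required for the second-kind case in Proposition~\ref{prop: B-1 exp second kind}, so as to make use of the first available invertibility at the threshold, namely that of $T_3 = S_3vG_4vS_3$ on $S_3L^2$. Starting from the expansion \eqref{M-1exp2} (available since $|V(x)|\lesssim\langle x\rangle^{-12-}$), the computation in the proof of Proposition~\ref{prop: B-1 exp second kind} goes through without change up to the factorization $B_2(\lambda) = -c^{\pm}\lambda^2 h^{\pm}(\lambda)\,\widetilde B_2(\lambda)$, with
\[
\widetilde B_2(\lambda) = T_2 + \frac{\tilde g_3^{\pm}(\lambda)}{c^{\pm}\lambda^2}\,S_2vG_4vS_2 + c^{\pm} h^{\pm}(\lambda)\lambda^2\,\Gamma_0 + \frac{\lambda^2}{c^{\pm}}\,S_2vG_5vS_2 + \widetilde E_2(\lambda).
\]
In the third-kind regime $T_2$ is no longer invertible on $S_2L^2$; its kernel is exactly $S_3L^2$.

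The essentially new step is to invert $\widetilde B_2(\lambda)$ by one further application of Lemma~\ref{JNlemma}, now with the projection $S_3$. Writing $\delta := \widetilde B_2 - T_2$ and using that $T_2 + S_3$ is invertible with inverse $D_2$, a Neumann series gives $(\widetilde B_2 + S_3)^{-1} = D_2 - D_2\delta D_2 + D_2\delta D_2\delta D_2 - \cdots$. Using $S_3D_2 = D_2S_3 = S_3$, the intermediate operator
\[
B_3(\lambda) := S_3 - S_3(\widetilde B_2 + S_3)^{-1}S_3 = S_3\delta S_3 - S_3\delta D_2\delta S_3 + \cdots
\]
has as its principal part on $S_3L^2$ the expression $\frac{\tilde g_3^{\pm}(\lambda)}{c^{\pm}\lambda^2}T_3 + \frac{\lambda^2}{c^{\pm}}S_3vG_5vS_3$, with all other terms carrying additional powers of $\lambda^2$ or $1/\log\lambda$. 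Since $T_3$ is invertible on $S_3L^2$ with inverse $D_3$ in the third-kind regime, factoring $\frac{\tilde g_3^{\pm}}{c^{\pm}\lambda^2}T_3$ out and expanding the remainder with a Neumann series, whose relative size is $\widetilde O(1/\log\lambda)$, yields
\[
B_3^{-1}(\lambda) = \frac{c^{\pm}\lambda^2}{\tilde g_3^{\pm}(\lambda)}\,D_3 - \frac{c^{\pm}\lambda^6}{(\tilde g_3^{\pm}(\lambda))^2}\,D_3vG_5vD_3 + \widetilde O_1\!\bigl(\lambda^{-2}(\log\lambda)^{-3}\bigr).
\]

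I then propagate $B_3^{-1}$ back through the Jensen--Nenciu chain, first to $\widetilde B_2^{-1} = (\widetilde B_2+S_3)^{-1} + (\widetilde B_2+S_3)^{-1}S_3 B_3^{-1}S_3(\widetilde B_2+S_3)^{-1}$, then to $B_2^{-1} = -\widetilde B_2^{-1}/(c^{\pm}\lambda^2 h^{\pm})$, then into $B_1^{-1}$ via the same identity used for the second kind, and finally to $B^{-1} = -h^{\pm}B_1^{-1}$. The factors of $-h^{\pm}$ and $-1/(c^{\pm}\lambda^2 h^{\pm})$ combine and convert the leading $\frac{c^{\pm}\lambda^2}{\tilde g_3^{\pm}}D_3$ of $B_3^{-1}$ into $D_3/\tilde g_3^{\pm}$, and the $G_5$-piece into $-\lambda^4/(\tilde g_3^{\pm})^2\,D_3vG_5vD_3$, matching the claimed formula. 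All remaining terms produced by the backward substitution---contributions from $(B_1+S_2)^{-1}$, from the $D_2$-part of $(\widetilde B_2+S_3)^{-1}$, and from cross terms---are at worst of size $\lambda^{-2}$ and hence absorbed into $E_0(\lambda)$, which satisfies $\widetilde O_1(\lambda^{-4}(\log\lambda)^{-3})$; the single $\lambda$-derivative bound follows at each stage from the corresponding bounds already established in Lemma~\ref{lem:R0 exp}, Corollary~\ref{cor:Error Lips} and Lemma~\ref{lem:M_exp}.

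The main technical obstacle is controlling the two competing $O(\lambda^2\log\lambda)$ contributions to $B_3(\lambda)$, namely $\frac{\tilde g_3^{\pm}}{c^{\pm}\lambda^2}T_3$ and $c^{\pm}h^{\pm}\lambda^2\,S_3\Gamma_0 S_3$, whose ratio $\frac{c^{\pm}\lambda^4 h^{\pm}}{\tilde g_3^{\pm}}$ is $O(1)$ as $\lambda\to 0$. The task is to expand the Neumann inversion of $B_3$ as a joint series in $\lambda^2$ and $1/\log\lambda$, isolate the $T_3$-driven contribution that produces the stated leading behavior $D_3/\tilde g_3^{\pm}$, and verify that the remaining $\Gamma_0$-involving pieces---together with the iterated terms $S_3\delta D_2\delta S_3$ and the cross contributions arising in the backward unwinding---all respect the uniform bound $E_0 = \widetilde O_1(\lambda^{-4}(\log\lambda)^{-3})$, in each case using that these operators are absolutely bounded and effectively carry $S_3$ on both sides after the projections are applied.
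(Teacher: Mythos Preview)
Your overall strategy matches the paper's: factor $B_2=-c^\pm\lambda^2 h^\pm\,\widetilde B_2$, apply Lemma~\ref{JNlemma} with the projection $S_3$, invert $B_3$ using the invertibility of $T_3=S_3vG_4vS_3$, and then unwind back to $B^{-1}$. The formula you write for $B_3^{-1}$ is exactly what the paper obtains, and the back-substitution is handled the same way.

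There is, however, a genuine gap in what you label ``the main technical obstacle.'' You correctly observe that the two candidate leading contributions to $B_3$, namely $\frac{\tilde g_3^\pm}{c^\pm\lambda^2}T_3$ and $c^\pm h^\pm\lambda^2\,S_3\Gamma_0 S_3$, have ratio $O(1)$ as $\lambda\to 0$. But if that second term were genuinely present, your proposed ``joint Neumann series in $\lambda^2$ and $1/\log\lambda$'' would not converge: a perturbation of relative size $O(1)$ cannot be treated as small, and the leading inverse would involve $(T_3+O(1)\,S_3\Gamma_0 S_3)^{-1}$ rather than $D_3=T_3^{-1}$, destroying the stated expansion. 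The resolution is not analytic but algebraic: one has
\[
S_3\Gamma_0=\Gamma_0 S_3=0.
\]
Indeed $\Gamma_0=S_2vG_2v\,D_1\,vG_2vS_2$ with $D_1$ acting on $S_1L^2$, and for $\phi\in S_3L^2$ the orthogonality relations $\langle v,\phi\rangle=0$ and $\int y_j v(y)\phi(y)\,dy=0$ force $vG_2v\phi$ to be a scalar multiple of $v$; since $S_1\leq Q$ annihilates $v$, the implicit $S_1$ flanking $D_1$ kills the term. Once you use this, $B_3$ reduces cleanly to \eqref{eqn:B3 defn} and the Neumann inversion around $T_3$ is legitimate because the remaining perturbations are genuinely $\widetilde O_1(1/\log\lambda)$ relative to the leading part. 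Without this vanishing your argument does not close.
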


\begin{proof} 
	As before, we consider only the `+' case, the `-' case follows with only minor modifications.
	Recall the expansion \eqref{eqn:B2 simple} for $B_2(\lambda)$ in Proposition~\ref{prop: B-1 exp second kind}. We have 
\begin{multline*}
	B_2(\lambda)=-c^+\lambda^2 h(\lambda)\bigg[ S_2vG_2vS_2+\frac{\tilde g_3(\lambda)}{c^+\lambda^2} S_2vG_4vS_2+ c^+ h(\lambda) \lambda^2\Gamma_0 
	+\frac{\lambda^2}{c^+} S_2vG_5vS_2+ \widetilde E_2(\lambda)	\bigg],
\end{multline*}
with $\widetilde E_2(\lambda)=\widetilde O_1(\lambda^{2+\ell})$, $0<\ell<2$.  In this case, the operator $S_2vG_2vS_2$ is not invertible. Therefore, we let $ -c^+\lambda^2 h(\lambda) \widetilde{B}_2 (\lambda)= B_2(\lambda)$ and invert $\widetilde{B}_2 (\lambda) + S_3$, where $S_3$ is the Riesz projection onto the kernel of $S_2vG_2vS_2$. We obtain, 
\begin{align*}
	(\widetilde B_2(\lambda)+S_3)^{-1}:= D_2- \frac{\tilde g_3(\lambda)}{c^{+}\lambda^2}D_2vG_4vD_2 - c^+ h(\lambda) \lambda^2\Gamma_0 
	-\frac{\lambda^2}{c^+} S_2vG_5vS_2+ E_4(\lambda)
\end{align*}
with $E_4=\widetilde O_1(\lambda^{2+\ell})$.  Then, using Lemma~\ref{JNlemma}, we seek to invert $B_3=S_3-S_3(\widetilde B_2(\lambda)+S_3)^{-1}S_3$. Since $S_3D_2=D_2S_3=S_3$, and $S_3\Gamma_0=\Gamma_0 S_3=0$ we have
\begin{align}\label{eqn:B3 defn}
	B_3(\lambda)= \frac{\tilde g_3(\lambda)}{c^{+} \lambda^2}S_3vG_4vS_3+\frac{\lambda^2}{c^{+}} S_3vG_5vS_3+S_3E_4(\lambda)S_3.
\end{align}
Then, letting $D_3:=(S_3vG_4vS_3)^{-1}$,  we may write
\begin{align*}
	B_3(\lambda)^{-1}=\frac{ c^{+} \lambda^2}{\tilde g_3(\lambda)}D_3 \bigg[
	1+\frac{\lambda^4}{\tilde g_3(\lambda)}S_3vG_5vD_3+S_3\widetilde E_4(\lambda)D_3	\bigg]^{-1}&\\
	=\frac{c^{+}\lambda^2}{\tilde g_3(\lambda)}D_3-\frac{c^{+} \lambda^6}{(\tilde g_3(\lambda))^2}D_3vG_5vD_3+\frac{c^{+} \lambda^{10}}{(\tilde g_3(\lambda))^3} &D_3vG_5vD_3vG_5vD_3 \\ 
	&+\widetilde O_1(\lambda^{ -2}(\log \lambda)^{-4}).
\end{align*}
So, we obtain
\begin{multline*}
	B_2(\lambda)^{-1}=\frac{1}{c^+\lambda^2 h(\lambda)} \widetilde B_2(\lambda)^{-1}
	=-\frac{1}{ h(\lambda)\tilde g_3(\lambda)}D_3+\frac{\lambda^4}{ h(\lambda) (\tilde g_3(\lambda))^2}D_3vG_5vD_3\\
	+\frac{\lambda^8}{ h(\lambda)(\tilde g_3(\lambda))^3}D_3vG_5vD_3vG_5vD_3+\widetilde O_1(\lambda^{-4}(\log\lambda)^{-4}).
\end{multline*}
Using  $(B_1(\lambda)+S_2)^{-1}= D_1+\widetilde{O}_1(\lambda^{2-})$, see  \eqref{eqn:B1+S2inv}, and since $B(\lambda)=-h(\lambda)^{-1}B_1(\lambda)$, we obtain the statement.
\end{proof}

\begin{prop} Let $|V(x)| \les \la x \ra^{-12-}$. If there is a resonance of the fourth kind then we have 
\begin{align}\label{eqn:Binv 4rdkind}
	B_{\pm}(\lambda)^{-1}= \frac{D_4}{\lambda^4}
	+h^{\pm}_2(\lambda)S_3\Gamma S_3 +E_0^\pm (\lambda),
\end{align}
where $E_{0} (\lambda)=  \widetilde O_1( \lambda^{-4} (\log\lambda)^{-3} )  $,  $\Gamma$ is an absolutely bounded operator and
$$
	h_2^+(\lambda)-h_2^-(\lambda)=O_1\bigg(\frac{1}{\lambda^4 (\log \lambda)^2}\bigg)
$$
\end{prop}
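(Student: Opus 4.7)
The plan is to apply Lemma~\ref{JNlemma} one final time, now with the projection $S_4$. By Definition~\ref{def:restype}, $S_3vG_4vS_3$ is no longer invertible on $S_3L^2$ in the fourth-kind case, its finite-dimensional kernel being precisely the range of $S_4$; hence $S_3vG_4vS_3+S_4$ is invertible with inverse $D_3$, while $T_4=S_4vG_5vS_4$ is invertible on $S_4L^2$, and we set $D_4:=T_4^{-1}$. Starting from the expression \eqref{eqn:B3 defn} for $B_3(\lambda)$, I would first invert $B_3+S_4$ using the orthogonal block decomposition $S_3L^2=S_4L^2\oplus(S_3-S_4)L^2$: on $S_4L^2$ the operator $S_3vG_4vS_3$ vanishes, while on $(S_3-S_4)L^2$ it is invertible, so for $\lambda$ sufficiently small the $(2,2)$-block of $B_3+S_4$ is dominated by $\frac{\tilde g_3^\pm}{c^\pm\lambda^2}(S_3-S_4)vG_4v(S_3-S_4)$, whose inverse is $O(\frac{1}{\lambda^2\log\lambda})$ in operator norm. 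A Schur-complement calculation then yields
$$ S_4(B_3+S_4)^{-1}S_4 = S_4 - \frac{\lambda^2}{c^\pm}S_4vG_5vS_4 + \widetilde O_1\Big(\frac{\lambda^2}{\log\lambda}\Big), $$
where the error absorbs cross-terms of shape $\frac{\lambda^6}{\tilde g_3^\pm}S_4vG_5v(S_3-S_4)D_3(S_3-S_4)vG_5vS_4$. Thus $B_4(\lambda)=\frac{\lambda^2}{c^\pm}T_4+\widetilde O_1\big(\frac{\lambda^2}{\log\lambda}\big)$, and a Neumann series inversion gives $B_4^{-1}(\lambda)=\frac{c^\pm D_4}{\lambda^2}+\widetilde O_1\big(\frac{1}{\lambda^2\log\lambda}\big)$.

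Next I would assemble $B_3^{-1}=(B_3+S_4)^{-1}+(B_3+S_4)^{-1}S_4B_4^{-1}S_4(B_3+S_4)^{-1}$ and then climb back up the ladder $B_3^{-1}\to\tilde B_2^{-1}\to B_2^{-1}\to B_1^{-1}\to B_\pm^{-1}$, reusing the intermediate expansions from the second- and third-kind analyses. The $\lambda^{-2}$ singularity of $B_4^{-1}$ is amplified to $\lambda^{-4}$ by the normalizations $B_2=-c^\pm\lambda^2h^\pm\tilde B_2$ and $B=-h^\pm B_1$; the $c^\pm$ and $h^\pm$ factors cancel exactly, leaving $D_4/\lambda^4$ with no $\pm$-discrepancy. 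The sub-leading contribution comes from (i) the $(2,2)$-block of $(B_3+S_4)^{-1}$, which after propagation yields a term of the form $\frac{1}{\tilde g_3^\pm(\lambda)}(S_3-S_4)D_3(S_3-S_4)$, and (ii) off-diagonal cross-terms involving both $D_3$ and $D_4$; all of these can be packaged into $h_2^\pm(\lambda)S_3\Gamma S_3$ with $h_2^\pm(\lambda)=O\big(\frac{1}{\lambda^4\log\lambda}\big)$ and $\Gamma$ absolutely bounded. Leftover terms such as $-h^\pm D_1=O(\log\lambda)$ and $\frac{D_2}{c^\pm\lambda^2}=O(\lambda^{-2})$ are both dominated by $\lambda^{-4}(\log\lambda)^{-3}$ for small $\lambda$ and therefore fit into $E_0^\pm(\lambda)$.

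For the final claim on $h_2^+-h_2^-$: since the $\pm$-dependence of $h_2^\pm$ enters only through reciprocals of $\tilde g_3^\pm$ and $h^\pm$, and
$$ \frac{1}{\tilde g_3^+(\lambda)}-\frac{1}{\tilde g_3^-(\lambda)}=\frac{\tilde g_3^-(\lambda)-\tilde g_3^+(\lambda)}{\tilde g_3^+(\lambda)\tilde g_3^-(\lambda)}=O_1\Big(\frac{1}{\lambda^4(\log\lambda)^2}\Big) $$
(using $\tilde g_3^\pm(\lambda)\sim\lambda^4\log\lambda$ and that $\tilde g_3^--\tilde g_3^+$ is a purely imaginary multiple of $\lambda^4$), together with the analogous but smaller discrepancy for $1/h^\pm$, the stated bound on $h_2^+-h_2^-$ follows. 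The principal obstacle is the five-fold nested bookkeeping: one must show that every sub-sub-leading term either carries $S_3$ on both sides and is absolutely bounded (so it enters $h_2^\pm S_3\Gamma S_3$) or is $\widetilde O_1(\lambda^{-4}(\log\lambda)^{-3})$ (so it enters $E_0^\pm$), and in particular that the $\lambda^{-4}$ coefficient is genuinely $D_4$ with no $\pm$-dependence so that it cancels in $B_+^{-1}-B_-^{-1}$.
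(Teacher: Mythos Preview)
Your strategy is correct and reaches the right structure, but it is organized differently from the paper. You apply Lemma~\ref{JNlemma} to $B_3$ directly and invert $B_3+S_4$ by a Schur-complement/block argument on $S_3L^2=S_4L^2\oplus(S_3-S_4)L^2$. The paper instead first normalizes by setting $\widetilde B_3(\lambda)=\tfrac{c^\pm\lambda^2}{\tilde g_3^\pm(\lambda)}B_3(\lambda)$, so that the leading part of $\widetilde B_3$ is the $\lambda$-independent operator $S_3vG_4vS_3$; then $(\widetilde B_3+S_4)^{-1}$ is expanded by a straight Neumann series about $D_3$, exactly mirroring the earlier steps in the ladder. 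This buys a cleaner extraction of the two explicit sub-leading pieces $\tfrac{1}{\tilde g_3^\pm}S_3\Gamma_3S_3$ and $\tfrac{1}{(\tilde g_3^\pm)^2}S_3\Gamma_4S_3$, which together play the role of $h_2^\pm S_3\Gamma S_3$. In your route these same pieces are hidden in the $(2,2)$-block and in the off-diagonal Schur corrections, and your sketch---stopping at $B_4^{-1}=\tfrac{c^\pm D_4}{\lambda^2}+\widetilde O_1\big(\tfrac{1}{\lambda^2\log\lambda}\big)$---only propagates to an error of order $\lambda^{-4}(\log\lambda)^{-1}$ in $B^{-1}$. To match the stated $E_0^\pm=\widetilde O_1\big(\lambda^{-4}(\log\lambda)^{-3}\big)$ you must carry two further Neumann orders (both in $B_4^{-1}$ and in the block inverse of $B_3+S_4$), pulling the $(\log\lambda)^{-1}$ and $(\log\lambda)^{-2}$ pieces out explicitly as structured $S_3(\cdot)S_3$ terms before declaring the remainder to be $E_0^\pm$. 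Your argument for $h_2^+-h_2^-$ via the difference $1/\tilde g_3^+-1/\tilde g_3^-$ is correct and matches the paper.
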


\begin{proof}

In the case of a resonance of the fourth kind, we need to use Lemma~\ref{JNlemma} one more time since $S_3vG_4vS_3$ is not invertible, though $S_4vG_5vS_4$ is.  As usual we consider the `+' case. We let 
$$
 \frac{c^{+}\lambda^2}{\tilde g_3(\lambda)}B_3(\lambda)     =:\widetilde B_3(\lambda)=S_3vG_4vS_3+\frac{\lambda^4}{\tilde g_3(\lambda)} S_3vG_5vS_3+\widetilde E_4(\lambda)
$$
where $\widetilde E_4(\lambda)= O_1(\lambda^{\ell} (\log \lambda)^{-1})$ and invert $(\widetilde B_3(\lambda)+S_4)$, where $S_4$ is the Riesz projection onto the kernel of $S_3vG_4vS_3$. We obtain, 
\begin{align*}
	(\widetilde B_3(\lambda)+S_4)^{-1}=D_3-\frac{\lambda^4}{\tilde g_3(\lambda)}D_3vG_5vD_3+\frac{\lambda^8}{(\tilde g_3(\lambda))^2}(D_3vG_5v)^2D_3 \\ 
	- \frac{\lambda^{12}}{(\tilde g_3(\lambda))^3}(D_3vG_5v)^3D_3+  \widetilde O_1((\log \lambda)^{-4}).
\end{align*}
Next, we define  $B_4:=S_4-S_4(\widetilde B_3(\lambda)+S_4)^{-1}S_4$.  Using the above expansion we have
\begin{multline*}
	B_4(\lambda)=\frac{\lambda^4}{\tilde g_3(\lambda)}S_4vG_5vS_4-\frac{\lambda^8}{(\tilde g_3(\lambda))^2}S_4vG_5vD_3vG_5vS_4 \\ - \frac{\lambda^{12}}{(\tilde g_3(\lambda))^3}S_4(vG_5vD_3)^2vG_5vS_4+  \widetilde O_1((\log \lambda)^{-4}).
\end{multline*}

We note that by Lemma~\ref{invertible},  $S_4vG_5vS_4$ is invertible. Hence, we invert $B_4(\lambda)$, and obtain 
\begin{align*}
	B_4(\lambda)^{-1}&=\frac{\tilde g_3(\lambda)}{\lambda^4}D_4\bigg[1+\frac{\lambda^4}{\tilde g_3(\lambda)}\Gamma_3 +\frac{\lambda^{8}}{(\tilde g_3(\lambda))^2}\Gamma_4 +\widetilde O_1((\log \lambda)^{-3})	\bigg]^{-1}\\
	&=\frac{\tilde g_3(\lambda)}{\lambda^4}D_4+\Gamma_3 +\frac{\lambda^4}{\tilde g_3(\lambda)}\Gamma_4+\widetilde O_1((\log \lambda)^{-2})
\end{align*}

Hence, in case of there is a resonance of the fourth kind we find 
$$B_3^{-1}(\lambda) =-c^{+}\frac{D_4} {\lambda^2} + \frac{ c^{+} \lambda^2}{\tilde{g}_3(\lambda)} \Gamma_3 + \frac{c^{+} \lambda^6}{(\tilde{g}_3(\lambda))^2} \Gamma_4 + \widetilde O_1 \Big( \frac{1}{\lambda^2 (\log \lambda)^3 } \Big) $$
Noting that $(\widetilde B_2 + S_3)^{-1} = D_2 + O(\lambda^{2-} )$, we have 
$$
B_2^{-1}(\lambda) = -\frac{D_4}{\lambda^4 h(\lambda)} + \frac{ S_3 \Gamma_3 S_3}{ \tilde{g}_3(\lambda)h(\lambda)} + \frac{ S_3 \Gamma_4 S_3} { (\tilde{g}_3(\lambda))^2 h(\lambda)} +  \widetilde O_1 \Big( \frac{1}{\lambda^4 (\log\lambda)^4 } \Big)
$$
and recalling $ B_1(\lambda) = D_1 + \widetilde O_1 (\lambda^{2-}) $ we finally obtain 
  $$
 B^{-1}(\lambda) =-  h(\lambda) B^{-1}_1(\lambda) =  \frac{D_4}{ \lambda^4} + \frac{ S_3 \Gamma_3 S_3}{ \tilde{g}_3(\lambda)} + \frac{ S_3 \Gamma_4 S_3} { (\tilde{g}_3(\lambda))^2} +  \widetilde O_1 \Big( \frac{1}{\lambda^4 (\log \lambda)^3 } \Big)
 $$ 
this establishes the statement. Here, $ \Gamma_3 $ and  $ \Gamma_4 $ can be determined precisely, however for our purpose the fact that they are absolutely bounded is enough.
\end{proof}

Before we prove the statement of Proposition~\ref{prop:3rd kind}, we note the following oscillatory integral estimate, which is a Corollary of Lemma~\ref{lem:log osc}. 
\begin{lemma}\label{cor:log osc}
	
	If $\mathcal E(\lambda)=\widetilde O_1(\frac{1}{\lambda^4 \log^2 \lambda})$ is supported on $0<\lambda \leq \lambda_1\ll 1$, then we have
	$$
	\bigg| \int_0^\infty e^{-it\lambda^4} \lambda^3 \mathcal E(\lambda)\, d\lambda \bigg| \les \min \bigg( 1,\frac{1}{ \log t}\bigg).
	$$
	
\end{lemma}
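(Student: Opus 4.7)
The plan is to follow closely the proof of Lemma~\ref{lem:log osc}, with the observation that although the singularity of $\mathcal E(\lambda)$ at the origin is much stronger here, it is almost exactly compensated by the $\lambda^3$ factor in the measure. In particular, after multiplication by $\lambda^3$ we are looking at an integrand of size $\lambda^{-1}|\log\lambda|^{-2}$, which is the same type of function whose time decay was established in Lemma~\ref{lem:log osc}. The only structural difference is that $\mathcal E(\lambda)$ is assumed only $\widetilde O_1$, so we can integrate by parts at most once, rather than twice as before.

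For the uniform bound, I would estimate directly, using the support condition together with $|\mathcal E(\lambda)|\les \lambda^{-4}|\log\lambda|^{-2}$:
\[
	\bigg|\int_0^\infty e^{-it\lambda^4}\lambda^3 \mathcal E(\lambda)\,d\lambda\bigg|
	\les \int_0^{\lambda_1}\frac{d\lambda}{\lambda|\log\lambda|^2}
	=\Big[-\tfrac{1}{\log\lambda}\Big]_0^{\lambda_1}\les 1,
\]
since $-1/\log\lambda\to 0$ as $\lambda\to 0^+$. This gives the first half of the claim.

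For the $(\log t)^{-1}$ bound, I would split the integral at $\lambda = t^{-1/4}$. On $[0,t^{-1/4}]$ the triangle inequality combined with the same pointwise bound yields
\[
	\int_0^{t^{-1/4}}\frac{d\lambda}{\lambda|\log\lambda|^2}
	\sim \frac{1}{|\log(t^{-1/4})|}\sim \frac{1}{\log t}.
\]
On $[t^{-1/4},\infty)$ I would integrate by parts once, writing $e^{-it\lambda^4}\lambda^3=-\partial_\lambda e^{-it\lambda^4}/(4it)$. The boundary term at infinity vanishes due to the support of $\mathcal E$, and at $\lambda=t^{-1/4}$ it contributes
\[
	\frac{|\mathcal E(t^{-1/4})|}{t}\les \frac{1}{t}\cdot t(\log t)^{-2} = \frac{1}{(\log t)^2}.
\]
The remaining integral is bounded, using $|\mathcal E'(\lambda)|\les \lambda^{-5}|\log\lambda|^{-2}$, by
\[
	\frac{1}{t}\int_{t^{-1/4}}^{\lambda_1}\frac{d\lambda}{\lambda^5|\log\lambda|^2}
	\les \frac{1}{t(\log t)^2}\int_{t^{-1/4}}^{\lambda_1}\lambda^{-5}\,d\lambda
	\les \frac{1}{(\log t)^2}.
\]
Summing all pieces gives the stated $\min(1,(\log t)^{-1})$ bound.

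The only obstacle worth flagging is the bookkeeping for the boundary term and the cutoff $t^{-1/4}$: because we have only one derivative available on $\mathcal E$, we cannot afford a second integration by parts as in Lemma~\ref{lem:log osc}, and the crossover point $\lambda=t^{-1/4}$ must be chosen precisely so that the trivial integral on $[0,t^{-1/4}]$ and the integration-by-parts terms on $[t^{-1/4},\lambda_1]$ both produce $O((\log t)^{-1})$ with no worse dependence. With the choice above, the low-frequency piece yields exactly $(\log t)^{-1}$ while the high-frequency contribution is a strictly better $(\log t)^{-2}$, so the total bound is as claimed.
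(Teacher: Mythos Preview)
Your approach is correct and matches the paper's, which simply records this lemma as a corollary of Lemma~\ref{lem:log osc}: you redo that argument with a single integration by parts instead of two, which is exactly the point.

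There is one slip in the final displayed estimate. You write
\[
\frac{1}{t}\int_{t^{-1/4}}^{\lambda_1}\frac{d\lambda}{\lambda^5|\log\lambda|^2}
\les \frac{1}{t(\log t)^2}\int_{t^{-1/4}}^{\lambda_1}\lambda^{-5}\,d\lambda,
\]
but this is not a pointwise bound: on the upper part of the interval, say near $\lambda=\lambda_1$, one has $|\log\lambda|^{-2}\approx|\log\lambda_1|^{-2}$, which is a fixed constant and can be much larger than $(\log t)^{-2}$. The conclusion is nonetheless correct, because the weight $\lambda^{-5}$ concentrates the integral near the lower endpoint; to make the step rigorous, split once more at $\lambda=t^{-1/8}$ exactly as in the proof of Lemma~\ref{lem:log osc}. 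On $[t^{-1/4},t^{-1/8}]$ you have $|\log\lambda|\gtrsim\log t$, giving the $(\log t)^{-2}$ contribution, while on $[t^{-1/8},\lambda_1]$ you bound $|\log\lambda|^{-2}\les 1$ and $\int_{t^{-1/8}}^{\lambda_1}\lambda^{-5}\,d\lambda\les t^{1/2}$, producing an acceptable $t^{-1/2}$ term. With this splitting in place your argument is complete.
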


We are now ready to prove Proposition~\ref{prop:3rd kind}. 
\begin{proof} [Proof of Proposition~\ref{prop:3rd kind}] We divide the proof two cases and first consider a resonance of the third kind.
Using the expansion $(M +S_1)^{-1} (\lambda) = QD_0Q + h^{-1} S +\widetilde O_1(\lambda^{0+})$,  and recalling $S_2 S= SS_2=0$, one can obtain the following expansion in the case of resonance of the third kind, 
\begin{align}
M_{\pm}^{-1}(\lambda) &= - \frac{D_3}{\tilde g^{\pm}_3(\lambda)}+\frac{\lambda^4}{(\tilde g^{\pm}_3(\lambda))^2}D_3vG_5vD_3 + QD_0Q E_{0} (\lambda)QD_0Q + \widetilde{O}_1(\lambda^{-4}(\log \lambda)^{-4}) \nn  \\&
=:  \frac{D_3}{\tilde g^{\pm}_3(\lambda)} + Q\tilde{E}^{\pm}_{0}(\lambda)Q + \tilde{E}^{\pm}_{1}(\lambda). 
\end{align}
where $\tilde{E}^{\pm}_{0}(\lambda)= \tilde O( \lambda^{-4}(\log \lambda)^{-2})$ and $\tilde{E}^{\pm}_{1}(\lambda)= \widetilde{O}_1(\lambda^{-4}(\log \lambda)^{-4})$

As usual, we need to estimate the contribution of $ [R^{+} v M_{+}^{-1} v R^{+} -R^{-} v M_{-}^{-1} v R^{-} ] $ to the Stone's formula. Notice that using the orthogonality property $Qv=0$,  we may exchange $R^{\pm} (\lambda^4) (x,x_1) $ with $H (\lambda, x,x_1)=\widetilde O_1(k(x,x_1))$, see \eqref{eqn:H def} in the proof of  Lemma~\ref{prop:QD0Q bound}. Therefore, we see that 
\begin{align*}
\left[ R^{+} v \frac{D_3}{\tilde g^{+}_3(\lambda)} v R^{+} -R^{-} v \frac{D_3}{\tilde g^{-}_3(\lambda)} v R^{-}  \right]  &= \left[ \frac{1}{g^{+}_3(\lambda)}
 -\frac{1}{g^{-}_3(\lambda)} \right] H^{-}vD_3vH^{+}  \\ & + \left[ \frac{R^{+} - R^{-}} {g^{+}_3(\lambda)} \right] vD_3vH^{+} + H^{-}vD_3v \left[ \frac{R^{+} - R^{-}} {g^{-}_3(\lambda)}  \right] 
\end{align*} 
 Note that, we further exchange $[R^{+} - R^{-}](\lambda^4)(x,x_1)$ with $G(\lambda, |x-x_1|, x) = \widetilde O_1( (\lambda \la x_1 \ra  )^{0+})$ and use Corollary~\ref{corJ1} to see 
$$\left[ R^{+} v \frac{D_3}{\tilde g^{+}_3(\lambda)} v R^{+} -R^{-} v \frac{D_3}{\tilde g^{-}_3(\lambda)} v R^{-}  \right] =\widetilde O_1 \big(  \lambda^{-4} (\log \lambda)^{-2} \big). $$
Hence, Lemma~\ref{cor:log osc} establishes the contribution of this term.
For the contribution of  $Q\tilde{E}^{\pm}_{0}(\lambda)Q + \tilde{E}^{\pm}_{1}(\lambda)$, we have 
$$
R^{\pm} v[Q\tilde{E}^{\pm}_{0}(\lambda)Q + \tilde{E}^{\pm}_{1}(\lambda)]vR^{\pm} = H^{\pm}v\tilde{E}^{\pm}_{0} vH^{\pm}
  + R^{\pm} v\tilde{E}^{\pm}_{1} v R^{\pm}.
$$
It is easy to see that the first summand is $\widetilde O_1 \big(  \lambda^{-4} (\log \lambda)^{-2} \big)$. Furthermore, writing $ R^{\pm} = \log^{-}(\lambda r) + A(\lambda,r)$, and noting $ \log^{-}(\lambda r) = \widetilde O (r^{0-} (\log \lambda) )$ and $A(\lambda,r)=\widetilde O_1(1)$ we see that $R^{\pm} v\tilde{E}^{\pm}_{1} v R^{\pm}= \widetilde O_1 \big(  \lambda^{-4} (\log \lambda)^{-2} \big)$. Therefore, Lemma~\ref{cor:log osc} establishes the claim in the case of a resonance of the third kind. 

Finally, we consider a resonance of the fourth kind.
As in the case of a resonance of the third kind, one can see that 
\begin{align} \label{M-1 fourth case}
M_{\pm}^{-1}(\lambda) = - \frac{D_4}{\lambda^4}+ h_2^{\pm}(\lambda) \Gamma + QD_0Q E_{0} (\lambda)QD_0Q + \widetilde{O}_1(\lambda^{-4}(\log \lambda)^{-4}).
\end{align}
Since $\Gamma$ is defined as an operator with the projection $S_3$ on both sides.  Then since $
	h_2^+(\lambda)-h_2^-(\lambda)=\widetilde{O}_1(\lambda^{-4}(\log \lambda)^{-2}) $
the contribution of all terms except the first one to the evolution can be estimated as in the previous case. Moreover, for the first term we have,  
\begin{align*}
 [R^{+} v D_4 v R^{+} -R^{-} v D_4 v R^{-} ] &= [R^{+}-R^{-}] v D_4 v R^{-} + R^{+} v D_4 v [R^{+}-R^{-}] \\
  &= G v D_4 v H^{-} + H^{+} v D_4 v G = \widetilde{O}_1(\lambda^{0+}) 
\end{align*}
Here, $G$ is as in the Corollary~\ref{corJ1}. Therefore, the contribution of the first term in \eqref{M-1 fourth case} is $t^{-0-}$.

\end{proof}

\begin{rmk}\label{rmk:eigenavalue}
	
	In the case of an eigenvalue only, when $S_1=S_2=S_3=S_4$, we expect to be able to obtain a better time decay.  Such results are known for the Schr\"odinger evolution in dimension $d\geq 3$, \cite{goldE,GG1,GG2,GT}.  As in the case of the Schr\"odinger evolution in two dimensions, \cite{eg2}, we expect the improved time decay to come at the cost of spatial weights.  In contrast to the Schr\"odinger evolution in which the natural time decay rate is achieved, we expect a decay rate of $\la t\ra^{-\f12}$ in this case.
	
\end{rmk}

\section{The Perturbed Evolution For Large Energy } \label{sec:large}
In this section, to complete the proofs of Theorems~\ref{thm:main} and \ref{thm:main1} we analyze the evolution of fourth order Schr\"odinger equation for  energies separated from zero, that is in the support of $\widetilde \chi (\lambda)$. We prove the following high energy results
\begin{prop} \label{prop:largemain}Let $|V(x)| \les \la x \ra^{-4-}$. We have  
\begin{align*}
 &\|e^{-itH}P_{ac}(H) \widetilde\chi(H) f \|_{L^{\infty}} \les |t| ^{-1} \| f \|_{L^{1}}  , \\
 & \| e^{-itH} P_{ac}(H) \widetilde\chi(H) f \|_{L^{\infty,-\f 12}} \les |t|^{-1-} \|  f \|_{L^{1,\f 12 }}, \,\,\ \text{for }t \geq 2.
\end{align*}  
\end{prop}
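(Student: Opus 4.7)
Via Stone's formula \eqref{stone} the task reduces to bounding the kernel
\[
	K_t^{\mathrm{hi}}(x,y) = \int_0^\infty e^{-it\lambda^4}\lambda^3 \widetilde\chi(\lambda)\, [R_V^+ - R_V^-](\lambda)(x,y)\, d\lambda
\]
uniformly with rate $|t|^{-1}$, and, with mild spatial weights $\la x\ra^{1/2}\la y\ra^{1/2}$, with rate $|t|^{-1-}$. My plan is to iterate the resolvent identity $R_V^\pm = R^\pm(H_0;\lambda^4) - R^\pm(H_0;\lambda^4) V R_V^\pm$ (and its symmetric counterpart) so as to write $R_V^\pm$ as a finite Born series in $R^\pm(H_0;\lambda^4)V$ plus a remainder in which $R_V^\pm$ is flanked on both sides by sufficiently many copies of $R^\pm(H_0;\lambda^4)V$. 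The $k=0$ term is exactly the free evolution restricted to high energy, handled by Lemma~\ref{lem:free est} together with the remark following it (the boundary term $\Im(z_1)/(4t)$ vanishes against $\widetilde\chi$); this yields the uniform bound $|t|^{-1}$ and the weighted bound $O(|t|^{-9/8}\la x\ra^{1/2}\la y\ra^{1/2})$.

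For each finite Born term of order $k\ge 1$, I substitute the large-argument form \eqref{eq:R0high} into every free-resolvent factor and separate oscillatory from exponentially decaying pieces, producing multiple integrals of the shape
\[
	\int_{\R^{4k}}\!\int_0^\infty e^{-it\lambda^4}e^{\pm i\lambda\phi(\vec z,x,y)}\lambda^3 \widetilde\chi(\lambda)\prod_j \tilde\omega(\lambda r_j)\prod_j V(z_j)\, d\lambda\, d\vec z,
\]
where $\phi$ is a signed sum of the consecutive distances $r_j$. Since $\lambda\gtrsim 1$ on the support of $\widetilde\chi$, integration by parts in $\lambda$ via $\partial_\lambda e^{-it\lambda^4} = -4it\lambda^3 e^{-it\lambda^4}$ loses no negative power of $\lambda$ and can be iterated to any desired order; one IBP suffices for the unweighted bound and two for the weighted one. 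Each derivative acting on $e^{\pm i\lambda \phi}$ pulls down a factor of $\phi$ that grows at most linearly in $|z_j|$, $|x|$, $|y|$; the $|z_j|$ growth is absorbed by $|V(z_j)|\les\la z_j\ra^{-4-}$ in a Schur-type spatial estimate, and at most one factor $\la x\ra^{1/2}\la y\ra^{1/2}$ is needed on the outer variables, as permitted by the weighted claim.

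The remaining tail term, in which $R_V^\pm$ no longer has an explicit kernel, is the principal obstacle, and we handle it via the limiting absorption principle of \cite[Theorem~2.23]{fsy}: for $\lambda\gtrsim 1$ and appropriate $\sigma>1/2$, the operator $R_V^\pm(\lambda)$ and its $\lambda$-derivatives (at least up to order two) are controlled from $L^{2,\sigma}$ to $L^{2,-\sigma}$. Writing the tail kernel as a pairing of $(R^\pm V)^{N+1}(x,\cdot)$ against $R_V^\pm (VR^\pm)^{N}(\cdot,y)$ and applying the Cauchy--Schwarz inequality in this weighted $L^2$ space, the decay $|V|\les \la\cdot\ra^{-4-}$ (for $N$ chosen large enough) makes every spatial integral convergent. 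Two integrations by parts in $\lambda$ then give $|t|^{-1-}$; the weights $\la x\ra^{1/2}\la y\ra^{1/2}$ in the second claim are precisely what is needed to accommodate the single outermost spatial factor that escapes the decay of $V$ when $\partial_\lambda$ falls on the outermost oscillatory phase. The central technical difficulty is the simultaneous bookkeeping of $\lambda$-growth and spatial growth across the iterated derivatives, which the weighted LAP is exactly designed to control.
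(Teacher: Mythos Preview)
Your overall architecture---iterate the resolvent identity, treat the finitely many Born terms by integration by parts in $\lambda$, and close the tail via the limiting absorption principle for $R_V^\pm$---matches the paper's strategy.  The paper, however, stops at the second iteration,
\[
R_V^\pm = R^\pm - R^\pm V R^\pm + R^\pm V R_V^\pm V R^\pm,
\]
so there is exactly one Born term ($k=1$) and one tail term with $R_V^\pm$ already flanked on both sides by a single copy of $R^\pm V$.  No higher $N$ is needed, and taking $N$ large only generates more terms to estimate.

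There are two genuine gaps in your treatment of the Born terms.  First, substituting the large-argument form \eqref{eq:R0high} into \emph{every} free resolvent factor is not justified: on the support of $\widetilde\chi$ one has $\lambda\gtrsim\lambda_1$, but the intermediate distances $r_j=|z_{j-1}-z_j|$ can still satisfy $\lambda r_j\ll 1$, where the resolvent has the logarithmic form of Lemma~\ref{lem:R0low} and carries no oscillatory phase.  The paper sidesteps this by using the uniform bound $R^\pm(\lambda^4)=\widetilde O_1((\lambda r)^{0-})$, valid in both regimes.  Second, and more seriously, your spatial bookkeeping for the weighted bound on the $k=1$ term is too optimistic.  With a single potential factor $V(z_1)$, two integrations by parts can land both $\lambda$-derivatives on the phase $e^{\pm i\lambda\phi}$, producing growth of order $\phi^2\les \la z_1\ra^2\la x\ra\la y\ra$; the decay $|V(z_1)|\les\la z_1\ra^{-4-}$ then leaves $\la z_1\ra^{-2-}$, which is \emph{not} integrable on $\R^4$.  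One can salvage a direct two-IBP argument by carefully exploiting the amplitude decay $\tilde\omega(\lambda r_j)=\widetilde O((\lambda r_j)^{-3/2})$ to offset the phase growth, but that is more delicate than ``the $|z_j|$ growth is absorbed by $V$''.  The paper avoids this issue entirely: for the $R^\pm V R^\pm$ term it does \emph{not} integrate by parts twice but instead applies the Lipschitz oscillatory lemma (Lemma~\ref{lem:ibpweight}), interpolating between first- and second-derivative bounds to extract only $t^{-1-}$ and correspondingly only $\la z_1\ra^{0+}$ of growth, which $\la\cdot\ra^{-4-}$ comfortably absorbs.  Two direct IBPs are used only for the tail term $R^\pm V R_V^\pm V R^\pm$, where the presence of \emph{two} copies of $V$ makes the spatial estimate go through under the stated decay.
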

The main contrast for large energy is that we need the oscillation in the Stone's formula to ensure the integrals converge, hence the singular powers of $t$ as $t\to 0$ arise.

For large energy, we utilize the  resolvent identity and write 
\begin{align}\label{large symmetric}
R_V(\lambda^4)= R^\pm(\lambda^4) - R^\pm(\lambda^4)VR^\pm(\lambda^4) + R^\pm(\lambda^4)VR_V(\lambda^4) V R^\pm(\lambda^4).
\end{align}

We estimate the contribution of the all terms in \eqref{large symmetric} to the Stone' formula. We first  note that the contribution of the first term on the right hand side is already controlled in Lemma~\ref{lem:free est} by $|t|^{-1}$. Moreover, as noted after Lemma~\ref{lem:free est}, one can obtain the decay rate $t^{-\f98} \la x \ra^{\f12} \la y \ra ^{\f12}$ the contribution of this term in the large energy regime. 

The next lemma will control the contribution of the second term in \eqref{large symmetric}. Similar to the low energy, to equalize the decay on the potential in Proposition~\ref{prop:largemain}, we utilize Lemma~\ref{lem:ibpweight} in proving weighted bound. 

\begin{lemma} Let $|V(x)| \les \la x \ra^{-4-\alpha}$ for some $0<\alpha<1$. Then, we have
\begin{align*}
&\Big| \int_{0}^{\infty} e^{-it\lambda^4} \widetilde\chi(\lambda) \lambda^{3} [R^\pm VR^\pm](\lambda^4)(x,y) d \lambda \Big| \les |t|^{-1}, \\
&\Big| \int_{0}^{\infty} e^{-it\lambda^4} \widetilde\chi(\lambda) \lambda^{3} [R^\pm VR^\pm](\lambda^4)(x,y) d \lambda \Big| \les \frac{\la x \ra ^{\alpha} \la y \ra^{\alpha}} {t^{1+}}, \,\,\ t \geq 2.
\end{align*}
\end{lemma}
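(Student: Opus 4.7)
The plan is to obtain pointwise bounds on $R^\pm V R^\pm(\lambda^4)(x,y)$ and its $\lambda$-derivatives on the support of $\widetilde\chi$, then invoke a single integration by parts for the first estimate and Lemma~\ref{lem:ibpweight} for the second. Using the large-argument expansion \eqref{eq:R0high} together with $|\widetilde\omega^{(\ell)}(s)| \les s^{-3/2-\ell}$ for $s \gtrsim 1$, I would establish, for $\lambda$ on the support of $\widetilde\chi$ and $\lambda|x-z| \gtrsim 1$,
$$
|R^\pm(\lambda^4)(x,z)| \les (\lambda|x-z|)^{-3/2}, \qquad |\partial_\lambda R^\pm(\lambda^4)(x,z)| \les \lambda^{-3/2}|x-z|^{-1/2},
$$
with the complementary region $\lambda|x-z| \les 1$ (of volume $\les \lambda^{-4}$) contributing only lower order terms. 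Convolving with $|V(z)| \les \la z\ra^{-4-\alpha}$, the integrals $\int r_1^{-a}r_2^{-b}|V(z)|\,dz$ for $a,b \in \{1/2,3/2\}$ all converge uniformly in $x,y$, so $|R^\pm V R^\pm(\lambda^4)(x,y)| + |\partial_\lambda R^\pm V R^\pm(\lambda^4)(x,y)| \les \lambda^{-3}$.

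For the first bound, one integration by parts against $e^{-it\lambda^4}\lambda^3 = -\partial_\lambda e^{-it\lambda^4}/(4it)$ suffices: the boundary terms vanish since $\widetilde\chi(0)=0$ and the kernel decays at infinity, and the remaining integral is bounded by $t^{-1}\int_{\lambda_1}^\infty \lambda^{-3}\,d\lambda \les t^{-1}$. For the weighted bound I apply Lemma~\ref{lem:ibpweight} to $\mathcal E(\lambda) = \widetilde\chi(\lambda) R^\pm V R^\pm(\lambda^4)(x,y)$, which satisfies $\mathcal E(0)=0$. The first summand of that lemma contributes $\frac{1}{t}\int_{\lambda_1}^\infty \lambda^{-3}/(1+\lambda^4 t)\,d\lambda \les t^{-2}$ with no spatial weight.

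The second summand requires a H\"older-type modulus bound on $\mathcal E'$, and this is the main obstacle. I would compute $|\partial_\lambda^2 R^\pm(\lambda^4)(x,z)| \les \lambda^{-3/2}|x-z|^{1/2}$, then absorb the $|x-z|^{1/2}$ growth via Peetre's inequality $|x-z|^{1/2} \les \la x\ra^{1/2}+\la z\ra^{1/2}$ (with $V$'s decay soaking up $\la z\ra^{1/2}$) to obtain $|\mathcal E''(\lambda)| \les \lambda^{-3}\la x\ra^{1/2}\la y\ra^{1/2}$. Interpolating via $|\mathcal E'(b) - \mathcal E'(\lambda)| \les \min\bigl((b-\lambda)\sup|\mathcal E''|,\, 2\sup|\mathcal E'|\bigr)$ by geometric mean with exponent $\theta = \min(2\alpha, 1)$ yields
$$
|\mathcal E'(b) - \mathcal E'(\lambda)| \les (b-\lambda)^\theta \lambda^{-3}\la x\ra^\alpha\la y\ra^\alpha,
$$
and since $b-\lambda \sim (t\lambda^3)^{-1}$ on $\lambda \geq t^{-1/4}$, the second summand contributes $\les \la x\ra^\alpha\la y\ra^\alpha/t^{1+\theta} \les \la x\ra^\alpha\la y\ra^\alpha/t^{1+}$. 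The interpolation exponent $\theta$ must be tuned so that the weight exponent matches $\alpha$ exactly while the time decay remains strictly better than $t^{-1}$, which forces the particular choice $\theta = \min(2\alpha, 1)$.
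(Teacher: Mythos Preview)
Your approach is essentially the paper's: one integration by parts for the $|t|^{-1}$ bound, and Lemma~\ref{lem:ibpweight} combined with interpolation between first- and second-derivative bounds on $\mathcal E$ for the weighted estimate. The only difference is that you use the sharper large-argument decay $|R^\pm|\les(\lambda r)^{-3/2}$, whereas the paper uses the cruder uniform bound $R^\pm=\widetilde O_1((\lambda r)^{0-})$, which already absorbs the region $\lambda|x-z|\les 1$; your claim that this region contributes only lower-order terms is correct but needs a line of justification for the second derivative, since there $|\partial_\lambda^2 R^\pm|\sim\lambda^{-2}$ rather than $\lambda^{-3/2}|x-z|^{1/2}$ (the latter bound fails when $\lambda r\ll 1$), and it is the small volume $\les\lambda^{-4}$ of that region that saves you.
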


\begin{proof} Note that we have $R^{\pm}(\lambda^4)= \widetilde{O}_1( (\lambda r) ^{0-})$ and hence
$$ [R^\pm VR^\pm](\lambda^4)(x,y) = \widetilde{O}_1( \lambda ^{0-}) \int_{\R^4} \frac{V(x_1)}{ |x-x_1|^{0+} |y-x_1|^{0+} } dx_1= \widetilde{O}_1( \lambda  ^{0-}). $$
Now applying integration by parts, we obtain
\begin{multline*}
\Big| \int_{0}^{\infty} e^{-it\lambda^4} \lambda^{3} \widetilde\chi(\lambda)    [R^\pm  VR^\pm](\lambda^4)(x,y) d \lambda \Big|\\ 
 \les \frac{1}{t} \Big| \int_{\R^4} \int_{\lambda_1}^{\infty}  \frac{ V(x_1) }{ \lambda^{1+} |x-x_1|^{0+} |y-x_1|^{0+}}  d\lambda dx_1 \Big| \les t^{-1}.
\end{multline*}
Next, we prove the weighted bound. Here we need to write $ R^{\pm} (\lambda^4)= \rho(\lambda r) + A(\lambda, r) $ where $\rho(\lambda r)= \widetilde{O}( (\lambda r)^{0-}) $  and $A(\lambda, r)$ is as in \eqref{eqn:A def}. We obtain ($r_1=|x-x_1|$, $r_2=|y-x_1|$)
\begin{align}
	&|\partial_{\lambda} [ R ^{\pm}(x,x_1)R ^{\pm}(y,x_1)](\lambda^4)| \les  \lambda  ^{-1-} r_1^{0-} r_2^{0-}, \label{firsrderlarge}\\
	&|\partial^2_{\lambda} [ R ^{\pm}(x,x_1)R ^{\pm}(y,x_1)](\lambda^4)| \les  \lambda ^{-2-} r_1^{0-} r_2^{0-} \la r_1\ra^{\f12} \la r_2\ra ^{\f12}. \label{secondderlarge}
\end{align}
The growth in the spatial variables arises when the derivatives hit the phase in $A(\lambda, r)$.
For $b>\lambda>0$, and using \eqref{secondderlarge} in Mean Value Theorem we have 
\begin{align}\label{largemvt}
|\partial_{\lambda} [R ^{\pm}R ^{\pm}](b^4)-\partial_{\lambda} [R ^{\pm}R^{\pm}](\lambda^4)| \les (b-\lambda) \lambda^{-2-} r_1^{0-} r_2^{0-} \la r_1\ra^{\f12} \la r_2\ra ^{\f12}
\end{align}
Moreover, by  \eqref{firsrderlarge} we have 
\begin{align} \label{largetointerpolate}
|\partial_{\lambda} [R ^{\pm}R ^{\pm}](b^4)-\partial_{\lambda} [R ^{\pm}R^{\pm}](\lambda^4)| \les \lambda^{-1-} r_1^{0-} r_2^{0-} 
\end{align}

Interpolating between \eqref{largetointerpolate} and \eqref{largemvt}, taking  $b=\lambda \sqrt[4]{1+\pi\lambda^{-4} t^{-1}}$,  we have the following bound, 
\begin{align}
	& |\partial_{\lambda} [R ^{\pm} R ^{\pm}](b^4)-\partial_{\lambda} [R ^{\pm} R ^{\pm}](\lambda^4)|  
	\les t^{-\alpha} \lambda^{-1-4\alpha-}  r_1^{0-} r_2^{0-} \la r_1\ra^{\f\alpha2}  \la r_2\ra ^{\f\alpha2}  \label{largedif}
\end{align}
Here, recall that for large $t$, we have $\lambda \sqrt[4]{1+\pi \lambda^{-4}t^{-1}}- \lambda \approx (\lambda ^3 t)^{-1}$. 

We now apply Lemma~\ref{lem:ibpweight}  with $ \mathcal E(\lambda)= \widetilde{\chi}(\lambda) [R^\pm VR^\pm ](\lambda^4) $. Then, since $\lambda \gtrsim1$ in the support of  $\widetilde{\chi}(\lambda)$, by  \eqref{firsrderlarge} we have
\begin{align*}
	\Big| \int_0^\infty \frac{|\mathcal E'(\lambda)|}{1+\lambda^4 t} d\lambda \Big|\les \frac{1}{t^\alpha} \int_{0}^{\infty} \int_{\R^4}  \frac{  |V(x_1)| }{\la \lambda \ra^{1+\alpha+}  r_1^{0+} r_2^{0+}} dx_1 d \lambda \les t^{-\alpha}.
\end{align*}
Now, using \eqref{largedif},  we have 
\begin{multline*}
	\int_{t^{-\f14}}^\infty \bigg| \mathcal E'(\lambda \sqrt[4]{1+\pi t^{-1} \lambda^{-4}})-\mathcal E'(\lambda) \bigg|\, d\lambda \\  \les \frac{1}{t^{\alpha}} \int_{0}^\infty  \int_{\R^4}  \frac{  \la r_1\ra^{\f\alpha2}  |V(x_1)|\la r_2\ra ^{\f\alpha2}} {\la \lambda \ra^{1+4 \alpha+}  r_1^{0+} r_2^{0+}} dx_1 d \lambda 
	\les \frac{ \la x \ra^{\f \alpha 2} \la y \ra^{\f \alpha 2}}{t^{\alpha} }.
\end{multline*}

\end{proof}
Lastly, we prove 
\begin{prop}\label{prop:large} Let $|V(x)| \les \la x \ra^{-4-}$. We have 
\begin{align*} 
	&\Big| \int_{0}^{\infty} e^{-it\lambda^4} \widetilde\chi(\lambda) \lambda^{3} [R^\pm V R_V^\pm VR^\pm](\lambda^4)(x,y) d \lambda \Big| \les |t|^{-1}, \\
	& \Big| \int_{0}^{\infty} e^{-it\lambda^4} \widetilde\chi(\lambda) \lambda^{3} [R^\pm V R_V^\pm VR^\pm](\lambda^4)(x,y) d \lambda \Big| \les \frac{\la x \ra ^{\f 12 } \la y \ra^{\f 12 }} {t^{1+}}, \,\,\ t \geq 2.
\end{align*}
\end{prop}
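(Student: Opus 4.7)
The plan is to combine the high-energy limiting absorption principle for $H$ from \cite[Theorem~2.23]{fsy} with integration by parts in the spectral variable. Specifically, \cite{fsy} furnishes bounds of the form
$$
\big\|\partial_\lambda^k R_V^\pm(\lambda^4)\big\|_{L^{2,\sigma}\to L^{2,-\sigma}} \les \lambda^{-3-k}, \qquad k=0,1,2,
$$
for $\lambda \gtrsim 1$ and any $\sigma > \tfrac12 + k$.  Using the high-energy representation of the free resolvent in \eqref{eq:R0high}, one has the pointwise bound $R^\pm(\lambda^4)(x,y) = \widetilde O_2\big((\lambda|x-y|)^{0-}\big)$ for $\lambda\gtrsim 1$, with derivatives producing a factor of $|x-y|$ when they fall on the oscillatory phase $e^{\pm i\lambda|x-y|}$.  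Combined with the decay $|V(x)|\les\la x\ra^{-4-}$ and Cauchy--Schwarz applied to the composition
\begin{align*}
[R^\pm V R_V^\pm V R^\pm](\lambda^4)(x,y)
= \iint R^\pm(\lambda^4)(x,z_1) V(z_1) R_V^\pm(\lambda^4)(z_1,z_2) V(z_2) R^\pm(\lambda^4)(z_2,y)\,dz_1 dz_2,
\end{align*}
this yields $\big|[R^\pm V R_V^\pm V R^\pm](\lambda^4)(x,y)\big| \les \lambda^{-3}$, with analogous bounds for the first two $\lambda$-derivatives up to spatial growth factors when derivatives land on the outer oscillatory phases.

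For the unweighted $t^{-1}$ bound, I would integrate by parts once using $e^{-it\lambda^4}\lambda^3 = -\partial_\lambda e^{-it\lambda^4}/(4it)$; the boundary terms vanish because of the cutoff $\widetilde\chi(\lambda)$ at the lower endpoint and the decay of the integrand at infinity.  The resulting $\lambda$-integral converges because the derivative of the integrand is $\widetilde O(\lambda^{-4})$ away from the transition region of $\widetilde\chi$, and the transition region is a compact set on which the integrand is already bounded.  When the derivative hits the outer free resolvents and produces a factor of $|x-z_j|$, this factor is absorbed by the adjacent $V(z_j)\les \la z_j\ra^{-4-}$ without loss of spatial weights in $x,y$, since $|x-z_j|\les \la x\ra \la z_j\ra$ and we have ample decay in $z_j$.

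For the weighted $t^{-1-}$ bound, I would apply Lemma~\ref{lem:ibpweight} to $\mathcal E(\lambda) := \widetilde\chi(\lambda)[R^\pm V R_V^\pm V R^\pm](\lambda^4)(x,y)$.  The first integral in that lemma is controlled via $|\mathcal E'(\lambda)| \les \lambda^{-4}\la x\ra^{1/2}\la y\ra^{1/2}$ combined with the trivial integrability in $\lambda$, producing a gain of $t^{-\alpha}$ for any $\alpha<1$ at the cost of weights $\la x\ra^{\alpha/2}\la y\ra^{\alpha/2}$.  For the Lipschitz integral, with $b=\lambda\sqrt[4]{1+\pi\lambda^{-4}t^{-1}}$ so that $b-\lambda\approx (\lambda^3 t)^{-1}$, the Mean Value Theorem and the second-derivative bound $|\mathcal E''(\lambda)|\les \lambda^{-5}\la x\ra^{1/2}\la y\ra^{1/2}$ give $|\mathcal E'(b)-\mathcal E'(\lambda)|\les t^{-1}\lambda^{-8}\la x\ra^{1/2}\la y\ra^{1/2}$; interpolating with the direct first-derivative bound $|\mathcal E'(b)-\mathcal E'(\lambda)|\les \lambda^{-4}$ yields a Lipschitz estimate integrable on $[t^{-1/4},\infty)$ that produces the desired $t^{-1-}\la x\ra^{1/2}\la y\ra^{1/2}$ decay.

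The main obstacle will be producing the second-derivative bounds on the composite operator $R^\pm V R_V^\pm V R^\pm$ in a form suitable for the pointwise $(x,y)$ estimates needed here: the $L^{2,\sigma}\to L^{2,-\sigma}$ bounds from \cite{fsy} must be turned into pointwise bounds on the kernel via Cauchy--Schwarz against $\|V R^\pm(\lambda^4)(\cdot,x)\|_{L^{2,\sigma}}$, and one must carefully track how powers of $|x-z_j|$ arising from derivatives of the oscillatory phases on the outer free resolvents are redistributed between the potential weights (to ensure $L^2$-integrability) and the final spatial weights $\la x\ra^{1/2}\la y\ra^{1/2}$.  Ensuring that $\sigma$ can be chosen above the limiting absorption threshold $\tfrac12+k$ while still leaving enough decay in $V$ to absorb derivatives of the outer resolvents is exactly what forces the $\beta>4$ hypothesis.
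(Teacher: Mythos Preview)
Your proposal is correct and, for the unweighted bound, follows the paper's argument essentially verbatim: one integration by parts using the limiting absorption bounds from \cite{fsy}. One small inaccuracy: a single $\lambda$-derivative of the outer free resolvent does \emph{not} produce a factor of $|x-z_j|$ (indeed $|\partial_\lambda R^\pm(\lambda^4)(x,z_j)|\les \lambda^{-1-}|x-z_j|^{0-}$ for $\lambda\gtrsim 1$), so there is nothing to absorb in that step; the growth factor $\la r\ra^{1/2}$ only enters at second order.

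For the weighted bound your route differs from the paper's. The paper simply integrates by parts \emph{twice}: since the integrand is supported on $\lambda\gtrsim\lambda_1>0$, there is no obstruction at $\lambda=0$, and the second-derivative bound $|\partial_\lambda^2\mathcal E(\lambda)|\les\lambda^{-5-}\la x\ra^{1/2}\la y\ra^{1/2}$ immediately yields
\[
\frac{1}{t^2}\int_{\lambda_1}^\infty \big|\partial_\lambda\big(\lambda^{-3}\partial_\lambda\mathcal E(\lambda)\big)\big|\,d\lambda \les \frac{\la x\ra^{1/2}\la y\ra^{1/2}}{t^2},
\]
which is stronger than the claimed $t^{-1-}$. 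Your approach via Lemma~\ref{lem:ibpweight} also works, but it is more machinery than the situation demands: that lemma was devised for the low-energy regime where $\mathcal E'$ blows up at $\lambda=0$ and a second integration by parts is unavailable. Here the support condition $\lambda\gtrsim 1$ removes that difficulty, and in fact your interpolation step is unnecessary since the direct mean-value bound $|\mathcal E'(b)-\mathcal E'(\lambda)|\les t^{-1}\lambda^{-8-}\la x\ra^{1/2}\la y\ra^{1/2}$ is already integrable on $[\lambda_1,\infty)$.
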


In the proof of of Proposition~\ref{prop:large} we utilize the limiting absorption principle, the boundedness of the resolvent operators between weighted $L^2$ spaces.  Notice that using the expansion \eqref{RH_0 rep} and the limiting absorption principle for Schr\"odinger resolvent, see \cite{agmon}, one can see that for $\sigma>1/2$,
\begin{align*}
\| R(H_0, \lambda^4) \|_{L^{2,\sigma} \rightarrow L^{2,-\sigma}} \les \lambda^{-2} \| R_0 (\lambda^2) \|_{L^{2,\sigma} \rightarrow L^{2,-\sigma}} \les \lambda^{-3}.
\end{align*}
Moreover,  $\| \partial^{k}_{\lambda}R(H_0, \lambda^4) \|_{L^{2,\sigma} \rightarrow L^{2,-\sigma}} \les \lambda^{-3-k}$ for $\sigma > k+ 1/2$.  We notice that in general, the extension of this property to $R_V(\lambda)$, is not possible as in Schr\"odinger resolvent. This is because unlike the $-\Delta + V$, $H$ might possess embedded eigenvalues even for decaying potentials. For that purpose, we assume absence of embedded eigenvalues and use the following theorem.

\begin{theorem} \label{th:LAP}\cite[Theorem~2.23]{fsy} Let $|V(x)|\les \la x \ra ^{-k-1}$. Then for any $\sigma>k+1/2$, $\partial_z^k R_V(z) \in \mathcal{B}(L^{2,\sigma}(\R^d), L^{2,-\sigma}(\R^d))$ is continuous for $z \notin {0}\cup \Sigma$. Further, 
\begin{align*} 
\|\partial_z^k R(H_0;z) \|_{L^{2,\sigma}(\R^d) \rar L^{2,-\sigma}(\R^d)} \les z^{-(3+3k)/4}. 
\end{align*}
\end{theorem}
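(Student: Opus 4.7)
The plan is to first establish the free-resolvent bound via the identity \eqref{RH_0 rep}, and then bootstrap to the perturbed resolvent using the symmetric resolvent identity together with analytic Fredholm theory. For the free bound, setting $z=\lambda^4$ and using
$$R(H_0;\lambda^4)=\frac{1}{2\lambda^2}\bigl(R_0(\lambda^2)-R_0(-\lambda^2)\bigr),$$
I note that $\partial_z=(4\lambda^3)^{-1}\partial_\lambda$, so each application of $\partial_z$ contributes a factor of $\lambda^{-3}$ beyond any derivatives that fall on the Schr\"odinger resolvent. Agmon's limiting absorption principle for $-\Delta$ on $\R^4$ yields
$$\|\partial_\lambda^j R_0^{\pm}(\lambda^2)\|_{L^{2,\sigma'}\rightarrow L^{2,-\sigma'}}\les \lambda^{-j},\qquad \sigma'>j+\tfrac12,$$
uniformly on $\lambda\gtrsim 1$, and the term $R_0(-\lambda^2)$ contributes exponentially decaying kernels with even better estimates. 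Carefully tracking the prefactor $\lambda^{-2}$ and summing all distributions of derivatives via Leibniz's rule gives an aggregate factor $\lambda^{-3-3k}=z^{-(3+3k)/4}$, with the weight threshold $\sigma>k+\tfrac12$ arising from the maximal number of derivatives falling on a single Schr\"odinger resolvent.

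For the perturbed resolvent, I would use the symmetric identity
$$R_V^{\pm}(z)=R(H_0;z)-R(H_0;z)v\,M^{\pm}(z)^{-1}v\,R(H_0;z),\qquad M^{\pm}(z):=U+vR(H_0;z)v.$$
The decay hypothesis on $V$ makes $vR(H_0;z)v$ a Hilbert--Schmidt (hence compact) operator on $L^2(\R^4)$, continuous in $z$ on $(0,\infty)\pm i0$ by the free bound just proved. Thus $M^{\pm}(z)$ is a compact perturbation of the bounded invertible operator $U$, and the analytic Fredholm theorem produces a meromorphic inverse on $\mathbb C\setminus[0,\infty)$ with boundary values that extend continuously to $(0,\infty)\pm i0$ off the exceptional set where $\ker M^{\pm}(z)\neq\{0\}$. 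A standard argument identifies nontrivial kernel elements with $L^{2,-\sigma}$ solutions of $H\psi=z\psi$; the paper's standing assumption that $H$ has no positive eigenvalues (together with the threshold carve-out at $z=0$) therefore reduces the exceptional set to $\{0\}\cup\Sigma$ as stated.

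Once $M^{\pm}(z)^{-1}$ is shown to be continuous on $(0,\infty)\setminus\Sigma$ as a bounded operator on $L^2$, the continuity of $R_V^{\pm}(z):L^{2,\sigma}\rightarrow L^{2,-\sigma}$ follows directly by sandwiching. To obtain the derivative claim, I would differentiate the symmetric identity $k$ times via Leibniz's rule; each derivative landing on $M^{\pm}(z)^{-1}$ is handled by the formula $\partial_z M^{-1}=-M^{-1}(\partial_z M)M^{-1}$ and its iterates, producing combinations of $M^{\pm}(z)^{-1}$ with derivatives of $vR(H_0;z)v$, each of which inherits the free bounds with slightly larger weights. This raises the required weight from $\tfrac12+$ to $k+\tfrac12+$, matching the hypothesis, and requires $|V|\les\la x\ra^{-k-1}$ to absorb the weights $\la x\ra^{\sigma}$ appearing on both sides of $v\cdots v$.

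The main obstacle is verifying that $M^{\pm}(z)^{-1}$ (and its $z$-derivatives) is locally uniformly bounded on compact subsets of $(0,\infty)\setminus\Sigma$; this is the heart of any LAP argument and relies crucially on the compactness of $vR(H_0;z)v$ combined with the absence-of-embedded-eigenvalues input. Differentiating through this implicit inverse $k$ times and keeping the weights sharp is a delicate bookkeeping exercise, but no genuinely new analytic input beyond Agmon's Schr\"odinger LAP, the identity \eqref{RH_0 rep}, and Fredholm theory is needed; this is precisely the content of \cite[Theorem 2.23]{fsy}, which we invoke here rather than reprove in detail.
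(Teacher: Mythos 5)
Your proposal is correct and follows essentially the same route as the paper: the paper obtains the free-resolvent bound exactly as you do, from the representation \eqref{RH_0 rep} together with Agmon's limiting absorption principle for $R_0^\pm(\lambda^2)$, and for the perturbed resolvent it does not reprove anything but, like you, invokes \cite[Theorem~2.23]{fsy} under the standing assumption that $H$ has no embedded eigenvalues. Your sketch of the Fredholm/symmetric-resolvent bootstrap is a reasonable outline of what that cited proof does, so deferring the delicate uniform bounds on $M^\pm(z)^{-1}$ to \cite{fsy} matches the paper's treatment.
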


\begin{proof}[Proof of Proposition~\ref{prop:large}]
Recall one has  $R^{\pm}(\lambda^4)= \widetilde{O}_1( (\lambda r) ^{0-})$. Therefore, using an analysis as in \eqref{spat bound} together with Theorem~\ref{th:LAP}, we see
$$
\big| \partial^k_{\lambda} \{ \widetilde \chi(\lambda) [R^\pm VR_V^\pm VR^\pm ](\lambda^4)(x,y)  \} \big| \les  \widetilde \chi(\lambda) \la \lambda \ra^{-k-3-} , \,\,\  \text{for} \,\,\ k=0,1. 
$$
Hence, the first assertion follows by a single integration by parts.

To prove the weighted bound we recall $ R ^{\pm}(\lambda^4) =\rho(\lambda r) + A(\lambda,r)$. Theorem~\ref{th:LAP}   with the bounds \eqref{firsrderlarge} and \eqref{secondderlarge} gives 
\begin{align*}
\big| \partial^{k}_{\lambda} \{ \widetilde{\chi}(\lambda) [R^\pm VR_V^\pm VR^\pm ](\lambda^4) \} \big| \les \la \lambda \ra ^{-3-k-} \la x \ra^{1/2} \la y \ra^{1/2}\,\,\, k=0,1,2
\end{align*} 

We note that $|V(x)| \les \la x \ra^{-4-}$ is enough to establish the spatial bound above. This is because by Theorem~\ref{th:LAP}, the inner product $  \widetilde{\chi}(\lambda) \la  \partial^{k_1}_{\lambda} R^\pm (\cdot, x), \partial^{k_2}_{\lambda}[VR_V^\pm VR^\pm] (\cdot,y) \ra$ is meaningful, if one can assure $V$, as a multiplication operator, to map $L^{2,-k_1-{\f 12}}$ to $L^{2, k_2+{\f 12}}$ for any $k_1+k_2 \leq 2$. This holds  holds if  $|V(x)| \les \la x \ra^{-(k_1+k_2+2)-}$.

We now integrate by parts twice and have the bound
\begin{align*}
\frac{1}{t^2} \Big| \int_{0}^{\infty} \partial_{\lambda} \big\{ \widetilde\chi(\lambda) \lambda^{-3} \partial_{\lambda} \{[R^\pm V R_V^\pm VR^\pm](\lambda^4)(x,y)\}  \big\} d \lambda \Big| \les \frac{ \la x \ra^{1/2} \la y \ra^{1/2} }{t^2}.
\end{align*}
Due to support of $\widetilde \chi(\lambda)$, we do not encounter any boundary term in the integration by parts.

\end{proof}

\section{Classification of threshold spectral subspaces}\label{sec:classification}

\begin{lemma}\label{lem:esa1}
Assume $|v(x)| \les \la x \ra ^{-2-}$, if $\phi \in S_1 L^2(\R^4) \setminus \{0\} $, then $\phi= Uv \psi $  where $\psi \in L^{\infty}$, $H\psi=0$ in distributional sense, and 
\be \label{eq:psi def}
	\psi= c_0 - G_1v \phi,\,\, \text{where} \,\,\ c_0= \frac{1}{ \| V\|_{L^1}} \la v,T \phi \ra .
\ee
Moreover, if $|v(x)| \les \la x \ra ^{-3-}$, then  $G_1v \phi \in L^p$ for all $4<p \leq \infty$.
\end{lemma}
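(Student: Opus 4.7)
The plan is to extract $\psi$ from the algebraic relation defining $S_1 L^2$, verify boundedness via the orthogonality $\phi \perp v$, check the PDE through the fundamental solution property of $G_1$, and upgrade to $L^p$ tail decay via a Taylor expansion. First, since $S_1 \le Q$, we have $Q\phi = \phi$ and $0 = QTQ\phi = QT\phi$, which forces $T\phi$ into the one-dimensional range of $P$, so $T\phi = c_0 v$ for some scalar $c_0$. Pairing with $v$ and using $\langle v,v\rangle = \|V\|_{L^1}$ identifies $c_0$ as in \eqref{eq:psi def}. Substituting $T = U + vG_1 v$ gives $U\phi = v(c_0 - G_1 v\phi)$, and applying $U$ once more yields the factorization $\phi = Uv\psi$ with $\psi := c_0 - G_1 v\phi$. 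For the PDE, the identities $(-\Delta)G_0 = \delta$ and $(-\Delta)G_1 = G_0$ give $(-\Delta)^2 G_1 = \delta$, hence $(-\Delta)^2 \psi = -v\phi$; meanwhile, $U\phi = v\psi$ implies $V\psi = Uv\cdot v\psi = U^2 v\phi = v\phi$, and the two contributions cancel in $H\psi$.

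The main work is the $L^\infty$ bound on $\psi$, where the orthogonality $\int v\phi = 0$ plays the decisive role. Without it, the integral $G_1 v\phi(x) = -\frac{1}{8\pi^2}\int \log|x-y|\, v(y)\phi(y)\, dy$ would grow logarithmically as $|x|\to\infty$. Exploiting $\phi\perp v$, we may freely subtract an $x$-dependent constant and replace $\log|x-y|$ by $\log(|x-y|/\langle x\rangle)$ in the integrand. For $|y|\le |x|/2$ this quantity is uniformly $O(1)$; for $|y| > |x|/2$ we split as $\log|x-y| - \log\langle x\rangle$ and apply Cauchy--Schwarz, absorbing the prefactor $\log\langle x\rangle$ into the tail bound $\|v\|_{L^2(|y|>|x|/2)} \to 0$ valid under $|v|\lesssim \langle y\rangle^{-2-}$; the near-diagonal singularity $|x-y|<1$ is handled by local $L^2$-integrability of the log against $\|v\phi\|_{L^2}$.

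Under the stronger decay $|v|\lesssim \langle y\rangle^{-3-}$, we refine these bounds to the pointwise estimate $|G_1 v\phi(x)|\lesssim \langle x\rangle^{-1}$. On $|y|\le |x|/2$ the Taylor expansion $\log(|x-y|/|x|) = -\hat x\cdot y/|x| + O(|y|^2/|x|^2)$ produces a contribution $\lesssim |x|^{-1}\||y|v\|_{L^2}\|\phi\|_{L^2}$, finite precisely under the sharper decay (since $|y|v\in L^2(\R^4)$ requires $|v|\lesssim \langle y\rangle^{-3-}$); the outer region $|y|>|x|/2$ gains additional decay from the tail norm of $v$. Since $\langle x\rangle^{-p}$ is integrable at infinity in $\R^4$ exactly when $p>4$, this yields the claimed range, with the endpoint $p=4$ genuinely lost to the borderline $\int_{\R^4} |x|^{-4}\,dx = \infty$. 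The main obstacle is therefore the $L^\infty$ step: the orthogonality $\phi\perp v$ is what rescues $G_1 v\phi$ from logarithmic growth, and the same cancellation, sharpened by one more Taylor term, drives the $L^p$ tail estimate.
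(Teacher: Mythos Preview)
Your proof is correct and follows essentially the same approach as the paper: extract $\psi$ algebraically from $QT\phi=0$, verify $H\psi=0$ via $(-\Delta)^2 G_1=\delta$, use the orthogonality $\la v,\phi\ra=0$ to subtract a free logarithm and obtain $L^\infty$, and then use the first-order expansion $\log(|x-y|/|x|)=O(|y|/|x|)$ on $|y|\ll|x|$ to get the $O(|x|^{-1})$ tail. The only cosmetic differences are that the paper splits the logarithm into $\log^-$ and $\log^+$ rather than by comparing $|y|$ to $|x|$, and uses the crude bound $\log(1+z)=O(|z|)$ in place of your explicit Taylor expansion; both routes yield the same estimates under the same decay assumptions on $v$.
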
 
\begin{proof} Assume $\phi \in S_1 L^2(\R^4)$, one has $Q(U+vG_1v)\phi=0$. Note that 
\begin{align*}
	Q(U+vG_1v)\phi & = (1-P)(U+vG_1v)\phi 
	 = U\phi + vG_1v\phi - PT\phi=0  \\
   &  \iff \phi = Uv ( -G_1v\phi + c_0)\,\, \text{where} \,\, c_0= \frac{1}{ \| V\|_{L^1}} \la v,T \phi \ra.
\end{align*}
To show $ [(-\Delta)^2+ V] ( -G_1v\phi + c_0)=0 $ first notice that $(-\Delta)^2 c_0=0$. Moreover, one has $ (-\Delta)^2 G_1v \phi = (-\Delta) G_0 v \phi = v \phi$ distributionally. Therefore,
$$ 
	[(-\Delta)^2+ V] ( -G_1v\phi + c_0)=v\psi+V(c_0-G_1v\phi) = v\phi - vUv\psi =0.
$$
Next we show that $G_1v \phi \in L^p$ for all $4<p \leq \infty$. First, we show that $G_1v \phi \in L^{\infty}$. First notice that we have $\la v , \phi \ra =0$ and hence, 
\begin{align*}
\int_{\R^4} \log|x-y| &v(y) \phi(y) dy \\
& = \int_{\R^4} \log^{-}|x-y| v(y) \phi(y) dy + \int_{\R^4} [\log^{+}|x-y|-\log^{+}|x|] v(y) \phi(y) dy \\
& \les \int_{|x-y| <1} \frac{|v(y) \phi(y)|}{|x-y|^{0+} } \phi(y) dy + \int_{\R^4} \la y \ra^{0+} |v(y) \phi(y)| dy < \infty.
\end{align*}

For the last equality notice that since $\log^{+}$ is an increasing function and $|x-y| \leq |x| (1+|y|) $ for $|x| \geq1 $, one has 
$$ \log^{+}  |x-y| \leq \log^{+}|x| + \log ^{+} \la y \ra .$$

Knowing the above estimate, to finish the proof it is enough to show $[G_1v \phi](x)  = O(|x|^{-1})$ when $|x|>10$. Using $\la v , \phi \ra =0$, we have 
\begin{align*}
\int_{\R^4} \log|x-y| v(y) \phi(y) dy = \int_{\R^4} \log \Big( \frac{|x-y|}{|x|} \Big) v(y) \phi(y) dy. 
\end{align*}
First assume that $|x| >2 |y| $. Notice that in this case $|x-y| < 2 |x|$, and hence
$$
\log \Big( \frac{|x-y|}{|x|} \Big) = \log \Big(1+ \frac{|x-y|-|x|}{|x|} \Big) = O \Big(\Big| \frac {y}{x} \Big|\Big).
$$
This gives, 
\begin{align*}
\Big| \int_{ |x| > 2 |y|} \log \Big( \frac{|x-y|}{|x|} \Big)v(y) \phi(y) dy\Big| \les \frac{1}{|x|} \int_{\R^4} \la y \ra ^{-2-} |\phi(y)| \les \frac{1}{|x|}. 
\end{align*} 
We note that this is the limiting factor for the decay, which is why we need $p>4$.

Second assume that  $|x| \leq 2 |y| $. In this case, one has $|y|/ |x| \gtrsim 1$ and 
\begin{multline} \label{Ox-1}
\Big| \int_{ |x| \leq 2|y|}  \log \Big( \frac{|x-y|}{|x|} \Big) v(y) \phi(y) dy\Big| \\
  \les   \int_{ |x| \leq |y|/2} [ \la y \ra^{0+} +  \log^{-} |x-y| ]   \la y \ra ^{-2-} |\phi(y)| dy \les \frac{1}{|x|} .
\end{multline} 
 
\end{proof}
\begin{rmk} \label{rem:loggood}Notice that the integral \eqref{Ox-1} can have faster decay for large $|x|$, provided that $|v(x)|$ has faster decay at infinity. In particular, for any $p\geq 1$ if $ \beta > p+2 $ , then one has the improved estimate
\begin{multline}
\Big| \int_{ |x| \leq 2|y|}  \log \Big( \frac{|x-y|}{|x|} \Big) v(y) \phi(y) dy \Big|  \les \int_{\R^4}  [\la y \ra^{0+} + \log^{-}|x-y|] |v(y) \phi(y)| dy \\  \les \frac{1}{|x|^p} \int_{\R^4} [\la y \ra^{0+} + \log^{-}|x-y|] \la y \ra ^{p-\beta-} |\phi(y)| dy \les \frac{1}{|x|^p} .
\end{multline} 

Recalling also $\psi \in L^{\infty}$, in the domain $B_{a}=\{(x,y): |x| \leq a |y| \}$ for $a  \gtrsim 1$ one has $\psi-c_0 \in L^{2}$ provided $\beta> 4$.

\end{rmk}

Define $S_2$ be the projection on the kernel of $ S_1TPTS_1$ then we have 

\begin{lemma} Let $|v(x)| \les \la x \ra ^{-3-}$. Then $\phi = Uv \psi \in S_2L^2 $ if and only if $\psi \in L^p $, for all $4<p \leq \infty$.
\end{lemma}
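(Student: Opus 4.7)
The plan is to leverage Lemma~\ref{lem:esa1} directly. For $\phi\in S_1L^2$ it provides the representation $\phi=Uv\psi$ with $\psi=c_0-G_1v\phi$, where $\psi\in L^\infty$ and $G_1v\phi\in L^p$ for every $p\in(4,\infty]$. I will translate the defining algebraic condition $S_1TPTS_1\phi=0$ for membership in $S_2L^2$ into a vanishing condition on the scalar $c_0$, and then translate $c_0=0$ into the $L^p$ integrability of $\psi$.

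First I would compute $T\phi$ for $\phi\in S_1L^2$. Using $T=U+vG_1v$, $U^2=I$ on $\mathrm{supp}\,V$, and the identity $\psi+G_1v\phi=c_0$, one obtains
\[
	T\phi = U(Uv\psi)+vG_1v\phi = v\psi+vG_1v\phi = c_0\,v.
\]
So $T\phi$ is a scalar multiple of $v$, which is consistent with $\phi\in S_1L^2\subset QL^2\cap\{g:QTg=0\}$. Next, using self-adjointness $T^*=T$, $P^*=P$, $S_1^*=S_1$ and $P^2=P$, factor
\[
	S_1TPTS_1 = (PTS_1)^*(PTS_1),
\]
so this operator is positive semi-definite and its kernel (in $S_1L^2$) agrees with $\ker(PTS_1)$. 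For $\phi\in S_1L^2$ one has $PTS_1\phi=P(c_0v)=c_0v$, which vanishes if and only if $c_0=0$. Hence $\phi\in S_2L^2$ iff $\phi\in S_1L^2$ and $c_0=0$.

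Finally I would translate $c_0=0$ into the $L^p$ statement for $\psi$. If $c_0=0$, then $\psi=-G_1v\phi\in L^p$ for every $p\in(4,\infty]$ by the second conclusion of Lemma~\ref{lem:esa1} (together with the pointwise bound $|G_1v\phi(x)|\lesssim |x|^{-1}$ established in its proof, which is integrable in $\mathbb R^4$ precisely for $p>4$). Conversely, if $\psi\in L^p$ for some finite $p\in(4,\infty)$, then since $G_1v\phi\in L^p$ too, the identity $c_0=\psi+G_1v\phi$ places the constant function $c_0$ in $L^p(\mathbb R^4)$, forcing $c_0=0$. Combining this with the previous paragraph yields the stated equivalence. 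The only mildly delicate point is the self-adjoint factorization argument identifying $\ker(S_1TPTS_1|_{S_1L^2})$ with $\ker(PTS_1|_{S_1L^2})$; everything else is a direct reading of Lemma~\ref{lem:esa1}.
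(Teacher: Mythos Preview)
Your proof is correct and follows essentially the same route as the paper's: both reduce the condition $\phi\in S_2L^2$ to $PT\phi=0$ via the positivity identity $\langle S_1TPTS_1\phi,\phi\rangle=\|PT\phi\|^2$, and then identify this with $c_0=0$, which in turn is equivalent to $\psi\in L^p$ for $4<p\leq\infty$ by Lemma~\ref{lem:esa1}. Your explicit computation $T\phi=c_0 v$ makes the link between $PT\phi=0$ and $c_0=0$ slightly more transparent than the paper's version (which simply recalls the formula $c_0=\|V\|_1^{-1}\langle v,T\phi\rangle$), but the substance is identical.
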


\begin{proof} It is enough to show that $c_0 =0$ in \eqref{eq:psi def} if and only if $\phi \in S_2 L^2 $. 
Taking $\phi \in S_2L^2$, we have
$$
S_1TPTS_1 \phi= 0 \Rightarrow 0= \la TPT \phi, \phi \ra = \| PT \phi \|^2=0. 
$$
This gives $c_0 =0 $ and $\psi \in L^p$ for $4<p \leq \infty$ by Lemma~\ref{lem:esa1}.

On the other hand, if $\psi \in L^p$ for all $p>4$, using \eqref{eq:psi def} then one must have $c_0 =0 $ and hence $PT\phi=0$. This gives $\phi \in S_2 L^2$. 
\end{proof} 

Let $S_3$ be the projection on the kernel of $ S_2 v G_2 v S_2 $  where $G_2(x,y) = |x-y|^2  $.

\begin{lemma} Let $|v(x)| \les \la x \ra ^{-3-}$. Assume that the function $ \psi= c +\Lambda= c+ \Lambda_1 + \Lambda_2+\Lambda_3$, with $\Lambda_1 \in L^{p_1}$ for some $ 4<p_1<\infty$, $\Lambda_2 \in L^{p_2}$ for some $ 2 < p_2 \leq 4$, $\Lambda_3 \in L^2$, solves $H\psi=0$ in the sense of distributions. Then $\phi= Uv\psi \in S_1L^2$, and we have $\psi = c - G_1 v \phi$, 
$c=\frac{1}{ \| V\|_{L^1}} \la v,T \phi \ra$. In particular, by the previous claim, $\psi -c \in L^p$ for any $p>4$.
\end{lemma}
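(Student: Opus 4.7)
The plan is to proceed in four steps. First, I would set $\phi := Uv\psi$ and verify $\phi \in L^2(\R^4)$: writing $v\psi = cv + v(\Lambda_1+\Lambda_2+\Lambda_3)$, H\"older's inequality gives $\|v\Lambda_i\|_2 \le \|v\|_{2p_i/(p_i-2)}\|\Lambda_i\|_{p_i}$, and the decay $|v|\les\la x\ra^{-3-}$ places $v$ in $L^q$ for every $q>4/3$, so each factor is finite, while $cv$ is clearly in $L^2$. The distributional equation $H\psi=0$ then rewrites as $(-\Delta)^2\psi = -V\psi = -v\phi$, and since $G_1$ is the distributional kernel of $(-\Delta)^{-2}$,
\[
	(-\Delta)^2(\psi + G_1 v\phi) = -v\phi + v\phi = 0.
\]

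Second, I would identify $\Psi := \psi + G_1 v\phi$ as a constant. Since $v\phi\in L^1(\R^4)$ by Cauchy--Schwarz, $G_1 v\phi$ grows at most logarithmically at infinity, while $\psi\in L^\infty+L^{p_1}+L^{p_2}+L^2$; hence $\Psi$ is a tempered distribution. On the Fourier side, $|\xi|^4\widehat\Psi=0$ forces $\widehat\Psi$ to be supported at the origin, so $\Psi$ is a polynomial; combined with at-most-logarithmic growth, $\Psi$ must be a constant, which I call $c_1$.

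Third, and this is the main obstacle, I would simultaneously deduce $\alpha := \la v,\phi\ra = 0$ and $c_1 = c$ from the identity $\Lambda + G_1 v\phi = c_1 - c$. Splitting $\log|x-y|=\log|x|+\log(|x-y|/|x|)$ gives the asymptotic
\[
	G_1 v\phi(x) = -\tfrac{\alpha}{8\pi^2}\log|x| + O(1) \quad \text{as } |x|\to\infty,
\]
which improves to $O(|x|^{-1})$ when $\alpha=0$ by the argument of Lemma~\ref{lem:esa1}. The measure-theoretic core is that for any $f \in L^{p_1}+L^{p_2}+L^2$ with $p_i<\infty$, Chebyshev's inequality gives $|\{|f|>M\}|<\infty$ for every $M>0$. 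If $\alpha \ne 0$, $\Lambda = (c_1-c)-G_1v\phi$ would exhibit $\log|x|$-type growth, so $\{|\Lambda|>M\}$ would contain the infinite-measure region $\{|x|>R\}$ for $R$ large enough; this forces $\alpha=0$. With $\alpha=0$, $G_1 v\phi \to 0$ at infinity, so $\Lambda(x)\to c_1-c$ uniformly, and the same measure argument forces $c_1=c$. Thus $\psi=c-G_1 v\phi$.

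Finally, $T\phi = U\phi + vG_1v\phi = v\psi + vG_1v\phi = cv$. The vanishing $\la v,\phi\ra=0$ gives $Q\phi=\phi$, so $QTQ\phi=Q(cv)=0$ because $Pv=v$; this places $\phi\in S_1L^2$. Pairing $T\phi=cv$ with $v$ in $L^2$ gives $\la v,T\phi\ra=c\|v\|_2^2=c\|V\|_1$, yielding the stated formula for $c$. The hypothesis $p_i<\infty$ for every $i$ is indispensable in the third step: an $L^\infty$ summand in $\Lambda$ could accommodate both logarithmic growth and nonzero constant limits at infinity.
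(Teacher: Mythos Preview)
Your proof is correct and takes a genuinely different route in the key step. The paper first proves $\langle v,\phi\rangle = 0$ directly by pairing the equation $(-\Delta)^2\psi = -V\psi$ against a dilated cutoff $\eta(\delta\,\cdot)$: since $(-\Delta)^2 c = 0$, this gives $\langle v\phi,\eta(\delta\cdot)\rangle = -\delta^4\langle \Lambda,[(-\Delta)^2\eta](\delta\cdot)\rangle$, which vanishes as $\delta\to 0$ by H\"older on each $\Lambda_i$. Only then does the paper invoke Liouville to identify $\psi + G_1v\phi$ as a constant. You reverse the order: you first show $\Psi = \psi + G_1v\phi$ is a polynomial via the Fourier characterization of biharmonic tempered distributions, and then run a Chebyshev/superlevel-set argument on $\Lambda$ to force the polynomial to be constant, $\alpha=0$, and $c_1=c$ in one stroke. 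The paper's scaling trick is slicker for the vanishing of $\alpha$, but its Liouville step is stated loosely (it asserts $\tilde\psi$ is ``bounded'', which is not literally true since $\Lambda\notin L^\infty$); your Fourier/polynomial reduction handles this more cleanly and makes explicit why each $p_i<\infty$ is needed. One small point: in your step~2 the phrase ``combined with at-most-logarithmic growth'' is imprecise, since only $\Psi-\Lambda$ has that growth --- the conclusion that the polynomial has degree zero already relies on the finite-superlevel-set property of $\Lambda$ that you spell out in step~3, so you may as well merge the two steps or flag the forward reference.
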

\begin{proof}
Let $\psi$ be in the form that is described with $H\psi=0$, or equivalently $-(-\Delta)^2 \psi = V \psi$. We first show that for $\phi= Uv\psi$ , one has 
$$
\int_{\R^4} v(x) \phi(x) dx =0
$$
 Let $\eta(x)$ be a smooth cutoff function with $\eta(x)=1$ for all $|x| \leq 1$, and take any $\delta>0$,
\begin{align*}
\la v \phi(\cdot) , \eta(\delta\cdot) \ra= \la  V\psi (\cdot),  \eta(\delta \cdot) \ra =- \la  (-\Delta)^2 \psi(\cdot),  \eta\eta(\delta \cdot) \ra =  - \delta^4 \la \Lambda (\cdot), [(-\Delta)^2 \eta] ( \delta \cdot)  \ra
\end{align*}
Where we used that $(-\Delta)^2\psi=(-\Delta)^2 \Lambda$.
Therefore, with $\eta_2=(-\Delta)^2\eta$,
\begin{multline*}
\big|\la v \phi , \eta \ra \big| \les \delta^{4-4/p_1} \|\Lambda_1\|_{L_{p_1}} \| \eta_2\|_{L_{p'_1}} \\ 
  +\delta^{4-4/p_2} \|\Lambda_1\|_{L_{p_2}} \| \eta_2 \|_{L_{p'_2}} +\delta^2 \|\Lambda_1\|_{L_2} \|\eta_2\|_{L_2}  \to 0 , \,\, as \,\,  \delta \to 0
  \end{multline*}
Hence by dominated convergence theorem we conclude $\la v ,\phi \ra=0$.

Moreover, let $ \tilde{\psi} = \psi+ G_1v\phi$, then by assumption $ \tilde{\psi}$ is bounded and $(-\Delta)^2 \tilde{\psi}=0$. By Liouville's theorem on $\R^n$, $ \tilde{\psi} =c$. Hence, $\psi= c- G_1v\phi$. Since we have,
$$
H \psi=[ (-\Delta)^2 +V] \psi = Vc+Uv(U+v G_1v)\phi \Rightarrow vc= (U+ v G_1v)\phi 
$$
one has $c= \frac{1}{ \| V\|_{L^1}} \la v,T \phi \ra$.
 Lastly notice that, 
$$ Q(U+vG_1v) Q \phi = Q(U+vG_1v) \phi = Q (U \phi + vG_1v \phi )= Q (U \phi -v \psi +cv) = 0, $$
hence $\phi \in S_1L^2$ as claimed.
\end{proof}
\begin{lemma}\label{lem:S_3} Let $|v(x)| \les \la x \ra ^{- 4-}$.  Then $\phi = Uv \psi \in S_3L^2 $ if and only if $\psi \in L^p $, for all $2<p \leq \infty$.

\end{lemma}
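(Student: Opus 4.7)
The plan is to characterize $\phi \in S_3 L^2$ by a moment condition on $v\phi$ and translate it into a sharper decay rate for $\psi$ at infinity, in direct analogy with the characterizations of $S_1 L^2$ and $S_2 L^2$ in the preceding lemmas.

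\textbf{Step 1 (moment characterization).} I would first show
\[
\phi \in S_3 L^2 \iff \phi \in S_2 L^2 \text{ and } \int_{\R^4} y\, v(y)\phi(y)\, dy = 0.
\]
Since $S_3$ is the Riesz projection onto $\ker(S_2 v G_2 v S_2)$, and $S_2 v G_2 v S_2$ is bounded and self-adjoint on $S_2 L^2$, its kernel equals the zero set of its quadratic form. For $\phi \in S_2 L^2 \subseteq Q L^2$ one has $\la v,\phi\ra = 0$, so expanding $|x-y|^2 = |x|^2 - 2 x\cdot y + |y|^2$ and using this orthogonality in both variables produces
\[
\la v G_2 v \phi,\phi\ra = c_2 \iint |x-y|^2 v(x)v(y)\phi(x)\overline{\phi(y)}\,dx\,dy = -2 c_2 \Bigl|\int y\, v(y)\phi(y)\, dy \Bigr|^2.
\]
The form is thus semidefinite, and vanishes precisely when the first moment vanishes.

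\textbf{Step 2 (forward direction).} Suppose $\phi \in S_3 L^2$. Since $\phi \in S_2 L^2$, the preceding lemma gives $\psi = -G_1 v\phi$ with $\psi \in L^p$ for every $p > 4$. Using both $\la v,\phi\ra = 0$ and $\int y v\phi = 0$, the Taylor expansion
\[
\log\frac{|x-y|}{|x|} = -\frac{x\cdot y}{|x|^2} + O\Bigl(\frac{|y|^2}{|x|^2}\Bigr),\qquad |x| > 4|y|,
\]
has both the constant and linear terms integrating to zero. This yields $|\psi(x)| \les |x|^{-2} \int |y|^2 |v(y)\phi(y)|\, dy$ in the region $|x| > 4|y|$; the remaining integral converges by Cauchy--Schwarz, since $|v|\les \la\cdot\ra^{-4-}$ implies $\|\la\cdot\ra^2 v\|_{L^2(\R^4)} < \infty$ and $\phi \in L^2$. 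On the complementary region $|x|\leq 4|y|$, Remark~\ref{rem:loggood} with $p = 2$ (permissible since $\beta > 4$) yields the same bound $|\psi(x)| \les |x|^{-2}$. Together with the local boundedness from Lemma~\ref{lem:esa1}, this gives $\psi(x) = O(\la x\ra^{-2})$ on $\R^4$, hence $\psi \in L^p$ for every $p > 2$.

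\textbf{Step 3 (converse).} Assume $\psi$ satisfies $H\psi = 0$ with $\psi \in L^p$ for every $p > 2$. In particular $\psi \in L^p$ for all $p > 4$, so the preceding lemma gives $\phi = Uv\psi \in S_2 L^2$ and $\psi = -G_1 v\phi$. Using only $\la v,\phi\ra = 0$, the Taylor expansion above produces
\[
\psi(x) = -\frac{1}{8\pi^2}\,\frac{x}{|x|^2}\cdot \int y\, v(y)\phi(y)\, dy + O(|x|^{-2}), \qquad |x|\to\infty.
\]
The error term belongs to $L^p(\R^4)$ for all $p > 2$, whereas the leading term has magnitude $\sim |x|^{-1}$ on a positive-measure cone and $|x|^{-1}\notin L^p(\R^4)$ at infinity for any $p \leq 4$. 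Testing against $p=3$, the leading coefficient must vanish, i.e.\ $\int y\, v(y)\phi(y)\, dy = 0$, so by Step~1 we have $\phi \in S_3 L^2$.

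The main technical obstacle is verifying that the decay hypothesis $|v|\les \la\cdot\ra^{-4-}$ is sufficient to push the dyadic/truncation argument of Lemma~\ref{lem:esa1} and Remark~\ref{rem:loggood} through at the exponent $p = 2$; the exponents are exactly borderline, with $\la\cdot\ra^2 v \in L^2(\R^4)$ only marginally. Once that endpoint is handled, the argument is a clean parallel to the characterizations of $S_1 L^2$ and $S_2 L^2$ already established.
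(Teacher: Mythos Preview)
Your proof is correct and follows essentially the same route as the paper: reduce membership in $S_3 L^2$ to the vanishing of the first moment $\int y\, v\phi\, dy$ via the quadratic form of $S_2 v G_2 v S_2$, then use the Taylor expansion of $\log(|x-y|/|x|)$ together with Remark~\ref{rem:loggood} to upgrade the decay of $\psi$ from $O(|x|^{-1})$ to $O(|x|^{-2})$, and run the converse by isolating the $|x|^{-1}$ leading term. Your worry about the endpoint is unfounded: with $|v|\les \la\cdot\ra^{-4-}$ one has $\la\cdot\ra^{2+0+}v \in L^2(\R^4)$ strictly (since $\la\cdot\ra^{-2-}\in L^2(\R^4)$), so the $p=2$ case of Remark~\ref{rem:loggood} goes through with room to spare.
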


\begin{proof} First we show that if $\phi \in S_3L^2 $ then $\psi \in L^p $, for all $2<p \leq \infty$.  Let, $\phi \in S_3L^2 $ then 
$$
S_2 v G_2  v \phi=0 \Rightarrow \int_{\R^8} \phi(x) v(x) [|x|^2- 2 x\cdot y + |y|^2] v(y) \phi(y) dx dy=0 .
 $$
Using $S_3\leq Q$, we have $\la v, \phi \ra =0$, and
\begin{align*}
\int_{\R^8} \phi(x) v(x) [|x|^2- 2 x \cdot y + |y|^2] v(y) \phi(y) dx dy= -2 \Big[\int_{\R^4} y v(y) \phi(y) dy \Big]^2 
\end{align*} 
which gives 
$$\int_{\R^4} y v(y) \phi(y) dy=0 .$$
Using this, \eqref{eq:psi def} and noting $c_0 =0$, one has 
\begin{align*} 
\psi(x) = c \int_{\R^4} \Bigg( \log \Big( \frac{|x-y|^2}{|x|^2} \Big) + 2 \frac{ x \cdot y}{|x|^2} \Bigg) v(y) \phi(y) dy
\end{align*}
We show that $\psi(x)= O(|x|^{-2})$ for $|x| >10$.
As in the proof of Lemma~\ref{lem:esa1}, first assume $|x| > 4 |y| $. Then we have $|y|^2 + 2|x \cdot y| < |x|^2$ and 
\begin{align}\label{log ext}
\log \Big( \frac{|x-y|^2}{|x|^2} \Big) + 2 \frac{ x \cdot y}{|x|^2} 
  & = \log \Big(1+ \frac{|x-y|^2-|x|^2}{|x|^2} \Big)+ 2 \frac{ x \cdot y}{|x|^2}  \\ \nn
   &= \frac{|y|^2}{|x|^2}+ O \Bigg( \frac {|y|^{2+}}{|x|^{2+}} \Bigg). 
\end{align}
Therefore,
\begin{align} \label{logbad}
\int_{ |x| > 4 |y|} \log \Big( \frac{|x-y|}{|x|} \Big)v(y) \phi(y) dy \les \frac{1}{|x|^2} \int_{\R^4} \la y \ra ^{-2-} |\phi(y)| \les \frac{1}{|x|^2}. 
\end{align} 
Second, if $|x| \leq 4 |y| $ one has $|x \cdot y| \les |y|^2$ and 
\begin{align} \label{boundxy}
\int_{ |x| \leq 4 |y|} 2 \frac{ x \cdot y}{|x|^2} v(y) \phi(y) \les \frac{1}{|x|^2}  \int_{\R^4} |y|^2 |v(y) \phi(y)| dy \les \frac{1}{|x|^2}. 
\end{align}
Remark~\ref{rem:loggood} will take care of the logarithmic term in \eqref{log ext}.

For the converse, we assume $\psi \in L^p $, for all $p>2$ and $\phi = Uv \psi$, and show 
\begin{align} \label{xperp}
 \int_{\R^4} y v(y) \phi(y) dy=0 .
\end{align}
Note that if  $\psi \in L^{2+}$ then, from \eqref{eq:psi def}, $c_0=0$ and
\begin{multline*}
 \int_{ \R^4 }  \log \Big( \frac{|x-y|^2}{|x|^2} \Big) v(y) \phi(y) dy \\ 
 = \int_{ |x| > 4 |y| }  \log \Big( \frac{|x-y|^2}{|x|^2} \Big) v(y) \phi(y) dy  + \int_{ |x| \leq 4 |y| }  \log \Big( \frac{|x-y|^2}{|x|^2} \Big) v(y) \phi(y) dy\in L^{2+}
\end{multline*} 
Note that, by Remark~\ref{rem:loggood} the second term is in $L^{2}\cap L^\infty$. Furthermore, using \eqref{log ext} and \eqref{logbad}, we have
$$
\int_{ |x| > 4 |y| }  \log \Big( \frac{|x-y|^2}{|x|^2} \Big) v(y) \phi(y) dy =- \int_{ |x| > 4 |y| } 2 \frac{ x \cdot y}{|x|^2}  v(y) \phi(y) dy + O_{L^{2+}} (1)
$$
Therefore, using also  \eqref{boundxy}, and noting that 
$$
- \int_{ |x| < 4 |y| } 2 \frac{ x \cdot y}{|x|^2}  v(y) \phi(y) dy	\les \frac{1}{\la x\ra^{2+}} \in L^{2+},
$$
one has 
$$
\psi= -2  \frac{ x}{\la x \ra ^2} \int_{\R^4} y v(y) \phi(y) dy + O_{L^{2+}} (1)
$$
and hence \eqref{xperp} holds and $\phi \in S_3L^2$.

\end{proof}

Define $S_4$ the projection on to the kernel of $S_3vG_4 vS_3$. 

\begin{lemma}  \label{invertible}Let $|v(x)| \les \la x \ra ^{- 4-}$. Then the kernel of the operator $S_4 v G_5 v S_4$ on $S_4L^2$ is trivial. 
\end{lemma}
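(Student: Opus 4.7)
The plan is to exploit that any $\phi \in S_4 L^2$ has the form $\phi = Uv\psi$ with $\psi \in L^2(\R^4)$ a distributional solution of $H\psi = 0$, and to reduce the quadratic form $\la \phi, v G_5 v\phi\ra$ to a nonzero scalar multiple of $\|\psi\|_{L^2}^2$ by a distributional integration by parts. Collecting from the chain of Lemmas~\ref{lem:esa1}--\ref{lem:S_3} together with the definition of $S_4$ as the Riesz projection onto $\ker(S_3 v G_4 v S_3)$, one has $\phi = Uv\psi$, $\psi = -G_1 v\phi \in L^2(\R^4)$, and $v\phi = -(-\Delta)^2\psi$ in the sense of distributions, together with the full set of vanishing moments $\int v\phi\,dy = \int y_j v\phi\,dy = \int y_i y_j v\phi\,dy = 0$ for all $1\le i,j\le 4$. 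The last family comes from expanding $|x-y|^4$ in $\la \phi, v G_4 v\phi\ra = 0$ and using the $S_3$-moments: the only surviving contributions are $2M^2 + 4\sum_{i,j} A_{ij}^2$ with $M := \int |y|^2 v\phi$ and $A_{ij} := \int y_i y_j v\phi$, so $A_{ij}=0$ for all $i,j$.

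Next, a direct radial computation in $\R^4$ using $\Delta f = f'' + 3 r^{-1} f'$ yields $(-\Delta)^2(r^4\log r) = 192\log r + 224$, so that pointwise for $x\ne y$,
\[
 (-\Delta)^2_x G_5(x,y) = c_5\bigl(192\log|x-y| + 224\bigr) = -1536\pi^2\, c_5\, G_1(x,y) + 224\,c_5,
\]
with $c_5 \ne 0$ read off from the $\lambda^6$-coefficient of the Bessel expansion underlying \eqref{resolv expansion4}. Applied to $v\phi$, using $\int v\phi = 0$ and $G_1 v\phi = -\psi$, this gives the pointwise identity $(-\Delta)^2(G_5 v\phi) = C\psi$ with $C := 1536\pi^2 c_5 \ne 0$, and the core of the argument is the formal identity
\[
 \la \phi, v G_5 v\phi\ra = \la v\phi, G_5 v\phi\ra = -\la (-\Delta)^2\psi, G_5 v\phi\ra = -\la \psi, (-\Delta)^2 G_5 v\phi\ra = -C\,\|\psi\|_{L^2}^2,
\]
which forces $\psi = 0$, hence $\phi = Uv\psi = 0$, whenever the left-hand side vanishes.

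The main obstacle is justifying the integration by parts in the third equality, since $G_5 v\phi$ does not decay at infinity. From $\phi = Uv\psi$ with $v\lesssim \la y\ra^{-4-}$ and $\psi \in L^\infty$ satisfying $\psi(y) = O(\la y\ra^{-3})$ (obtained by revisiting the pointwise argument of Lemma~\ref{lem:S_3} with the stronger $S_4$-cancellation $A_{ij}=0$), one has the fast pointwise bound $v\phi(y) = O(\la y\ra^{-11-})$. A Taylor expansion of $|x-y|^4\log|x-y|$ in $y$ around the origin that absorbs the three vanishing moments then yields the controlled growth $\partial_x^\beta(G_5 v\phi)(x) = O(|x|^{1-|\beta|}\log|x|)$ for $0\le|\beta|\le 3$, while differentiating the $\log$-representation $\psi = -G_1 v\phi$ gives $\partial^\alpha\psi(x) = O(\la x\ra^{-3-|\alpha|})$ for $|\alpha|\le 4$. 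Inserting a smooth cutoff $\eta(x/R)$ in the pairing and expanding the four successive integration-by-parts boundary terms, each product $(\partial^j\psi)(\partial^{3-j}G_5 v\phi)$ on $\partial B_R$ is $O(R^{-3-j}\cdot R^{j-2}\log R) = O(R^{-5}\log R)$; multiplied by the $R^3$ surface measure these contributions vanish as $R\to\infty$, and the identity above is rigorously justified.
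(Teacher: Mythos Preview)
Your argument is correct and takes a genuinely different route from the paper's. The paper expresses $\la G_5 v\phi, v\phi\ra$ spectrally: using the expansion \eqref{eq:R0low} together with the vanishing moments coming from $S_4 \le S_3 \le Q$, it writes
\[
 \la G_5 v\phi, v\phi\ra \;=\; \lim_{\lambda\to 0^+}\Big\la \frac{R^+(\lambda^4)-G_1}{\lambda^4}\,v\phi,\,v\phi\Big\ra,
\]
passes to the Fourier side, and evaluates this as a negative multiple of $\int_{\R^4}|\widehat{v\phi}(\xi)|^2\,|\xi|^{-4}\,d\xi$; vanishing then forces $v\phi=0$, hence $\phi=0$. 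Your approach instead computes $(-\Delta)^2(r^4\log r)$ directly, uses $v\phi = -(-\Delta)^2\psi$, and integrates by parts to obtain $\la v G_5 v\phi,\phi\ra = -C\|\psi\|_{L^2}^2$. The two identities are of course equivalent (since $\widehat{G_1 v\phi}(\xi)$ is a constant times $|\xi|^{-2}\widehat{v\phi}(\xi)$ and $\psi = -G_1 v\phi$), but the derivations are independent: the paper's is cleaner once the resolvent machinery is in place and requires no decay bookkeeping, while yours is more self-contained (no Fourier, no resolvent limit) at the cost of the careful pointwise estimates you carry out to justify the integration by parts. Both confirm the definiteness of the quadratic form.

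Two small remarks on your write-up. First, the bound $\psi(y)=O(\la y\ra^{-3})$ under the hypothesis $|v|\lesssim\la x\ra^{-4-}$ does hold, but it uses the bootstrap $\psi\in L^\infty \Rightarrow |v\phi|=|V\psi|\lesssim \la y\ra^{-8-}$ to make $\int |y|^3|v\phi|\,dy$ finite; it would help the reader to flag this step explicitly rather than simply ``revisiting Lemma~\ref{lem:S_3}''. Second, the boundary analysis only needs $\partial^\alpha\psi$ for $|\alpha|\le 3$ (Green's identity for $(-\Delta)^2$ involves at most third derivatives of each factor), so you may safely drop the $|\alpha|=4$ claim, which would otherwise run into the local nonintegrability of $\partial^4\log|x|$ in $\R^4$.
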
 
\begin{proof} Take $f$ to be in the kernel of $S_4 v G_5 v S_4$ on $S_4L^2$ and recall the expansion \eqref{eq:R0low};
$$
R^{+} ( \lambda^4)= \tilde g_1^{+}(\lambda)+G_1(x,y)
+\alpha^{+}_1\lambda^2 G_2(x,y)
+\tilde g^{+}_3(\lambda) G_4(x,y) + \lambda^{4} G_5 (x,y)+ \widetilde O_2((\lambda |x-y|)^{6-}).
$$ 
Notice that since $f \in S_4L^2$ one has $0 = \la v,f \ra =\la G_4 v f, vf \ra $, and therefore 
\begin{align} \label{G5toG0}
0&= \la S_4 v G_5 v f, f \ra = \la G_5 v f, vf \ra \\
&=  \lim_{\lambda \rar 0}  \Big\la \frac{R^{+} ( \lambda^4)- \tilde{g}_1^{+}(\lambda) - G_1 - c^+ \lambda^2 G_2- \tilde{g}_3^{+}(\lambda)G_4}{\lambda^4} vf, vf \Big\ra 
 = \lim_{\lambda \rar 0} \Big\la \frac{R^{+} ( \lambda^4)- G_1}{\lambda^4} vf, vf \Big\ra \nn.
\end{align}
Further, one has 
\begin{align} \label{G5toG0lim}
	\lim_{\lambda \rar 0^+} \Big\la \frac{R^{+} ( \lambda^4)- G_1}{\lambda^4} vf, vf \Big\ra= \lim_{\lambda \to 0^+} \frac{1}{\lambda^4} \Big\la \Big( \frac{1}{ 8 \pi^2 \xi^2 + \lambda^4} - \frac{1}{ 8 \pi^2 \xi^2} \Big) \widehat{vf}(\xi), \widehat{vf}(\xi) \Big\ra \\ 
	= \lim_{\lambda \to 0^+} \int_{\R^4} \frac{-1}{(8 \pi^2 \xi^2 + \lambda^4) 8 \pi^2 \xi^2} |\widehat{vf}(\xi)| d \xi = \frac{-1}{ (4 \pi)^4} \int_{\R^4} \frac{|\widehat{vf}(\xi)| }{ \xi^4} d \xi =0 \nn. 
\end{align}
Note that this gives $vf=0$ since $vf \in L^1$ and hence $f=0$. This establishes the invertibility of  $S_4 v G_5 v S_4$ on $S_4L^2$.
\end{proof}
\begin{rmk} \label{G5toG0inuse}
Notice that, \eqref{G5toG0} and \eqref{G5toG0lim} imply that for any $\phi \in S_4$ one has 
\begin{align*}
\la S_4 v G_5 v \phi, \phi \ra = \frac{1}{ (4 \pi)^4} \int_{\R^4} \la \frac{|\widehat{v\phi}(\xi)| }{ \xi^2},  \frac{|\widehat{v \phi}(\xi)| }{ \xi^2} \ra= \la G_1v \phi, G_1v \phi \ra
\end{align*}
provided $|v(x)| \les \la x \ra ^{- 4-}$.
\end{rmk}

\begin{lemma}Let $|v(x)| \les \la x \ra ^{- 4-}$, $\phi = Uv \psi \in S_4L^2 $ if and only if $\psi \in L^2 $. 
\end{lemma}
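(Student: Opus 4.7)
The plan is to prove both directions by combining the preceding lemmas with a Fourier-analytic identification based on Remark~\ref{G5toG0inuse}.

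For the forward direction, suppose $\phi \in S_4 L^2$. Since $S_4 \leq S_3$, the $S_3$ characterization gives $\psi = -G_1 v\phi$ together with the orthogonalities $\la v,\phi\ra = 0$ and $\la x_i v,\phi\ra = 0$ for each $i$. Remark~\ref{G5toG0inuse} asserts that $\|\psi\|_{L^2}^2 = \la G_1 v\phi, G_1 v\phi\ra = \la S_4 v G_5 v\phi, \phi\ra$, and the last quantity is finite because $S_4 v G_5 v S_4$ is a well-defined operator on the finite-dimensional space $S_4 L^2$. Hence $\psi \in L^2$.

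For the converse, assume $\psi \in L^2$ solves $H\psi = 0$. Applying the preceding lemma with $\Lambda_3 = \psi$ and $\Lambda_1 = \Lambda_2 = 0$ yields $\phi \in S_1 L^2$ and $\psi = c - G_1 v\phi$; since a nonzero constant is not in $L^2(\R^4)$ while $G_1 v\phi$ is bounded by Lemma~\ref{lem:esa1}, we must have $c = 0$, so $\psi = -G_1 v\phi$. The same lemma also gives $\psi \in L^\infty$, whence $\psi \in L^p$ for every $p \in [2,\infty]$. The characterizations of $S_2$ and $S_3$ (the latter being Lemma~\ref{lem:S_3}) then promote $\phi$ to $S_3 L^2$, yielding the vanishing moments $\int v\phi = \int x_i v\phi = 0$.

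To conclude, it remains to verify $\la G_4 v\phi, v\phi\ra = 0$ (which is equivalent to $\phi \in S_4 L^2$ by self-adjointness and semidefiniteness of $S_3 v G_4 v S_3$ on $S_3 L^2$). Expanding $|x-y|^4$ and cancelling the terms that contain the already-vanishing moments reduces this quadratic form to
\[
\la G_4 v\phi, v\phi\ra \;=\; c_4\Big[2\,(\mathrm{tr}\,M)^2 + 4\sum_{i,j} M_{ij}^2\Big],\qquad M_{ij}:=\int_{\R^4} x_i x_j\, v(x)\phi(x)\,dx,
\]
so it suffices to show $M=0$. I would now invoke Plancherel together with the symbol $|\xi|^{-4}$ of $G_1 = (-\Delta)^{-2}$: the hypothesis $\psi = -G_1 v\phi \in L^2$ becomes $\int_{\R^4} |\widehat{v\phi}(\xi)|^2/|\xi|^8\, d\xi < \infty$. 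The decay $|v(x)| \les \la x\ra^{-4-}$ together with the two vanishing moments of $v\phi$ yield the Taylor expansion $\widehat{v\phi}(\xi) = -\tfrac12 \xi^T M \xi + O(|\xi|^3)$ at the origin; in polar coordinates the leading contribution of the integrand near $\xi = 0$ is of the form $\int_0^\delta r^{-1}\,dr$ times $\int_{S^3} |\omega^T M \omega|^2\,d\omega$, and the logarithmic radial divergence forces $\omega^T M \omega \equiv 0$ on $S^3$, hence $M=0$ by symmetry. The main obstacle is precisely this final Fourier step: one must justify the Taylor expansion of $\widehat{v\phi}$ under the borderline decay $|v| \les \la x\ra^{-4-}$, and then extract the full matrix identity $M = 0$ from the logarithmic divergence rather than merely a weaker scalar condition such as $\mathrm{tr}\,M = 0$.
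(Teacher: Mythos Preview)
Your approach is sound and genuinely different from the paper's; both concerns you flag dissolve. Under $|v|\les\la x\ra^{-4-}$ one has $\la x\ra^{2}v\in L^2(\R^4)$, hence $|x|^2 v\phi\in L^1$ and $\widehat{v\phi}\in C^2$, so Taylor with Peano remainder gives $\widehat{v\phi}(\xi)=-\tfrac12\xi^TM\xi+o(|\xi|^2)$. You do not get $O(|\xi|^3)$, but $o(|\xi|^2)$ suffices: if $M\neq0$ pick $\omega_0\in S^3$ with $\omega_0^TM\omega_0\neq0$; on a small cone $C$ around $\omega_0$ one has $|\widehat{v\phi}(\xi)|\geq \tfrac{c}{4}|\xi|^2$ for small $|\xi|$, and then $\int_{C\cap\{|\xi|<\delta\}}|\widehat{v\phi}|^2|\xi|^{-8}\,d\xi=\infty$, contradicting $\psi\in L^2$. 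This cone argument yields the pointwise vanishing $\omega^TM\omega\equiv0$ on $S^3$, which for a (complex) symmetric matrix forces $M=0$, not merely $\mathrm{tr}\,M=0$. Your forward direction via Remark~\ref{G5toG0inuse} is legitimate once one accepts that $S_4vG_5vS_4$ is a well-defined operator on $S_4L^2$, which the paper already assumes in Lemma~\ref{invertible} under the same decay hypothesis.

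The paper argues entirely in physical space. For the forward implication it first reads off the vanishing second moments $\int y_iy_j v\phi=0$ directly from $\la G_4v\phi,v\phi\ra=0$ by expanding $|x-y|^4$ (the surviving terms are $2|x|^2|y|^2+4(x\cdot y)^2$, giving a sum of squares of the $M_{ij}$), and then shows $\psi(x)=O(|x|^{-2-})$ at infinity by expanding $\log(|x-y|^2/|x|^2)+2x\cdot y/|x|^2-|y|^2/|x|^2$. For the converse it expands $\log(|x-y|^4/|x|^4)$ to write $\psi$ as an explicit finite sum of non-$L^2$ homogeneous functions of the form $|x|^{-2}$ and $x_ix_j|x|^{-4}$ with coefficients $M_{ij}$, plus an $L^2$ remainder; $\psi\in L^2$ then forces $M=0$. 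Your route is more structural and reuses the Fourier identity already proved, giving a shorter forward direction; the paper's route is elementary and self-contained but requires tracking the spatial expansions carefully.
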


\begin{proof}
Assume for now that $\phi \in S_3L^2$ and  
\begin{align} \label{x^2perp}
\int_{\R^4} |y|^2 v(y) \phi(y) =0
\end{align}
then we have 
\begin{align*} 
\psi(x) = c \int_{\R^4} \Bigg( \log \Big( \frac{|x-y|^2}{|x|^2} \Big) + 2 \frac{ x \cdot y}{|x|^2}- \frac{ |y|^2}{|x|^2}  \Bigg) v(y) \phi(y) dy
\end{align*}
Note that $\psi \in L^2$, by  \eqref{log ext} in the domain $|x|>4|y|$  and by Remark~\ref{rem:loggood} in the domain $|x|<4|y|$ . Therefore, to prove the only if part of the statement it is enough to show that \eqref{x^2perp} holds if $\phi \in S_4L^2 $. 

First recall the definition of $G_5$  and notice that
\begin{align}
\label{exp4-1} |x-y|^4= |x|^4 + |y|^4 - 4x \cdot y |y|^2 - 4 y \cdot x |x|^2 + 2 |x|^2 |y|^2 + 4 ( x \cdot y )^2.  
\end{align}
Next recall  that $\phi \in S_4L^2 \leq S_3L^2 \leq S_2 L^2$. Hence, using the expansion \eqref{exp4-1} in $S_4vG_5vS_4$ we see all but the final two terms contribute zero.
For the contribution of $G_{5,1}(x,y):=2 |x|^2 |y|^2$, we note
\begin{multline*}
	\la S_4 v G_{5,1}v \phi, \phi \ra
	=2\int_{\R^4} \int_{\R^4} v(x)v(y)|x|^2|y|^2 \phi(x) \overline{\phi(y)} \, dx\, dy\\
	=2\int_{\R^4} |y|^2 v(y)\overline{\phi(y)} \int_{\R^4} v(x) |x|^2 \phi(x)  \, dx\, dy=2 \bigg|\int |y|^2 v(y) \phi(y)\, dy \bigg|^2.
\end{multline*}
Here, we note that $v$ and $|y|$ are all real-valued, while $\phi$ is complex valued.  Since $y_j$ is real, a similiar argument applies and
we obtain
$$
	0= \la S_4 v G_5v \phi, \phi \ra =2 \bigg|\int |y|^2 v(y) \phi(y)\, dy \bigg|^2
+ 4 \sum_{i,j=1}^4 \bigg| \int y_i y_j v(y) \phi(y)\, dy \bigg| ^2.
$$
Since, both quantities are non-negative,   they both must be zero.    Hence,
$$
\int_{\R^4} y_j y_i v(y) \phi(y) =0, \,\,\ 1\leq i,j \leq 4,
$$
and hence \eqref{x^2perp} holds. For the reverse implication, notice that we need to show 
\begin{multline*}
 \int_{ \R^4 }  \log \Big( \frac{|x-y|^4}{|x|^4} \Big) v(y) \phi(y) dy \\ 
 = \int_{ |x| > 32 |y| }  \log \Big( \frac{|x-y|^4}{|x|^4} \Big) v(y) \phi(y) dy  + \int_{ |x| \leq 32 |y| }  \log \Big( \frac{|x-y|^4}{|x|^4} \Big) v(y) \phi(y) dy\in L^{2}
\end{multline*} 
By Remark~\ref{rem:loggood} the second integral is in $L^2$, Therefore, the first integral should also be in $L^2$. Using \eqref{exp4-1}  in the domain of $|x| > 32 |y|$  we have
\begin{align*}
  \log \Big( \frac{|x-y|^4}{|x|^4} \Big) =  - 4 \frac{y\cdot x }{|x|^2} + 2 \frac{|y|^2}{|x|^2} + 4 \frac{ (x \cdot y)^2}{|x|^4} +  O \Bigg( \frac {|y|^{2+}}{|x|^{2+}} \Bigg)
\end{align*}
Therefore, one has 
\begin{multline} \label{psi4}
\psi(x)= -4  \frac{ x}{|x|^2} \int_{\R^4} y v(y) \phi(y) dy + 6  \frac{ 1}{|x|^2} \int_{\R^4} |y|^2 v(y) \phi(y) dy \\ 
+ 2\sum_{\substack{i,j=1 \\ i>j}}^4 \frac{ x_i x_j}{|x|^4} \int_{\R^4} y_i y_j   v(y) \phi(y) dy + O_{L^2}(1) 
\end{multline}
Notice that to conclude \eqref{psi4}, one has to make sure  the integrals in \eqref{psi4} are in $L^2$ in the domain $|x| \leq 32 |y|$. But this is true since if $|x| \leq 32 |y|$ then $|x \cdot y | \les |y|^2 \les |y|^{2+}/ |x|^{0+} $. 

By Lemma~\ref{lem:S_3}, the first integral is zero and hence $\psi \in L^2$ if  
$$ \int_{\R^4} y_j y_i v(y) \phi(y) =0, \,\,\ 1\leq i,j \leq 4 $$
which corresponds $\phi \in S_4 L^2$. 
\end{proof}

\begin{lemma} The operator $G_1 v S_4 [ S_4 v G_5 v S_4]^{-1} S_4 v G_1$  is the orthogonal projection on $L^2$ onto the zero energy eigenspace of $H = (-\Delta)^2 + V$. 
\end{lemma}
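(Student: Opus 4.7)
The plan is to recognize this operator as the canonical orthogonal projection formula $W(W^*W)^{-1}W^*$ for a suitable $W$. Let $W\colon S_4 L^2 \to L^2$ be the bounded operator $W\phi := G_1 v\phi$ (bounded since $G_1 v\phi \in L^2$ for $\phi \in S_4 L^2$ by the characterization of $S_4$), and denote its Hilbert space adjoint by $W^*\colon L^2 \to S_4 L^2$. Because $v$ is real and $G_1$ is real and symmetric, the quadratic form of $W^*W$ on $S_4 L^2$ is $\la W^*W\phi,\phi\ra = \la G_1 v\phi, G_1 v\phi\ra$. By Remark~\ref{G5toG0inuse}, for every $\phi \in S_4 L^2$,
\[
	\la S_4 v G_5 v S_4\,\phi, \phi\ra \;=\; \la G_1 v \phi, G_1 v \phi\ra \;=\; \la W^* W \phi, \phi\ra,
\]
so polarization on the finite-dimensional space $S_4 L^2$ yields the operator identity $S_4 v G_5 v S_4 = W^* W$. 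By Lemma~\ref{invertible} this operator is invertible on $S_4 L^2$; in particular $W$ is injective and the operator of the statement is
\[
	P \;=\; W(W^*W)^{-1}W^*.
\]

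Next I observe that $W(W^*W)^{-1}W^*$ is the standard formula for the orthogonal projection onto $\mathrm{Ran}(W)$: self-adjointness is immediate, $P^2 = W(W^*W)^{-1}(W^*W)(W^*W)^{-1}W^* = P$, $Pf = f$ for $f = W\phi \in \mathrm{Ran}(W)$, and $Pf = 0$ whenever $f \perp \mathrm{Ran}(W)$ (since then $W^*f = 0$). Hence $P$ is the orthogonal projection of $L^2$ onto $\mathrm{Ran}(W) = \{G_1 v\phi : \phi \in S_4 L^2\}$.

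Finally I identify $\mathrm{Ran}(W)$ with the zero-energy eigenspace of $H$. By the preceding characterization of $S_4 L^2$, a function $\psi \in L^2$ satisfies $H\psi = 0$ if and only if $\phi := Uv\psi \in S_4 L^2$, in which case $S_4 L^2 \subseteq S_2 L^2$ forces the constant $c_0$ in \eqref{eq:psi def} to vanish, so $\psi = -G_1 v\phi = -W\phi \in \mathrm{Ran}(W)$. Conversely, for every $\phi \in S_4 L^2$ the function $-W\phi = -G_1 v\phi$ lies in $L^2$ (by the $S_4$ characterization) and solves $H\psi = 0$ by Lemma~\ref{lem:esa1} with $c_0 = 0$. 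Thus $\mathrm{Ran}(W)$ is exactly the zero-energy eigenspace, completing the proof. The only non-routine step is invoking Remark~\ref{G5toG0inuse} to identify $S_4 v G_5 v S_4$ with $(G_1 v)^*(G_1 v)$ on $S_4 L^2$; everything else is the standard projection-formula argument on a finite-dimensional subspace.
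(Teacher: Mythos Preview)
Your proof is correct and rests on the same key ingredient as the paper's: Remark~\ref{G5toG0inuse} identifying $S_4 v G_5 v S_4$ with the Gram operator of $G_1 v$ on $S_4 L^2$. The paper carries this out by choosing an orthonormal basis $\{\phi_k\}$ of $S_4 L^2$, writing the matrix $A_{ij}=\la S_4 vG_5 v\phi_i,\phi_j\ra=\la G_1 v\phi_i,G_1 v\phi_j\ra$, and checking $P_e\psi_k=\psi_k$ by a direct matrix computation; you instead recognize the operator as $W(W^*W)^{-1}W^*$ and invoke the standard projection formula. The two arguments are equivalent (your polarization step is exactly what produces the paper's off-diagonal identity $A_{ij}=a_{i,j}$), but your packaging is a bit cleaner: you verify self-adjointness, idempotence, and vanishing on $\mathrm{Ran}(W)^\perp$ explicitly, whereas the paper's proof only checks $P_e\psi_k=\psi_k$ and leaves the remaining projection properties implicit.
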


\begin{proof}
Let $ \{ \phi_k \}_{k=1}^N $ be the orthonormal basis of $S_4L^2$, then $S_4 f = \sum_{j=1}^N \phi_j \la f,  \phi_j \ra $. Moreover, for all $\phi_k$, one has $ \psi_k = - G_1 v \phi_k $ are linearly independent for each $k$ and $\psi_k \in L^2$. We will show that $P_e \psi_k := G_1 v S_4 [ S_4 v G_5 v S_4]^{-1} S_4 v G_1 \psi_k = \psi_k$ for all $1\leq k \leq N$. 

First notice that, by the representation of $S_4$, one has 
$$
S_4 v G_1 \psi_k = \sum^N_{j=1} \phi_j \la v G_1 \psi_k,  \phi_j \ra = \sum^N_{j=1} \phi_j \la \psi_k,  \psi_j \ra =: \sum^N_{j=1} \phi_j a_{k,j}
$$
Let $\{A_{ij}\}_{i,j=1}^N$ be the matrix that represents the kernel of $S_4 v G_5 v S_4$, then by Remark~\ref{G5toG0inuse}
$$
A_{ij}(x,y) =  \la S_4 v G_5 v \phi_i, \phi_j \ra \phi_i(x) \phi_j(y)= \la  G_1 v \phi_i,  G_1 v \phi_j \ra \phi_i(x) \phi_j(y)= a_{i,j} \phi_i(x) \phi_j(y).
$$
Hence, one has 
\begin{align*}
P_e \psi_k &= - \sum^N_{j=1} G_1 v S_4 [ S_4 v G_5 v S_4]^{-1} \phi_j  a_{k,j} \\ &
 = \sum^N_{i,j=1} G_1 v S_4 (a^{-1})_{i,j}  \phi_i a_{k,j} 
=  \sum^N_{i,j=1} \psi_i (a^{-1})_{i,j}  a_{j,k}= \psi_k
\end{align*}

This finishes the proof.

\end{proof}

\begin{rmk}
	
	One consequence of the preceding results is that any zero-energy resonance function is of the form:
	$$
		\psi(x)= c_0+ c_1 \frac{x_1}{\la x \ra ^{2}}+ c_2 \frac{x_2}{\la x \ra ^{2}} +c_3 \frac{x_3}{\la x \ra ^{2}}  + c_3 \frac{x_4}{\la x \ra ^{2}} + \sum_{i,j=1}^4 c_{ij} \frac{x_i x_j}{\la x \ra^{4}} + O_{L^2}(1)
	$$
	For some constants $c_0,c_1, c_2,c_3,c_4$ and $c_{ij}$, $1\leq i,j,\leq 4$.
	Hence, the resonance space is 15 dimensional along with the finite-dimensional eigenspace. Moreover, $S_1-S_2 $ is one dimensional, $S_2-S_3$ is four dimensional, $S_3-S_4$ is 10 dimensional. 
	
\end{rmk}


\begin{thebibliography}{9}
   

\bibitem{AS}
Abramowitz, M.  and  Stegun, I.~A. \emph{Handbook of mathematical functions with
formulas, graphs, and mathematical tables.} National Bureau of Standards
Applied Mathematics Series, 55. For sale by the Superintendent of Documents,
U.S. Government Printing Office, Washington, D.C. 1964


\bibitem{agmon} Agmon, S. {\em Spectral properties of Schr\"odinger
operators and scattering theory.}
Ann.\ Scuola Norm.\ Sup.\ Pisa Cl.\ Sci. (4) 2 (1975), no.~2, 151--218.

 \bibitem{BKS} Ben-Artzi, M. Koch, H. and Saut,  J. \emph{Dispersion estimates for fourth order Schr\"odinger equations}, C. R. Acad. Sci. Paris S\'er. I Math., 330 (2000), No.1, 87--92.

\bibitem{BN} Ben-Artzi, M., and Nemirovsky, J.
\emph{Remarks on relativistic Schr\"odinger operators and their extensions.}
Ann. Inst. H. Poincar\'e Phys. Th\'eor. 67 (1997), no. 1, 29?-39. 
 



 
\bibitem{DDY} Deng, Q., Ding, Y., and Yao, X. \emph{ Maximal and Minimal Forms for Generalized Schr\"odinger Operators.} Indiana University Mathematics Journal, 63(3), 727-738.  


\bibitem{Dinh} Dinh, V.~D. \emph{On well-posedness, regularity and ill-posedness for the nonlinear fourth-order Schr\"odinger equation.} Bull. Belg. Math. Soc. Simon Stevin 25 (2018), no. 3, 415--437. 



\bibitem{EGG}
Erdo\smash{\u{g}}an, M.~B., Goldberg, M.~J., and Green, W.~R. 
\emph{Dispersive estimates for four dimensional Schr\"odinger and wave equations with obstructions at zero energy},  Comm. PDE. 39:10 (2014), 1936--1964.



\bibitem{eg2}
Erdo\smash{\u{g}}an, M.~B., and  Green, W.~R. \emph{Dispersive estimates for Schr\"{o}dinger operators in dimension two with obstructions at zero energy}.  Trans. Amer. Math. Soc. 365 (2013), 6403--6440.

\bibitem{eg3} 
Erdo\smash{\u{g}}an, M.~B., and  Green, W.~R. \emph{A weighted dispersive estimate for Schr\"odinger operators in dimension two}. Commun. Math. Phys. 319 (2013), 791--811. 


 

\bibitem{ES} Erdo\smash{\u{g}}an, M.~B.,  and Schlag, W. \emph{Dispersive estimates for Schrodinger operators in the presence of a resonance and/or an eigenvalue at zero energy in dimension three: I}, Dynamics of PDE 1 (2004), 359--379.


\bibitem{fsy}
Feng, H., Soffer A., Yao X., \emph{Decay Estimate and strichartz Estimate of fourth order Schr\"odinger Operator}.  Journal of Functional Analysis, Volume 274, Issue 2, 2018, 605--658.


\bibitem{goldE}
Goldberg, M. \emph{A Dispersive Bound for Three-Dimensional Schr\"odinger Operators withZero Energy Eigenvalues.} Comm. PDE 35 (2010), 1610--1634.



\bibitem{Gol2}
Goldberg, M. \emph{Dispersive Estimates for the Three-Dimensional Schr\"odinger Equation with Rough Potentials}.  Amer. J. Math. 128 (2006) 731--750.


\bibitem{GG1}
Goldberg, M. and Green, W.
\emph{Dispersive Estimates for higher dimensional Schr\"odinger Operators with threshold eigenvalues I: The odd dimensional case,} J. Funct. Anal. 269 (2015) no. 3, 633--682.


\bibitem{GG2}
Goldberg, M. and Green, W.
\emph{Dispersive Estimates for higher dimensional Schr\"odinger Operators with threshold eigenvalues II: The even dimensional case,} J. Spectr. Theory 7 (2017), 33--86. 





\bibitem{GT} Green, W., Toprak E., ``Decay estimates for four dimensional Schrodinger, Klein Gordon and wave equations with obstructions at zero energy", Differential and Integral Equations, Volume 30, Number 5/6 (2017), 329--386.


 
\bibitem{JenKat}  Jensen, A.,  and  Kato, T. {\em Spectral properties of Schr\"odinger operators and time--decay of the wave functions.} Duke Math.\ J.~46  (1979), no.~3, 583--611.

\bibitem{JN}
Jensen, A.  and  Nenciu, G. \emph{A unified approach to resolvent expansions at thresholds}. Rev. Mat. Phys. vol. 13, no. 6 (2001), 717--754.


\bibitem{JSS}  Journ\'e, J.-L.,  Soffer, A., and C.~D.~Sogge. {\em Decay estimates for Schr\"odinger operators.} Comm.\ Pure Appl.\ Math.~44 (1991), no.~5, 573--604.
 

\bibitem{kato}
Kato, T. \emph{Growth properties of solutions of the reduced wave equation with a variable
coefficient,} Comm. Pure Appl. Math. 12 (1959), 403--425.

\bibitem{K} Karpman, V.~I.\emph{Stabilization of soliton instabilities by higher-order dispersion: fourth-order
nonlinear Schr\"odinger equation}, Phys. Rev. E. Vol. 53, no. 2, 1336--1339, (1996).

\bibitem{KS}  Karpman, V.~I., and Shagalov, A.~G. \emph{Stability of soliton described by nonlinear Schr\"odinger
type equations with higher-order dispersion}, Phys D. Vol. 144, 194--210, (2000)



\bibitem{Lev} Levandosky, S.~P. \emph{Stability and instability of fourth-order solitary waves,} J. Dynam. Diff. Equ. 10 (1998), 151--188.

\bibitem{Lev1}  Levandosky, S.~P. \emph{Decay estimates for fourth order wave equations,} J. Diff. Equ. 143 (1998), 360--413. 

\bibitem{LS} Levandosky S.~P., and  Strauss, W.~A. \emph{Time decay for the nonlinear beam equation,} Methods and Applications of Analysis 7 (2000), 479--488. 




\bibitem{MXZ1} Miao, C., Xu, G., and Zhao, L. \emph{ Global wellposedness and scattering for the focusing energy-critical nonlinear
Schr\"odinger equations of fourth order in the radial case,} J. Diff. Equ. 246 (2009), 3715--3749. 


\bibitem{MXZ2}  Miao, C., Xu, G., and Zhao, L. \emph{Global wellposedness and scattering for the defocusing energy-critical nonlinear
Schr\"odinger equations of fourth order in dimensions $d \geq 9$}, J. Diff. Equ. 251 (2011), 3381--3402. 



\bibitem{Mur} Murata, M.  {\em Asymptotic expansions in time for solutions of Schr\"odinger-type equations} J.\ Funct.\ Anal.~49 (1) (1982), 10--56.

\bibitem{Mur1} Murata, M. \emph{High energy resolvent estimates I, first order operators,} J. Math. Soc. Japan 35(1983), 711--733. 

\bibitem{Mur2} Murata, M. \emph{High energy resolvent estimates II, higher order elliptic operators}, J. Math. Soc. Japan 36(1984), 1--10.


\bibitem{P}  Pausader, B. \emph{Scattering and the Levandosky-Strauss conjecture for fourth-order nonlinear wave equations,} J. Diff.
Equ. 241(2007), No. 2, 237-278. 

\bibitem{P1} Pausader, B. \emph{Global well-posedness for energy critical fourth-order Schr\"odinger equations in the radial case,} Dynamics of PDE 4(2007), No. 3, 197-225. 




\bibitem{Sc2}
Schlag,  W. \emph{Dispersive estimates for Schr\"{o}dinger operators in
dimension two.} Comm. Math. Phys. 257 (2005), no. 1, 87--117.

 
 

\bibitem{ebru} Toprak, E. {\em  A weighted estimate for two dimensional Schr\"odinger, Matrix Schr\"odinger and wave equations with resonance of the first kind at zero.}   J. Spectr. Theory 7 (2017), no. 4, 1235–-1284.

\bibitem{Yaj} \emph{The $W^{k,p}$ -continuity of wave operators
for Schr\"odinger operators } J. Math. Soc. Japan
Vol. 47, No. 3, 1995


\end{thebibliography}
\end{document}